\crefname{theorem}{Theorem}{Theorems}
\crefname{fact}{Fact}{Facts}
\crefname{note}{Note}{Notes}
\crefname{lemma}{Lemma}{Lemmas}
\crefname{alg}{Algorithm}{Algorithms}
\crefname{remark}{Remark}{Remarks}
\crefname{example}{Example}{Examples}
\crefname{prop}{Proposition}{Propositions}
\crefname{conj}{Conjecture}{Conjectures}
\crefname{cor}{Corollary}{Corollaries}
\crefname{defn}{Definition}{Definitions}
\crefname{equation}{\!\!}{\!\!} 
\tikzstyle directed=[postaction={decorate,decoration={markings, mark=at position #1 with {\arrow[scale=1]{>}}}}]
\tikzstyle rdirected=[postaction={decorate,decoration={markings, mark=at position #1 with {\arrow[scale=1]{<}}}}]
\newcommand{\webs}{\mathfrak{q}(n)\text{-}\mathbf{Web}_{\up}}
\newcommand{\qwebs}{\mathfrak{q}\text{-}\mathbf{Web}_{\up}}
\newcommand{\websupdown}{\mathfrak{q}(n)\text{-}\mathbf{Web}_{\up\down}}
\newcommand{\qwebsupdown}{\mathfrak{q}\text{-}\mathbf{Web}_{\up\down}}
\newcommand{\websk}{\mathfrak{q}(n)\text{-}\mathbf{Web}_{\up, \k}}
\newcommand{\qwebsk}{\mathfrak{q}\text{-}\mathbf{Web}_{\up, \k}}
\newcommand{\websupdownk}{\mathfrak{q}(n)\text{-}\mathbf{Web}_{\up\down, \k}}
\newcommand{\qwebsupdownk}{\mathfrak{q}\text{-}\mathbf{Web}_{\up\down, \k}}
\newcommand{\qwebsK}{\mathfrak{q}\text{-}\mathbf{Web}_{\up, \K}}
\newcommand{\qwebsupdownK}{\mathfrak{q}\text{-}\mathbf{Web}_{\up\down, \K}}
\newcommand{\beq}{\begin{equation}}
\newcommand{\eeq}{\end{equation}}
\newcommand{\bea}{\begin{eqnarray*}}
\newcommand{\eea}{\end{eqnarray*}}
\newcommand{\bi}{\begin{itemize}}
\newcommand{\ei}{\end{itemize}}
\newcommand{\be}{\begin{enumerate}}
\newcommand{\ee}{\end{enumerate}}
\newcommand{\bc}{\begin{center}}
\newcommand{\ec}{\end{center}}
\newcommand{\bt}{\begin{tikzpicture}}
\newcommand{\et}{\end{tikzpicture}}
\newcommand{\nfrac}[2]{\genfrac{}{}{0pt}{}{#1}{#2}}
\newcommand{\bma}{\begin{bmatrix}}
\newcommand{\ema}{\end{bmatrix}}
\let\oldstar\star
\renewcommand{\star}{\circledast}
\newcommand{\fqt}{\mathfrak{q}}
\newcommand{\mods}{U_{q}\left( \mathfrak{q}_{n}\right)\text{-}\mathbf{Mod}_{\mathcal{S}}}
\newcommand{\modsupdown}{U_{q}(\mathfrak{q}_{n})\text{-}\mathbf{Mod}_{\mathcal{S},\mathcal{S}^*}}
\newcommand{\modsk}{U_{q}\left( \mathfrak{q}_{n}\right)\text{-}\mathbf{Mod}_{\mathcal{S}, \k}}
\newcommand{\modsupdownk}{U_{q}(\mathfrak{q}_{n})\text{-}\mathbf{Mod}_{\mathcal{S},\mathcal{S}^*, \k}}
\newcommand{\modsupdownK}{U_{q}(\mathfrak{q}_{n})\text{-}\mathbf{Mod}_{\mathcal{S},\mathcal{S}^*, \K}}
\newcommand{\lcap}{
\begin{tikzpicture}[baseline = 3pt, scale=0.5, color=\clr]
        \draw[-,thick] (1,0) to[out=up, in=right] (0.53,0.5) to[out=left, in=right] (0.47,0.5);
        \draw[->,thick] (0.49,0.5) to[out=left,in=up] (0,0);
\end{tikzpicture}
}
\newcommand{\rcap}{
\begin{tikzpicture}[baseline = 3pt, scale=0.5, color=\clr]
        \draw[<-,thick] (1,0) to[out=up, in=right] (0.53,0.5) to[out=left, in=right] (0.47,0.5);
        \draw[-,thick] (0.49,0.5) to[out=left,in=up] (0,0);
\end{tikzpicture}
}
\newcommand{\clr}{rgb:black,1;blue,4;red,1}
\newcommand{\wdot}{ node[circle, draw, color=\clr, fill=white, thick, inner sep=0pt, minimum width=4pt]{}}
\newtheorem{theorem}{Theorem}[subsection]
\newtheorem{lemma}[theorem]{Lemma}
\newtheorem{proposition}[theorem]{Proposition}
\newtheorem{definition}[theorem]{Definition}
\newtheorem{remark}[theorem]{Remark}
\newcommand{\End}{\operatorname{End}}
\newcommand{\Hom}{\operatorname{Hom}}
\newcommand{\Z}{\mathbb{Z}}
\newcommand{\C}{\mathbb{C}}
\newcommand{\K}{\mathbb{K}}
\newcommand{\cat}{\mathcal}
\newcommand{\arup}[1]{\stackrel{#1}{\longrightarrow}}
\newcommand{\larup}[1]{\stackrel{#1}{\longleftarrow}}
\newcommand{\up}{\uparrow}
\newcommand{\down}{\downarrow}
\newcommand{\ob}[1]{\mathsf{#1}}
\newcommand{\unit}{\mathds{1}}
\newcommand{\q}{\mathfrak{q}}
\newcommand{\0}{\bar{0}}
\renewcommand{\1}{\bar{1}}
\newcommand{\II}{\mathcal{I}}
\newcommand{\ibar}{{-\imath}}
\newcommand{\jbar}{{-\jmath}}
\newcommand{\vareps}{\varepsilon}
\renewcommand{\k}{\Bbbk}
\renewcommand{\K}{\mathbb{K}}
\newcommand{\ev}{\operatorname{ev}}
\newcommand{\coev}{\operatorname{coev}}
\newcommand{\fq}{\mathfrak{q}}
\newcommand{\fh}{\mathfrak{h}}
\newcommand{\fb}{\mathfrak{b}}
\newcommand{\Id}{\operatorname{Id}}
\newcommand{\svec}{\mathfrak{svec}}
\newcommand{\p}[1]{{|#1|}}
\newcommand{\U}{U}
\newcommand{\SP}{\mathcal{SP}}
\newcommand{\Udot}{\dot{\U}_{q}}
\newcommand{\bUdot}{\dot{\mathbf{U}}_{q}}
\newcommand{\etilde}{E}
\newcommand{\ftilde}{F}
\newcommand{\htilde}{h}
\newcommand{\uddoublearrow}{\vert}
\renewcommand{\AA}{\mathcal{A}}
\newcommand{\tq}{\tilde{q}}
\newcommand{\TT}{\mathcal{T}}
\newcommand{\B}{\mathcal{B}}
\newcommand{\qtbinom}[2]{\genfrac{[}{]}{0pt}{}{#1}{#2}_{t}}
\newcommand{\qbinom}[2]{\genfrac{[}{]}{0pt}{}{#1}{#2}_{q}}
\renewcommand{\ss}{\scriptsize}
\newcommand{\Udoti}{\dot{\U}_{q}}
\newcommand{\bUdoti}{\dot{\mathbf{U}}_{q}}
\newcommand{\qwebseven}{\mathfrak{q}\text{-}\mathbf{Web}_{\up, \text{ev}}}
\newcommand{\qwebsupdowneven}{\mathfrak{q}\text{-}\mathbf{Web}_{\up\down, \text{ev}}}
\newcommand{\HC}{\ensuremath{\operatorname{HC}}}
\newcommand{\BC}{\ensuremath{\operatorname{BC}}}
\newcommand{\mt}{\mathtt{t}}
\newcommand{\md}{\mathtt{d}}
\newcommand{\Cl}{\operatorname{Cl}}
\tikzstyle{none}=[]
\begin{document}

\title{Quantum Webs of Type Q}

\author{Gordon C. Brown }
\address{Fort Worth, TX}
\email{gcbrown@utexas.edu}
\author{Nicholas J. Davidson}
\address{Department of Mathematics \\
          Reed College \\
         Portland, OR 97202}
\email{njd@reed.edu}
\author{Jonathan R. Kujawa}
\address{Department of Mathematics \\
          University of Oklahoma \\
          Norman, OK 73019}
\thanks{Research of the first author was partially supported by NSA grant H98230-11-1-0127.   Part of the research of the second author was completed while in residence at the Mathematical Science Research Institute, funded by NSF Grant DMS-1440140. Research of the third author was partially supported by NSA grant H98230-11-1-0127 and a Simons Collaboration Grant for Mathematicians.}\
\email{kujawa@math.ou.edu}

\date{\today}
\subjclass[2010]{Primary 17B10, 18D10}

\begin{abstract}  Webs are combinatorial diagrams used to encode homomorphisms between representations of Lie (super)algebras and related objects.   This paper extends the theory of webs to the quantum group of type Q.  We define a monoidal supercategory of quantum type $Q$ webs and show it admits a full, essentially surjective functor onto the monoidal supercategory of $U_q(\q_n)$-modules generated by the quantum symmetric powers of the natural representation and their duals.  We also show that a certain subcategory of the web category is a ribbon category and discuss applications to the representation theory of $U_q(\q_n)$ and to invariants of oriented, framed links.  \end{abstract}

\maketitle  


\section{Introduction} 

\subsection{Background}\label{SS:background}  Let $\fq_{n}$ denote the complex Lie superalgebra of type $Q$.  In 1992 Olshanski introduced a corresponding quantized enveloping superalgebra, $U_{q}(\fq_{n})$,  over the field of rational functions $\C (q)$ \cite{Olshanski}.  This is a Hopf superalgebra and so one can consider $V_{n}^{\otimes d}$, the $d$-fold tensor product of the natural supermodule for $U_{q}(\fq_{n})$.   At the same time Olshanski introduced the Hecke-Clifford superalgebra $\HC_{d}(q)$, defined an action on $V_{n}^{\otimes d}$, and proved  $\HC_{d}(q)$ is in Schur-Weyl duality with $U_{q}(\fq_{n})$.  In this way Olshanski provided a quantized version of Sergeev's results for the complex Lie superalgebra $\fq_{n}$ \cite{Sergeev}.  Generalizing Olshanski's work, Benkart-Guay-Jung-Kang-Wilcox introduced the quantum walled Brauer-Clifford superalgebra $BC_{r,s}(q)$, showed it acts on mixed tensor space $V_{n}^{\otimes r} \otimes (V_{n}^{*})^{\otimes s}$ and again established a Schur-Weyl duality with $U_{q}(\fq_{n})$ \cite{BGJKW}.  More recently Chang-Wang obtained a Howe duality for $U_{q}(\fq_{n})$ \cite{ChangWang}.

In a somewhat different direction, the authors of \cite{GJKK,GJKKashK} investigated the polynomial representations of $U_{q}(\fq_{n})$.  In particular, they proved these representations are semisimple, they admit a classical limit which yields the polynomial representations of $\fq_{n}$, and their characters coincide with their classical limits.  One of the main results of those papers is a theory of crystals (in the sense of Kashiwara) for polynomial representations which includes a combinatorial rule for tensor products.  The polynomial representations of $U_{q}(\fq_{n})$ were also studied in \cite{GJKK2,DW}.

Despite these important results, the representation theory of $U_{q}(\fq_{n})$ is still mysterious.  Even fundamental questions like the existence of canonical isomorphisms $M \otimes N \to N \otimes M$ for finite-dimensional $U_{q}(\fq_{n})$-supermodules remain open.  For polynomial representations the fact $M \otimes N$ and $N \otimes M$ are isomorphic can be extracted from character considerations and the results of \cite{GJKK,GJKKashK}.  However obtaining a canonical family of isomorphisms or, more generally, a universal $R$-matrix, remains an open problem. Similarly, there is little known about the characters of the non-polynomial simple supermodules, what happens when $q$ is specialized to a root of unity, and so on.

\subsection{Main Results}\label{SS:mainresults}  The category of finite-dimensional $U_{q}(\fq_{n})$-supermodules is closed under tensor products and hence is a monoidal supercategory.  Here the adjective ``super'' reflects the fact the $\Hom$-spaces have a $\Z_{2}=\Z/2\Z$-grading and the category obeys a graded version of the interchange law. For details see \cref{E:superinterchange} and the adjoining discussion.  The main goal of the present paper is to provide a complete combinatorial model for several natural monoidal supercategories of representations for $U_{q}\left(\fq_{n} \right)$.  

In \cref{D:UpwardWebs} we introduce a diagrammatic supercategory $\qwebs$.  It is defined as a monoidal supercategory via generators and relations.  The objects are words from the set 
\[
\left\{ \uparrow_{k} \mid k\in \Z_{\geq 0} \right\}.  
\]  The generating morphisms are certain diagrams which we call dots, merges, and splits.  The $\Z_{2}$-grading on morphisms is given by declaring dots to have parity $\1$ and the merges and splits to have parity $\0$.  As customary for diagrammatic categories, composition is given by vertical concatenation and tensor product is given by horizontal concatenation.  A diagram obtained by a finite sequence of these operations is called a \emph{web} and a general morphism is a linear combination of webs.  Note, our convention is to read diagrams from bottom to top.   For example, the following web is a morphism from $\uparrow_{4}\uparrow_{9}\uparrow_{6}\uparrow_{7}$ to $\uparrow_{6}\uparrow_{5}\uparrow_{1}\uparrow_{4}\uparrow_{8}\uparrow_{2}$:
\[
\xy
(0,0)*{
\bt[color=\clr, scale=1.2]
	\node at (2,0) {\scriptsize $4$};
	\node at (3,0) {\scriptsize $9$};
	\node at (4,0) {\scriptsize $6$};
	\node at (4.75,0) {\scriptsize $7$};
	\node at (2,2.9) {\scriptsize $6$};
	\node at (2.75,2.9) {\scriptsize $5$};
	\node at (3.25,2.9) {\scriptsize $1$};
	\node at (3.75,2.9) {\scriptsize $4$};
	\node at (4.375,2.9) {\scriptsize $8$};
	\node at (5,2.9) {\scriptsize $2$};
	\draw [thick, directed=0.65] (2,0.15) to (2,0.5);
	\draw [thick, directed=0.65] (3,0.15) to (3,0.5);
	\draw [thick, ] (4,0.15) to (4,0.5);
	\draw [thick, directed=0.75] (4.75,0.15) to (4.75,1);
	\draw [thick, directed=0.65] (2,0.5) [out=30, in=330] to (2,1.25);
	\draw [thick, directed=0.65] (2,0.5) [out=150, in=210] to (2,1.25);
	\draw [thick, directed=0.65] (2,1.25) [out=90, in=210] to (2.375,1.75);
	\draw [thick, ] (3,0.5) [out=150, in=270] to (2.75,1);
	\draw [thick, directed=0.65] (3,0.5) to (3.75,1);
	\draw [thick, ] (2.75,1) to (2.75,1.25);
	\draw [thick, directed=1] (3.5,2) [out=30, in=270] to (3.75,2.75);
	\draw [thick, directed=1] (3.5,2) [out=150, in=270] to (3.25,2.75);
	\draw [thick, directed=0.01] (2.75,1.25) [out=90, in=330] to (2.375,1.75);
	\draw [thick, directed=0.65] (2.375,1.75) to (2.375,2.25);
	\draw [thick, directed=1] (2.375,2.25) [out=30, in=270] to (2.75,2.75);
	\draw [thick, directed=1] (2.375,2.25) [out=150, in=270] to (2,2.75);
	\draw [thick, directed=0.45] (4,0.5) [out=90, in=330] to (3.75,1);
	\draw [thick, directed=0.65] (3.75,1) to (3.75,1.5);
	\draw [thick, directed=0.65] (3.75,1.5) [out=150, in=270] to (3.5,2);
	\draw [thick, ] (3.75,1.5) [out=30, in=270] to (4,2);
	\draw [thick, directed=0.01] (4,2) [out=90, in=210] to (4.375,2.5);
	\draw [thick, directed=0.65] (4.75,1) [out=30, in=330] to (4.75,1.75);
	\draw [thick, directed=0.65] (4.75,1) [out=150, in=210] to (4.75,1.75);
	\draw [thick, directed=0.65] (4.75,1.75) to (4.75,2.125);
	\draw [thick, directed=0.65] (4.75,2.125) to (4.375,2.5);
	\draw [thick, ] (4.75,2.125) [out=30, in=270] to (5,2.5);
	\draw [thick, directed=1] (5,2.5) to (5,2.75);
	\draw [thick, directed=1] (4.375,2.5) to (4.375,2.75);
	\node at (1.575,0.95) {\scriptsize $3$}; 
	\node at (2.4,0.95) {\scriptsize $1$}; 
	\node at (2.975,1.2) {\scriptsize $7$}; 
	\node at (2.65,2) {\scriptsize $11$}; 
	\node at (1.9,1.65) {\scriptsize $4$}; 
	\node at (3.5,0.575) {\scriptsize $2$}; 
	\node at (3.525,1.25) {\scriptsize $8$}; 
	\node at (3.35,1.65) {\scriptsize $5$}; 
	\node at (4.2,1.925) {\scriptsize $3$}; 
	\node at (4.95,1.925) {\scriptsize $7$}; 
	\node at (5.15,1.45) {\scriptsize $5$}; 
	\node at (4.35,1.45) {\scriptsize $2$}; 
	\node at (4.65,2.5) {\scriptsize $5$}; 
	\draw (1.85,0.65) \wdot;
	\draw (2.765,0.85) \wdot;
	\draw (2.65,1.525) \wdot;
	\draw (4.75,0.45) \wdot;
	\draw (4.95,2.3) \wdot;
	\draw (3.925,1.65) \wdot;
	\draw (3.25,2.45) \wdot;
\et
};
\endxy .
\]
Morphisms are subject to an explicit list of diagrammatic relations.  See \cref{D:UpwardWebs} for details. 

Let $S_{q}^{d}\left(V_{n} \right)$ denote the $d$th quantum symmetric power of the natural $U_{q}(\fq_{n})$-supermodule $V_{n}$.  Let $\mods$ denote the full subcategory of finite-dimensional $U_{q}\left(\fq_{n} \right)$-supermodules consisting of arbitrary tensor products of $S_{q}^{d}\left(V_{n} \right)$ for various $d \geq 0$, where by definition $S_{q}^{0}\left(V_{n} \right)$ is understood to be the trivial supermodule and $S^{1}_{q}\left(V_{n} \right) \cong V_{n}$.  Also, we have $\Lambda_{q}^{d}\left(V_{n} \right) \cong S^{d}_{q}\left(V_{n} \right)$ for all $d \geq 0$ so the quantum exterior powers are covered by our setup.  In what follows it is convenient to write $S^{\uparrow_{d}}_{q}\left(V_{n} \right)$ for $S^{d}_{q}\left(V_{n} \right)$  and $S^{\downarrow_{d}}_{q}\left(V_{n} \right)$ for the dual supermodule $S^{d}_{q}\left(V_{n} \right)^{*}$.

Our first main result is contained in \cref{psi-functor,T:fullness} and shows there exists a  full functor of monoidal supercategories, 
\[
\Psi_{n}: \qwebs\to \mods,
\] for every $n \geq 1$.
The functor is given on the object $\up_{\ob{a}} := \up_{a_{1}}\dotsb \up_{a_{t}}$ by 
\[
\Psi_{n}\left(\up_{\ob{a}} \right)= S_{q}^{\uparrow_{a_{1}}}(V_{n}) \otimes\dotsb \otimes S_{q}^{\uparrow_{a_{t}}}(V_{n}).
\]  That is, there is a \emph{single} diagrammatic category $\qwebs$ which describes tensor products of symmetric powers of the natural supermodule for $U_{q}(\fq_{n})$ for \emph{all $n \geq 1$}.

In \cref{D:orientedwebs} we introduce the diagrammatic supercategory of oriented webs, $\qwebsupdown$.  It is again defined as a monoidal supercategory via generators and relations.  Objects in $\qwebsupdown$ are words  from the set 
\[
\left\{ \uparrow_{k}, \downarrow_{k} \mid k\in \Z_{\geq 0} \right\}.  
\]  The generating morphisms are dots, merges, and splits, along with leftward oriented cups and caps.  Dots are again declared to have parity $\1$ and the other generating morphisms are declared to have parity $\0$.  Dots, merges, and splits satisfy the defining relations of $\qwebs$.  We require cups and caps to satisfy a straightening rule which makes $\uparrow_{k}$ and $\downarrow_{k}$ dual to each other (see \cref{straighten-zigzag}) and require clockwise oriented circles labelled by $1$ to be equal to zero (see \cref{delete-bubble}).  The final key relation is the requirement certain leftward oriented crossing morphisms be invertible (see \cref{E:leftwardcrossing}).  That is, we assume the existence of additional generating morphisms which provide two-sided inverses to leftward crossings under composition.  The existence of these inverses force the additional relations we require in $\qwebsupdown$. 

Let $\modsupdown$ be the full subcategory of finite-dimensional $U_{q}(\fq_{n})$-supermodules consisting of arbitrary tensor products of $S_{q}^{d}\left(V_{n} \right)$ for $d\geq 0$ along with their duals. Extending the previous result, we prove in \cref{T:Psiupdown} the single diagrammatic category $\qwebsupdown$ admits a full functor of monoidal supercategories
\[
\Psi_{n}: \qwebsupdown \to \modsupdown
\] for every $n \geq 1$.  On generating objects the functor is given by
\[
\Psi_{n}\left(\uparrow_{k} \right) = S_{q}^{\uparrow_{k}}\left(V_{n} \right) \hspace{.25in} \text{and}  \hspace{.25in} \Psi_{n}\left(\downarrow_{k} \right) = S_{q}^{\downarrow_{k}}\left(V_{n} \right).
\] Because $\Psi_n$ is monoidal, an arbitrary object in $\qwebsupdown$ is then sent to the corresponding tensor product of $S_{q}^{k}\left(V_{n} \right)$'s and their duals.  For example, $\Psi_{n}\left(\downarrow_{6}\downarrow_{2}\uparrow_{9}\right) = S^{\downarrow_{6}\downarrow_{2}\uparrow_{9}}_{q}\left(V_{n} \right) := S^{\downarrow_{6}}_{q}\left(V_{n} \right) \otimes S^{\downarrow_{2}}_{q}\left(V_{n} \right) \otimes S^{\uparrow_{9}}_{q}\left(V_{n} \right)$.

Our third main result gives a precise description of the kernel of $\Psi_{n}$.   Namely, for a fixed $n \geq 1$ set $\ell  = (n+1)(n+2)/2$. In \cref{SS:EquivalencesofCategories} we fix a parity $\0$ element 
\[
e_{\lambda(n)} \in \End_{\qwebs}\left(\uparrow_{1}^{\otimes \ell } \right) \cong \End_{\qwebsupdown}\left(\uparrow_{1}^{\otimes \ell } \right),
\] which can be made explicit using the construction given in \cite{JN} (the isomorphism follows from \cref{T:Thetafunctor}.)  Define $\webs$ and $\websupdown$ to be $\qwebs$ and $\qwebsupdown$, respectively, with the additional relation 
\[
e_{\lambda(n)} =0.
\]  In \cref{T:maintheorem} we prove the functors $\Psi_{n}$ induce functors which we call by the same name,
\begin{align*}
\Psi_{n}&: \webs \to \mods,\\
\Psi_{n}&: \websupdown \to \modsupdown.
\end{align*}  Moreover, these functors are equivalences of monoidal supercategories.  In short, $\webs$ and $\websupdown$ are complete combinatorial models for $\mods$ and $\modsupdown$, respectively. 

Before describing some applications of our main theorems, we would like to mention we first learned the trick of defining monoidal categories via inverting morphisms from Jon Brundan.  Certainly the definition of our diagrammatic categories, many of the diagrammatic calculations, and the verification of the existence of the functors $\Phi_{n}$ would all be much less pleasant if we were to instead use the full set of relations implied by this inversion.  We also remark our webs are analogous to those used elsewhere in the literature to describe monoidal (super)categories.  For example, see \cite{CKM,TVW,RT} and the references therein for recent examples.  While some of our results and calculations have obvious parallels with those in the literature, we also encounter new difficulties which require different techniques.

\subsection{Applications}\label{SS:consequences}  Given a tuple of nonnegative integers $c=(c_{1},\dotsc ,c_{t})$, set $|c| = c_{1}+\dotsb +c_{t}$ and let $\uddoublearrow_{c}= \uddoublearrow_{c_{1}}\uddoublearrow_{c_{2}}\dotsb \uddoublearrow_{c_{t}}$ be an object of $\qwebsupdown$, where $\uddoublearrow$ denotes either a $\uparrow$ or $\downarrow$.  In \cref{T:EndomorphismAlgebras} we prove the map induced by the functor,
\[
\Psi_{n}: \End_{\qwebsupdown}\left(\uddoublearrow_{c} \right) \to \End_{U_{q}\left(\fq_{n} \right)}\left(S_{q}^{\uddoublearrow_{c}}\left(V_{n} \right) \right),
\] is a surjective superalgebra homomorphism.  Moreover this map is an isomorphism whenever $|c| < (n+1)(n+2)/2$.  When the object is $\uparrow_{1}^{k}$, then by \cref{P:sergeev-isomorphism} the superalgebra  $\End_{\qwebsupdown}\left(\uparrow_{1}^{k} \right)$ is isomorphic to the Hecke-Clifford superalgebra.  When it is $\uparrow_{1}^{r}\downarrow_{1}^{s}$, then by \cref{T:quantumBCalgebra} the superalgebra  $\End_{\qwebsupdown}\left(\uparrow_{1}^{r}\downarrow_{1}^{s} \right)$ is isomorphic to the quantum walled Brauer-Clifford superalgebra.  In this way our results generalize those of \cite{Olshanski,BGJKW}.  Moreover, in these cases our bound on when $\Psi_{n}$ is an isomorphism is sharp and improves on those given previously.  It would be interesting to describe other endomorphism superalgebras in $\qwebsupdown$, say by generators and relations. 

Let $\qwebsupdowneven$ be the subcategory consisting of all objects and all morphisms which can be written as a linear combination of webs with no dots.  We prove in \cref{T:ribboncategory} this is a ribbon category.  Pushing this structure forward with the functor $\Psi_{n}$ yields a ribbon category of $U_{q}\left(\fq_{n} \right)$-supermodules.  In particular, for any two tuples of nonnegative integers $a$ and $b$ we have isomorphisms $\uddoublearrow_{a}\otimes \uddoublearrow_{b} \to \uddoublearrow_{b} \otimes \uddoublearrow_{a}$ in $\qwebsupdown$. Thus we may apply the functor $\Psi_{n}$ to obtain an isomorphism
\[
S^{\uddoublearrow_{a}}_{q}\left(V_{n} \right) \otimes S^{\uddoublearrow_{b}}_{q}\left(V_{n} \right) \to S^{\uddoublearrow_{b}}_{q}\left(V_{n} \right) \otimes S^{\uddoublearrow_{a}}_{q}\left(V_{n} \right).
\]  This gives isomorphisms for all $U_{q}\left(\fq_{n} \right)$-supermodules which are arbitrary tensor products of symmetric powers of the natural supermodule and their duals.  As far as we are aware this is the first instance of canonical braiding-type isomorphisms for $U_{q}\left(\fq_{n} \right)$-supermodules beyond tensor products of the natural supermodule and its dual.  Moreover, as these are not necessarily polynomial supermodules, this establishes isomorphisms not covered by the results of \cite{GJKK,GJKKashK}.

Given a ribbon category such as $\qwebsupdowneven$ there are established techniques for defining invariants of oriented, framed links and other objects in low-dimensional topology (e.g.\ see \cite{Turaev}).  As a consequence of setting the clockwise oriented circle equal to zero in relation \cref{delete-bubble}, the objects in $\qwebsupdowneven$ have categorical dimension zero and, hence, give trivial invariants.  Undaunted, we show there are two ways to obtain nontrivial invariants from $\qwebsupdowneven$.  In \cref{SS:mtraces} we prove $\qwebsupdowneven$ admits an essentially unique family of modified traces in the sense of \cite{GKPM}.  Using these we describe how to define topological invariants.  In particular, we explain how to recover the colored HOMFLY-PT polynomials where edges are colored by partitions with one part.  In \cref{SS:bubblespop} we see we could choose to omit the left relation in \cref{delete-bubble} and instead declare the clockwise oriented circle equal to $2/(q-q^{-1})$.  This specialization defines a monoidal supercategory which, while no longer related to the representations of $U_{q}\left(\fq_{n} \right)$, does naturally lead to a ribbon category with nonzero categorical dimensions and, hence, to nontrivial topological invariants.  We leave calculating these invariants as an open question.  Interestingly, the calculations in \cref{SS:bubblespop} show $0$ and $2/(q-q^{-1})$ are the only possible specialization of the clockwise oriented circle labelled by $1$.

\section{Notation and Preliminaries}

\subsection{Notation and Preliminaries}\label{SS:NotationsandConventions}
Unless otherwise stated, the ground field is $\K = \C ((q))$ the field of formal Laurent series in the variable $q$.  In \cref{SS:rationalfunctions} we will describe how to obtain our main results over $\C (q)$.  We fix a primitive fourth root of unity $\sqrt{-1} \in \mathbb{C}$.  For short we set $\tq = q-q^{-1}$. Given positive integers $a,b$ and $t \in \K$ not a root of unity, define elements of $\K$ by 
\begin{align*}
	[a]_{t} &= \frac{t^{a}-t^{-a}}{t-t^{-1 }},\\
	[a]_{t}! &= [a]_{t}[a-1]_{t}\dotsb [2]_{t}[1]_{t},\\
	\qtbinom{a+b}{b} &= \frac{[a+b]_{t}!}{[a]_{t}![b]_{t}!}.
\end{align*}
When $t=q$ these are the quantum integers, factorials, and binomial coefficients, respectively.

Unless otherwise stated, throughout we work in the category of $\K$-superspaces, $\svec$, where by definition a \emph{superspace} is a $\Z_{2}$-graded $\K$-vector space.  We refer to the degree of a homogeneous element $w \in W = W_{\0} \oplus W_{\1 }$ as its \emph{parity} and write $\p{w} \in \Z_{2}$ for the parity.  An element of $W_{\0}$ (resp.\ $W_{\1}$) is often called \emph{even} (resp.\ \emph{odd}) for short.  We view $\K$ as a superspace concentrated in parity $\0$. Given two superspaces $V$ and $W$, the space of $\K$-linear maps $\Hom_{\K}(V,W)$ is naturally $\Z_{2}$-graded by declaring $f: V \to W$ to have parity $r \in \Z_{2}$ if $f(V_{s}) \subseteq W_{r+s}$ for all $s \in \Z_{2}$.  Similarly the vector space $V \otimes W = V \otimes_{\K} W$ has a natural grading given by $\p{v \otimes w} = \p{v}+\p{w}$ for all homogeneous $v \in V$, $w \in W$.

We assume the reader is familiar with the basics of super mathematics and the use of the \emph{sign rule}: whenever two homogenous elements trade positions in a formula, a sign given by the product of the parities of the elements should be included.  For example, if $A$ and $B$ are superalgebras, then $A \otimes B$ is again a superalgebra with the product given by $(a_{1} \otimes b_{1})(a_{2} \otimes b_{2}) = (-1)^{\p{a_{2}}\p{b_{1}}}a_{1}a_{2} \otimes b_{1}b_{2}$ for all homogeneous $a_{1},a_{2} \in A$ and $b_{1},b_{2} \in B$.  Similarly, the braiding isomorphism $V \otimes W \to W \otimes V$ in the category of $\K$-superspaces is given by the graded flip map $v \otimes w \mapsto (-1)^{\p{v}\p{w}}w \otimes v$ for all homogeneous $v \in V$ and $w \in W$. Here and elsewhere unlabelled tensor products are taken over the relevant ground field.  Also, as we did here, we commonly give a formula for homogeneous elements only. The general case is  obtained by linearity.  

By a \emph{supercategory} we mean a category enriched in $\svec$. Similarly, a \emph{superfunctor} is a functor enriched in $\svec$.  Unless otherwise stated, all functors in this paper will be superfunctors which preserve the parity of homogeneous morphisms.  Given two supercategories $\cat{A}$ and $\cat{B}$, there is a supercategory $\cat{A}\boxtimes\cat{B}$ whose objects are pairs $(\ob a,\ob b)$ of objects $\ob a\in\cat{A}$ and $\ob b\in\cat{B}$ and whose morphisms are given by the tensor product of superspaces $\Hom_{\cat{A}\boxtimes\cat{B}}((\ob a,\ob b),(\ob a',\ob b'))=\Hom_\cat{A}(\ob a,\ob a')\otimes\Hom_\cat{B}(\ob b,\ob b')$ with composition defined using the braiding in $\svec$.  

By a \emph{monoidal supercategory} we mean a supercategory $\cat{A}$ equipped with a functor $-\otimes-:\cat{A}\boxtimes\cat{A}\to\cat{A}$, a unit object $\unit$, and even supernatural isomorphisms $(-\otimes-)\otimes-\arup{\sim}-\otimes(-\otimes-)$ and $\unit\otimes-\arup{\sim}-\larup{\sim}-\otimes\unit$ called \emph{coherence maps} satisfying certain axioms analogous to the ones for a monoidal category. In particular, if $f$ and $g$ are homogenous morphisms in a monoidal supercategory, then the \emph{super-interchange law} is the identity
\begin{equation}\label{E:superinterchange}
(f\otimes g)\circ(h\otimes k)=(-1)^{\p{g}\p{h}}(f\circ h)\otimes(g\circ k).
\end{equation}

A monoidal supercategory is called \emph{strict} if its coherence maps are identities.  
A \emph{monoidal functor} between two monoidal supercategories $\cat{A}$ and $\cat{B}$ is a functor $F:\cat{A}\to\cat{B}$ equipped with an even supernatural isomorphism $(F-)\otimes (F-)\arup{\sim}F(-\otimes-)$ and an even isomorphism $\unit_\cat{B}\arup{\sim}F\unit_\cat{A}$ satisfying axioms analogous to the ones for a monoidal functor.

\section{The quantized  enveloping superalgebra of type Q}

\subsection{The Lie superalgebra \texorpdfstring{$\fq_{n}$}{q(n)}}\label{SS:Liesuperalgebras}

For $n \geq 1$, define an index set $I_{n|n} = \{\pm i \mid 1 \leq i \leq n\}$.  Given $i \in I_{n|n}$, define its parity $\p{i} \in \Z_2$ by $\p{i} = \0$ if $i > 0$, and $\p{i} = \1$ otherwise.  We also order $I_{n|n}$ via the usual ordering on integers.  Fix a complex superspace $V= V_{\0} \oplus V_{\1}$ with $\dim_{\C}(V_{\0}) = \dim_{\C}(V_{\1})=n$.   Fix a homogeneous basis $\left\{v_{i} \mid i \in I_{n|n} \right\}$ for $V$ with $\p{v_{i}}=\p{i}$ for all $i \in I_{n|n}$.  Realized as matrices with respect to our choice of basis,  $\fq_{n}$ is the set of all $2n \times 2n$ matrices of the following form: 
\begin{equation}\label{E:qndef}
\fq(n)=\left\{\left(\begin{matrix}A & B \\
B & A 
\end{matrix} \right) \mid A,B \text{ are } n \times n \text{ complex matrices} \right\}.
\end{equation}
The $\Z_{2}$-grading is given by declaring $\q_{n,\0}$ (resp.\ $\q_{n,\1}$) to be the subspace of all such matrices with $B=0$ (resp.\ $A=0$). The Lie bracket is given by the graded commutator bracket. That is, for homogeneous $x,y \in \fq_{n}$, $[x,y] = xy-(-1)^{\p{x} \p{y}}yx$.  Observe $\fq_{n}$ has a homogenous basis given by $e_{i,j}^{\0}:=e_{i,j}+e_{\ibar,\jbar}$ and $e_{i,j}^{\1 }:=e_{\ibar,j}+e_{i,\jbar}$ for $1\leq i,j\leq n$, where $e_{i,j}$ denotes the corresponding matrix unit for all $i,j \in I_{n|n}$. Observe, $\p{e_{i,j}^{\varepsilon}} = \varepsilon$ for all $1 \leq i,j \leq n$ and $\varepsilon \in \Z_{2}$.

Fix the Cartan subalgebra of $\fh \subseteq \fq_{n}$ consisting of matrices as in \cref{E:qndef} with $A$ and $B$ diagonal.  For $i=1, \dotsc , n$, let $\varepsilon_{i}: \fh_{\0} \to \C$ be defined by $\varepsilon_{i}(e^{\0}_{j,j}) =\delta_{i,j}$. Then $\varepsilon_{1}, \dotsc , \varepsilon_{n}$ gives a basis for $\fh_{\0}^{*}$.  It is convenient for later calculations to set the notation $\varepsilon_{-i}=\varepsilon_{i}$ for any $i \geq 0$.  Define a bilinear form on $\fh_{\0}^{*}$ by $(\varepsilon_{i}, \varepsilon_{j}) = \delta_{i,j}$. Set 
\begin{equation}\label{E:XTdef}
X(T) = X(T_{n}) = \oplus_{i=1}^{n} \Z\varepsilon_{i} \subseteq \fh_{\0}^{*}.
\end{equation}
Let $X(T)_{\geq 0} = \left\{\lambda \in X(T) \mid (\lambda, \varepsilon_{i}) \geq 0 \text{ for all $i=1,\dotsc ,n$} \right\}$.  For $1 \leq i,j \leq n$, write $\alpha_{i,j}=\vareps_i-\varepsilon_{j}$.

Fix the Borel subalgebra $\fb \subseteq \fq_{n}$ consisting of matrices as in \cref{E:qndef} with $A$ and $B$ upper triangular. Corresponding to our choices the set of roots, positive roots, and simple roots are $\left\{\alpha_{i,j} \mid 1 \leq i \neq j \leq n \right\}$,  $\left\{\alpha_{i,j}  \mid 1 \leq i<j \leq n \right\}$, and $\left\{\alpha_{i} := \alpha_{i,i+1} \mid 1 \leq i \leq n-1 \right\}$, respectively. 

Let $U(\fq_{n})$ denote the enveloping superalgebra of $\fq_{n}$ over $\C$.  A $U(\fq_{n})$-supermodule is a $\C$-superspace $M = M_{\0} \oplus M_{\1 }$ where the action of $U(\fq_{n})$ on $M$ is compatible with grading in the sense that if $x \in U(\fq_{n})_{r}$ and $m \in M_{s}$, then $x.m \in M_{r+s}$.  We allow for all $U(\fq_{n})$-supermodule homomorphisms, not just those which preserve the $\Z_{2}$-grading.  However, using the rule for superspaces (see \cref{SS:NotationsandConventions}) there is a $\Z_{2}$-grading on $\Hom$-spaces which makes the category of $U(\fq_{n})$-supermodules a supercategory. Since $U(\fq_{n})$ is a Hopf superalgebra the category of $U(\fq_{n})$-supermodules is a monoidal supercategory.

\subsection{The quantized enveloping superalgebra \texorpdfstring{$U_{q}(\fq_{n})$}{Uq}}\label{SS:quantizedenvelopingalgebra}

For $n \geq 1$ let $U_q(\q_n)$ denote the quantized enveloping algebra of the Lie superalgebra $\q_n$.  This is an associative $\K$-superalgebra originally defined by Olshanski in \cite{Olshanski}.  We describe it using the presentation given in \cite{GJKKashK}.   As an associative algebra $U_{q}(\q_n)$ has homogeneous generators $E_i, \bar{E_i}, F_i, \bar{F}_i$ $(1 \leq i \leq n-1)$ and $K_j, K_j^{-1}, \bar{K}_j$ $(1 \leq j \leq n)$.  The generators $E_i, F_i, K_j, K_j^{-1}$ are even, and their barred counterparts $\bar{E}_i, \bar{F}_i, \bar{K}_{j}$ are odd.  The generators satisfy a rather lengthy list of relations which we omit; see \cite[Definition 1.1]{GJKKashK} for details.  We will introduce an idempotent version in \cref{SS:IdempotentAlgebra}.  When comparing our algebra to the one in \cite{GJKKashK} the reader should note we write  $K_{i}^{\pm 1}$ and $\bar{K}_{i}$ for their elements $q^{\pm k_i}$ and $k_{\bar{i}}$, respectively.  It is also useful to note it is remarked at the end of  \cite[Definition 1.1]{GJKKashK} that $U_q(\q_n)$ is generated by its even generators together with $\bar{K}_1$.  In particular, superalgebra homomorphisms need only be specified on this smaller list of generators.

By definition, the \textit{natural supermodule} for $U_q(\q_n)$ is the $\K$-superspace, $V_{n}=V_{q,n}$, with fixed homogeneous basis $\left\{v_{i} \mid i \in I_{n|n} \right\}$ and $\Z_{2}$-grading given by $\p{v_{i}}=\p{i}$ for all $i \in  I_{n|n}$.  The action of the generators of $U_q(\q_n)$ are given by:
$$\begin{array}{lll}
K_j v_i = q^{(\varepsilon_{i},\varepsilon_{j})} v_i, & E_j v_i = \delta_{i, j+1} v_{j} + \delta_{i, -(j+1)} v_{-j}, & F_j v_i = \delta_{i,j} v_{j+1} + \delta_{i,-j} v_{-(j+1)} ,\\
\bar{K}_j v_i = \delta_{i,j} v_{-j} + \delta_{i,-j} v_j & \bar{E}_j v_i = \delta_{i,j+1} v_{-j} + \delta_{i,-(j+1)} v_{j} , & \bar{F}_j v_i = \delta_{i,j} v_{-(j+1)} + \delta_{i,-j} v_{j+1}.
\end{array}
$$

As shown in \cite{Olshanski}, the superalgebra $U_q(\q_n)$ is actually a Hopf superalgebra.  The coproduct $\Delta :  U_q(\q_n)  \to U_q(\q_n) \otimes U_q(\q_n)$, counit $\varepsilon: U_{q}(\fq_{n}) \to \K$, and antipode $S : U_q(\q_n) \to U_q(\q_n)$ are given as follows.  In the presentation used here the coproduct given on the generators of $U_q(\q_n)$ by:

$$\begin{array}{ll}
\Delta(E_i) = E_i \otimes K_i^{-1} K_{i+1} + 1 \otimes E_i, & \Delta(F_i) = F_i \otimes 1 + K_i K_{i+1}^{-1} \otimes F_i , \\
\Delta(K_i) = K_i \otimes K_i, & \Delta(\bar{K}_1) =  \bar{K}_1 \otimes K_1 + K_1^{-1} \otimes \bar{K}_1.
\end{array}$$
The value of the coproduct on the other odd generators is rather complicated (see \cite[Section 2]{GJKK}). Since we will not need these values, we omit them. The counit is given by 
\[
\begin{array}{ll}
\varepsilon(E_{i})=\varepsilon(F_{i})=\varepsilon(\bar{E}_{i})=\varepsilon(\bar{F_{i}})=\varepsilon(\bar{K}_{i})=0, & \varepsilon (K_{i})=1.
\end{array}
\]
The value of the antipode on the generators is given by:
$$\begin{array}{llll}
S(E_i) = -E_i K_i K_{i+1}^{-1}, & S(F_i) = -K_i^{-1} K_{i+1} F_i &
S(K_j) = K_j^{-1} & S(\bar{K}_1) = -\bar{K}_1.
\end{array}$$
Again, the value of the antipode on the other generators is complicated and not needed for our purposes so we omit them.  We remind the reader the map $S$ is an anti-automorphism of superalgebras, meaning $S(ab) = (-1)^{\p{a} \p{b}} S(b) S(a)$ for all homogeneous $a,b \in U_q(\q_n)$.

As we do for $U(\q_n)$, when studying $U_{q}(\fq_{n})$-supermodules we consider \emph{all} supermodule homomorphisms, not just those which preserve the $\Z_{2}$-grading. Using the same rule as for $\K$-superspaces (see \cref{SS:NotationsandConventions}) there is a $\Z_{2}$-grading on the $\Hom$-spaces for the category of $U_{q}(\fq)$-supermodules which makes it a supercategory.  Moreover, the Hopf superalgebra structure on $U_{q}(\fq_{n})$ makes this a monoidal supercategory.

A check using the presentation in \cite{GJKKashK} verifies there is a superalgebra homomorphism  $\tau : U_q(\q_n) \to U_q(\q_n)$ given on generators by:
\begin{equation}\label{E:tau}
\begin{array}{lll}
\tau(E_i) = F_i, & \tau(F_i) = E_i, & \tau(K_i) = K_i^{-1}, \\
\tau(\bar{E}_i) = -\sqrt{-1} \bar{F}_i, & \tau({\bar{F}_i}) = -\sqrt{-1} \bar{E}_i, & \tau({\bar{K}_j}) = \sqrt{-1} \bar{K}_j.
\end{array}
\end{equation}  Since $\tau$ has order four it is an isomorphism.
Moreover $\tau$ satisfies $\Delta \circ \tau = \tau \otimes \tau \circ \sigma \circ \Delta$, where $\sigma : U_q(\q_n) \otimes U_q(\q_n) \to U_q(\q_n) \otimes U_q(\q_n)$ is the flip map $a\otimes b \mapsto (-1)^{\p{a} \p{b}} b \otimes a$.  
Given a $U_q(\q_n)$-supermodule $M$ we can define a new action by $x \cdot_\tau m = \tau(x)m$ for all $x \in U_q(\q_n)$ and $m \in M$.  This equips the superspace $M$ with a new $U_q(\q_n)$-supermodule structure. We call the resulting supermodule the \textit{$\tau$-twist} of $M$ and denote it by $M^{\tau}$.  

Since the antipode $S$ also satisfies $\Delta \circ S = S \otimes S \circ \sigma \circ \Delta$, it follows 
\begin{equation}\label{E:TauSDelta}
\Delta \circ S \circ \tau = S \otimes S \circ \tau \otimes \tau \circ \Delta.
\end{equation}

\subsection{Polynomial Representations of \texorpdfstring{$U(\fq_{n})$}{Uq}}  Let $\TT_q$ denote the category of polynomial representations of $U_q(\q_n)$.  By definition this is the full Serre subcategory generated by all finite-dimensional $U_{q}(\fq_{n})$-supermodules whose composition factors appear as composition factors of $V_{q}^{\otimes d}$ for various $d\geq 0$ (where $V_{q}^{\otimes 0}=\K$ is the trivial supermodule by definition). 

The Cartan subalgebra of $U_{q}(\fq_{n})$ is defined to be the subsuperalgebra generated by $K_{i}^{\pm 1}$, $\bar{K}_{i}$ for $i=1, \dotsc , n$.  Given a polynomial representation $M$ and $\lambda \in X(T_{n})$, the $\lambda$-weight space of $M$ with respect to this choice of Cartan subalgebra is 
\[
M_{\lambda} =\{m \in M \mid  K_{i}m = q^{(\lambda, \varepsilon_{i})}m \text{ for $i=1, \dotsc , n$}\}.
\]
For any polynomial supermodule $M$ we have
\[
M = \bigoplus_{\lambda \in X(T)_{\geq 0}} M_{\lambda}.
\]   Given $\lambda = \sum_{i=1}^{n} \lambda_{i}\varepsilon_{i} \in X(T)_{\geq 0}$, define the \emph{total weight of $\lambda$} to be $|\lambda| = \sum_{i=1}^{n} \lambda_{i}$.  Given $d \geq 0$ we say a supermodule in $\TT_{q}$ is a polynomial representation of degree $d$ if all of its weights have total weight $d$.  This is equivalent to requiring the composition factors which occur in $M$ also occur in $V_{n}^{\otimes d}$.  Recall, if $d\geq 0$, then a partition of $d$ is a weakly decreasing sequence of nonnegative integers which sums to $d$.  A partition is \emph{strict} if the positive integers strictly decrease.  Let $\Lambda^{+}_{n}$ denote the set of strict partitions with no more than $n$ nonzero parts. We identify $\Lambda_{n}^{+}$ as a subset of $X(T)$ via the map $(\lambda_{1}, \dotsc , \lambda_{n}) \mapsto \sum_{i=1}^{n}\lambda_{i}\varepsilon_{i}$.

 Combining results from \cite{GJKK, GJKKashK} we have the following theorem.  When comparing with the results therein, the reader is reminded we are working over the field of formal Laurent series.

\begin{theorem}\label{T:Grantcharovtheorem}  The following statements are true:
	\begin{enumerate}
		\item The category $\TT_{q}$ is completely reducible and the Hopf superalgebra structure on $U_{q}(\fq_{n})$ makes $\TT_{q}$ into a monoidal supercategory.
		\item For each $\lambda \in \Lambda^{+}_{n}$ there exists a simple $U_{q}(\fq_{n})$-supermodule labelled by $\lambda$, $L_{q}(\lambda) = L_{q, n}(\lambda)$.  The set 
		\[
		\left\{L_{q}(\lambda) \mid \lambda \in \Lambda^{+}_{n} \right\}
		\]
		is a complete irredundant set of simple supermodules in $\TT_{q}$.
		\item Each $L_{q}(\lambda)$ admits a $q=1$ specialization, $L_{q=1}(\lambda)$, which is a $U(\fq_{n})$-supermodule and the characters and dimensions of $L_{q}(\lambda)$ and $L_{q=1}(\lambda)$ coincide.   Moreover,  $L_{q=1}(\lambda)$ is isomorphic to $L(\lambda)$, the simple $U(\fq_{n})$-supermodule labelled by $\lambda$.
		\item Supermodules in $\TT_{q}$ are completely determined by their characters. In particular, the multiplicity of  $L_{q}(\nu)$ in $L_{q}(\lambda) \otimes_{\K} L_{q}(\mu)$  equals the multiplicity of $L(\nu)$  as a summand of $L(\lambda) \otimes_{\C } L(\mu)$.
	\end{enumerate}
	\end{theorem}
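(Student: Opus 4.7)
The plan is to deduce each statement from the corresponding result of \cite{GJKK,GJKKashK}, which establishes these facts when the ground field is $\C(q)$, and then transfer everything to $\K = \C((q))$ by a base change argument. Throughout, for $\lambda\in\Lambda^+_n$ I would let $L_{\C(q)}(\lambda)$ denote the simple polynomial supermodule constructed in the cited papers and set $L_q(\lambda):=L_{\C(q)}(\lambda)\otimes_{\C(q)}\K$.

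First I would handle parts (a) and (b) together. The cited papers prove that each $V_{\C(q)}^{\otimes d}$ is completely reducible over $U_{\C(q)}(\fq_n)$ with explicit decomposition into $L_{\C(q)}(\lambda)$'s. Since we are in characteristic zero, the image of the action map is a semisimple finite-dimensional $\C(q)$-algebra, and semisimplicity is preserved by base change to $\K$. This gives complete reducibility of $V_\K^{\otimes d}$, hence of $\TT_q$. To identify the simple summands, I would use highest weight theory: a highest weight space in $L_q(\lambda)$ is obtained by base change from that of $L_{\C(q)}(\lambda)$, and since the latter generates $L_{\C(q)}(\lambda)$ as a $U_{\C(q)}(\fq_n)$-supermodule, the former generates $L_q(\lambda)$. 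An absolute-simplicity argument (using that the highest weight space in type $Q$ is an irreducible module over a Clifford algebra defined over $\C(q)$) then shows $L_q(\lambda)$ is simple and that nonisomorphic $\lambda$ yield nonisomorphic simples. The monoidal supercategory structure follows from the Hopf superalgebra structure on $U_q(\fq_n)$ and the observation that weights of tensor products remain in $X(T)_{\geq 0}$.

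For (c), the $q=1$ specialization is constructed in \cite{GJKK,GJKKashK} via an integral form over $\C[q,q^{-1}]$: one chooses a highest weight generator for $L_{\C(q)}(\lambda)$, takes the lattice it generates under a suitable integral form of $U_q(\fq_n)$, and reduces modulo $q-1$ to obtain $L_{q=1}(\lambda)\cong L(\lambda)$ with matching characters and dimensions. Since base change $\C(q)\to\K$ does not alter weight multiplicities, the equality of characters persists for $L_q(\lambda)$. For (d), I would invoke the classical fact (Penkov--Sergeev) that simple polynomial $\fq_n$-supermodules are distinguished by their characters, so by (c) the same holds over $\K$; combined with complete reducibility from (a), this shows arbitrary objects in $\TT_q$ are determined by their characters. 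The tensor product multiplicity claim then follows from the chain of equalities
\[
\chi\bigl(L_q(\lambda)\otimes L_q(\mu)\bigr)=\chi(L_q(\lambda))\chi(L_q(\mu))=\chi(L(\lambda))\chi(L(\mu))=\chi\bigl(L(\lambda)\otimes L(\mu)\bigr),
\]
together with complete reducibility on both sides. The main technical obstacle is verifying the absolute simplicity of $L_{\C(q)}(\lambda)$ under base change, because the highest weight space in type $Q$ is not one-dimensional but rather an irreducible Clifford module; however, since the relevant Clifford algebra is already split over $\C(q)$ (using that $\sqrt{-1}\in\C\subseteq\C(q)$), extending scalars to $\K$ preserves irreducibility, and the argument goes through.
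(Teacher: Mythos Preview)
The paper does not prove this theorem; it records it as a summary of results from \cite{GJKK,GJKKashK}, which are stated directly over $\K=\C((q))$, so no base change from $\C(q)$ is involved.

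Your base-change approach contains a genuine error in the last step. You assert that the Clifford algebra governing the highest weight space is already split over $\C(q)$ because $\sqrt{-1}\in\C(q)$, and hence that $L_{\C(q)}(\lambda)$ is absolutely simple. This is false: on the $\lambda$-weight space the odd Cartan generators satisfy $\bar K_i\bar K_j+\bar K_j\bar K_i=2\delta_{ij}[\lambda_i]_{q^2}$ (the fifth relation in \cref{D:dotU2}), so the relevant Clifford algebra has parameters $[\lambda_i]_{q^2}$, and its splitting requires square roots of these quantum integers. Such square roots do not lie in $\C(q)$ in general (for instance $[2]_{q^2}=q^2+q^{-2}$ is not a square there). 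They \emph{do} lie in $\K=\C((q))$; this is exactly the point of the remark after \cite[Proposition~1.7]{GJKKashK} invoked in the proof of \cref{T:OlshanskiDuality} to conclude that $\HC_k(q)$ is split semisimple over $\K$ (but only semisimple over $\C(q)$). Consequently, as the paper itself records in \cref{SS:rationalfunctions}, $L_{\C(q)}(\lambda)\otimes_{\C(q)}\K$ can decompose as \emph{two} copies of the simple $L_q(\lambda)_\K$. Your proposed definition $L_q(\lambda):=L_{\C(q)}(\lambda)\otimes_{\C(q)}\K$ would then have twice the correct character, breaking parts (c) and (d). The fix is either to cite \cite{GJKK,GJKKashK} over $\K$ directly, as the paper does, or to define $L_q(\lambda)$ as a simple summand of the base change and track the type carefully.
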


If $M$ is a polynomial supermodule of $U_q(\q_n)$, the antipode $S$ of $U_q(\q_n)$ affords a supermodule structure on the dual space $M^* = \Hom_{\K}(M,\K)$ and, in turn, on $M^{\oldstar}:=\left(M^{*} \right)^{\tau}$.  
Explicitly, the action of a homogeneous $x \in U_{q}(\fq_{n})$ on a homogeneous $f \in M^{\oldstar}$ is given by $(xf)(v) = (-1)^{|x||f|} f(S(\tau(x))v)$.  Since the characters of $M^{\oldstar}$ and $M$ coincide, the following is immediate from the fact polynomial representations are completely reducible and the simple supermodules in $\TT_{q}$ are determined by their characters.
\begin{proposition}\label{P:dualtwist}
If $M$ is a polynomial representation of $U_q(\q_n)$, then $M^{\oldstar} \cong M$.
\end{proposition}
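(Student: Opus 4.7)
The plan is to reduce the assertion to a character computation and then invoke Theorem \ref{T:Grantcharovtheorem}. Since $\TT_q$ is semisimple by part (1) of that theorem, write $M=\bigoplus_{\lambda\in\Lambda_n^+}L_q(\lambda)^{\oplus m_\lambda}$; because the $\oldstar$-construction is additive it suffices to prove $L_q(\lambda)^{\oldstar}\cong L_q(\lambda)$ for every $\lambda\in\Lambda_n^+$.

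The heart of the proof is to show that $M^{\oldstar}$ and $M$ have the same character. For an even element $x\in U_q(\q_n)$ and $f\in M^{\oldstar}$ the twisted dual action simplifies to $(x\cdot f)(v)=f(S(\tau(x))v)$, since the sign in the definition of the $\oldstar$-action is trivial. Applying this with $x=K_i$, \cref{E:tau} gives $\tau(K_i)=K_i^{-1}$ and then $S(K_i^{-1})=K_i$, so $(K_i\cdot f)(v)=f(K_iv)$. A standard weight-space argument now produces a parity-preserving $\K$-linear isomorphism $(M^{\oldstar})_\mu\cong (M_\mu)^*$ for every $\mu\in X(T)$. In particular, the $\Z_2$-graded dimensions of corresponding weight spaces agree, which is exactly the statement that the characters of $M$ and $M^{\oldstar}$ coincide.

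Next I would verify that $L_q(\lambda)^{\oldstar}$ still lies in $\TT_q$, so that part (4) of Theorem \ref{T:Grantcharovtheorem} applies. The intertwining identity \cref{E:TauSDelta} makes $(-)^{\oldstar}$ into a monoidal endofunctor of the category of finite-dimensional $U_q(\q_n)$-supermodules. A direct computation on the basis $\{v_i\mid i\in I_{n|n}\}$ shows $V_n^{\oldstar}\cong V_n$, and therefore $(V_n^{\otimes d})^{\oldstar}\cong V_n^{\otimes d}$ for every $d\geq 0$. Since $\TT_q$ is the Serre subcategory generated by composition factors of these tensor powers, it is preserved by $(-)^{\oldstar}$, so $L_q(\lambda)^{\oldstar}\in\TT_q$. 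Combined with the character equality above, Theorem \ref{T:Grantcharovtheorem}(4) forces $L_q(\lambda)^{\oldstar}\cong L_q(\lambda)$, which is the desired conclusion.

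The main obstacle is keeping the sign conventions of the super setting straight when applying the formula $(x\cdot f)(v)=(-1)^{|x||f|}f(S(\tau(x))v)$, in particular when verifying $V_n^{\oldstar}\cong V_n$ on basis vectors; once this bookkeeping is done the argument is essentially formal, since all the substantive work has already been compiled into Theorem \ref{T:Grantcharovtheorem}.
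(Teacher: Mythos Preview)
Your proof is correct and follows the same strategy as the paper: both deduce the result from the equality of characters together with Theorem~\ref{T:Grantcharovtheorem}. The paper's argument is a single sentence that asserts the character equality and invokes complete reducibility; you supply the details the paper omits, including the computation $S(\tau(K_i))=K_i$ and the verification that $M^{\oldstar}$ remains in $\TT_q$ via the monoidal property \cref{E:TauSDelta} and $V_n^{\oldstar}\cong V_n$.
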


\subsection{Schur-Weyl-Sergeev-Olshanski Duality}\label{SS:Olshanskiduality}
If $A$ and $B$ are split semisimple superalgebras and $S$ and $T$ are simple $A$- and $B$-supermodules, respectively, then their outer tensor product, $S \boxtimes T$, is the direct sum of at most two simple $A \otimes B$-supermodules and all summands are isomorphic.  As a matter of notation we write $S \star T$ for the simple $A \otimes B$-supermodule which appears as a direct summand in $S \otimes T$.

For each $k \geq 1$  Olshanski introduced the \emph{Hecke-Clifford superalgebra} $\HC_{k}(q)$ and showed it satisfies a double centralizer theorem with respect to the $U_{q}(\fq_{n})$-supermodule $V_{n}^{\otimes k}$ \cite{Olshanski}.  By definition it is the associative $\K$-superalgebra generated by the elements $T_{1}, \dotsc , T_{k-1}, c_{1}, \dotsc , c_{k}$ subject to the relations listed in \cref{L:srelations}.  The $\Z_{2}$-grading is given by declaring $T_{1}, \dotsc , T_{k-1}$ to be even and $c_{1}, \dotsc , c_{k}$ to be odd.  The following theorem summarizes the results we will need.

\begin{theorem}\label{T:OlshanskiDuality}  Let $k \geq 1$ be fixed.
	
	\begin{enumerate}
		\item  There is a surjective superalgebra homomorphism 
		\[
		\psi: \HC_{k}(q) \to \End_{U_{q}(\fq_{n})}\left(V_{n}^{\otimes k} \right).
		\]  This map is an isomorphism if $n \gg k$.
		\item  The superalgebra $\HC_{k}(q)$ is semisimple.  In fact, since the base field is $\K$,  $\HC_{k}(q)$ is split semisimple.
		\item  As a $U_{q}(\fq_{n}) \otimes \HC_{k}(q)$-supermodule,
		
\begin{equation}\label{E:Olshanskidecomposition}
		V_{n}^{\otimes k} \cong \bigoplus_{\lambda \in \Lambda^{+}_{n,k}}  L_{q, n}(\lambda) \star D^{\lambda}, 
\end{equation}
where $\Lambda^{+}_{n,k}$ is the set of all strict partitions of $k$ with not more than $n$ parts, and where 
\[
\left\{D^{\lambda} \mid \lambda \in \Lambda^{+}_{n,k} \right\}
\]   is the set of simple $\HC_{k}(q)$-supermodules appearing in $V_{n}^{\otimes k}$. 
	\end{enumerate}
	
\end{theorem}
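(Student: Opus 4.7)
The plan is to follow Olshanski's original argument, leveraging \cref{T:Grantcharovtheorem} to avoid reproving complete reducibility. First I would define the action of $\HC_k(q)$ on $V_n^{\otimes k}$ explicitly on generators: the odd generator $c_i$ acts on the $i$th tensor factor through the operator $\bar{K}_1$ (viewed as a super-Clifford element on $V_n$), while the even generator $T_i$ acts on the $i$th and $(i+1)$st factors through a quantum $R$-matrix-like operator built from the natural supermodule structure (this is the quantum analog of Sergeev's original $T_i$, and one can write it down in the basis $\{v_i\mid i\in I_{n|n}\}$ using eigenvalues $\pm q^{\pm 1}$ on the various weight summands). A direct check on generators verifies the defining relations of $\HC_k(q)$ listed in \cref{L:srelations}, and that the action supercommutes with the action of each generator of $U_q(\fq_n)$ on $V_n^{\otimes k}$ (the latter uses the coproduct formulas in \cref{SS:quantizedenvelopingalgebra} and is a generator-by-generator verification). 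This produces the superalgebra homomorphism $\psi$ of part (a).

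For surjectivity of $\psi$, I would argue via a classical limit combined with \cref{T:Grantcharovtheorem}(c). Specifically, $\End_{U_q(\fq_n)}(V_n^{\otimes k})$ admits a natural filtration/specialization such that its $q=1$ limit is isomorphic to $\End_{U(\fq_n)}(V^{\otimes k})$, since characters and dimensions are preserved; by Sergeev's classical duality, the latter is the image of the classical Sergeev algebra $\HC_k(1)$. Since $\dim_{\K}\HC_k(q) = \dim_{\C}\HC_k(1)$ and $\psi$ has image of the right dimension in the specialization, a standard semicontinuity/Nakayama-style argument upgrades the classical surjectivity to generic surjectivity, which over $\K=\C((q))$ is actual surjectivity. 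The condition $n \gg k$ for injectivity corresponds exactly to the regime where every strict partition of $k$ has at most $n$ nonzero parts, so no irreducible $\HC_k(q)$-summand is killed when passing through $V_n^{\otimes k}$; I would extract this from the decomposition in (c) together with the double centralizer property.

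For parts (b) and (c), I would invoke \cref{T:Grantcharovtheorem}(a): $V_n^{\otimes k}$ belongs to $\TT_q$ and is therefore completely reducible, say
\[
V_n^{\otimes k} \;\cong\; \bigoplus_{\lambda \in \Lambda^+_{n,k}} L_{q,n}(\lambda)^{\oplus m_\lambda}
\]
as a $U_q(\fq_n)$-supermodule, where the set of $\lambda$ appearing is precisely $\Lambda^+_{n,k}$ because, by \cref{T:Grantcharovtheorem}(c)-(d), the composition multiplicities match those in the classical $V^{\otimes k}$, which are known to be indexed by strict partitions of $k$ with at most $n$ parts. The double centralizer theorem for semisimple actions (applied to the surjection $\psi$ onto $\End_{U_q(\fq_n)}(V_n^{\otimes k})$) then implies that $\HC_k(q)/\ker\psi$ is semisimple with simple summands $D^\lambda$ indexed by the $\lambda$ that occur, and gives the claimed $U_q(\fq_n)\otimes\HC_k(q)$-bimodule decomposition of $V_n^{\otimes k}$ in \cref{E:Olshanskidecomposition}. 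To upgrade this to full semisimplicity of $\HC_k(q)$ itself (rather than merely of its image in $\End(V_n^{\otimes k})$), I would take $n$ large enough that $\Lambda^+_{n,k}$ equals the set of all strict partitions of $k$; then $\psi$ is an isomorphism, so $\HC_k(q)$ is itself semisimple, and since this is a statement intrinsic to $\HC_k(q)$ independent of $n$, it holds in general. Splitness over $\K$ follows since $\K$ is a field of characteristic zero containing $q$ and $\sqrt{-1}$, so all simple factors are already split by \cref{T:Grantcharovtheorem}(c) applied to the classical limit.

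The main obstacle will be the verification of the Hecke-Clifford relations for the explicit $T_i$-operator on $V_n\otimes V_n$: writing down a formula that is simultaneously $U_q(\fq_n)$-linear, satisfies the braid relation, satisfies the quadratic relation, and super-anticommutes correctly with the Clifford generators $c_j$ requires careful bookkeeping of signs and the action of $\bar{K}_1$ on weight vectors. Everything else is, in principle, bookkeeping on top of \cref{T:Grantcharovtheorem} and standard double-centralizer machinery.
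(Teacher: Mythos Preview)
Your overall strategy is sound and tracks the ideas underlying the cited literature; the paper itself does not reprove any of this but simply points to \cite{BGJKW}, \cite{JN}, and \cite{ChengWangBook} for (a.), (b.), and (c.) respectively, together with a $q=1$ specialization argument for the labelling in (c.). So your proposal is really a sketch of what those references do, and in that sense it is not a different route.

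There is, however, a genuine gap in your justification of \emph{split} semisimplicity. Containing $q$ and $\sqrt{-1}$ is not enough: over $\C(q)$ the algebra $\HC_k(q)$ is semisimple but \emph{not} split semisimple (the paper notes this explicitly in \cref{SS:rationalfunctions}). What makes $\K=\C((q))$ special is that it contains square roots of the quantum integers $[a]_{q^2}$ for all $a\ge 1$; these square roots are needed to split the type~$Q$ endomorphism algebras of the simple $\HC_k(q)$-supermodules. This is exactly the input from \cite[Theorem~6.7]{JN} and the remark after \cite[Proposition~1.7]{GJKKashK} that the paper invokes. Your appeal to \cref{T:Grantcharovtheorem}(c.) does not supply this: that result concerns classical limits of $U_q(\fq_n)$-supermodules, not the splitting behaviour of the centralizer algebra. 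You should replace your last sentence with the observation that $\C((q))$ contains $\sqrt{[a]_{q^2}}$ (as a formal power series) and cite the explicit construction of the simple $\HC_k(q)$-supermodules over such a field.
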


\begin{proof}  These results are already known so we only point to the relevant places in the literature.  Statement (a.) can be found in \cite[Theorem 3.28]{BGJKW}.  The first part of statement~(b.) holds by \cite[Proposition 2.1]{JN}.  Since $\K$ is known to contain the square roots of $[a]_{q^{2}}$ for all $a \geq 1$ (e.g.\ see the remark after \cite[Proposition 1.7]{GJKKashK}) it follows from \cite[Theorem 6.7]{JN} that $\HC_{k}(q)$ is split semisimple.  That a direct sum of the form given in (c.) holds follows from the theory of double centralizers for superalgebras; see \cite[Proposition 3.5]{ChengWangBook} for details.  That the listed simple supermodules are the ones which occur in this direct sum can be deduced by specializing to $q=1$ and applying \cref{T:Grantcharovtheorem} and the analogous statement in classical Sergeev duality (e.g.\ see \cite[Theorem 3.46]{ChengWangBook}). 
\end{proof}

Let $\SP (k)$ be the set of strict partitions of $k$.  Since $\psi$ is an isomorphism for $n$ sufficiently large it follows from the previous theorem that the simple $\HC_{k}(q)$-supermodules are parameterized by $\SP (k)$.  The super analogue of the Artin-Wedderburn theorem (e.g.\ see \cite[Chapter 3]{ChengWangBook}) implies there is an isomorphism of superalgebras,  
\begin{equation}\label{E:ArtinWedderburn} 
\HC_{k}(q)\cong \bigoplus_{\lambda\in\SP(k)}M(E_{\lambda}),
\end{equation} where $E_{\lambda}$ equals $\K$ or a Clifford algebra on one generator and, if we forget the $\Z_{2}$-grading, $M(E_{\lambda})$ is a matrix ring with entries from $E_{\lambda}$.  For our purposes the important point is each $M(E_{\lambda})$ is a simple superalgebra and so is generated as an ideal by any nonzero homogeneous element.

 Elements of  $M(E_{\lambda})$ are distinguished by the fact they act nontrivially on an $\HC_{k}(q)$-supermodule if and only if the $D^{\lambda}$-isotypic component of the supermodule is nontrivial.  In particular, consider $V_{n}^{\otimes k}$.  In this case it follows from \cref{E:Olshanskidecomposition} that if $L_{q, n}(\lambda)$ appears in $V_{n}^{\otimes k}$, then a nonzero element of $M(E_{\lambda})$ will act nontrivially on $V_{n}^{\otimes k}$ and the image under the action of this element will be a direct sum of $L_{q, n}(\lambda)$'s as a $U_{q}(\fq_{n})$-module.  On the other hand, given $a \in \HC_{k}(q)$, if $a = \sum_{\gamma \in \SP (k)} x_{\gamma}$ with $x_{\gamma}$ an element of $M(E_{\gamma})$, then the image of $a$ on $V_{n}^{\otimes k}$ will contain a nontrivial $L_{q, n}(\gamma)$-isotypic component if and only if $x_{\gamma}$ is nonzero.

For each $\lambda \in \SP (k)$, fix $e_{\lambda} \in \HC_{k}(q)$ to be an even nonzero element of $M(E_{\lambda})$.  Such elements are constructed in \cite{JN}.  Since their precise form is not needed here we do not describe them explicitly.   As the $M(E_{\lambda})$ are simple superalgebras, the two-sided ideal generated by any nonzero homogeneous element will contain and be generated by $e_{\lambda}$.    Combining this discussion with \cref{T:OlshanskiDuality} yields the following result.  Given a strict partition $\lambda$ let $\ell(\lambda)$ equal the number of nonzero parts in $\lambda$.

\begin{proposition}\label{P:OlshanskiKernel}  Let $\psi: \HC_{k}(q) \to \End_{\U_{q}(\fq_{n})}\left(V_{n}^{\otimes k} \right)$ be the superalgebra homomorphism given in \cref{T:OlshanskiDuality}.  The kernel of $\psi$ is generated by the set 
\[
\left\{e_{\lambda} \mid \ell (\lambda) > n \right\}.
\]  In particular, $\psi$ is injective if and only if $k < (n+1)(n+2)/2$. 
\end{proposition}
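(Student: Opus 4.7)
The plan is to combine the Artin--Wedderburn block decomposition \cref{E:ArtinWedderburn} with the $U_q(\fq_n)\otimes\HC_k(q)$-isotypic decomposition \cref{E:Olshanskidecomposition} of $V_n^{\otimes k}$ from \cref{T:OlshanskiDuality}(c). Under $\HC_k(q)\cong\bigoplus_{\lambda\in\SP(k)}M(E_\lambda)$, a given block $M(E_\mu)$ acts as zero on any summand $L_{q,n}(\lambda)\star D^\lambda$ with $\lambda\neq\mu$ (since $D^\lambda$ is a simple supermodule for the complementary block), and acts faithfully on $L_{q,n}(\mu)\star D^\mu$ whenever that summand is present, as already noted in the paragraph preceding the proposition. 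Hence $M(E_\mu)\subseteq\ker\psi$ if and only if $\mu\notin\Lambda^+_{n,k}$, which by definition of $\Lambda^+_{n,k}$ is the condition $\ell(\mu)>n$. Therefore
\[
\ker\psi=\bigoplus_{\substack{\lambda\in\SP(k)\\\ell(\lambda)>n}}M(E_\lambda).
\]
Since each $M(E_\lambda)$ is a simple superalgebra and $e_\lambda$ is a nonzero homogeneous element of $M(E_\lambda)$, the two-sided ideal it generates in $\HC_k(q)$ equals $M(E_\lambda)$. This gives the first claim that $\ker\psi$ is generated by $\{e_\lambda\mid\ell(\lambda)>n\}$.

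For the ``in particular'' assertion, injectivity of $\psi$ is equivalent to the nonexistence of a strict partition of $k$ with more than $n$ nonzero parts. The smallest strict partition with $n+1$ nonzero parts is $(n+1,n,\ldots,2,1)$, of total weight $(n+1)(n+2)/2$. For any $k\geq(n+1)(n+2)/2$ one exhibits a strict partition of $k$ with $n+1$ parts by, for example, adding the excess $k-(n+1)(n+2)/2$ to the first part of $(n+1,n,\ldots,1)$; the resulting sequence remains strictly decreasing. Conversely, for $k<(n+1)(n+2)/2$ no such partition exists, so every strict partition of $k$ fits inside $n$ parts and $\ker\psi=0$.

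The main obstacle here is more notational than mathematical: the bulk of the argument is a direct consequence of the block structure of $\HC_k(q)$ already laid out in the preamble to the proposition, and the partition count is elementary. The one point to state carefully is that the action of the block $M(E_\lambda)$ on the isotypic component $L_{q,n}(\lambda)\star D^\lambda$ is faithful, which is immediate from $M(E_\lambda)$ being simple as a superalgebra and acting on $D^\lambda$ through its defining simple representation.
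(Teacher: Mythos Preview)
Your proof is correct and follows essentially the same approach as the paper: both use the Artin--Wedderburn decomposition \cref{E:ArtinWedderburn} together with the isotypic decomposition \cref{E:Olshanskidecomposition} to identify $\ker\psi$ as the sum of the blocks $M(E_\lambda)$ with $\ell(\lambda)>n$, and then invoke simplicity of each block to conclude that the $e_\lambda$ generate. Your treatment of the ``in particular'' clause via the staircase partition $(n+1,n,\ldots,1)$ is also the intended argument.
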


\subsection{Quantum symmetric powers} \label{SS:Symmetric}  
For any mathematical statement $P$ we adopt the convention $\delta_{P}=1$ if the statement $P$ is true and $\delta_{P}=0$ otherwise.  For example $\delta_{i=j}=\delta_{i,j}$ is the usual Kronecker delta function.    Given $a,b \in I_{n|n}$, let 
\begin{equation}\label{E:phidef}
\varphi(a,b)=(-1)^{\p{b}}\delta_{a=\pm b}.
\end{equation}

Let $T \in \End_{U_q(\q_n)}(V_n^{\otimes 2})$ denote the image of $T_1 \in \HC_2(q)$ under the superalgebra homomorphism $\psi$ from \cref{T:OlshanskiDuality}.  A formula for this map can be found in \cite[Theorem 3.16]{BGJKW}.  Explicitly, for any $a,b \in I_{n|n}$ the action of $T$ is given by
\begin{equation} \label{E:Braiding11}
T(v_a \otimes v_b)= q^{\varphi(a,b)} (-1)^{\p{a}\p{b}}v_b  \otimes v_a + \delta_{a < b} \tq v_a \otimes v_b + \delta_{-a < b} \tq (-1)^{\p{b} } v_{-a} \otimes v_{-b}.
\end{equation}

Let $\mathcal{T}(V_n) = \bigoplus_{d \geq 0} V_{n}^{\otimes d}$ denote the tensor superalgebra for $V_n$.  Let $I$ denote the two-sided ideal of $\mathcal{T}(V_n)$ generated by all elements of the form $qv_a \otimes v_b - T(v_a \otimes v_b)$. We call  $S_q(V_n)=\mathcal{T}(V_n)/I$ the \emph{quantum symmetric superalgebra}.  Since $\mathcal{T}(V_{n})$ is $\Z$-graded by total degree and $I$ is homogenous, it follows the quantum symmetric superalgebra admits a $\Z$-grading, $S_{q}(V_{n}) = \bigoplus_{d \geq 0} S_{q}^{d}(V_{n})$.  For $d \geq 0$ we call $S_q^d(V_n)$ the \emph{$d$-th quantum symmetric power} of $V_n$.  Since the action of $U_{q}(\fq_{n})$ respects the $\Z$-grading, it follows $S_{q}^{d}(V_{n})$ is a $U_{q}(\fq_{n})$-supermodule for all $d \geq  0$. 

For brevity in what follows we write $v_{a_1} \cdots v_{a_d}$  for the image of $v_{a_1} \otimes \cdots \otimes v_{a_d}$ under the canonical map $\mathcal{T}(V_n) \to S_q(V_n)$. 

\begin{lemma}\label{L:AlternatePresentationofSq} As an associative superalgebra ${S}_q(V_n)$ has a presentation given by the generators $\{v_a \mid a \in I_{n|n}\}$ with parity given by $\p{v_{a}}=\p{a}$ and subject to the relation
\[
q v_a v_b = q^{\varphi(a,b)} (-1)^{\p{a}\p{b}}v_b v_a + \delta_{a < b} \tq v_a v_b + \delta_{-a < b} \tq (-1)^{\p{b} } v_{-a}  v_{-b}.
\] 

Moreover, the set 
\[
\left\{ \prod_{a \in I_{n|n}}  v_{a}^{d_{a}} \mid d_{a} \geq 0 \text{ if } \p{v_{a}}=\0 \text{ and } d_{a} \in \{0,1 \} \text{ if } \p{v_{a}} = \1 \right\}
\] is a spanning set for $S_{q}(V_{n})$, where the product is taken in the order on $I_{n|n}$ fixed in \cref{SS:NotationsandConventions}. 
\end{lemma}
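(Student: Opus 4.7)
The first assertion is essentially a rewording of the definition.  By construction $S_q(V_n) = \mathcal{T}(V_n)/I$, where $I$ is generated by the elements $qv_a \otimes v_b - T(v_a \otimes v_b)$ for $a, b \in I_{n|n}$; substituting the explicit formula for $T$ from \cref{E:Braiding11} converts each such generator into precisely the stated relation after passing to the quotient.  The universal property of the tensor algebra then shows no additional relations are imposed, and the asserted presentation follows.

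For the spanning set, the plan is a straightening argument, proceeding for each fixed degree $k$ by strong induction on the lexicographic order on tuples $(a_1, \dotsc, a_k) \in (I_{n|n})^k$, where $I_{n|n}$ carries its ordering as an interval of $\Z$.  Two consequences of the defining relation drive the induction.  First, specializing $a = b$ with $\p{a} = \1$ collapses the relation to $(q + q^{-1}) v_a^2 = 0$, and since $q + q^{-1}$ is a unit in $\K$ this forces $v_a^2 = 0$, eliminating any monomial with a repeated odd index.  Second, for $a > b$ one solves the relation for $v_a v_b$ to obtain
\[
v_a v_b = q^{\varphi(a,b)-1} (-1)^{\p{a}\p{b}} v_b v_a + q^{-1} \tq\, \delta_{-a < b} (-1)^{\p{b}} v_{-a} v_{-b},
\]
rewriting an out-of-order adjacent pair $(a,b)$ as a linear combination of the substituted tuples $(b,a)$ and (conditionally) $(-a,-b)$.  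Applied to the leftmost adjacent inversion in a given monomial, together with the inductive hypothesis, this identity would express the original monomial as a linear combination of ordered monomials.

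The main obstacle is verifying that both substituted tuples are strictly lex-smaller than $(a_1, \dotsc, a_k)$, so that the induction is well-founded.  The swap $(a,b) \mapsto (b,a)$ is immediate since $b < a$.  For the correction term, the coefficient $\delta_{-a < b}$ is nonzero only when $-a < b$, and this combined with $a > b$ forces $a > 0$: otherwise $a < 0$ would give $-a > 0 > b$, contradicting $-a < b$.  Consequently $-a < 0 < a$, so the substituted tuple $(-a,-b)$ is strictly smaller at position $i$.  With this step secured, the lex order on $(I_{n|n})^k$ is a well-order on a finite set, so the straightening halts after finitely many reductions and outputs a linear combination of ordered monomials, with repetitions of odd indices already killed by $v_a^2 = 0$.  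This is exactly the desired spanning statement.
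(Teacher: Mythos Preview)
The proposal is correct and follows essentially the same approach as the paper's (very terse) proof. The paper dismisses the spanning claim as ``a straightforward argument using the given relation along with induction on $\Z$-degree''; your lex-order straightening within each fixed degree is exactly the argument needed to make this precise, and your verification that the correction term $(-a,-b)$ is lex-smaller (by showing $a>0$ whenever both $a>b$ and $-a<b$ hold) fills in the one nontrivial step the paper leaves implicit.
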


\begin{proof}  In light of the explicit description of the action of $T$ given in \cref{E:Braiding11} the description of $S_{q}(V_{n})$ by generators and relations is nothing but a reformulation of the definition of $S_{q}(V_{n})$.  A straightforward argument using the given relation along with induction on $\Z$-degree shows the given monomials span $S_{q}(V_{n})$. 
\end{proof}

\begin{remark}\label{R:Sqproperties}  In \cref{P:basicpropertiesofAq}(e.)  it will be shown the above set is a basis for $S_{q}(V_{n})$. As a consequence $S_{q}(V_{n})$ is a flat deformation of the supersymmetric algebra $S(V_{n})$ and  $S^{d}_{q}(V_{n}) \cong L_{q}((d))$ as $U_{q}(\fq_{n})$-supermodules where $(d)$ is the partition of $d$ with one part.
\end{remark}

\subsection{The superalgebra \texorpdfstring{$\AA_{q}$}{Aq} and Quantum Howe Duality of Type Q}\label{SS:Aqdef}  


\begin{definition}\label{D:Aqdefinition}  Fix $m,n \geq 1$. Let $\AA_{q}=\AA_{q}(V_{m} \star V_{n})$ be the associative $\K$-superalgebra generated by the set
\[
\left\{t_{a,b} \mid (a,b) \in I_{m|m} \times I_{n|n} \right\}
\] with the parity of $t_{a,b}$ given by $\p{t_{a,b}}= \p{a}+\p{b} \in \Z_{2}$.   These generators are subject to the following relations for all $a,b \in I_{n|n}$:
\begin{equation}\label{E:AqRel1}
t_{a,b} =t_{-a,-b}, 
\end{equation}
\begin{align}\label{E:AqRel2}
		q^{\varphi(a,c)}&(-1)^{(\p{a}+\p{b})(\p{c}+\p{d})}t_{a,b}t_{c,d} + \delta_{c < a}\tq (-1)^{\p{c}+(\p{b}+\p{c})(\p{c}+\p{d})} t_{c,b}t_{a,d} + \delta_{c < -a}\tq (-1)^{\p{c}+(\p{b}+\p{c}+\1)(\p{c}+\p{d})} t_{-c, b}t_{-a, d} \notag \\ 
		&= q^{\varphi(b,d)}t_{c,d}t_{a,b} + \delta_{b<d}\tq (-1)^{\p{b}+(\p{b}+\p{d})(\p{c}+\p{b})}t_{c,b}t_{a,d} - \delta_{-b < d}\tq (-1)^{\p{b}+(\p{b}+\p{c}+\1)(\p{b}+\p{d}+\1)}t_{c,-b}t_{a,-d}.
\end{align}
\end{definition}

\begin{remark}\label{R:AlternateAq}
For calculations it is helpful to observe relation \cref{E:AqRel2} is equivalent to the following list of relations.   For $a,b,c,d > 0$ and $a \leq c$ let
\begin{align}
q^{\delta_{a,c}}t_{a,b}t_{c,d} &= q^{\delta_{b,d}}t_{c,d}t_{a,b} + \delta_{b<d}\tq t_{c,b}t_{a,d} + \tq t_{c,-b}t_{a,-d}, \label{E:AltAqRel2a} \\
q^{\delta_{a,c}}t_{a,b}t_{c,-d} &= q^{-\delta_{b,d}}t_{c,-d}t_{a,b}  - \delta_{d < b}\tq t_{c,-b}t_{a,d}, \label{E:AltAqRel2b}\\
q^{\delta_{a,c}}t_{a,-b}t_{c,d} &= q^{\delta_{b,d}}t_{c,d}t_{a,-b} + \tq t_{c,-b}t_{a,d} + \delta_{b < d}\tq t_{c,b}t_{a,-d}, \label{E:AltAqRel2c}\\
q^{\delta_{a,c}}t_{a,-b}t_{c,-d} &= -q^{-\delta_{b,d}}t_{c,-d}t_{a,-b} + \delta_{d<b}\tq t_{c,-b}t_{a,-d}.\label{E:AltAqRel2d}
\end{align}
\end{remark}
The superalgebra $\AA_{q}$ appears as a deformation of the polynomial functions on $\fq (n)$ in \cite[Definition 5.3]{BGJKW}.  It is isomorphic to the algebra $\AA_q(\q_m,\q_n)$ which is used by \cite{ChangWang} to establish a Howe duality between $U_q(\q_m)$ and $U_q(\q_n)$, where the isomorphism is established below.  We summarize their results here.
	
Set $s = \max(m, n)$ and $r = \min(m,n)$.  Define elements $t_{a,b}' \in U_q(\q_s)^*$ via $x v_b = \sum_{a \in I_{s|s}} t_{a,b}'(x) v_a$ for all $x \in U_q(\q_s)$ and $b \in I_{s|s}$.  Then, \cite{ChangWang} defines $\AA_q(\q_m,\q_n)$ to be the $\Z$-graded subsuperalgebra of $U_q(\q_s)^*$ generated by $\{t'_{a,b} \mid (a,b) \in I_{m|m} \times I_{n|n}\}$, where the element $t'_{a,b}$ is declared to have $\Z$-degree equal to 1.  The authors deduce that $\AA_q(\q_m,\q_n)$ is a $U_q(\q_m) \otimes U_q(\q_n)$-supermodule, where the action of an element $x \otimes y$ on $t'_{a,b}$ is given by:
\begin{equation} \label{E:CWAction}
\left(x \otimes y \cdot t'_{a,b} \right)(z) = (-1)^{\p{x}(\p{a} + \p{b}) + \p{y}(\p{a} + \p{b} + \p{z})} t'_{a,b}\left(S(x) z y \right),
\end{equation}
where $x \in U_q(\q_m)$, $y \in U_q(\q_n)$, and $z \in U_q(\q_s)$.  In the product $S(x) z y$, view $x$ and $y$ as elements of $U_q(\q_s)$ via the embeddings $U_q(\q_m), U_q(\q_n) \hookrightarrow U_q(\q_s)$ given by sending generators to elements of the same name.

\begin{theorem} 
\cite[Theorem 3.5]{ChangWang} There is a degree preserving isomorphism of $\Z$-graded $U_q(\q_m) \otimes U_q(\q_n)$-supermodules:
\begin{equation} \label{E:HoweDualityDecomp} 
	\AA_q(\q_m, \q_n) \cong \bigoplus_{\lambda \in \Lambda_r^+} L_{q, m}(\lambda)^* \star L_{q, n}(\lambda),
	\end{equation}
	where the $\Z$-degree $d$ subspace of the supermodule on the right is $\bigoplus_{\lambda \in \Lambda_r^+, |\lambda| = d}  L_{q, m}(\lambda)^* \star L_{q, n}(\lambda)$.  
\end{theorem}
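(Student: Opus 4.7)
The plan is to decompose both sides of \cref{E:HoweDualityDecomp} degree by degree in the natural $\Z$-grading of $\AA_q(\fq_m,\fq_n)$, using Sergeev-Olshanski duality for $\fq_m$ and $\fq_n$ simultaneously.

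First, since each generator $t'_{a,b}$ is by construction a matrix coefficient of the $U_q(\fq_s)$-action on $V_s$, and the product in $U_q(\fq_s)^{*}$ is dual to the $d$-fold coproduct, I would identify the degree-$d$ subspace $(\AA_q(\fq_m,\fq_n))_d$ with $\Hom_{\HC_d(q)}(V_m^{\otimes d},V_n^{\otimes d})$ as a $U_q(\fq_m)\otimes U_q(\fq_n)$-supermodule. The key point is that the defining relations \cref{E:AqRel1} and \cref{E:AqRel2}, rewritten in the form \crefrange{E:AltAqRel2a}{E:AltAqRel2d}, should encode precisely the commutation of the tensor generators with the Hecke-Clifford generators $T_i$ and $c_i$ acting via \cref{E:Braiding11}; since by \cref{T:OlshanskiDuality}(a) the $\HC_d(q)$- and $U_q$-actions commute on each tensor factor, the quotient by these relations is exactly the space of $\HC_d(q)$-equivariant maps.

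With this identification in place, a double-centralizer computation finishes the argument. Decomposing both $V_m^{\otimes d}$ and $V_n^{\otimes d}$ via \cref{T:OlshanskiDuality}(c) and invoking the super Artin-Wedderburn description of $\HC_d(q)$ in \cref{E:ArtinWedderburn} yields
\[
\Hom_{\HC_d(q)}(V_m^{\otimes d},V_n^{\otimes d})\cong\bigoplus_{\lambda} L_{q,m}(\lambda)^{*}\star L_{q,n}(\lambda),
\]
where $\lambda$ ranges over strict partitions of $d$ of length at most $r=\min(m,n)$, the length constraint being exactly the condition that $L_{q,m}(\lambda)$ and $L_{q,n}(\lambda)$ both appear in their respective Olshanski decompositions. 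In each summand, the $\star$ on the right emerges from the one-dimensional or Clifford-on-one-generator algebra $\End_{\HC_d(q)}(D^\lambda)$, which implements precisely the half-block pairing used to define the $\star$ product. Summing over $d\geq 0$ produces \cref{E:HoweDualityDecomp} together with the stated matching of the $\Z$-grading with the total weight $|\lambda|$.

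The main obstacle will be the first step, namely the explicit matching of \crefrange{E:AltAqRel2a}{E:AltAqRel2d} with $\HC_d(q)$-equivariance: the type $Q$ setting is made subtle by the Clifford-type $\pm b$ corrections mixing $v_b$ and $v_{-b}$, which have no analog in type $A$ and which must be tracked through both the dual action on $(V_m^{\otimes d})^{*}$ and the direct action on $V_n^{\otimes d}$. A route that bypasses this relation-by-relation verification is to argue at the level of characters: a Diamond Lemma argument on the quadratic relations of \cref{R:AlternateAq} would show $\AA_q(\fq_m,\fq_n)$ is a flat $\tq$-deformation of its classical $q=1$ analog and hence has the same graded character; then \cref{T:Grantcharovtheorem}(a,c,d), combined with the known classical type $Q$ Howe duality, determines the $U_q(\fq_m)\otimes U_q(\fq_n)$-supermodule structure uniquely.
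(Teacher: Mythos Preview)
The paper does not prove this theorem at all: it is quoted verbatim from \cite[Theorem 3.5]{ChangWang} and used as an input to the subsequent arguments (in particular to the dimension count in the proof of \cref{P:basicpropertiesofAq} and to \cref{P:decomposition}). So there is no proof in the paper to compare your proposal against.

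That said, your sketch is a plausible route to an independent proof, and it is close in spirit to how Chang--Wang actually argue. A word of caution on the first approach: the identification of $(\AA_q(\fq_m,\fq_n))_d$ with $\Hom_{\HC_d(q)}(V_m^{\otimes d},V_n^{\otimes d})$ is not automatic from the presentation \crefrange{E:AltAqRel2a}{E:AltAqRel2d}, because those relations describe the abstract algebra $\AA_q$, whereas $\AA_q(\fq_m,\fq_n)$ is defined concretely as a subalgebra of $U_q(\fq_s)^{*}$ generated by matrix coefficients. You would first need to know that the surjection $\AA_q\to\AA_q(\fq_m,\fq_n)$ is an isomorphism, which in the paper is only established \emph{after} this theorem, in \cref{P:basicpropertiesofAq}(c), using the dimension information coming from \cref{E:HoweDualityDecomp} itself. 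So your first route, as written, is circular relative to the logical order in this paper. Your second route via flatness and classical Howe duality avoids this circularity and is essentially the strategy behind the original Chang--Wang argument.
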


The following proposition summarizes what we need to know about $\AA_{q}$ and establishes the isomorphism $\AA_q \to \AA_q(\q_m, \q_n)$.    Define a lexicographic order on $I_{m|m} \times I_{n|n}$ by $(a,b) \leq (c,d)$ if $a < c$ or if $a=c$ and $b \leq d$. 


\begin{proposition}\label{P:basicpropertiesofAq}  The following statements hold true for $\AA_{q}$.
	\begin{enumerate}
		\item The superalgebra $\AA_{q}$ is a $\Z$-graded superalgebra where the generators are in degree one and the direct summands are finite-dimensional $\K$-vector spaces.
		\item If $\p{t_{a,b}}=\1$, then $t_{a,b}^{2}=0$.
		\item  As a superalgebra $\AA_{q}$ is isomorphic to $\AA_{q}(\fq_{m}, \fq_{n})$.
		\item  The set 
		\begin{equation}\label{E:Aqbasis}
		\B=\left\{ \prod_{(a,b) \in I_{m|m}\times I_{n|n}} t_{a,b}^{d_{a,b}} \mid a >0, d_{a,b} \in \Z_{\geq 0} \text{ if $\p{t_{a,b}}=\0$, and } d_{a,b} \in \{0,1 \} \text{ if $\p{t_{a,b}}=\1$}   \right\},
		\end{equation}
		where the product is taken with respect to the lexicographic order, is a $\K$-basis for $\AA_{q}$.
		\item  For each $k \in \left\{1, \dotsc , m \right\}$ there is an injective superalgebra homomorphism $\rho_k : S_q(V_n) \to \AA_q$ given by $v_b \mapsto t_{k,b}$.  Moreover the set given in \cref{L:AlternatePresentationofSq} forms a homogenous basis for $S_{q}(V_{n})$.
		\item  There is a  parity preserving vector space isomorphism $\bar{\rho} : S_q(V_n)^{\otimes m} \to \AA_q$ given by $x_1 \otimes \cdots \otimes x_m \mapsto \rho_1(x_1) \cdots \rho_m(x_m)$.
			\end{enumerate}
	
\end{proposition}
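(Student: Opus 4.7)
The plan is to establish parts (a) through (f) in order, using the Chang-Wang realization of $\AA_q(\q_m,\q_n)$ inside $U_q(\q_s)^*$ as the tool for pinning down the size of $\AA_q$. Part (a) is immediate: both defining relations \cref{E:AqRel1} and \cref{E:AqRel2} are homogeneous of degree two in the generators, so the tensor-algebra $\Z$-grading descends to $\AA_q$, and each graded piece is finite-dimensional because it is spanned by the finitely many monomials in the $t_{a,b}$ of a given length. Part (b) is a direct computation: setting $(c,d) = (a,b)$ in \cref{E:AqRel2} under the assumption $\p{t_{a,b}} = \1$, the $\delta_{a<a}$ terms vanish and the remaining $\delta$ terms collapse, yielding an identity of the form $(q + q^{-1})t_{a,b}^2 = 0$ and hence $t_{a,b}^2 = 0$.

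I would next prove that $\B$ spans $\AA_q$. Using \cref{E:AqRel1} every monomial can be rewritten in terms of generators $t_{a,b}$ with $a > 0$. Then \cref{E:AqRel2} can be read as a rewriting rule: any out-of-order product $t_{c,d} t_{a,b}$ with $(c,d) > (a,b)$ in the lexicographic order is expressible as a $\K$-linear combination of $t_{a,b} t_{c,d}$ plus strictly earlier-indexed products (the $\delta$-corrections), with squares of odd generators killed by (b). A Bergman-style diamond argument, ordering monomials by length and then lex, shows every element reduces to a $\K$-linear combination of elements of $\B$.

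The heart of the proof is (c) and (d), which I would handle together via a dimension comparison. Define $\Phi : \AA_q \to \AA_q(\q_m,\q_n)$ by $t_{a,b} \mapsto t'_{a,b}$; well-definedness requires checking that the $t'_{a,b}$ satisfy relations \cref{E:AqRel1} and \cref{E:AqRel2}, which unwinds to a computation with the coproduct of $U_q(\q_s)$ and the explicit $R$-matrix action \cref{E:Braiding11} on $V_s \otimes V_s$ (this is an $FRT$-style verification, and I expect it to be the main obstacle: each of the four sign-laden cases \cref{E:AltAqRel2a}--\cref{E:AltAqRel2d} must be cross-checked against the formula \cref{E:CWAction}). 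Surjectivity of $\Phi$ is built into the definition of $\AA_q(\q_m,\q_n)$. For injectivity I compare Hilbert series: the spanning set $\B$ gives $\dim_\K \AA_{q,d} \leq |\B_d|$, while the Chang-Wang decomposition \cref{E:HoweDualityDecomp}, combined with the $q = 1$ dimension preservation from \cref{T:Grantcharovtheorem}(c), evaluates $\dim_\K \AA_q(\q_m,\q_n)_d$ as $\sum_{\lambda} \dim L_m(\lambda)\dim L_n(\lambda)$ over strict partitions $\lambda$ of $d$ with $\ell(\lambda) \leq r$, and classical Sergeev theory identifies this with the degree $d$ piece of the supersymmetric algebra on the $q=1$ limit of $\AA_q$, whose dimension is exactly $|\B_d|$. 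The resulting equality forces $\Phi$ to be an isomorphism and $\B$ to be a basis.

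Parts (e) and (f) then follow formally from (d). For (e), defining $\rho_k$ on generators by $v_b \mapsto t_{k,b}$, a check of \cref{E:AqRel2} specialized to $a = c = k$ (a fixed positive index) recovers precisely the defining relation of $S_q(V_n)$ given in \cref{L:AlternatePresentationofSq}, so $\rho_k$ is a superalgebra homomorphism; its image is spanned by monomials in the $k$-th row of generators, and these appear as a subset of $\B$, so $\rho_k$ is injective and the spanning set from \cref{L:AlternatePresentationofSq} is forced to be a basis of $S_q(V_n)$. Part (f) is then immediate: the map $\bar\rho$ sends the tensor product of monomial bases of $S_q(V_n)^{\otimes m}$ (one factor per row $k = 1, \dots, m$) bijectively onto $\B$ with matching parity, so it is a parity-preserving vector space isomorphism.
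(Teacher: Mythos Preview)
Your approach is essentially the same as the paper's: spanning by an induction/straightening argument, then a dimension count via the Howe decomposition \cref{E:HoweDualityDecomp} and $q=1$ specialization to upgrade $\B$ to a basis and force the comparison map to be an isomorphism; parts (e) and (f) then follow formally.

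One point to flag: the paper does not verify well-definedness of the comparison map by an FRT computation but instead cites \cite[Proposition 3.3]{ChangWang}, and the map used there is $t_{a,b} \mapsto (-q)^{\operatorname{abs}(a)}(-1)^{\p{a}+\p{b}} t'_{a,b}$, not the unscaled $t_{a,b} \mapsto t'_{a,b}$ you wrote down. The scalar twist is what aligns Chang--Wang's matrix-coefficient relations with the presentation \cref{E:AqRel2}, so your $\Phi$ as written will likely fail the relation check; you would discover the need for this rescaling in the course of the FRT verification you propose. Apart from that adjustment, your plan matches the paper's proof.
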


\begin{proof}  Statement (a.) is immediate from the fact the defining relations are homogeneous when the generators are declared to have integer degree one. Statement (b.) follows from a straightforward calculation using \cref{E:AltAqRel2d}.    
	
We prove (c.) and (d.) simultaneously.  First, an argument by induction on the $\Z$-degree using the defining relations along with (b.) shows the set given in \cref{E:Aqbasis} spans $\AA _{q}$.  By \cite[Proposition 3.3]{ChangWang} there is a surjective map of $\Z$-graded superalgebras $\eta:\AA_{q} \to \AA_{q}(\fq_{m}, \fq_{n})$ given by $t_{a,b}\mapsto (-q)^{\operatorname{abs}(a)}(-1)^{\p{a}+\p{b}}t'_{a,b}$, where $\operatorname{abs}(a) = a$ when $a > 0$, and $-a$ when $a < 0$.
From \cref{E:HoweDualityDecomp}, the $U_q(\q_n)$-supermodule $\AA_q(\q_m, \q_n)_d$ consisting of all vectors of $\AA_q(\q_m,\q_n)$ with $\Z$-degree $d$ is the direct sum of degree $d$ polynomial representations of $U_q(\q_n)$.    From \cref{T:Grantcharovtheorem} the dimension $\AA_q(\q_m,\q_n)_{d}$ is the same as its $q = 1$ analogue (c.f.\   \cite[Theorem 3.1]{ChengWangDuality}). That is, $\dim_{\K} \AA_q(\q_m,\q_n)_d = \dim_{\C} S^{d}\left(V_{m}^* \star V_{n} \right)$ which, in turn, equals the number of elements in $\B$ which have $\Z$-degree $d$.  Since the map $\eta$ is surjective and preserves $\Z$-degree, this implies the elements of $\Z$-degree $d$ in $\B$ are linearly independent, so $\B$ is a basis for $\AA_q$.  This proves (d.) and also shows $\eta$ is a superalgebra isomorphism, proving (c.).  
	
To verify (e.), compare the relations from \cref{L:AlternatePresentationofSq} and \cref{E:AqRel2} to verify that $\rho_k$ is a well-defined superalgebra homomorphism.  Since the distinct monomials in the spanning set given in \cref{L:AlternatePresentationofSq} map to distinct elements of the basis $\B$, which are linearly independent, it follows that $\rho_k$ is injective and that the monomials in the set \cref{L:AlternatePresentationofSq} are actually a basis of $S_q(V_n)$ as claimed in \cref{R:Sqproperties}.

Finally, (f.) follows from the observation the linear map $\bar{\rho}$ defines a bijection between a basis for $S_{q}(V_{n})^{\otimes m}$ and $\B$. 
\end{proof}

Transporting the action of $U_q(\q_m) \otimes U_q(\q_n)$ on $\AA_q(\q_m,\q_n)$ from \cref{E:CWAction} through the isomorphism given in \cref{P:basicpropertiesofAq}(c.) gives a $U_{q}(\fq_{m}) \otimes U_{q}(\fq_{n})$-supermodule structure to $\AA_{q}$.  Twisting this action via the algebra automorphism $\tau \otimes 1$ yields a new $U_{q}(\fq_{m}) \otimes U_{q}(\fq_{n})$-supermodule structure on $\AA_{q}.$ \textit{From here forward, we use this $\tau$-twisted structure}.  

Combining \cite[Theorem 4.2]{ChangWang} with \cref{P:dualtwist} yields the following result.
\begin{theorem}\label{P:decomposition} Let $r = \operatorname{min}(m,n)$. There is a multiplicity-free decomposition of $U_q(\q_m) \otimes U_q(\q_n)$-supermodules:
	$$
\AA_q \cong \bigoplus_{\lambda \in \Lambda_r^+} L_m(\lambda) \star L_n(\lambda).
$$
\end{theorem}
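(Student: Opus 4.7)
The plan is essentially to unwind the definitions and patch together the two results cited: the Chang--Wang Howe duality decomposition and \cref{P:dualtwist}. Since the statement is explicitly flagged as a corollary, no genuinely new work is required; the burden is really just bookkeeping about how the $\tau$-twist interacts with the first tensor factor.

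First I would transport the decomposition from \cite[Theorem 4.2]{ChangWang}, which is stated for $\AA_q(\q_m,\q_n)$, across the superalgebra isomorphism $\eta : \AA_q \to \AA_q(\q_m,\q_n)$ supplied by \cref{P:basicpropertiesofAq}(c.). This yields an isomorphism of $U_q(\q_m) \otimes U_q(\q_n)$-supermodules
\begin{equation*}
\AA_q \;\cong\; \bigoplus_{\lambda \in \Lambda_r^+} L_{q,m}(\lambda)^{*} \star L_{q,n}(\lambda),
\end{equation*}
where here $\AA_q$ carries the action inherited from \cref{E:CWAction} via $\eta$ (i.e., before any $\tau$-twist).

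Next I would twist by $\tau \otimes 1$, which by definition replaces the $U_q(\q_m)$-action on each summand $L_{q,m}(\lambda)^* \star L_{q,n}(\lambda)$ with its $\tau$-twist on the first factor and leaves the $U_q(\q_n)$-action on the second factor untouched. Thus the twisted action on $\AA_q$ decomposes as
\begin{equation*}
\AA_q \;\cong\; \bigoplus_{\lambda \in \Lambda_r^+} \bigl(L_{q,m}(\lambda)^{*}\bigr)^{\tau} \star L_{q,n}(\lambda) \;=\; \bigoplus_{\lambda \in \Lambda_r^+} L_{q,m}(\lambda)^{\oldstar} \star L_{q,n}(\lambda).
\end{equation*}
Finally, since each $L_{q,m}(\lambda)$ is a polynomial representation, \cref{P:dualtwist} supplies a $U_q(\q_m)$-supermodule isomorphism $L_{q,m}(\lambda)^{\oldstar} \cong L_{q,m}(\lambda)$, and substituting this into each summand produces the claimed decomposition.

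The only potentially delicate point is the compatibility of $\tau$-twisting with the outer tensor product decoration $\star$: one must check that the $\tau$-twist of the simple outer tensor $L_{q,m}(\lambda)^* \star L_{q,n}(\lambda)$ really is the outer tensor of the $\tau$-twist on the first factor with the (untwisted) second factor. This is immediate from the definitions of $\star$ and of $M^\tau$, so no serious obstacle arises; the main subtlety is simply ensuring that one is applying \cref{P:dualtwist} after, not before, performing the twist, so that the hypothesis ``polynomial representation'' is genuinely in force.
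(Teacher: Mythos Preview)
Your proposal is correct and follows essentially the same approach as the paper, which simply says the result follows by combining \cite[Theorem 4.2]{ChangWang} with \cref{P:dualtwist}; you have spelled out the bookkeeping (transport via $\eta$, twist by $\tau\otimes 1$, then apply \cref{P:dualtwist} to the first factor) that the paper leaves implicit. Your remark on the compatibility of the $\tau$-twist with $\star$ is a reasonable sanity check that the paper does not make explicit.
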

 In what follows we identify $U_q(\q_m)$ and $U_q(\q_n)$ with the subsuperalgebras $U_q(\q_m) \otimes 1$ and $1 \otimes U_q(\q_n)$ of $U_q(\q_m) \otimes U_q(\q_n)$.   Via this identification, we consider $\AA_q$ as a supermodule for both $U_q(\q_m)$ and $U_q(\q_n)$, where the actions commute in the graded sense.  
 In order to avoid confusing elements of $U_q(\q_m)$ and $U_q(\q_n$), we decorate the generators of $U_q(\q_n)$ with primes (e.g. $E_r', F_r', \dotsc$).  
  
The action of the generators of $U_{q}(\q_m)$ on the generators of $\AA_{q}$ is given below.  For indices $1 \leq r < m$, $1 \leq s \leq m$, and $(a,b) \in I_{m|m} \times I_{n|n}$, 
\begin{align*}\label{E:qmaction}
	E_r  \cdot t_{a,b}&= \delta_{a,r+1} t_{r,b} + \delta_{a,-(r+1)} t_{-r, b}, &
	\bar{E}_r \cdot t_{a,b} &= \sqrt{-1}(-1)^{\p{a} + \p{b}} \left( \delta_{a, r+1} t_{-r, b} + \delta_{a, -(r+1)} t_{r, b} \right), \\
	F_r  \cdot t_{a,b} &= \delta_{a,r} t_{r+1, b} + \delta_{a, -r} t_{-(r+1), b},&
	\bar{F}_r \cdot t_{a,b} &= \sqrt{-1} (-1)^{\p{a} + \p{b}} \left( \delta_{a, r} t_{-(r+1), b} + \delta_{a, -r} t_{r+1, b} \right), \\
	K_s  \cdot t_{a,b} &= q^{(\varepsilon_{s}, \varepsilon_{a})} t_{a,b}, & 
	 \bar{K}_s  \cdot t_{a,b} &=  \sqrt{-1} (-1)^{\p{a} + \p{b}} \left( \delta_{a,s} t_{-s,b} + \delta_{a,-s} t_{s,b} \right).
\end{align*}
The formulas above can be calculated as follows.  For $E_r, F_r, K_s$, and $\bar{K}_1$, use the values of $S$ and $\tau$ given in \cref{SS:quantizedenvelopingalgebra} to directly calculate the action on $t_{a,b}$.  Next, for every  $b \in I_{n|n}$, there is an injective linear map
$$\xi_b : V_m \to \AA_q, \quad v_a \mapsto  \left( \sqrt{-1} \right) ^{\p{a} + \p{b}} t_{a,b},$$   
Because the elements $E_r, F_r, K_s$, and $\bar{K}_1$ generate $U_q(\q_m)$ as an algebra, their action on $t_{a,b}$ given above can be used to verify that $\xi_b$ is a $U_q(\q_m)$-module homomorphism.  The action of $\bar{E}_r$, $\bar{F}_r$, and $\bar{K}_s$ ($s > 1$) on $t_{a,b}$ can be calculated by pushing their action on $v_a$ across the homomorphism $\xi_b$.  

The action of the generators of $U_q(\q_n)$ on the generators of $\AA_q$ can be calculated directly and is given below.  For all indices $ 1 \leq r \leq n-1$ and $1 \leq s \leq n$, for every $(a,b) \in I_{m|m} \times I_{n|n}$, 
\begin{align*}
	E_r' \cdot t_{a,b} &= \delta_{b,r+1} t_{a,r} + \delta_{b,-(r+1)} t_{a,-r}, 
	& \bar{E}_r'  \cdot t_{a,b} &=  \delta_{b,r+1}  t_{a,-r} + \delta_{b,-(r+1)} t_{a,r},\\
	 F_r '\cdot t_{a,b} &= \delta_{b,r} t_{a,r+1} + \delta_{b, -r} t_{a,-(r+1)}, 
	& \bar F_r '\cdot t_{a,b} &= \delta_{b,r} t_{a,-(r+1)} + \delta_{b, -r} t_{a,r+1},\\
	 K_r' \cdot t_{a,b} &= q^{(\varepsilon_{r}, \varepsilon_{b})} t_{a,b},
	&  \bar K_r ' \cdot t_{a,b} &=  \delta_{b,r} t_{a,-r} + \delta_{b, -r} t_{a,r}. 
\end{align*}

This action on the generators of $\AA_q$ extends to the general case as follows.  If $f$ and $g$ are homogeneous elements of $\AA_q$, given some element $x$ of $U_q(\q_m)$ or $U_q(\q_n)$, the action of $x$ on the product $fg$ is given by
\begin{align*}
	x \cdot fg &= \sum (-1)^{\p{x_{(2)} }\p{f}} (x_{(1)}  \cdot f )\cdot (x_{(2)} \cdot g ) , 
\end{align*}
where we write $\Delta(x)=\sum x_{(1)} \otimes x_{(2)}$ 
using Sweedler notation. In the case where $x \in U_q(\q_n)$, this can be verified by just considering the definition of the action.  For $x \in U_q(\q_m)$, one also needs \cref{E:TauSDelta}.


\begin{lemma}\label{L:weight-space-isomorphism}  Let 
	\[
	\AA_{q} = \bigoplus_{\lambda \in X(T_{m})} \AA_{q, \lambda}\quad 
	\] be the decomposition into weight spaces with respect to the Cartan subalgebra of $U_{q}(\fq_{m})$.  Then $\AA_{q,\lambda} \neq 0$ if and only if $\lambda \in X(T_{m})_{\geq 0}$.

	Furthermore, if $\lambda = \sum_{i=1}^{m}\lambda_{i}\varepsilon_{i} \in X(T_{m})_{\geq 0}$, then as a $U_{q}(\fq_{n})$-supermodule  
	\begin{equation}\label{E:symmisom}
	\AA_{q,\lambda} \cong S_{q}^{\lambda_1}(V_{n})\otimes\cdots\otimes S_{q}^{\lambda_m}(V_{n}),
	\end{equation}
\end{lemma}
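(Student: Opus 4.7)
The plan is to transport the $U_q(\fq_n)$-structure on $S_q(V_n)^{\otimes m}$ to $\AA_q$ through the vector space isomorphism $\bar\rho$ of \cref{P:basicpropertiesofAq}(f.) and check that it respects the relevant grading/weight decomposition. First I would compute the $U_q(\fq_m)$-weight of each basis monomial in $\B$ from \cref{P:basicpropertiesofAq}(d.). Since $K_s\cdot t_{a,b}=q^{(\varepsilon_s,\varepsilon_a)}t_{a,b}$ and $K_s$ is group-like (so weights are additive on products), a basis monomial $\prod_{(a,b),\,a>0}t_{a,b}^{d_{a,b}}$ has weight $\sum_{a=1}^m\bigl(\sum_b d_{a,b}\bigr)\varepsilon_a$. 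Nonnegativity of the exponents forces $\AA_{q,\lambda}=0$ unless $\lambda\in X(T_m)_{\geq 0}$; conversely, every such $\lambda$ is realized by the nonzero monomial $t_{1,1}^{\lambda_1}\cdots t_{m,1}^{\lambda_m}$.

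Next I would match the $U_q(\fq_m)$-weight decomposition of $\AA_q$ with the $\Z^m$-grading
\[
S_q(V_n)^{\otimes m}=\bigoplus_{(d_1,\ldots,d_m)}S_q^{d_1}(V_n)\otimes\cdots\otimes S_q^{d_m}(V_n).
\]
Since $\rho_k(v_b)=t_{k,b}$ has weight $\varepsilon_k$ and $\rho_k$ is a superalgebra map, $\rho_k(x)$ has weight $d_k\varepsilon_k$ whenever $x\in S_q^{d_k}(V_n)$. Hence $\bar\rho$ carries $S_q^{\lambda_1}(V_n)\otimes\cdots\otimes S_q^{\lambda_m}(V_n)$ into $\AA_{q,\lambda}$, and since $\bar\rho$ is a vector space isomorphism respecting both direct sum decompositions, it restricts to a linear isomorphism onto $\AA_{q,\lambda}$ for each $\lambda$.

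The main task is to promote this to a $U_q(\fq_n)$-supermodule isomorphism, and the key step is showing each $\rho_k\colon S_q(V_n)\to\AA_q$ is $U_q(\fq_n)$-equivariant. On the generators $v_b$, a direct comparison reveals that the formulas in \cref{SS:Aqdef} for the action of the primed generators $E'_r,F'_r,K'_s,\bar K'_s$ of $U_q(\fq_n)$ on $t_{k,b}$ match term-by-term the formulas for the action of the unprimed counterparts on $v_b\in V_n$; since $U_q(\fq_n)$ is generated by its even generators together with $\bar K_1$, this suffices on generators. Equivariance extends to all of $S_q(V_n)$ by induction on $\Z$-degree, using that both $S_q(V_n)$ (as a quotient of $\mathcal T(V_n)$ by the $U_q(\fq_n)$-invariant ideal generated by the image of $q\cdot\id-T$, with $T$ a $U_q(\fq_n)$-module map) and $\AA_q$ are $U_q(\fq_n)$-module superalgebras. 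Equivariance of $\bar\rho$ then follows by expanding the action on the product $\rho_1(x_1)\cdots\rho_m(x_m)$ via iterated application of the module algebra rule and matching the result with the action on $x_1\otimes\cdots\otimes x_m$ coming from the iterated coproduct. The principal subtlety is bookkeeping for the Koszul signs, but these agree automatically because both sides are governed by the same Hopf coproduct of $U_q(\fq_n)$.
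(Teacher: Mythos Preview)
Your proposal is correct and follows essentially the same route as the paper: both use the basis $\B$ to identify the nonzero weight spaces, invoke the linear isomorphism $\bar\rho$ from \cref{P:basicpropertiesofAq}(f.), and then observe that $\bar\rho$ is $U_q(\fq_n)$-equivariant and carries the $\Z^m$-grading on $S_q(V_n)^{\otimes m}$ to the $U_q(\fq_m)$-weight decomposition of $\AA_q$. The paper merely asserts the equivariance check is ``straightforward,'' whereas you have spelled out the module-algebra argument in detail; this is a faithful expansion rather than a different approach.
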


\begin{proof}  The fact $\AA_{q,\lambda} \neq 0$ if and only if $\lambda \in X(T_{m})_{\geq 0}$ is immediate from the fact $\B$ is a basis of homogeneous weight vectors for $\AA_{q}$ as a $U_{q}(\fq_{m})$-supermodule. From \cref{P:basicpropertiesofAq}(f.) there is a linear isomorphism $\bar{\rho} : S_q(V_n)^{\otimes m} \overset{\sim}{\to} \AA_q$.  It is straightforward to check this is an isomorphism of $U_q(\q_n)$-supermodules.   By decomposing each $S_q(V_n)$ as a direct sum of symmetric powers, we have a $U_q(\q_n)$-supermodule decomposition
	$$
	\AA_q \cong \bigoplus_{\lambda \in X(T)_{\geq 0}} S^{\lambda_1}_q(V_n) \otimes \cdots \otimes S_q^{\lambda_m}(V_n). 
	$$
Furthermore, under the isomorphism $\bar{\rho}$ the image of $S^{\lambda_1}_q(V_n) \otimes \cdots \otimes S_q^{\lambda_m}(V_n)$ is precisely the $\lambda$-weight space for the action of $U_q(\q_m)$ on $\AA_q$.
\end{proof}

Let us consider an example which will play a role in what follows.  For any fixed $m \geq 1$, consider the weight $\lambda  = \varepsilon_{1}+\dotsb +\varepsilon_{m}$ for $U_{q}(\fq_{m})$.  Using \cref{R:Sqproperties} we identify $V_{n}$ with $S^1_{q}(V_{n})$ and, hence, $V_n^{\otimes m}  \subset S_q(V_n)^{\otimes m}$.  The restriction of $\bar{\rho}$ from \cref{P:basicpropertiesofAq}(f.)~then defines a $U_q(\q_n)$-homomorphism  $V_n^{\otimes m} \to \AA_{q}$  given by
$$
v_{b_1} \otimes v_{b_2} \otimes \dotsb \otimes v_{b_m} \mapsto t_{1,b_1}t_{2,b_1} \dotsb t_{m,b_m}.$$
A direct calculation using the fact $\B$ is a basis of weight vectors for $\AA_{q}$ shows the image of this homomorphism is precisely the $\lambda$-weight space with respect to the action of $U_{q}(\fq_{m})$.  That is, for every $m \geq 1$ the above map defines a parity preserving isomorphism of $U_q(\q_n)$-supermodules 
\begin{equation}\label{E:tensorspaceisomorphism}
V^{\otimes m}_n \overset{\sim}{\to} \AA_{q, \varepsilon_{1}+\dotsb +\varepsilon_{m}}.
\end{equation}

We end this section by recording a lemma which will be used later.  The proof is a straightforward calculation using the action of $U_{q}(\fq_{m})$ on $\AA_{q}$.

\begin{lemma}\label{L:Aqcalculation}  Fix $m,n$, fix $1 \leq r < m$, and let $x_{1}, \dotsc , x_{b} \in I_{n|n}$.  Then for any $1 \leq a \leq b$ the action of $E_{r}^{(a)} = \frac{E_{r}^{a}}{[a]_{q}!} \in U_{q}(\fq_{m})$ is given by 
\[
E_{r}^{(a)} \cdot \prod_{k=1}^{b} t_{r+1,x_{k}} = \sum_{} q^{\gamma(i_{1}, \dotsc , i_{b})} t_{i_{1}, x_{1}}t_{i_{2}, x_{2}}\dotsb t_{i_{b},x_{b}},
\] where the sum is over all tuples $(i_{1}, \dotsc , i_{b}) \in \left\{r, r+1 \right\}^{b}$ consisting of precisely $a$ $r$'s and $b-a$ $r+1$'s.  The function $\gamma : \left\{r, r+1 \right\}^{b} \to \Z$ is given by 
\begin{equation*}
\gamma (i_{1}, \dotsc , i_{b}) = |\left\{(p,q) \mid 1 \leq p < q \leq b, i_{p}=r, i_{q}=r+1 \right\}|.
\end{equation*}
\end{lemma}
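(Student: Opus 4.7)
The plan is to prove this by induction on $a$, reducing to a careful bookkeeping of $q$-powers that arise from the iterated coproduct of $E_r$ together with the $q$-binomial identity $\sum_{j=1}^{a} q^{2j-a-1} = [a]_{q}$.

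First, I would set up notation: write $K = K_r^{-1}K_{r+1}$ so that $\Delta(E_r) = E_r \otimes K + 1 \otimes E_r$, and iterate to get $\Delta^{(b-1)}(E_r) = \sum_{j=1}^{b} 1^{\otimes(j-1)} \otimes E_r \otimes K^{\otimes(b-j)}$. Using the action formulas given before the lemma, $E_r \cdot t_{r+1,x} = t_{r,x}$ and $E_r \cdot t_{r,x} = 0$, while $K\cdot t_{r+1,x} = q\, t_{r+1,x}$ and $K\cdot t_{r,x} = q^{-1}t_{r,x}$. Hence for any monomial $c(m) := t_{i_1,x_1}\cdots t_{i_b,x_b}$ with $i_\ell \in \{r,r+1\}$, one obtains
\[
E_r \cdot c(m) \;=\; \sum_{k:\, i_k = r+1} q^{w_k(m)} c(m_k),
\]
where $m_k$ denotes the sequence obtained from $m$ by switching $i_k$ from $r+1$ to $r$, and $w_k(m) = \#\{\ell > k : i_\ell = r+1\} - \#\{\ell > k : i_\ell = r\}$.

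Next I would denote the right-hand side of the lemma by $S_{a,b}$, i.e.\ $S_{a,b} = \sum_{m:|m|_r = a} q^{\gamma(m)}c(m)$, and verify that $S_{0,b} = P_b := \prod_{k=1}^{b} t_{r+1,x_k}$. The key step is to prove
\[
E_r \cdot S_{a-1,b} \;=\; [a]_q\, S_{a,b},
\]
which upon iteration and dividing by $[a]_q!$ gives the lemma. To establish this, I would reindex the sum by first fixing the target sequence $m'$ with $|m'|_r = a$ (positions of $r$'s listed as $k_1 < \cdots < k_a$) and then summing over which $r$ in $m'$ was created from $m$. A direct count comparing $\gamma$ on $m$ versus $m'$ shows that for $k = k_j$,
\[
\gamma(m) - \gamma(m') \;=\; \#\{p < k : i'_p = r\} - \#\{\ell > k : i'_\ell = r+1\} \;=\; (j-1) - (\#\{\ell>k:i'_\ell=r+1\}),
\]
and combining this with $w_k(m)$ (using $i_\ell = i'_\ell$ for $\ell\ne k$) yields $\gamma(m) + w_k(m) = \gamma(m') + 2j - a - 1$. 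Summing $q^{2j-a-1}$ over $j = 1,\ldots,a$ gives precisely $[a]_q$, and the inductive identity $E_r \cdot S_{a-1,b} = [a]_q S_{a,b}$ follows.

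The main obstacle is the combinatorial identification of $\gamma(m)+w_k(m)$ with $\gamma(m') + 2j - a - 1$; everything else is formal manipulation of the coproduct. It is worth noting that any potential sign from the super-interchange law is absent here because $E_r$ and $K$ are both even generators, so the computation proceeds exactly as in the non-super setting.
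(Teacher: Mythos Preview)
Your argument is correct. The paper itself omits the proof entirely, stating only that it is ``a straightforward calculation using the action of $U_q(\mathfrak{q}_m)$ on $\mathcal{A}_q$''; your induction on $a$ via the iterated coproduct, together with the combinatorial identity $\gamma(m)+w_k(m)=\gamma(m')+2j-a-1$ and $\sum_{j=1}^a q^{2j-a-1}=[a]_q$, is exactly the natural way to make that calculation precise.
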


\section{The supercategory  \texorpdfstring{$\bUdot(\fq_{m})$}{Udot} }\label{S:IdempotentAlgebra}

\subsection{The supercategory  \texorpdfstring{$\bUdot(\fq_{m})$}{Udot}}\label{SS:IdempotentAlgebra}
We now introduce  a $\K$-linear supercategory $\bUdot(\fq_{m})$ via generators and relations. When writing compositions of morphisms we often write them as products (e.g.\ $fg$ for $f \circ g$).   To lighten notation we use the same name for morphisms between different objects and leave the objects implicit when there is no risk of confusion.  As an application of this convention we can define the divided powers of the even generating morphisms by  $E_{i}^{(r)} := E_{i}^{r}/[r]_{q}!$ and  $F_{i}^{(r)} := F_{i}^{r}/[r]_{q}!$ for $r \geq 1$.  When a statement requires we specify the domain and/or range, we do so by pre- and/or post-composing with the relevant identity morphisms. In what follows we use the notation established in \cref{SS:Liesuperalgebras}.

\begin{definition}\label{D:dotU2}
	
	Let $\bUdot(\fq_{m})$ be the $\K$-linear supercategory with set of objects $X(T_{m})$ and with the morphisms generated by $E_i, \bar{E}_{i}: \lambda \to \lambda+\alpha_{i}$, $F_{i}, \bar{F}_{i}: \lambda \to \lambda-\alpha_{i}$, and $\bar{K}_{j}: \lambda \to \lambda$ for all  $\lambda\in X(T_{m})$, $i=1, \dotsc, m-1$, $j=1, \dotsc , m$.  For all $i,j$ the parities of $E_{i}$ and $F_{i}$ are even and the parities of $\bar{E}_{i}$, $\bar{F}_{i}$, and $\bar{K}_{j}$ are odd. We write $1_{\lambda}: \lambda \to \lambda$ for the identity morphism. 
	
	The morphisms in $\bUdot(\fq_{m})$ are subject to the following relations for all objects $\lambda$ and all admissible $i,j$:

	\begin{align*}
		&(E_i F_j - F_j E_i)1_{\lambda} =  \delta_{i,j} \frac{q^{(\lambda, \alpha_{i})} - q^{-(\lambda, \alpha_{i})}}{q - q^{-1}}1_{\lambda}, \\ 
		%
		%
		&E_i E_j - E_j E_i = F_i F_j - F_j F_i = 0 \quad \text{if $|i-j| > 1$},\\
		%
		%
		%
		%
		&E_i^2 E_j - (q + q^{-1}) E_i E_j E_i + E_j E_i^2 = 0 \quad \text{ if $|i -j| = 1$} ,\\
		%
		%
		%
		&F_i^2 F_j - (q + q^{-1}) F_i F_j F_i + F_j F_i^2 = 0 \quad \text{ if $|i -j| = 1$}, \\
		%
		%
		%
		%
		&(\bar{K}_i \bar{K}_j + \bar{K}_j \bar{K}_i)1_{\lambda} = \delta_{i,j} 2 \frac{q^{2(\lambda, \varepsilon_{i})} - q^{-2(\lambda, \varepsilon_{i})}}{q^2 - q^{-2}}1_{\lambda}, \\
		%
		%
		%
		&(\bar{K}_i E_i - q E_i \bar{K}_i)1_{\lambda} = q^{-(\lambda, \varepsilon_{i})}\bar{E}_i 1_{\lambda} \quad (q\bar{K}_i E_{i-1} - E_{i-1} \bar{K}_i)1_{\lambda} = - q^{-(\lambda+\alpha_{i-1}, \varepsilon_{i})} \bar{E}_{i-1}1_{\lambda}, \\
		%
		%
		&\bar{K}_i E_j - E_j \bar{K}_i = 0 \quad \text{if $j \neq i,i-1$}, \\
		%
		%
		%
		&(\bar{K}_i F_i - q F_i \bar{K}_i)1_{\lambda} = -q^{(\lambda, \varepsilon_{i})}\bar{F}_{i}1_{\lambda} \quad (q\bar{K}_i F_{i-1} - F_{i-1} \bar{K}_i)1_{\lambda} = q^{(\lambda-\alpha_{i-1}, \varepsilon_{i})}  \bar{F}_{i-1}1_{\lambda},\\
		&\bar{K}_i F_j - F_j \bar{K}_i = 0 \quad \text{if $j \neq i,i-1$} ,\\
		%
		%
		%
		&(E_i \bar{F}_j - \bar{F}_j E_i)1_{\lambda} = \delta_{i,j} (q^{-(\lambda, \varepsilon_{i+1})}\bar{K}_i - q^{-(\lambda, \varepsilon_{i})}\bar{K}_{i+1})1_{\lambda}\\
		%
		&(\bar{E}_i F_j - F_j \bar{E}_i)1_{\lambda} = \delta_{i,j} (q^{(\lambda, \varepsilon_{i+1})}\bar{K}_i  - q^{(\lambda, \varepsilon_{i})}\bar{K}_{i+1})1_{\lambda} ,\\  
		%
		& E_i \bar{E}_i - \bar{E}_i E_i = F_i \bar{F}_i - \bar{F}_i F_i = 0 ,\\
		%
		%
		%
		%
		%
		&E_i E_{i+1} - q E_{i+1} E_i  = \bar{E}_i \bar{E}_{i+1} + q\bar{E}_{i+1} \bar{E}_i, \\ 
		&qF_{i+1} F_i - F_i F_{i+1} = \bar{F}_i \bar{F}_{i+1} + q \bar{F}_{i+1} \bar{F}_i ,\\
		&E_i^2 \bar{E}_j - (q+q^{-1}) E_i \bar{E}_j E_i + \bar{E}_j E_i^2 = 0 \quad \text{if $|i - j| = 1$}, \\
		&F_i^2 \bar{F}_j - (q+q^{-1}) F_i \bar{F}_j F_i + \bar{F}_j F_i^2 = 0 \quad \text{if $|i - j| = 1$}.
		%
		%
		%
	\end{align*}
	
\end{definition}

From the supercategory $\bUdot (\fq_{m})$ we can define a locally unital superalgebra,
\[
\Udot(\q_{m})=\bigoplus_{\lambda,\mu\in X(T_{m})} \Hom_{\bUdot(\q_{m})}(\lambda, \mu),
\] with distinguished family of idempotents $\left\{1_{\lambda}  \mid \lambda \in X(T_{m}) \right\}$. Comparing with the presentation given in \cite{GJKK}, the superalgebra $\Udot (\fq_{m})$ is seen to be an idempotent version of the quantized enveloping superalgebra $U_{q}(\fq_{m})$.  

\begin{definition}\label{D:Udotgeq0}   Let $\bUdot(\q_{m})_{\geq 0}$ be  the quotient of  $\bUdot(\q_{m})$ given by setting $1_{\lambda}=0$ for all $\lambda \not\in X(T_{m})_{\geq 0}$.  That is,  $\bUdot(\q_{m})_{\geq 0}=\bUdot (\q_{m})/\mathcal{I}$, where $\mathcal{I}$ is the two-sided tensor ideal generated by $\left\{ 1_{\lambda} \mid \lambda \notin X(T_{m})_{\geq 0} \right\}$. Let $\Udot (\fq_{m})_{\geq 0}$ be the corresponding locally unital algebra.
\end{definition}

 If $M$ is a $U_{q}(\fq_{m})$-supermodule which has a weight space decomposition $M = \oplus_{\lambda \in X(T_{m})} M_{\lambda}$, then $M$ is naturally a $\Udot(\q_{m})$-supermodule. The action of an idempotent is given by declaring $1_{\lambda}m = \delta_{\lambda, \mu}m$ for all $\mu \in X(T_{m})$ and all $m \in M_{\mu}$.   The action of $E_{i}1_{\lambda}, \bar{E}_{i}1_{\lambda}, F_{i}1_{\lambda}, \bar{F}_{i}1_{\lambda}, \bar{K}_{i}1_{\lambda}$ on $M_{\lambda}$ is given by the action $E_{i}, \bar{E}_{i}, F_{i}, \bar{F}_{i}, \bar{K}_{i} \in U_{q}(\fq_{m})$, respectively, and by zero on $M_{\mu}$ for $\mu \neq \lambda$.  Furthermore,  if  $M = \oplus_{\lambda \in X(T_{m})} M_{\lambda}$ is a supermodule for the idempotented superalgebra $\Udot(\q_{m})$, then there is a functor of monoidal supercategories $F:\bUdot(\q_{m}) \to \svec$ defined on objects by $F(\lambda) = M_{\lambda}$.  These three concepts are known to be equivalent (for example  see \cite[Section 2.4]{BrownKujawa} for further details in a similar setting).  Going forward we freely use whichever is convenient.

\subsection{A functor from the supercategory  \texorpdfstring{$\bUdot(\fq_{m})$}{Udot} to \texorpdfstring{$U_{q}(\fq_{n})$-supermodules}{Udot}}\label{SS:superfunctor}

Define $\mods$ to be the monoidal supercategory of $U_{q}(\fq_{n})$-supermodules tensor generated by $\left\{ S_{q}^{d}(V_{n})\mid d \geq 0 \right\}$.  This is the full subcategory of $U_{q}(\fq_{n})$-supermodules consisting of objects of the form 
\[
S_{q}^{d_{1}}(V_{n}) \otimes \dotsb \otimes S_{q}^{d_{t}}(V_{n}),
\] where $t, d_{1}, \dotsc , d_{t} \in  \Z_{\geq 0}$. By convention both the empty tensor product and $S_{q}^{0}(V_{n})$ are the trivial supermodule, $\K$.

For every $m,n \geq 1$,  $\AA_{q} = \AA_{q}(V_{m} \star V_{n})$ is a weight supermodule for $U_{q}(\fq_{m})$ with weights lying in $X(T_{m})_{\geq 0}$ and, hence, defines a representation of $\Udoti(\fq_{m})_{\geq 0}$.  Since the actions of $U_{q}(\fq_{m})$ and $U_{q}(\fq_{n})$ mutually commute it follows the action of any element of $\Udoti(\fq_{m})_{\geq 0}$ is by a $U_{q}(\fq_{n})$-linear map.  Write 
\[
\phi : \Udot(\fq_{m})_{\geq 0} \to \End_{U_{q}\left(\fq_{n} \right)}\left(\AA_{q}\right)
\] for this representation.  As discussed in the previous section, having such a representation is equivalent to having a functor $\bUdot (\fq_{m})_{\geq 0} \to \svec$.  Since the action of $\Udot (\fq_{m})_{\geq 0}$ on $\AA_{q}$ commutes with the action of $U_{q}(\fq_{n})$, this implies the functor can be viewed as having codomain the category of $U_{q}(\fq_{n})$-supermodules.  The existence of this functor is summarized in the following result.

\begin{proposition}\label{Phi-up}
	For every $m,n \geq 1$ there exists a functor of supercategories 
	\begin{equation*}
		\Phi_{m,n}: \bUdot(\fqt_{m})_{\geq 0} \to \mods.
	\end{equation*}
	On objects, 
	\[
	\Phi_{m,n}(\lambda) = \AA_{q,\lambda} \cong S_{q}^{\lambda_{1}}(V_{n}) \otimes \dotsb \otimes S_{q}^{\lambda_{m}}(V_{n}),
	\]
	for all $\lambda = \sum_{i=1}^{m}\lambda_{i}\varepsilon_{i} \in X(T_{m})_{\geq 0}$.  On a morphism $x \in \Hom_{\bUdot (\fq_{m})}(\lambda, \mu)$,
	\[
	\Phi_{m,n} (x) = \phi(x).
	\]
\end{proposition}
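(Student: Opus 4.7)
The plan is to assemble the functor $\Phi_{m,n}$ directly from the $U_q(\fq_m) \otimes U_q(\fq_n)$-supermodule structure on $\AA_q$ established earlier in the paper, using the dictionary between weight supermodules for $U_q(\fq_m)$, supermodules for the locally unital algebra $\Udot(\fq_m)$, and superfunctors out of $\bUdot(\fq_m)$ recalled in Section~\ref{SS:IdempotentAlgebra}.

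First, I would observe that $\AA_q$ is a $U_q(\fq_m)$-supermodule admitting a weight-space decomposition $\AA_q = \bigoplus_{\lambda \in X(T_m)} \AA_{q,\lambda}$. By \cref{L:weight-space-isomorphism} all nonzero weight spaces lie in $X(T_m)_{\geq 0}$, so this makes $\AA_q$ into a supermodule for the idempotented superalgebra $\Udot(\fq_m)_{\geq 0}$, with $1_\lambda$ acting as projection onto $\AA_{q,\lambda}$ for $\lambda \in X(T_m)_{\geq 0}$ and as zero otherwise. By the equivalence discussed at the end of \cref{SS:IdempotentAlgebra}, this data is the same as a $\K$-linear superfunctor $\bUdot(\fq_m)_{\geq 0} \to \svec$ sending $\lambda$ to $\AA_{q,\lambda}$ and a generating morphism $X 1_\lambda$ (for $X$ one of $E_i, \bar E_i, F_i, \bar F_i, \bar K_j$) to the $\K$-linear map $\phi(X 1_\lambda) : \AA_{q,\lambda} \to \AA_{q,\lambda \pm \alpha_i}$ induced by the $U_q(\fq_m)$-action. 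Checking that this assignment respects the defining relations of $\bUdot(\fq_m)$ is automatic from the fact that $\AA_q$ is a genuine $U_q(\fq_m)$-supermodule; the only additional ingredient needed is that the relations of $\bUdot(\fq_m)_{\geq 0}$ differ from those of $\bUdot(\fq_m)$ only by the extra annihilation of non-dominant idempotents, which is built into $\AA_q$ by \cref{L:weight-space-isomorphism}.

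Next, I would upgrade the target from $\svec$ to $\mods$. Since the $U_q(\fq_m)$ and $U_q(\fq_n)$ actions on $\AA_q$ mutually supercommute (they are part of a $U_q(\fq_m) \otimes U_q(\fq_n)$-supermodule structure), every morphism $\phi(x)$ with $x \in \Udot(\fq_m)_{\geq 0}$ is $U_q(\fq_n)$-linear. Therefore the superfunctor factors through the inclusion of $U_q(\fq_n)$-supermodules into $\svec$. To see that each weight space already lies in the subcategory $\mods$, I invoke the explicit $U_q(\fq_n)$-module isomorphism
\[
\AA_{q,\lambda} \;\cong\; S_q^{\lambda_1}(V_n) \otimes \cdots \otimes S_q^{\lambda_m}(V_n)
\]
from \cref{L:weight-space-isomorphism}; each right-hand side is by definition an object of $\mods$. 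Setting $\Phi_{m,n}(\lambda) = \AA_{q,\lambda}$ and $\Phi_{m,n}(x) = \phi(x)$ thus defines the required functor into $\mods$.

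I do not anticipate a genuine obstacle: the nontrivial content (the commuting actions on $\AA_q$, the identification of weight spaces with tensor products of quantum symmetric powers, and the module/superalgebra/superfunctor dictionary) has already been established in the preceding sections. The only mild point to keep track of is the bookkeeping around the passage to $\bUdot(\fq_m)_{\geq 0}$, namely verifying that $\AA_{q,\lambda} = 0$ for $\lambda \notin X(T_m)_{\geq 0}$ so that the relations forcing $1_\lambda = 0$ there are satisfied; this is exactly the first half of \cref{L:weight-space-isomorphism}.
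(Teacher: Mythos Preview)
Your proposal is correct and follows essentially the same route as the paper: the discussion immediately preceding the proposition in \cref{SS:superfunctor} constructs $\Phi_{m,n}$ exactly as you describe, by using the weight decomposition of $\AA_q$ (with weights in $X(T_m)_{\geq 0}$ by \cref{L:weight-space-isomorphism}) to obtain a $\Udot(\fq_m)_{\geq 0}$-supermodule, invoking the module/functor dictionary from \cref{SS:IdempotentAlgebra}, and then using the commuting $U_q(\fq_n)$-action together with \cref{L:weight-space-isomorphism} to land in $\mods$. Your added remark about the vanishing of $\AA_{q,\lambda}$ for $\lambda\notin X(T_m)_{\geq 0}$ is a helpful clarification but is already implicit in the paper's argument.
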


The following stability result lets us be flexible in choosing our domain category.

\begin{remark}\label{R:Compatibility}  Let $m', m$ be positive integers with $m' \geq m$.  Given an element $\lambda = (\lambda_{1}, \dotsc , \lambda_{m}) \in X(T_{m})$ we view $\lambda  \in X(T_{m'})$ by extending by $m'-m$ zeros:  $\lambda= (\lambda_{1}, \dotsc , \lambda_{m}, 0, \dotsc , 0)$.  With this convention there is a functor of supercategories,
	\[
	\Theta_{m,m'}: \bUdot(\fq_{m}) \to \bUdot(\fq_{m'}),
	\]
	given by sending the objects and generating morphisms of $\bUdoti(\fq_{m})$ to the objects and morphisms of the same name in $\bUdoti(\fqt_{m'})$.  This induces a functor which we call by the same name,
	\[
	\Theta_{m,m'}: \bUdot(\fq_{m})_{\geq 0} \to \bUdot(\fq_{m'})_{\geq 0}.
	\]  For any fixed $n\geq 1$, the functors $\Phi_{m,n}$ and $\Phi_{m',n}$ are compatible in the sense $\Phi_{m',n}\circ \Theta_{m,m'}$ and $\Phi_{m,n}$ are canonically isomorphic. 

More generally for every $0 \leq a \leq m'-m$ we have functors $\Theta_{m+a,m'}:\bUdot(\fq_{m}) \to \bUdot(\fq_{m'})$ which send $\lambda = (\lambda_{1}, \dotsc , \lambda_{m})$ to $(0,\dotsc ,0, \lambda_{1}, \dotsc , \lambda_{m}, 0, \dotsc , 0)$ where there are $a$ zeros before $\lambda_{1}$ and the generators of $\bUdot (\fq_{m})$ are sent to the generators of the same name in $\bUdot (\fq_{m'})$ with an $a$ added to the subscript.  These functors also have an obvious compatiblity with the functors $\Phi_{m,n}$ and $\Phi_{m',n}$. 
\end{remark}

\section{Upward Webs}\label{S:Upward Webs of Type Q}

\subsection{String calculus for monoidal supercategories}\label{SS:stringcalculus}
Going forward we will make extensive use of the diagrammatic calculus for monoidal supercategories described in \cite{BrundanEllis} and used in, for example, \cite{BrownKujawa}.  Other than the appearance of odd morphisms it is very similar to the well-known diagrammatic calculus for monoidal categories (e.g. see \cite{Turaev}).  We only briefly describe it here and refer the reader to \cite{BrundanEllis} for details. 

A morphism $f:\ob a\to \ob b$ is drawn as
\begin{equation*}
    \begin{tikzpicture}[baseline = 12pt,scale=0.5,color=\clr,inner sep=0pt, minimum width=11pt]
        \draw[-,thick] (0,0) to (0,2);
        \draw (0,1) node[circle,draw,thick,fill=white]{$f$};
        \draw (0,-0.5) node{$\ob a$};
        \draw (0, 2.5) node{$\ob b$};
    \end{tikzpicture}
    \qquad\text{or simply as}\qquad
    \begin{tikzpicture}[baseline = 12pt,scale=0.5,color=\clr,inner sep=0pt, minimum width=11pt]
        \draw[-,thick] (0,0) to (0,2);
        \draw (0,1) node[circle,draw,thick,fill=white]{$f$};
    \end{tikzpicture}
\end{equation*}
when the objects are left implicit.  The convention used in this paper is to read diagrams from bottom to top.  On morphisms $f\otimes g$ and $f\circ g$ are given by horizontal and vertical concatenation, respectively:
\begin{equation*}
    \begin{tikzpicture}[baseline = 19pt,scale=0.5,color=\clr,inner sep=0pt, minimum width=11pt]
        \draw[-,thick] (0,0) to (0,3);
        \draw[-,thick] (2,0) to (2,3);
        \draw (1,1.5) node[color=black]{$\otimes$};
        \draw (0,1.5) node[circle,draw,thick,fill=white]{$f$};
        \draw (2,1.5) node[circle,draw,thick,fill=white]{$g$};
    \end{tikzpicture}
    ~=~
    \begin{tikzpicture}[baseline = 19pt,scale=0.5,color=\clr,inner sep=0pt, minimum width=11pt]
        \draw[-,thick] (0,0) to (0,3);
        \draw[-,thick] (2,0) to (2,3);
        \draw (0,1.5) node[circle,draw,thick,fill=white]{$f$};
        \draw (2,1.5) node[circle,draw,thick,fill=white]{$g$};
    \end{tikzpicture}
    ~,\qquad
    \begin{tikzpicture}[baseline = 19pt,scale=0.5,color=\clr,inner sep=0pt, minimum width=11pt]
        \draw[-,thick] (0,0) to (0,3);
        \draw[-,thick] (2,0) to (2,3);
        \draw (1,1.5) node[color=black]{$\circ$};
        \draw (0,1.5) node[circle,draw,thick,fill=white]{$f$};
        \draw (2,1.5) node[circle,draw,thick,fill=white]{$g$};
    \end{tikzpicture}
    ~=~
    \begin{tikzpicture}[baseline = 19pt,scale=0.5,color=\clr,inner sep=0pt, minimum width=11pt]
        \draw[-,thick] (0,0) to (0,3);
        \draw (0,2.2) node[circle,draw,thick,fill=white]{$f$};
        \draw (0,0.8) node[circle,draw,thick,fill=white]{$g$};
    \end{tikzpicture}
    ~.
\end{equation*}
Pictures involving multiple products should be interpreted by \emph{first composing horizontally, then composing vertically}. For example, 
\begin{equation*}
    \begin{tikzpicture}[baseline = 19pt,scale=0.5,color=\clr,inner sep=0pt, minimum width=11pt]
        \draw[-,thick] (0,0) to (0,3);
        \draw[-,thick] (2,0) to (2,3);
        \draw (0,2.2) node[circle,draw,thick,fill=white]{$f$};
        \draw (2,2.2) node[circle,draw,thick,fill=white]{$g$};
        \draw (0,0.8) node[circle,draw,thick,fill=white]{$h$};
        \draw (2,0.8) node[circle,draw,thick,fill=white]{$k$};
    \end{tikzpicture}
\end{equation*}
should be interpreted as $(f\otimes g)\circ(h\otimes k)$. In general, this is \emph{not} the same as $(f\circ h)\otimes(g\circ k)$ because of the \emph{super-interchange law} given in \cref{E:superinterchange}.  Diagrammatically the super-interchange law becomes the identity
\begin{equation}\label{super-interchange}
    \begin{tikzpicture}[baseline = 19pt,scale=0.5,color=\clr,inner sep=0pt, minimum width=11pt]
        \draw[-,thick] (0,0) to (0,3);
        \draw[-,thick] (2,0) to (2,3);
        \draw (0,2) node[circle,draw,thick,fill=white]{$f$};
        \draw (2,1) node[circle,draw,thick,fill=white]{$g$};
    \end{tikzpicture}
    ~=~
    \begin{tikzpicture}[baseline = 19pt,scale=0.5,color=\clr,inner sep=0pt, minimum width=11pt]
        \draw[-,thick] (0,0) to (0,3);
        \draw[-,thick] (2,0) to (2,3);
        \draw (0,1.5) node[circle,draw,thick,fill=white]{$f$};
        \draw (2,1.5) node[circle,draw,thick,fill=white]{$g$};
    \end{tikzpicture}
    ~=(-1)^{\p{f}\p{g}}~
    \begin{tikzpicture}[baseline = 19pt,scale=0.5,color=\clr,inner sep=0pt, minimum width=11pt]
        \draw[-,thick] (0,0) to (0,3);
        \draw[-,thick] (2,0) to (2,3);
        \draw (0,1) node[circle,draw,thick,fill=white]{$f$};
        \draw (2,2) node[circle,draw,thick,fill=white]{$g$};
    \end{tikzpicture}
    ~.
\end{equation}

\subsection{Upward Webs of Type Q}\label{SS:UpwardWebs}  We use the diagrammatic calculus to introduce the following supercategory.  Recall, $\tq = q-q^{-1}$.

\begin{definition}\label{D:UpwardWebs}  Let $\qwebs$ be the $\K$-linear monoidal supercategory generated by the objects $\{\up_{k} \mid k \in \Z_{\geq 0}\}$ and by the generating morphisms
	
	\begin{equation*}
		\xy
		(0,0)*{
			\begin{tikzpicture}[color=\clr, scale=1]
			\draw[color=\clr, thick, directed=1] (1,0) to (1,1);
			\node at (1,-0.15) {\scriptsize $k$};
			\node at (1,1.15) {\scriptsize $k$};
			\draw (1,0.5) \wdot;
			\end{tikzpicture}
		};
		\endxy \ ,\quad\quad
		\xy
		(0,0)*{
			\begin{tikzpicture}[color=\clr, scale=.35]
			\draw [color=\clr,  thick, directed=1] (0, .75) to (0,2);
			\draw [color=\clr,  thick, directed=.65] (1,-1) to [out=90,in=330] (0,.75);
			\draw [color=\clr,  thick, directed=.65] (-1,-1) to [out=90,in=210] (0,.75);
			\node at (0, 2.5) {\scriptsize $k\! +\! l$};
			\node at (-1,-1.5) {\scriptsize $k$};
			\node at (1,-1.5) {\scriptsize $l$};
			\end{tikzpicture}
		};
		\endxy \ ,\quad\quad
		\xy
		(0,0)*{
			\begin{tikzpicture}[color=\clr, scale=.35]
			\draw [color=\clr,  thick, directed=.65] (0,-0.5) to (0,.75);
			\draw [color=\clr,  thick, directed=1] (0,.75) to [out=30,in=270] (1,2.5);
			\draw [color=\clr,  thick, directed=1] (0,.75) to [out=150,in=270] (-1,2.5); 
			\node at (0, -1) {\scriptsize $k\! +\! l$};
			\node at (-1,3) {\scriptsize $k$};
			\node at (1,3) {\scriptsize $l$};
			\end{tikzpicture}
		};
		\endxy
	\end{equation*}
	for $k,l\in\Z_{>0}$.  We call these \emph{dots}, \emph{merges}, and \emph{splits}, respectively. The $\Z_{2}$-grading is given by declaring  merges and splits to have parity $\0$ and dots to have parity $\1$. The morphisms in $\qwebs$ are subject to the relations \cref{associativity,digon-removal,dot-collision,dots-past-merges,dumbbell-relation,square-switch,square-switch-dots,double-rungs-1,double-rungs-2}.	
\end{definition}

Before giving the defining relations for $\qwebs$, let us set some notation and terminology.  Objects in $\qwebs$ are words from the set $\left\{\uparrow_{k} \mid k \geq 0 \right\}$.  We write $\uparrow_{a}^{t}$ for the $t$-fold tensor product of $\uparrow_{a}$ with itself.  The empty word is the unit object and we write $\unit$ for this object.  Given a sequence of integers $a=(a_{1}, \dotsc , a_{r})$, for short we write $\uparrow_{a}$ for $\uparrow_{a_{1}}\dotsb \uparrow_{a_{r}}$.   We frequently use sans serif (e.g.\ $\ob{a}$) to denote an object of $\qwebs$ with the same letter in the ordinary font then denoting the corresponding sequence of integers.  For example, $a = (a_{1}, \dotsc, a_{r})$ is the tuple corresponding to the object $\ob{a}= \uparrow_{a} = \uparrow_{a_{1}}\dotsb \uparrow_{a_{r}}$.  When it will not cause confusion we may abuse notation by writing $\ob{a}$ when referring to the corresponding sequence of integers.
  
As we did with the generating morphisms above, we label the edges of a diagram by nonnegative integers to indicate the relevant object.  For short we sometimes say an edge has \emph{thickness $k$} if it is labelled by $k$.  Since vertical concatenation is composition and horizontal concatenation is tensor product, any finite sequence of these operations applied to merges, splits, and dots yields a diagram which is a morphism in $\qwebs$.  We call such a diagram a \emph{web}.    If $\ob{a}$ and $\ob{b}$ are objects in $\qwebs$, then we say a web is of \emph{type} $\ob{a}\to \ob{b}$ if it is a morphism from $\ob{a}$ to $\ob{b}$  For example, the generating merge depicted above is of type $\up_{k}\up_{l} \to \up_{k+l}$.

We identify any two webs which are related by a planar isotopy which fixes the top and bottom boundaries and all dots.  We adopt the convention an edge labeled by zero is understood to mean the edge is omitted.  We declare any web containing an edge labeled by a negative integer to be the zero morphism. When no confusion is possible we sometimes choose to suppress edge labels.  In particular, edges labelled by $1$ frequently have their labels omitted.

An arbitrary morphism from $\ob{a}$ to $\ob{b}$ is a linear combination of webs of type $\ob{a} \to \ob{b}$.  To write the defining relations for $\qwebs$ it is convenient to set the following shorthand. A \emph{ladder} is a web which is finite sequence of vertical and horizontal concatenations of identities, dots, and webs of the form
\begin{equation}\label{E:rightladder}
\xy
(0,0)*{\reflectbox{
		\bt[color=\clr, scale=.4]
		\draw [ thick, color=\clr,directed=1, directed=0.3 ] (2,-1) to (2,1);
		\draw [ thick, color=\clr,directed=0.6 ] (0,0) to (2,0);
		\draw [ thick, color=\clr,directed=1, directed=0.3 ] (0,-1) to (0,1);
		\node at (0,-1.5) {\reflectbox{\scriptsize $l$}};
		\node at (2,-1.5) {\reflectbox{\scriptsize $k$}};
		\node at (0,1.5) {\reflectbox{\scriptsize $l\! -\! j$}};
		\node at (2,1.5) {\reflectbox{\scriptsize $k\! +\! j$}};
		\node at (0.95,0.55) {\reflectbox{\scriptsize $j$ \ }};
		\et
}};
\endxy:= \ 
\xy
(0,0)*{\reflectbox{
		\bt[color=\clr, scale=.4]
		\draw [ thick, color=\clr,directed=1] (1,.75) to  (1,2);
		\draw [ thick, color=\clr,directed=0.75, looseness=1.25] (2,-1.5) to [out=90,in=0] (1,.75);
		\draw [ thick, color=\clr,directed=.65] (-1,-1.5) to (-1,-.25);
		\draw [ thick, blue, looseness=1.25] (-1,-0.25) to [out=180,in=270] (-2,1.5);
		\draw [ thick, color=\clr,directed=1] (-2,1.5) to (-2,2);
		\draw [ thick, color=\clr,directed=.55, looseness=1.5] (-1,-.25) to [out=0,in=180] (1,.75);
		\node at (-1,-2) {\reflectbox{\scriptsize $l$}};
		\node at (2,-2) {\reflectbox{\scriptsize $k$}};
		\node at (-2,2.5) {\reflectbox{\scriptsize $l\! -\! j$}};
		\node at (1,2.5) {\reflectbox{\scriptsize $k\! +\! j$}};
		\node at (-0.35,.8) {\reflectbox{\scriptsize $j$ \ }};
		\et
}};
\endxy,
\end{equation}
\begin{equation}\label{E:leftladder}
\xy
(0,0)*{
	\bt[color=\clr, scale=.4]
	\draw [ thick, color=\clr,directed=1, directed=0.3 ] (2,-1) to (2,1);
	\draw [ thick, color=\clr,directed=0.6 ] (0,0) to (2,0);
	\draw [ thick, color=\clr,directed=1, directed=0.3 ] (0,-1) to (0,1);
	\node at (0,-1.5) {\scriptsize $k$};
	\node at (2,-1.5) {\scriptsize $l$};
	\node at (0,1.5) {\scriptsize $k\! -\! j$};
	\node at (2,1.5) {\scriptsize $l\! +\! j$};
	\node at (0.95,0.55) {\scriptsize $j$ \ };
	\et
};
\endxy:= \ 
\xy
(0,0)*{
	\bt[color=\clr, scale=.4]
	\draw [ thick, color=\clr,directed=1] (1,.75) to (1,2);
	\draw [ thick, color=\clr,directed=0.75, looseness=1.25] (2,-1.5) to [out=90,in=0] (1,.75);
	\draw [ thick, color=\clr,directed=.65] (-1,-1.5) to (-1,-.25);
	\draw [ thick, blue, looseness=1.25] (-1,-0.25) to [out=180,in=270] (-2,1.5);
	\draw [ thick, color=\clr,directed=1] (-2,1.5) to (-2,2);
	\draw [ thick, color=\clr,directed=.55, looseness=1.5] (-1,-.25) to [out=0,in=180] (1,.75);
	\node at (-1,-2) {\scriptsize $k$};
	\node at (2,-2) {\scriptsize $l$};
	\node at (-2,2.5) {\scriptsize $k\! -\! j$};
	\node at (1,2.5) {\scriptsize $l\! +\! j$};
	\node at (-0.2,.8) {\scriptsize $j$ \ };
	\et
};
\endxy \ ,
\end{equation}
for $k,l\in\Z_{>0}$ and $j\in\Z_{\geq0}$. The edge which connects two vertical strands is called a \emph{rung}.
By choosing a suitable $k$, $l$, and $j$ every merge and split is itself a ladder.  A rung with a dot denotes the same web but with a dot placed on the strand labelled by $j$ in the above diagrams.

The morphisms in $\qwebs$ are subject to the following relations  for all nonnegative integers $h,k,l$.  We also impose the relations obtained by reflecting the webs in \cref{dots-past-merges} across a vertical axis while also exchanging $q$ and $q^{-1}$, and by reversing all rung orientations of the ladders in \cref{double-rungs-1} and \cref{double-rungs-2} (where edge labels at the top of the diagram are changed as needed to make a valid web).  


\input{UpwardQuantumWebRelations}

As a shorthand we declare the following diagram of type $\uparrow_{1}^{k} \to \uparrow_{k}$ to be understood as the vertical composition of $k$ merges:  
\begin{equation}\label{E:explosion}
\xy
(0,0)*{
	\bt[color=\clr, scale=.5]
	\draw [ thick, directed=1] (4,9) to (4,10);
	\draw [ thick, directed=0.75] (5,7.25) to [out=90,in=330] (4,9);
	\draw [ thick, directed=0.75] (3,7.25) to [out=90,in=210] (4,9);
	\node at (4.1, 7.75) { $\cdots$};
	\node at (4,10.5) {\scriptsize $k$};
	\node at (3,6.75) {\scriptsize $1$};
	\node at (5,6.75) {\scriptsize $1$};
	\et
};
\endxy \ .
\end{equation}
 By \cref{associativity} the resulting morphism is independent of how this is done.  In particular, it will frequently happen in calculations that several merges will be composed and by \cref{associativity} the result will equal the above diagram.  We use this observation (and its doppelg{\" a}nger for splits) without comment in what follows.

Finally, in later calculations we frequently save work by implicitly making use of the self-isomorphism of $\qwebs$ given on morphisms by reflecting diagrams across a vertical line through the center and exchanging $q$ and $q^{-1}$.

\subsection{Additional Relations}\label{SS:AdditionalRelations}  The defining relations of $\qwebs$ imply the following additional relations.

\begin{lemma}\label{additional-relations}
	The following relations holds in $\qwebs$.
	\begin{enumerate}
		\item 
		\beq\label{2-dots-zero}
		\xy
		(0,0)*{
			\begin{tikzpicture}[color=\clr, scale=.3]
			\draw [ color=\clr, thick, directed=1] (0,.75) to (0,2);
			\draw [ color=\clr, thick, directed=.85] (0,-2.75) to [out=30,in=330] (0,.75);
			\draw [ color=\clr, thick, directed=.85] (0,-2.75) to [out=150,in=210] (0,.75);
			\draw [ color=\clr, thick, directed=.65] (0,-4) to (0,-2.75);
			\node at (0,-4.5) {\scriptsize $2$};
			\node at (0,2.5) {\scriptsize $2$};
			\node at (-1.5,-1) {\scriptsize $1$};
			\node at (1.5,-1) {\scriptsize $1$};
			\draw (-0.85,-1) \wdot;
			\draw (0.85,-1) \wdot;
			\end{tikzpicture}
		};
		\endxy= \tq 
		\xy
		(0,0)*{
			\begin{tikzpicture}[color=\clr, scale=.3]
			\draw [ color=\clr, thick, directed=1] (0,-4.) to (0,2);
			\node at (0,-4.5) {\scriptsize $2$};
			\node at (0,2.5) {\scriptsize $2$};
			\end{tikzpicture}
		};
		\endxy
		\eeq
		\item 		For $k \geq 1$,
		\beq\label{dot-on-k-strand}
		\xy
		(0,0)*{
			\begin{tikzpicture}[color=\clr, scale=.3]
			\draw [ color=\clr, thick, directed=1] (0,.75) to (0,2);
			\draw [ color=\clr, thick, directed=.85] (0,-2.75) to [out=30,in=330] (0,.75);
			\draw [ color=\clr, thick, directed=.85] (0,-2.75) to [out=150,in=210] (0,.75);
			\draw [ color=\clr, thick, directed=.65] (0,-4) to (0,-2.75);
			\node at (0,-4.5) {\scriptsize $k$};
			\node at (0,2.5) {\scriptsize $k$};
			\node at (-1.5,-1) {\scriptsize $1$};
			\node at (2,-1) {\scriptsize $k\!-\!1$};
			\draw (-0.875,-1) \wdot;
			\end{tikzpicture}
		};
		\endxy=
		\xy
		(0,0)*{
			\begin{tikzpicture}[color=\clr, scale=.3]
			\draw [ color=\clr, thick, directed=1] (0,-4.) to (0,2);
			\node at (0,-4.5) {\scriptsize $k$};
			\node at (0,2.5) {\scriptsize $k$};
			\draw (0,-1) \wdot;
			\end{tikzpicture}
		};
		\endxy= 
		\xy
		(0,0)*{
			\begin{tikzpicture}[color=\clr, scale=.3]
			\draw [ color=\clr, thick, directed=1] (0,.75) to (0,2);
			\draw [ color=\clr, thick, directed=.85] (0,-2.75) to [out=30,in=330] (0,.75);
			\draw [ color=\clr, thick, directed=.85] (0,-2.75) to [out=150,in=210] (0,.75);
			\draw [ color=\clr, thick, directed=.65] (0,-4) to (0,-2.75);
			\node at (0,-4.5) {\scriptsize $k$};
			\node at (0,2.5) {\scriptsize $k$};
			\node at (-2,-1) {\scriptsize $k\!-\!1$};
			\node at (1.5,-1) {\scriptsize $1$};
			\draw (0.875,-1) \wdot;
			\end{tikzpicture}
		};
		\endxy.
		\eeq
		\item 
		For $k \geq 1$,
		\beq\label{dot-on-any-strand} 
		\xy
		(0,0)*{
			\begin{tikzpicture}[color=\clr, scale=.3]
			\draw [ color=\clr, thick, directed=1] (0,.75) to (0,2);
			\draw [ color=\clr, thick, directed=.85] (0,-2.75) to [out=30,in=330] (0,.75);
			\draw [ color=\clr, thick, directed=.85] (0,-2.75) to [out=150,in=210] (0,.75);
			\draw [ color=\clr, thick, directed=.65] (0,-4) to (0,-2.75);
			\node at (0.18, -1.25) {$\cdots$};
			\node at (0,-4.5) {\scriptsize $k$};
			\node at (0,2.5) {\scriptsize $k$};
			\node at (-1.5,-1) {\scriptsize $1$};
			\node at (1.5,-1) {\scriptsize $1$};
			\draw (-0.875,-1) \wdot;
			\end{tikzpicture}
		};
		\endxy=
		\xy
		(0,0)*{
			\begin{tikzpicture}[color=\clr, scale=.3]
			\draw [ color=\clr, thick, directed=1] (0,-4.) to (0,2);
			\node at (0,-4.5) {\scriptsize $k$};
			\node at (0,2.5) {\scriptsize $k$};
			\draw (0,-1) \wdot;
			\end{tikzpicture}
		};
		\endxy= 
		\xy
		(0,0)*{
			\begin{tikzpicture}[color=\clr, scale=.3]
			\draw [ color=\clr, thick, directed=1] (0,.75) to (0,2);
			\draw [ color=\clr, thick, directed=.85] (0,-2.75) to [out=30,in=330] (0,.75);
			\draw [ color=\clr, thick, directed=.85] (0,-2.75) to [out=150,in=210] (0,.75);
			\draw [ color=\clr, thick, directed=.65] (0,-4) to (0,-2.75);
			\node at (0.0, -1.25) {$\cdots$};
			\node at (0,-4.5) {\scriptsize $k$};
			\node at (0,2.5) {\scriptsize $k$};
			\node at (-1.5,-1) {\scriptsize $1$};
			\node at (1.5,-1) {\scriptsize $1$};
			\draw (0.875,-1) \wdot;
			\end{tikzpicture}
		};
		\endxy,
		\eeq
		where in \cref{dot-on-any-strand} the left and right diagrams have  $k$ parallel strands labelled by $1$.  Moreover, these diagrams are equal to any of the other, similar diagrams consisting of $k$ parallel strands labelled by $1$ with exactly one of those strands having a dot.
	\end{enumerate}
	
\end{lemma}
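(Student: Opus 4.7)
The plan is to prove these three relations in the order (b), (a), (c), since the base case of (b) is the key computational input and (c) will be handled inductively using (b).

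For (b) in the base case $k=2$, let $A=m\circ(x\otimes\id_1)\circ s$ and $B=m\circ(\id_1\otimes x)\circ s$ denote the two digons $\up_2\to\up_1\up_1\to\up_2$ with a single dot. Post-composing the merge--dot relation from \cref{dots-past-merges} with $s$, and pre-composing the split--dot relation with $m$, then using \cref{digon-removal} to rewrite $m\circ s=[2]_q\id_2$, gives the linear system
\[
[2]_q x_2=qA+q^{-1}B=q^{-1}A+qB.
\]
Subtracting forces $A=B$, and adding then yields $A=B=x_2$. For general $k\ge 2$, the same argument applied to $s:\up_k\to\up_1\up_{k-1}$ and $m:\up_1\up_{k-1}\to\up_k$ (using $m\circ s=[k]_q\id_k$) gives $[k]_q x_k=q^{k-1}A+q^{-1}B'=q^{-(k-1)}A+qB'$, which solves to $A=x_k$. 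The right-hand version follows by the horizontal-reflection self-isomorphism of $\qwebs$ noted at the end of \cref{SS:UpwardWebs}.

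For (a), I would interpret the two stacked dots as $(x\otimes x)=(x\otimes\id_1)\circ(\id_1\otimes x)$ and apply the merge--dot relation to the leftmost factor $m\circ(x\otimes\id_1)$. This rewrites the diagram as a combination of two pieces. The first is $x_2\circ B=x_2^2$, which by \cref{dot-collision} equals $[2]_{q^2}\id_2=(q^2+q^{-2})\id_2$ (using the $k=2$ case of (b)). The second is $m\circ(\id_1\otimes x)\circ(\id_1\otimes x)\circ s$, which reduces via $(\id_1\otimes x)^2=\id_1\otimes x^2=\id_{1,1}$ (from \cref{dot-collision} at $k=1$, since $[1]_{q^2}=1$) and \cref{digon-removal} to $[2]_q\id_2$. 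Collecting coefficients with the appropriate powers of $q$ yields exactly $(q-q^{-1})\id_2=\tq\,\id_2$.

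For (c), I would argue by induction on $k$, with base cases $k=1,2$ either trivial or immediate from (b). For $k\ge 3$, \cref{associativity} rewrites the bottom and top vertices of the diagram in nested form, isolating one outer split/merge pair of type $\up_k\leftrightarrow\up_1\up_{k-1}$ together with an inner splitting of the $\up_{k-1}$-strand. If the dotted $\up_1$-strand is the isolated outer one, the inner $(k-1)$-digon is absorbed by \cref{digon-removal} into a scalar and the remaining outer digon is (b); otherwise, the dot sits inside the inner structure, where the inductive hypothesis lets me consolidate it before applying (b) to the outer digon. The main obstacle will be bookkeeping: carefully tracking super-interchange signs coming from the odd parity of the dot, and choosing associativity patterns that match the position of the dot so that the reduction to (b) and the inductive hypothesis can be carried out uniformly.
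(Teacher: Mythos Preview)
Your proof is correct but takes a different route than the paper's. The paper proves (a) first by composing with \cref{digon-removal} and invoking the dumbbell relation \cref{dumbbell-relation}, and then proves (b) by induction on $k$ with the base case $k=2$ deduced from (a). You reverse the order: your argument for (b) is a direct, non-inductive computation---pairing the merge and split halves of \cref{dots-past-merges} (post- and pre-composed with $s$ and $m$) into a $2\times 2$ linear system and solving---which works uniformly for all $k\ge 2$ at once. You then derive (a) from the $k=2$ case of (b) together with \cref{dots-past-merges,dot-collision,digon-removal}, never touching the dumbbell relation. Your approach is a bit more elementary in that it avoids \cref{dumbbell-relation}; the paper's is shorter for (a) since the dumbbell relation does the work in one stroke. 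For (c) both arguments reduce to the same associativity-and-induction bookkeeping.
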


\begin{proof}
	The first relation follows by vertically composing the left side of \cref{2-dots-zero} with \cref{digon-removal}, rewriting the result using \cref{dumbbell-relation}, and simplifying to obtain the right hand side.  See \cite[Lemma 4.2.1]{BrownKujawa} for a similar calculation in the classical setting.  To prove \cref{dot-on-k-strand} one argues by inducting on $k$ using \cref{associativity,dots-past-merges}.  The base case of $k=2$ follows from \cref{2-dots-zero}.  As the details are similar to the analogous proof of \cite[Lemma 4.2.1]{BrownKujawa}, we omit them. 
\end{proof}

\begin{lemma}\label{Udot-relations}
	For $h,k,l,r,s \geq 0$, 
	\begin{enumerate}
		\item 
		\begin{equation}\label{rung-collision}
		\xy
		(0,0)*{
			\begin{tikzpicture}[color=\clr]
			\draw [color=\clr, thick, directed=.15, directed=1, directed=.55] (0,0) to (0,1.75);
			\node at (0,-0.15) {\scriptsize $k$};
			\node at (0,1.9) {\scriptsize $k\!+\!r\!+\!s$ \ };
			\draw [color=\clr, thick, directed=.15, directed=1, directed=.55] (1,0) to (1,1.75);
			\node at (1,-0.15) {\scriptsize $l$};
			\node at (1,1.9) {\scriptsize  \ $l\!-\!r\!-\!s$};
			\draw [color=\clr, thick, directed=.55] (1,0.5) to (0,0.5);
			\node at (0.5,0.25) {\scriptsize$r$};
			\node at (-0.4,0.875) {\scriptsize $k\!+\!r$};
			\draw [color=\clr, thick, directed=.55] (1,1.25) to (0,1.25);
			\node at (0.5,1.5) {\scriptsize$s$};
			\node at (1.4,0.875) {\scriptsize $l\!-\!r$};
			\end{tikzpicture}
		};
		\endxy=\qbinom{r+s}{s}
		\xy
		(0,0)*{
			\begin{tikzpicture}[color=\clr]
			\draw [color=\clr, thick, directed=.15, directed=1] (0,0) to (0,1.75);
			\node at (0,-0.15) {\scriptsize $k$};
			\node at (0,1.9) {\scriptsize $k\!+\!r\!+\!s$ \ };
			\draw [color=\clr, thick, directed=.15, directed=1] (1,0) to (1,1.75);
			\node at (1,-0.15) {\scriptsize $l$};
			\node at (1,1.9) {\scriptsize  \ $l\!-\!r\!-\!s$};
			\draw [color=\clr, thick, directed=.55] (1,0.75) to (0,0.75);
			\node at (0.5,1) {\scriptsize$r\!+\!s$};
			\end{tikzpicture}
		};
		\endxy,
		\end{equation}
		\item 
		\begin{equation}\label{square-switch-double-dots}
		\xy
		(0,0)*{
			\begin{tikzpicture}[color=\clr]
			\draw [color=\clr, thick, directed=.15, directed=1, directed=.55] (0,0) to (0,1.75);
			\node at (0,-0.15) {\scriptsize $k$};
			\node at (0,1.9) {\scriptsize $k$};
			\draw [color=\clr, thick, directed=.15, directed=1, directed=.55] (1,0) to (1,1.75);
			\node at (1,-0.15) {\scriptsize $l$};
			\node at (1,1.9) {\scriptsize $l$};
			\draw [color=\clr, thick, directed=.55] (0,0.5) to (1,0.5);
			\node at (0.5,0.25) {\scriptsize$1$};
			\node at (-0.4,0.875) {\scriptsize $k\!-\!1$};
			\draw [color=\clr, thick, directed=.55] (1,1.25) to (0,1.25);
			\node at (0.5,1.5) {\scriptsize$1$};
			\node at (1.4,0.875) {\scriptsize $l\!+\!1$};
			\draw  (0.75,0.5) \wdot;
			\draw  (0.25,1.25) \wdot;
			\end{tikzpicture}
		};
		\endxy+
		\xy
		(0,0)*{
			\begin{tikzpicture}[color=\clr]
			\draw [color=\clr, thick, directed=.15, directed=1, directed=.55] (0,0) to (0,1.75);
			\node at (0,-0.15) {\scriptsize $k$};
			\node at (0,1.9) {\scriptsize $k$};
			\draw [color=\clr, thick, directed=.15, directed=1, directed=.55] (1,0) to (1,1.75);
			\node at (1,-0.15) {\scriptsize $l$};
			\node at (1,1.9) {\scriptsize $l$};
			\draw [color=\clr, thick, directed=.55] (1,0.5) to (0,0.5);
			\node at (0.5,0.25) {\scriptsize$1$};
			\node at (-0.4,0.875) {\scriptsize $k\!+\!1$};
			\draw [color=\clr, thick, directed=.55] (0,1.25) to (1,1.25);
			\node at (0.5,1.5) {\scriptsize$1$};
			\node at (1.4,0.875) {\scriptsize $l\!-\!1$};
			\draw  (0.75,0.5) \wdot;
			\draw  (0.25,1.25) \wdot;
			\end{tikzpicture}
		};
		\endxy= \left[ k+l \right]_{q}
		\xy
		(0,0)*{
			\begin{tikzpicture}[color=\clr, scale=.3] 
			\draw [color=\clr, thick, directed=1] (1,-2.75) to (1,2.5);
			\draw [color=\clr, thick, directed=1] (-1,-2.75) to (-1,2.5);
			\node at (-1,3) {\scriptsize $k$};
			\node at (1,3) {\scriptsize $l$};
			\node at (-1,-3.15) {\scriptsize $k$};
			\node at (1,-3.15) {\scriptsize $l$};
			\end{tikzpicture}
		};
		\endxy \ +\tq  
		\xy
		(0,0)*{
			\begin{tikzpicture}[color=\clr, scale=.3] 
			\draw [color=\clr, thick, directed=1] (1,-2.75) to (1,2.5);
			\draw [color=\clr, thick, directed=1] (-1,-2.75) to (-1,2.5);
			\node at (-1,3) {\scriptsize $k$};
			\node at (1,3) {\scriptsize $l$};
			\node at (-1,-3.15) {\scriptsize $k$};
			\node at (1,-3.15) {\scriptsize $l$};
			\draw  (1,0) \wdot;
			\draw  (-1,0) \wdot;
			\end{tikzpicture}
		};
		\endxy
		\end{equation}
		\item

\beq\label{generalized-square-switch}
\xy
(0,0)*{
\bt[color=\clr]
	\draw[ color=\clr, thick, directed=.15, directed=1] (0,0) to (0,1.75);
	\node at (0,-0.2) {\scriptsize $k$};
	\node at (-0.2,1.9) {\scriptsize $k-r+1$};
	\draw[ color=\clr, thick, directed=.15, directed=1] (1,0) to (1,1.75);
	\node at (1,-0.2) {\scriptsize $l$};
	\node at (1.2,1.9) {\scriptsize $l+r-1$};
	\draw[ color=\clr, thick, directed=.55] (0,0.5) to (1,0.5);
	\node at (0.5,0.25) {\scriptsize $r$};
	\node at (-0.4,0.875) {\scriptsize{$k-r$}};
	\draw[ color=\clr, thick, directed=.55] (1,1.25) to (0,1.25);
	\node at (0.5,1.5) {\scriptsize$1$};
	\node at (1.4,0.875) {\scriptsize{$l+r$}};
\et
};
\endxy-
\xy
(0,0)*{
\bt[color=\clr]
	\draw[ color=\clr, thick, directed=.15, directed=1] (0,0) to (0,1.75);
	\node at (0,-0.2) {\scriptsize $k$};
	\node at (-0.2,1.9) {\scriptsize $k-r+1$};
	\draw[ color=\clr, thick, directed=.15, directed=1] (1,0) to (1,1.75);
	\node at (1,-0.15) {\scriptsize $l$};
	\node at (1.2,1.9) {\scriptsize $l+r-1$};
	\draw[ color=\clr, thick, directed=.55] (1,0.5) to (0,0.5);
	\node at (0.5,0.25) {\scriptsize $1$};
	\node at (-0.4,0.875) {\scriptsize $k\!+\!1$};
	\draw[ color=\clr, thick, directed=.55] (0,1.25) to (1,1.25);
	\node at (0.5,1.5) {\scriptsize $r$};
	\node at (1.4,0.875) {\scriptsize $l\!-\!1$};
\et
};
\endxy=[k-l+1-r]_{q}
\xy
(0,0)*{
\bt[color=\clr]
	\draw[ color=\clr, thick, directed=.15, directed=1] (0,0) to (0,1.75);
	\node at (0,-0.2) {\scriptsize $k$};
	\node at (-0.2,1.9) {\scriptsize $k-r+1$};
	\draw[ color=\clr, thick, directed=.15, directed=1] (1,0) to (1,1.75);
	\node at (1,-0.15) {\scriptsize $l$};
	\node at (1.2,1.9) {\scriptsize $l+r-1$};
	\node at (0.5,0.25) {\scriptsize $1$};
	\draw[ color=\clr, thick, directed=.55] (0,.85) to (1,.85);
	\node at (0.5,1.15) {\scriptsize $r-1$};
	\node at (1.4,0.875) {\scriptsize $l\!-\!1$};
\et
};
\endxy \ ,
\eeq

		\item 
		\begin{equation}\label{two-rung-switch}
		\xy
		(0,0)*{
			\begin{tikzpicture}[color=\clr]
			\draw [color=\clr, thick, directed=.15, directed=1] (-1,0) to (-1,1.75);
			\node at (-1,-0.15) {\scriptsize $h$};
			\node at (-1,1.9) {\scriptsize $h\!+\!2$};
			\draw [color=\clr, thick, directed=.15, directed=1, directed=.55] (0,0) to (0,1.75);
			\node at (0,-0.15) {\scriptsize $k$};
			\node at (0,1.9) {\scriptsize $k\!-\!1$};
			\draw [color=\clr, thick, directed=.15, directed=1] (1,0) to (1,1.75);
			\node at (1,-0.15) {\scriptsize $l$};
			\node at (1,1.9) {\scriptsize $l\!-\!1$};
			\draw [color=\clr, thick, rdirected=.55] (1,0.5) to (0,0.5);
			\node at (0.5,0.25) {\scriptsize$1$};
			\draw [color=\clr, thick, directed=.55] (0,1.25) to (-1,1.25);
			\node at (-0.5,1.5) {\scriptsize$1$};
			\end{tikzpicture}
		};
		\endxy =
		\xy
		(0,0)*{
			\begin{tikzpicture}[color=\clr]
			\draw [color=\clr, thick, directed=.15, directed=1] (-1,0) to (-1,1.75);
			\node at (-1,-0.15) {\scriptsize $h$};
			\node at (-1,1.9) {\scriptsize $h\!+\!2$};
			\draw [color=\clr, thick, directed=.15, directed=1, directed=.55] (0,0) to (0,1.75);
			\node at (0,-0.15) {\scriptsize $k$};
			\node at (0,1.9) {\scriptsize $k\!-\!1$};
			\draw [color=\clr, thick, directed=.15, directed=1] (1,0) to (1,1.75);
			\node at (1,-0.15) {\scriptsize $l$};
			\node at (1,1.9) {\scriptsize $l\!-\!1$};
			\draw [color=\clr, thick, rdirected=.55] (1,1.25) to (0,1.25);
			\node at (0.5,1.5) {\scriptsize$1$};
			\draw [color=\clr, thick, directed=.55] (0,0.5) to (-1,0.5);
			\node at (-0.5,0.25) {\scriptsize$1$};
			\end{tikzpicture}
		};
		\endxy,
		\end{equation}

		\item 
		\begin{equation}\label{two-rung-switch-with-dots}
		\xy
		(0,0)*{
			\begin{tikzpicture}[color=\clr]
			\draw [color=\clr, thick, directed=.15, directed=1] (-1,0) to (-1,1.75);
			\node at (-1,-0.15) {\scriptsize $h$};
			\node at (-1,1.9) {\scriptsize $h\!+\!2$};
			\draw [color=\clr, thick, directed=.15, directed=1, directed=.55] (0,0) to (0,1.75);
			\node at (0,-0.15) {\scriptsize $k$};
			\node at (0,1.9) {\scriptsize $k\!-\!1$};
			\draw [color=\clr, thick, directed=.15, directed=1] (1,0) to (1,1.75);
			\node at (1,-0.15) {\scriptsize $l$};
			\node at (1,1.9) {\scriptsize $l\!-\!1$};
			\draw [color=\clr, thick, rdirected=.25] (1,0.5) to (0,0.5);
			\node at (0.5,0.25) {\scriptsize$1$};
			\draw [color=\clr, thick, directed=.55] (0,1.25) to (-1,1.25);
			\node at (-0.5,1.5) {\scriptsize$1$};
			\draw (0.5,0.5) \wdot; 
			\end{tikzpicture}
		};
		\endxy =
		\xy
		(0,0)*{
			\begin{tikzpicture}[color=\clr]
			\draw [color=\clr, thick, directed=.15, directed=1] (-1,0) to (-1,1.75);
			\node at (-1,-0.15) {\scriptsize $h$};
			\node at (-1,1.9) {\scriptsize $h\!+\!2$};
			\draw [color=\clr, thick, directed=.15, directed=1, directed=.55] (0,0) to (0,1.75);
			\node at (0,-0.15) {\scriptsize $k$};
			\node at (0,1.9) {\scriptsize $k\!-\!1$};
			\draw [color=\clr, thick, directed=.15, directed=1] (1,0) to (1,1.75);
			\node at (1,-0.15) {\scriptsize $l$};
			\node at (1,1.9) {\scriptsize $l\!-\!1$};
			\draw [color=\clr, thick, rdirected=.25] (1,1.25) to (0,1.25);
			\node at (0.5,1.5) {\scriptsize$1$};
			\draw [color=\clr, thick, directed=.55] (0,0.5) to (-1,0.5);
			\node at (-0.5,0.25) {\scriptsize$1$};
			\draw (0.5,1.25) \wdot; 
			\end{tikzpicture}
		};
		\endxy,
		\end{equation}

		\item 
		\begin{equation}\label{double-rungs-3}
		\xy
		(0,0)*{
			\begin{tikzpicture}[color=\clr]
			\draw [color=\clr, thick, directed=.15, directed=1] (-1,0) to (-1,1.75);
			\node at (-1,-0.15) {\scriptsize $h$};
			\node at (-1,1.9) {\scriptsize $h\!+\!2$};
			\draw [color=\clr, thick, directed=.15, directed=1, directed=.55] (0,0) to (0,1.75);
			\node at (0,-0.15) {\scriptsize $k$};
			\node at (0,1.9) {\scriptsize $k\!-\!1$};
			\draw [color=\clr, thick, directed=.15, directed=1] (1,0) to (1,1.75);
			\node at (1,-0.15) {\scriptsize $l$};
			\node at (1,1.9) {\scriptsize $l\!-\!1$};
			\draw [color=\clr, thick, directed=.55] (1,0.5) to (0,0.5);
			\node at (0.5,0.25) {\scriptsize$1$};
			\draw [color=\clr, thick, directed=.55] (0,1.25) to (-1,1.25);
			\node at (-0.5,1.5) {\scriptsize$2$};
			\end{tikzpicture}
		};
		\endxy-
		\xy
		(0,0)*{
			\begin{tikzpicture}[color=\clr]
			\draw [color=\clr, thick, directed=.15, directed=1, directed=.55] (-1,0) to (-1,1.75);
			\node at (-1,-0.15) {\scriptsize $h$};
			\node at (-1,1.9) {\scriptsize $h\!+\!2$};
			\draw [color=\clr, thick, directed=.15, directed=1] (0,0) to (0,1.75);
			\node at (0,-0.15) {\scriptsize $k$};
			\node at (0,1.9) {\scriptsize $k\!-\!1$};
			\draw [color=\clr, thick, directed=.15, directed=1] (1,0) to (1,1.75);
			\node at (1,-0.15) {\scriptsize $l$};
			\node at (1,1.9) {\scriptsize $l\!-\!1$};
			\draw [color=\clr, thick, directed=.55] (0,0.5) to (-1,0.5);
			\node at (-0.5,0.25) {\scriptsize$1$};
			\draw [color=\clr, thick, directed=.55] (1,0.875) to (0,0.875);
			\node at (0.5,1.125) {\scriptsize$1$};
			\draw [color=\clr, thick, directed=.55] (0,1.25) to (-1,1.25);
			\node at (-0.5,1.5) {\scriptsize$1$};
			\end{tikzpicture}
		};
		\endxy+
		\xy
		(0,0)*{
			\begin{tikzpicture}[color=\clr]
			\draw [color=\clr, thick, directed=.15, directed=1] (-1,0) to (-1,1.75);
			\node at (-1,-0.15) {\scriptsize $h$};
			\node at (-1,1.9) {\scriptsize $h\!+\!2$};
			\draw [color=\clr, thick, directed=.15, directed=1, directed=.55] (0,0) to (0,1.75);
			\node at (0,-0.15) {\scriptsize $k$};
			\node at (0,1.9) {\scriptsize $k\!-\!1$};
			\draw [color=\clr, thick, directed=.15, directed=1] (1,0) to (1,1.75);
			\node at (1,-0.15) {\scriptsize $l$};
			\node at (1,1.9) {\scriptsize $l\!-\!1$};
			\draw [color=\clr, thick, directed=.55] (1,1.25) to (0,1.25);
			\node at (0.5,1.5) {\scriptsize$1$};
			\draw [color=\clr, thick, directed=.55] (0,0.5) to (-1,0.5);
			\node at (-0.5,0.25) {\scriptsize$2$};
			\end{tikzpicture}
		};
		\endxy=0,
		\end{equation}

		\item
		\begin{equation}\label{double-rungs-5}
		\xy
		(0,0)*{
			\begin{tikzpicture}[color=\clr]
			\draw [color=\clr, thick, directed=.15, directed=1] (-1,0) to (-1,1.75);
			\node at (-1,-0.15) {\scriptsize $h$};
			\node at (-1,1.9) {\scriptsize $h\!+\!2$};
			\draw [color=\clr, thick, directed=.15, directed=1, directed=.55] (0,0) to (0,1.75);
			\node at (0,-0.15) {\scriptsize $k$};
			\node at (0,1.9) {\scriptsize $k\!-\!1$};
			\draw [color=\clr, thick, directed=.15, directed=1] (1,0) to (1,1.75);
			\node at (1,-0.15) {\scriptsize $l$};
			\node at (1,1.9) {\scriptsize $l\!-\!1$};
			\draw [color=\clr, thick, directed=.85] (1,0.5) to (0,0.5);
			\draw (0.5,0.5) \wdot;
			\node at (0.5,0.25) {\scriptsize$1$};
			\draw [color=\clr, thick, directed=.55] (0,1.25) to (-1,1.25);
			\node at (-0.5,1.5) {\scriptsize$2$};
			\end{tikzpicture}
		};
		\endxy-
		\xy
		(0,0)*{
			\begin{tikzpicture}[color=\clr]
			\draw [color=\clr, thick, directed=.15, directed=1, directed=.55] (-1,0) to (-1,1.75);
			\node at (-1,-0.15) {\scriptsize $h$};
			\node at (-1,1.9) {\scriptsize $h\!+\!2$};
			\draw [color=\clr, thick, directed=.15, directed=1] (0,0) to (0,1.75);
			\node at (0,-0.15) {\scriptsize $k$};
			\node at (0,1.9) {\scriptsize $k\!-\!1$};
			\draw [color=\clr, thick, directed=.15, directed=1] (1,0) to (1,1.75);
			\node at (1,-0.15) {\scriptsize $l$};
			\node at (1,1.9) {\scriptsize $l\!-\!1$};
			\draw [color=\clr, thick, directed=.55] (0,0.5) to (-1,0.5);
			\node at (-0.5,0.25) {\scriptsize$1$};
			\draw [color=\clr, thick, directed=.85] (1,0.875) to (0,0.875);
			\draw (0.5,0.875) \wdot;
			\node at (0.5,1.125) {\scriptsize$1$};
			\draw [color=\clr, thick, directed=.55] (0,1.25) to (-1,1.25);
			\node at (-0.5,1.5) {\scriptsize$1$};
			\end{tikzpicture}
		};
		\endxy+
		\xy
		(0,0)*{
			\begin{tikzpicture}[color=\clr]
			\draw [color=\clr, thick, directed=.15, directed=1] (-1,0) to (-1,1.75);
			\node at (-1,-0.15) {\scriptsize $h$};
			\node at (-1,1.9) {\scriptsize $h\!+\!2$};
			\draw [color=\clr, thick, directed=.15, directed=1, directed=.55] (0,0) to (0,1.75);
			\node at (0,-0.15) {\scriptsize $k$};
			\node at (0,1.9) {\scriptsize $k\!-\!1$};
			\draw [color=\clr, thick, directed=.15, directed=1] (1,0) to (1,1.75);
			\node at (1,-0.15) {\scriptsize $l$};
			\node at (1,1.9) {\scriptsize $l\!-\!1$};
			\draw [color=\clr, thick, directed=.85] (1,1.25) to (0,1.25);
			\draw (0.5,1.25) \wdot ;
			\node at (0.5,1.5) {\scriptsize$1$};
			\draw [color=\clr, thick, directed=.55] (0,0.5) to (-1,0.5);
			\node at (-0.5,0.25) {\scriptsize$2$};
			\end{tikzpicture}
		};
		\endxy=0,
		\end{equation}

	\end{enumerate}
	In addition we have the equations obtained by reversing all rung orientations of the ladders in \cref{rung-collision} and relabelling the tops of diagrams as needed to make a valid web.  We also have the equations obtained from  \cref{double-rungs-3} by reversing all rung orientations, reflecting across the vertical line through the middle arrow, and both reversing and reflecting.  In this way \cref{double-rungs-3}, for example, actually represents four relations. 
\end{lemma}

\begin{proof}
The proof of \cref{rung-collision} follows from \cref{associativity} and \cref{digon-removal}.

 To prove \cref{square-switch-double-dots} we rewrite the first web on the left-hand side of \cref{square-switch-double-dots} using \cref{square-switch} as follows:
\begin{align*}
\mathord{\begin{tikzpicture}[baseline=-1ex,color=\clr, scale=.35]
		\node [style=none] (0) at (-2, 3) {};
		\node [style=none] (1) at (2, 3) {};
		\node [style=none] (2) at (-2, -3) {};
		\node [style=none] (3) at (2, -3) {};
		\node [style=none] (4) at (-2, -2) {};
		\node [style=none] (5) at (0, -2) {};
		\node [style=none] (6) at (0, -1) {};
		\node [style=none] (7) at (2, -1) {};
		\node [style=none] (8) at (2, 1) {};
		\node [style=none] (9) at (0, 1) {};
		\node [style=none] (10) at (0, 1.75) {};
		\node [style=none] (11) at (-2, 1.75) {};
		\node [style=none] (12) at (-1, 1.25) {\scriptsize{$1$}};
		\node [style=none] (13) at (-1, -1.25) {\scriptsize{$1$}};
		\node [style=none] (14) at (-2, -4) {\scriptsize{$k$}};
		\node [style=none] (15) at (2, -4) {\scriptsize{$l$}};
		\node [style=none] (18) at (-3, 0) {\scriptsize{$k-1$}};
		\node [style=none] (19) at (3, 0) {\scriptsize{$l+1$}};
		\draw [thick, <-] (0.center) to (2.center);
		\draw [thick, <-] (1.center) to (3.center);
		\draw [thick, -, directed=.5] (4.center) to (5.center);
		\draw [thick, -] (5.center) to (6.center);
		\draw [thick, -, directed=.5] (6.center) to (7.center);
		\draw [thick, -, directed=.6] (8.center) to (9.center);
		\draw [thick, -] (9.center) to (10.center);
		\draw [thick, -, directed=.5] (10.center) to (11.center);
		\draw  (-0.5, -2) \wdot;
		\draw  (-1.5, 1.75) \wdot;
\end{tikzpicture}}  
& =
\mathord{\begin{tikzpicture}[baseline=-1ex,color=\clr, scale=.35]
		\node [style=none] (0) at (-2, 3) {};
		\node [style=none] (1) at (2, 3) {};
		\node [style=none] (2) at (-2, -3) {};
		\node [style=none] (3) at (2, -3) {};
		\node [style=none] (4) at (-2, -2) {};
		\node [style=none] (5) at (0, -2) {};
		\node [style=none] (6) at (0, -1) {};
		\node [style=none] (7) at (2, -1) {};
		\node [style=none] (8) at (2, 1) {};
		\node [style=none] (9) at (0, 1) {};
		\node [style=none] (10) at (0, 1.75) {};
		\node [style=none] (11) at (-2, 1.75) {};
		\node [style=none] (12) at (-1, 1.25) {\scriptsize{$1$}};
		\node [style=none] (13) at (-1, -1.25) {\scriptsize{$1$}};
		\node [style=none] (14) at (-2, -4) {\scriptsize{$k$}};
		\node [style=none] (15) at (2, -4) {\scriptsize{$l$}};
		\node [style=none] (18) at (-3, 0) {\scriptsize{$k-1$}};
		\node [style=none] (19) at (3, 0) {\scriptsize{$l+1$}};
		\node [style=none] (20) at (-0.5, -0.25) {\scriptsize{$2$}};
		\draw [thick, <-] (0.center) to (2.center);
		\draw [thick, <-] (1.center) to (3.center);
		\draw [thick, -, directed=.5] (4.center) to (5.center);
		\draw [thick, -] (5.center) to (6.center);
		\draw [thick, -, rdirected=.5] (6.center) to (7.center);
		\draw [thick, -, rdirected=.6] (8.center) to (9.center);
		\draw [thick, -] (9.center) to (10.center);
		\draw [thick, -, directed=.5] (10.center) to (11.center);
		\draw [thick, -, directed=.75] (6.center) to (9.center);
		\draw  (-0.5, -2) \wdot;
		\draw  (-1.5, 1.75) \wdot;
\end{tikzpicture}}  
+  [1-l]_{q}[k]_{q}
\mathord{\begin{tikzpicture}[baseline=-1ex,color=\clr, scale=.35]
		\node [style=none] (0) at (-1, 3) {};
		\node [style=none] (3) at (-1, -3) {};
		\node [style=none] (4) at (1, 3) {};
		\node [style=none] (5) at (1.5, -2) {};
		\node [style=none] (6) at (1.5, 2) {};
		\node [style=none] (7) at (1, -3) {};
		\node [style=none] (14) at (-1, -4) {\scriptsize{$k$}};
		\node [style=none] (15) at (1, -4) {\scriptsize{$l$}};
		\draw [thick, <-] (0.center) to (3.center);
		\draw [thick,->] (7.center) to (4.center);
\end{tikzpicture}} \\
&= 
\mathord{\begin{tikzpicture}[baseline=-1ex,color=\clr, scale=.35]
		\node [style=none] (0) at (-2, 3) {};
		\node [style=none] (1) at (2, 3) {};
		\node [style=none] (2) at (-2, -3) {};
		\node [style=none] (3) at (2, -3) {};
		\node [style=none] (4) at (2, -2) {};
		\node [style=none] (5) at (0, -2) {};
		\node [style=none] (6) at (0, -0.6) {};
		\node [style=none] (7) at (-2, -2) {};
		\node [style=none] (8) at (-2, 1.75) {};
		\node [style=none] (9) at (0, 0.6) {};
		\node [style=none] (10) at (0, 1.75) {};
		\node [style=none] (11) at (2, 1.75) {};
		\node [style=none] (12) at (-1, 2) {\scriptsize{$1$}};
		\node [style=none] (13) at (-1, -2) {\scriptsize{$1$}};
		\node [style=none] (12) at (1, 2) {\scriptsize{$1$}};
		\node [style=none] (13) at (1, -2) {\scriptsize{$1$}};
		\node [style=none] (14) at (-2, -4) {\scriptsize{$k$}};
		\node [style=none] (15) at (2, -4) {\scriptsize{$l$}};
		\node [style=none] (18) at (-3, 0) {\scriptsize{$k-1$}};
		\node [style=none] (19) at (3, 0) {\scriptsize{$l-1$}};
		\node [style=none] (20) at (0.5, -0.25) {\scriptsize{$2$}};
		\draw [thick, <-] (0.center) to (2.center);
		\draw [thick, <-] (1.center) to (3.center);
		\draw [thick, -, directed=.6, looseness=1.25] (4.center) to [out=135, in=315] (6.center);
		\draw [thick, -, rdirected=.75, looseness=1.25] (6.center) to [out=225, in=45] (7.center);
		\draw [thick, -, rdirected=.35, looseness=1.25] (8.center) to [out=315,in=135] (9.center);
		\draw [thick, -, directed=.6, looseness=1.25] (9.center) to [out=45,in=225] (11.center);
		\draw [thick, -, directed=.75] (6.center) to (9.center);
		\draw  (-0.5, 1) \wdot;
		\draw  (-0.5, -1) \wdot;
\end{tikzpicture}}
+  [1-l]_{q}[k]_{q}
\mathord{\begin{tikzpicture}[baseline=-1ex,color=\clr, scale=.35]
		\node [style=none] (0) at (-1, 3) {};
		\node [style=none] (3) at (-1, -3) {};
		\node [style=none] (4) at (1, 3) {};
		\node [style=none] (5) at (1.5, -2) {};
		\node [style=none] (6) at (1.5, 2) {};
		\node [style=none] (7) at (1, -3) {};
		\node [style=none] (14) at (-1, -4) {\scriptsize{$k$}};
		\node [style=none] (15) at (1, -4) {\scriptsize{$l$}};
		\draw [thick, <-] (0.center) to (3.center);
		\draw [thick,->] (7.center) to (4.center);
\end{tikzpicture}} .
\end{align*}
Similarly we rewrite the second web on the left-hand side using \cref{square-switch,dumbbell-relation} as follows:
\begin{align*}
\mathord{\begin{tikzpicture}[baseline=-1ex,color=\clr, scale=.35]
		\node [style=none] (0) at (-2, 3) {};
		\node [style=none] (1) at (2, 3) {};
		\node [style=none] (2) at (-2, -3) {};
		\node [style=none] (3) at (2, -3) {};
		\node [style=none] (4) at (2, -2) {};
		\node [style=none] (5) at (0, -2) {};
		\node [style=none] (6) at (0, -1) {};
		\node [style=none] (7) at (-2, -1) {};
		\node [style=none] (8) at (-2, 1) {};
		\node [style=none] (9) at (0, 1) {};
		\node [style=none] (10) at (0, 1.75) {};
		\node [style=none] (11) at (2, 1.75) {};
		\node [style=none] (12) at (-1, 1.5) {\scriptsize{$1$}};
		\node [style=none] (13) at (-1, -1.5) {\scriptsize{$1$}};
		\node [style=none] (14) at (-2, -4) {\scriptsize{$k$}};
		\node [style=none] (15) at (2, -4) {\scriptsize{$l$}};
		\node [style=none] (18) at (-3, 0) {\scriptsize{$k+1$}};
		\node [style=none] (19) at (3, 0) {\scriptsize{$l-1$}};
		\draw [thick, <-] (0.center) to (2.center);
		\draw [thick, <-] (1.center) to (3.center);
		\draw [thick, -, directed=.5] (4.center) to (5.center);
		\draw [thick, -] (5.center) to (6.center);
		\draw [thick, -, directed=.75] (6.center) to (7.center);
		\draw [thick, -, directed=.75] (8.center) to (9.center);
		\draw [thick, -] (9.center) to (10.center);
		\draw [thick, -, directed=.5] (10.center) to (11.center);
		\draw  (0.5, -2) \wdot;
		\draw  (1.5, 1.75) \wdot;
\end{tikzpicture}}  
 &=  
\mathord{\begin{tikzpicture}[baseline=-1ex,color=\clr, scale=.35]
		\node [style=none] (0) at (-2, 3) {};
		\node [style=none] (1) at (2, 3) {};
		\node [style=none] (2) at (-2, -3) {};
		\node [style=none] (3) at (2, -3) {};
		\node [style=none] (4) at (2, -2) {};
		\node [style=none] (5) at (0, -2) {};
		\node [style=none] (6) at (0, -1) {};
		\node [style=none] (7) at (-2, -1) {};
		\node [style=none] (8) at (-2, 1) {};
		\node [style=none] (9) at (0, 1) {};
		\node [style=none] (10) at (0, 1.75) {};
		\node [style=none] (11) at (2, 1.75) {};
		\node [style=none] (12) at (-1, 1.5) {\scriptsize{$1$}};
		\node [style=none] (13) at (-1, -1.5) {\scriptsize{$1$}};
		\node [style=none] (14) at (-2, -4) {\scriptsize{$k$}};
		\node [style=none] (15) at (2, -4) {\scriptsize{$l$}};
		\node [style=none] (18) at (-3, 0) {\scriptsize{$k-1$}};
		\node [style=none] (19) at (3, 0) {\scriptsize{$l-1$}};
		\node [style=none] (20) at (0.5, -0.25) {\scriptsize{$2$}};
		\draw [thick, <-] (0.center) to (2.center);
		\draw [thick, <-] (1.center) to (3.center);
		\draw [thick, -, directed=.5] (4.center) to (5.center);
		\draw [thick, -] (5.center) to (6.center);
		\draw [thick, -, rdirected=.75] (6.center) to (7.center);
		\draw [thick, -, rdirected=.35] (8.center) to (9.center);
		\draw [thick, -] (9.center) to (10.center);
		\draw [thick, -, directed=.5] (10.center) to (11.center);
		\draw [thick, -, directed=.75] (6.center) to (9.center);
		\draw  (0.5, -2) \wdot;
		\draw  (1.5, 1.75) \wdot;
\end{tikzpicture}}
-  [k-1]_{q}[l]_{q}
\mathord{\begin{tikzpicture}[baseline=-1ex,color=\clr, scale=.35]
		\node [style=none] (0) at (-1, 3) {};
		\node [style=none] (3) at (-1, -3) {};
		\node [style=none] (4) at (1, 3) {};
		\node [style=none] (5) at (1.5, -2) {};
		\node [style=none] (6) at (1.5, 2) {};
		\node [style=none] (7) at (1, -3) {};
		\node [style=none] (14) at (-1, -4) {\scriptsize{$k$}};
		\node [style=none] (15) at (1, -4) {\scriptsize{$l$}};
		\draw [thick, <-] (0.center) to (3.center);
		\draw [thick,->] (7.center) to (4.center);
\end{tikzpicture}} \\
 &=  -
\mathord{\begin{tikzpicture}[baseline=-1ex,color=\clr, scale=.35]
		\node [style=none] (0) at (-2, 3) {};
		\node [style=none] (1) at (2, 3) {};
		\node [style=none] (2) at (-2, -3) {};
		\node [style=none] (3) at (2, -3) {};
		\node [style=none] (4) at (2, -2) {};
		\node [style=none] (5) at (0, -2) {};
		\node [style=none] (6) at (0, -0.6) {};
		\node [style=none] (7) at (-2, -2) {};
		\node [style=none] (8) at (-2, 1.75) {};
		\node [style=none] (9) at (0, 0.6) {};
		\node [style=none] (10) at (0, 1.75) {};
		\node [style=none] (11) at (2, 1.75) {};
		\node [style=none] (12) at (-1, 2) {\scriptsize{$1$}};
		\node [style=none] (13) at (-1, -2) {\scriptsize{$1$}};
		\node [style=none] (12) at (1, 2) {\scriptsize{$1$}};
		\node [style=none] (13) at (1, -2) {\scriptsize{$1$}};
		\node [style=none] (14) at (-2, -4) {\scriptsize{$k$}};
		\node [style=none] (15) at (2, -4) {\scriptsize{$l$}};
		\node [style=none] (18) at (-3, 0) {\scriptsize{$k-1$}};
		\node [style=none] (19) at (3, 0) {\scriptsize{$l-1$}};
		\node [style=none] (20) at (0.5, -0.25) {\scriptsize{$2$}};
		\draw [thick, <-] (0.center) to (2.center);
		\draw [thick, <-] (1.center) to (3.center);
		\draw [thick, -, directed=.6, looseness=1.25] (4.center) to [out=135, in=315] (6.center);
		\draw [thick, -, rdirected=.75, looseness=1.25] (6.center) to [out=225, in=45] (7.center);
		\draw [thick, -, rdirected=.35, looseness=1.25] (8.center) to [out=315,in=135] (9.center);
		\draw [thick, -, directed=.6, looseness=1.25] (9.center) to [out=45,in=225] (11.center);
		\draw [thick, -, directed=.75] (6.center) to (9.center);
		\draw  (-0.5, 1) \wdot;
		\draw  (-0.5, -1) \wdot;
\end{tikzpicture}}
 + \left( [2]_{q}[k]_{q}[l]_{q}-  [k-1]_{q}[l]_{q} \right)
\mathord{\begin{tikzpicture}[baseline=-1ex,color=\clr, scale=.35]
		\node [style=none] (0) at (-1, 3) {};
		\node [style=none] (3) at (-1, -3) {};
		\node [style=none] (4) at (1, 3) {};
		\node [style=none] (5) at (1.5, -2) {};
		\node [style=none] (6) at (1.5, 2) {};
		\node [style=none] (7) at (1, -3) {};
		\node [style=none] (14) at (-1, -4) {\scriptsize{$k$}};
		\node [style=none] (15) at (1, -4) {\scriptsize{$l$}};
		\draw [thick, <-] (0.center) to (3.center);
		\draw [thick,->] (7.center) to (4.center);
\end{tikzpicture}}
 +  \tq 
\mathord{\begin{tikzpicture}[baseline=-1ex,color=\clr, scale=.35]
		\node [style=none] (0) at (-1, 3) {};
		\node [style=none] (3) at (-1, -3) {};
		\node [style=none] (4) at (1, 3) {};
		\node [style=none] (5) at (1.5, -2) {};
		\node [style=none] (6) at (1.5, 2) {};
		\node [style=none] (7) at (1, -3) {};
		\node [style=none] (14) at (-1, -4) {\scriptsize{$k$}};
		\node [style=none] (15) at (1, -4) {\scriptsize{$l$}};
		\draw [thick, <-] (0.center) to (3.center);
		\draw [thick,->] (7.center) to (4.center);
		\draw (-1,0) \wdot;
		\draw (1,0) \wdot;
\end{tikzpicture}} .
\end{align*}  Adding the two and simplifying yields the right hand side of  \cref{square-switch-double-dots}, as desired.

To prove \cref{generalized-square-switch} one argues via induction using \cref{rung-collision,square-switch}.   The proof of \cref{two-rung-switch,two-rung-switch-with-dots} both follow from writing the rungs as merges and splits and applying \cref{associativity}.

The proof of \cref{double-rungs-3} follows by showing the middle diagram equals the sum of the other two webs and is as follows: 
\begin{align*}
\mathord{\begin{tikzpicture}[baseline=-1ex,color=\clr, scale=.35]
		\node [style=none] (0) at (-2, 3) {};
		\node [style=none] (1) at (-2.5, 2) {};
		\node [style=none] (3) at (-2, 4) {};
		\node [style=none] (8) at (-1, 1.5) {};
		\node [style=none] (10) at (-0.5, 4) {};
		\node [style=none] (12) at (0, -0.5) {};
		\node [style=none] (13) at (-0.5, -2) {};
		\node [style=none] (14) at (0.5, -1) {};
		\node [style=none] (16) at (-2, -1.5) {};
		\node [style=none] (17) at (-2.5, -4) {};
		\node [style=none] (18) at (-1.5, -2.5) {};
		\node [style=none] (19) at (-2.5, 0) {};
		\node [style=none] (20) at (1, -2) {};
		\node [style=none] (22) at (1.5, 4) {};
		\node [style=none] (23) at (1, -4) {};
		\node [style=none] (24) at (-1, -3) {};
		\node [style=none] (27) at (-1, -4) {};
		\node [style=none] (31) at (-1.5, 2.5) {};
		\node [style=none] (32) at (-0.5, 2.5) {};
		\node [style=none] (33) at (1.5, -1) {};
		\node [style=none] (34) at (-2.5, -2.5) {};
		\node [style=none] (35) at (0, 0.5) {};
		\node [style=none] (28) at (-2.5, -4.5) {\scriptsize{$h$}};
		\node [style=none] (29) at (-1, -4.5) {\scriptsize{$k$}};
		\node [style=none] (30) at (1, -4.5) {\scriptsize{$l$}};
		\node [style=none] (25) at (0.35, 0.75) {\scriptsize{$k$}};
		\node [style=none] (26) at (-1.75, -2.5) {\scriptsize{$1$}};
		\node [style=none] (40) at (0.75, -1) {\scriptsize{$1$}};
		\node [style=none] (21) at (-1.75, 2.25) {\scriptsize{$1$}};
		\draw [thick, <-](3.center) to (0.center);
		\draw [thick, -, looseness=1.25](0.center) to [out=180, in=90] (1.center);
		\draw [thick, -, looseness=1.25, rdirected=0.5](12.center) to [out=180, in=90] (13.center);
		\draw [thick, -, looseness=1.25](12.center) to [out=0, in=90] (14.center);
		\draw [thick, -, looseness=1.25](19.center) to [out=270, in=90] (16.center);
		\draw [thick, -, looseness=1.25, rdirected=0.6](16.center) to [out=0,in=90] (18.center);
		\draw [thick, -](23.center) to (20.center);
		\draw [thick, -, looseness=1.25, directed=0.75](20.center) to [out=180,in=270] (14.center);
		\draw [thick, -](27.center) to (24.center);
		\draw [thick, -, looseness=1.25](24.center) to [out=180,in=270] (18.center);
		\draw [thick, -, looseness=1.25](24.center) to [out=0, in=270] (13.center);
		\draw [thick, -, rdirected=0.5](1.center) to (19.center);
		\draw [thick, -, looseness=1.25](0.center) to [out=0,in=90] (31.center);
		\draw [thick, -, looseness=1.25](31.center) to [out=270,in=180] (8.center);
		\draw [thick, -, looseness=1.25](8.center) to [out=0, in=90] (32.center);
		\draw [thick, <-](10.center) to (32.center);
		\draw [thick, -, looseness=1.25](20.center) to [out=0, in=270] (33.center);
		\draw [thick, ->](33.center) to (22.center);
		\draw [thick, -, looseness=1.25](16.center) to [out=180,in=90] (34.center);
		\draw [thick, -](34.center) to (17.center);
		\draw [thick, -, looseness=1.25, rdirected=0.5](8.center) to [out=270, in=90] (35.center);
		\draw [thick, -](35.center) to (12.center);
\end{tikzpicture}}
=
\mathord{\begin{tikzpicture}[baseline=-1ex,color=\clr, scale=.35]
		\node [style=none] (0) at (-2, 3) {};
		\node [style=none] (1) at (-2.5, 2) {};
		\node [style=none] (3) at (-2, 4) {};
		\node [style=none] (8) at (-0.6125, 1) {};
		\node [style=none] (10) at (0, 4) {};
		\node [style=none] (12) at (0, -1) {};
		\node [style=none] (13) at (-0.5, -2) {};
		\node [style=none] (14) at (0.5, -2) {};
		\node [style=none] (16) at (-1.75, 1) {};
		\node [style=none] (17) at (-2.5, -4) {};
		\node [style=none] (18) at (-1.75, -2) {};
		\node [style=none] (19) at (-2.5, 0) {};
		\node [style=none] (20) at (1, -3) {};
		\node [style=none] (22) at (1.5, 4) {};
		\node [style=none] (23) at (1, -4) {};
		\node [style=none] (24) at (-1, -3) {};
		\node [style=none] (27) at (-1, -4) {};
		\node [style=none] (31) at (-1.25, 2) {};
		\node [style=none] (32) at (0, 2) {};
		\node [style=none] (33) at (1.5, -2) {};
		\node [style=none] (34) at (-2.5, -2) {};
		\node [style=none] (35) at (0, 0) {};
		\node [style=none] (28) at (-2.5, -4.5) {\scriptsize{$h$}};
		\node [style=none] (29) at (-1, -4.5) {\scriptsize{$k$}};
		\node [style=none] (30) at (1, -4.5) {\scriptsize{$l$}};
		\node [style=none] (25) at (0.35, 0.5) {\scriptsize{$k$}};
		\node [style=none] (40) at (0.75, -2) {\scriptsize{$1$}};
		\node [style=none] (21) at (-2, -1) {\scriptsize{$1$}};
		\node [style=none] (21) at (-1, 3) {\scriptsize{$2$}};
		\node [style=none] (26) at (-.65, 1.75) {\scriptsize{$1$}};
		\draw [thick, <-](3.center) to  (0.center);
		\draw [thick, -, looseness=1.25](0.center) to [out=180,in=90] (1.center);
		\draw [thick, -, looseness=1.25](12.center) to [out=180,in=90] (13.center);
		\draw [thick, -, looseness=1.25](12.center) to [out=0,in=90] (14.center);
		\draw [thick, -, rdirected=0.5](16.center) to (18.center);
		\draw [thick, -](23.center) to (20.center);
		\draw [thick, -, looseness=1.25, directed=0.75](20.center) to [out=180,in=270] (14.center);
		\draw [thick, -](27.center) to (24.center);
		\draw [thick, -, looseness=1.25](24.center) to [out=180,in=270] (18.center);
		\draw [thick, -, looseness=1.25](24.center) to [out=0,in=270] (13.center);
		\draw [thick, -](1.center) to (19.center);
		\draw [thick, -, looseness=1.25](0.center) to [out=0,in=90] (31.center);
		\draw [thick, -, looseness=1.25](31.center) to [out=0,in=180] (8.center);
		\draw [thick, -, looseness=1.25](8.center) to [out=0,in=270] (32.center);
		\draw [thick, <-](10.center) to (32.center);
		\draw [thick, -, looseness=1.25](20.center) to [out=0,in=270] (33.center);
		\draw [thick, ->](33.center) to (22.center);
		\draw [thick, -](34.center) to (17.center);
		\draw [thick, -,looseness=1.25,rdirected=0.5](8.center) to [out=270,in=90] (35.center);
		\draw [thick, -](35.center) to (12.center);
		\draw [thick, -, looseness=1.25](31.center) to [out=180,in=90] (16.center);
		\draw [thick, -](19.center) to (34.center);
\end{tikzpicture}}  
=
\mathord{\begin{tikzpicture}[baseline=-1ex,color=\clr, scale=.35]
		\node [style=none] (0) at (-1, 1) {};
		\node [style=none] (1) at (1, 1) {};
		\node [style=none] (2) at (1, -1) {};
		\node [style=none] (3) at (-1, -1) {};
		\node [style=none] (4) at (-2, 2) {};
		\node [style=none] (5) at (-2, 4) {};
		\node [style=none] (6) at (-3, 1) {};
		\node [style=none] (7) at (-3, -1) {};
		\node [style=none] (8) at (3, -1) {};
		\node [style=none] (9) at (2, -2) {};
		\node [style=none] (10) at (2, -4) {};
		\node [style=none] (11) at (-1, -4) {};
		\node [style=none] (12) at (-3, -4) {};
		\node [style=none] (13) at (3, 1) {};
		\node [style=none] (14) at (1, 4) {};
		\node [style=none] (15) at (3, 4) {};
		\node [style=none] (16) at (-3, -4.5) {\scriptsize{$h$}};
		\node [style=none] (17) at (-1, -4.5) {\scriptsize{$k$}};
		\node [style=none] (18) at (2, -4.5) {\scriptsize{$l$}};
		\node [style=none] (19) at (0, 1.5) {\scriptsize{$1$}};
		\node [style=none] (20) at (-1.5, 0) {\scriptsize{$1$}};
		\node [style=none] (21) at (0, -1.6) {\scriptsize{$k-1$}};
		\node [style=none] (22) at (1.5, 0) {\scriptsize{$k$}};
		\node [style=none] (23) at (1.6, -1.3) {\scriptsize{$1$}};
		\node [style=none] (24) at (-1.25, 2.25) {\scriptsize{$2$}};
		\draw [thick, -, rdirected=0.3] (0.center) to (1.center);
		\draw [thick, -] (1.center) to (2.center);
		\draw [thick, -, rdirected=0.3] (2.center) to (3.center);
		\draw [thick, -] (3.center) to (0.center);
		\draw [thick, -, looseness=1.25] (0.center) to [out=90,in=0] (4.center);
		\draw [thick, -, looseness=1.25] (4.center) to [out=180,in=90] (6.center);
		\draw [thick, ->] (4.center) to (5.center);
		\draw [thick, -] (6.center) to (7.center);
		\draw [thick, -, looseness=1.25] (2.center) to [out=270,in=180] (9.center);
		\draw [thick, -, looseness=1.25] (9.center) to [out=0,in=270] (8.center);
		\draw [thick, -] (9.center) to (10.center);
		\draw [thick, -] (11.center) to (3.center);
		\draw [thick, -] (12.center) to (7.center);
		\draw [thick, ->] (1.center) to (14.center);
		\draw [thick, <-] (15.center) to (13.center);
		\draw [thick, -] (13.center) to (8.center);
\end{tikzpicture}} 
&=
\mathord{\begin{tikzpicture}[baseline=-1ex,color=\clr, scale=.35]
		\node [style=none] (0) at (-1, 1) {};
		\node [style=none] (1) at (1, 1) {};
		\node [style=none] (2) at (1, -1) {};
		\node [style=none] (3) at (-1, -1) {};
		\node [style=none] (4) at (-2, 2) {};
		\node [style=none] (5) at (-2, 4) {};
		\node [style=none] (6) at (-3, 1) {};
		\node [style=none] (7) at (-3, -1) {};
		\node [style=none] (8) at (3, -1) {};
		\node [style=none] (9) at (2, -2) {};
		\node [style=none] (10) at (2, -4) {};
		\node [style=none] (11) at (-1, -4) {};
		\node [style=none] (12) at (-3, -4) {};
		\node [style=none] (13) at (3, 1) {};
		\node [style=none] (14) at (1, 4) {};
		\node [style=none] (15) at (3, 4) {};
		\node [style=none] (16) at (-3, -4.5) {\scriptsize{$h$}};
		\node [style=none] (17) at (-1, -4.5) {\scriptsize{$k$}};
		\node [style=none] (18) at (2, -4.5) {\scriptsize{$l$}};
		\node [style=none] (19) at (0, 1.6) {\scriptsize{$k-1$}};
		\node [style=none] (20) at (-2, 0) {\scriptsize{$k+1$}};
		\node [style=none] (21) at (0, -1.5) {\scriptsize{$1$}};
		\node [style=none] (22) at (1.5, 0) {\scriptsize{$0$}};
		\node [style=none] (23) at (1.6, -1.3) {\scriptsize{$1$}};
		\node [style=none] (24) at (-1.25, 2.25) {\scriptsize{$2$}};
		\draw [thick, -, directed=0.8] (0.center) to (1.center);
		\draw [thick, -] (1.center) to (2.center);
		\draw [thick, -, directed=0.8] (2.center) to (3.center);
		\draw [thick, -] (3.center) to (0.center);
		\draw [thick, -, looseness=1.25] (0.center) to [out=90,in=0] (4.center);
		\draw [thick, -, looseness=1.25] (4.center) to [out=180,in=90] (6.center);
		\draw [thick, ->] (4.center) to (5.center);
		\draw [thick, -] (6.center) to (7.center);
		\draw [thick, -, looseness=1.25] (2.center) to [out=270,in=180] (9.center);
		\draw [thick, -, looseness=1.25] (9.center) to [out=0,in=270] (8.center);
		\draw [thick, -] (9.center) to (10.center);
		\draw [thick, -] (11.center) to (3.center);
		\draw [thick, -] (12.center) to (7.center);
		\draw [thick, ->] (1.center) to (14.center);
		\draw [thick, <-] (15.center) to (13.center);
		\draw [thick, -] (13.center) to (8.center);
\end{tikzpicture}} 
+
\mathord{\begin{tikzpicture}[baseline=-1ex,color=\clr, scale=.35]
		\node [style=none] (0) at (-1, 1) {};
		\node [style=none] (1) at (1, 1) {};
		\node [style=none] (2) at (1, -1) {};
		\node [style=none] (3) at (-1, -1) {};
		\node [style=none] (4) at (-2, 2) {};
		\node [style=none] (5) at (-2, 4) {};
		\node [style=none] (6) at (-3, 1) {};
		\node [style=none] (7) at (-3, -1) {};
		\node [style=none] (8) at (3, -1) {};
		\node [style=none] (9) at (2, -2) {};
		\node [style=none] (10) at (2, -4) {};
		\node [style=none] (11) at (-1, -4) {};
		\node [style=none] (12) at (-3, -4) {};
		\node [style=none] (13) at (3, 1) {};
		\node [style=none] (14) at (1, 4) {};
		\node [style=none] (15) at (3, 4) {};
		\node [style=none] (16) at (-3, -4.5) {\scriptsize{$h$}};
		\node [style=none] (17) at (-1, -4.5) {\scriptsize{$k$}};
		\node [style=none] (18) at (2, -4.5) {\scriptsize{$l$}};
		\node [style=none] (19) at (0, 0.8) {\scriptsize{$k-2$}};
		\node [style=none] (23) at (1.6, -1.3) {\scriptsize{$1$}};
		\node [style=none] (24) at (-1.25, 2.25) {\scriptsize{$2$}};
		\node [style=none] (25) at (1, 0) {};
		\node [style=none] (26) at (-1, 0) {};
		\draw [thick, -] (1.center) to (2.center);
		\draw [thick, -, rdirected=0.3] (25.center) to (26.center);
		\draw [thick, -] (3.center) to (0.center);
		\draw [thick, -, looseness=1.25] (0.center) to [out=90,in=0] (4.center);
		\draw [thick, -, looseness=1.25] (4.center) to [out=180,in=90] (6.center);
		\draw [thick, ->] (4.center) to (5.center);
		\draw [thick, -] (6.center) to (7.center);
		\draw [thick, -, looseness=1.25] (2.center) to [out=270,in=180] (9.center);
		\draw [thick, -, looseness=1.25] (9.center) to [out=0,in=270] (8.center);
		\draw [thick, -] (9.center) to (10.center);
		\draw [thick, -] (11.center) to (3.center);
		\draw [thick, -] (12.center) to (7.center);
		\draw [thick, ->] (1.center) to (14.center);
		\draw [thick, <-] (15.center) to (13.center);
		\draw [thick, -] (13.center) to (8.center);
\end{tikzpicture}} \\  
&=
\mathord{\begin{tikzpicture}[baseline=-1ex,color=\clr, scale=.35]
		\node [style=none] (0) at (-1, 0.75) {};
		\node [style=none] (1) at (0, 1.75) {};
		\node [style=none] (2) at (0, -2) {};
		\node [style=none] (3) at (-1, -1) {};
		\node [style=none] (4) at (-2.25, 2) {};
		\node [style=none] (5) at (-2.25, 4) {};
		\node [style=none] (6) at (-3, 0.5) {};
		\node [style=none] (7) at (-3, -1) {};
		\node [style=none] (8) at (2, -2) {};
		\node [style=none] (9) at (1, -2.75) {};
		\node [style=none] (10) at (1, -4) {};
		\node [style=none] (11) at (-2, -4) {};
		\node [style=none] (12) at (-3, -4) {};
		\node [style=none] (13) at (2, 1.75) {};
		\node [style=none] (14) at (0, 4) {};
		\node [style=none] (15) at (2, 4) {};
		\node [style=none] (16) at (-3, -4.5) {\scriptsize{$h$}};
		\node [style=none] (17) at (-2, -4.5) {\scriptsize{$k$}};
		\node [style=none] (18) at (1, -4.5) {\scriptsize{$l$}};
		\node [style=none] (22) at (0.25, 0) {\scriptsize{$k+1$}};
		\node [style=none] (23) at (-0.5, -1.75) {\scriptsize{$1$}};
		\node [style=none] (24) at (-1.35, 2) {\scriptsize{$2$}};
		\node [style=none] (25) at (-2, -2) {};
		\draw [thick, -, looseness=1.25] (0.center) to [out=0,in=270] (1.center);
		\draw [thick, -, looseness=1.25] (2.center) to [out=90,in=0] (3.center);
		\draw [thick, -] (3.center) to (0.center);
		\draw [thick, -, looseness=1.25] (0.center) to [out=180,in=0] (4.center);
		\draw [thick, -, looseness=1.25] (4.center) to [out=180,in=90] (6.center);
		\draw [thick, ->] (4.center) to (5.center);
		\draw [thick, -] (6.center) to (7.center);
		\draw [thick, -, looseness=1.25] (2.center) to [out=270,in=180] (9.center);
		\draw [thick, -, looseness=1.25] (9.center) to [out=0,in=270] (8.center);
		\draw [thick, -] (9.center) to (10.center);
		\draw [thick, -] (12.center) to (7.center);
		\draw [thick, ->] (1.center) to (14.center);
		\draw [thick, <-] (15.center) to (13.center);
		\draw [thick, -] (13.center) to (8.center);
		\draw [thick, -, looseness=1.25] (3.center) to [out=180,in=90] (25.center);
		\draw [thick, -] (25.center) to (11.center);
\end{tikzpicture}}
+
\mathord{\begin{tikzpicture}[baseline=-1ex,color=\clr, scale=.35]
		\node [style=none] (0) at (-1, -0.5) {};
		\node [style=none] (1) at (0.25, 0.75) {};
		\node [style=none] (2) at (1, -1.5) {};
		\node [style=none] (3) at (-1, -1.5) {};
		\node [style=none] (4) at (-2.25, 2) {};
		\node [style=none] (5) at (-2.25, 4) {};
		\node [style=none] (6) at (-3, 0.5) {};
		\node [style=none] (7) at (-3, -1.5) {};
		\node [style=none] (8) at (3, -1.5) {};
		\node [style=none] (9) at (2, -2.5) {};
		\node [style=none] (10) at (2, -4) {};
		\node [style=none] (11) at (-1, -4) {};
		\node [style=none] (12) at (-3, -4) {};
		\node [style=none] (13) at (3, 1.75) {};
		\node [style=none] (14) at (0.25, 4) {};
		\node [style=none] (15) at (3, 4) {};
		\node [style=none] (16) at (-3, -4.5) {\scriptsize{$h$}};
		\node [style=none] (17) at (-1, -4.5) {\scriptsize{$k$}};
		\node [style=none] (18) at (2, -4.5) {\scriptsize{$l$}};
		\node [style=none] (23) at (1.5, -1.5) {\scriptsize{$1$}};
		\node [style=none] (24) at (-1.25, 2) {\scriptsize{$2$}};
		\node [style=none] (26) at (1, -0.5) {};
		\draw [thick, -, looseness=1.25] (0.center) to [out=0,in=180] (1.center);
		\draw [thick, -] (3.center) to (0.center);
		\draw [thick, -, looseness=1.25] (0.center) to [out=180,in=0] (4.center);
		\draw [thick, -, looseness=1.25] (4.center) to [out=180,in=90] (6.center);
		\draw [thick, ->] (4.center) to (5.center);
		\draw [thick, -] (6.center) to (7.center);
		\draw [thick, -, looseness=1.25] (2.center) to [out=270,in=180] (9.center);
		\draw [thick, -, looseness=1.25] (9.center) to [out=0,in=270] (8.center);
		\draw [thick, -] (9.center) to (10.center);
		\draw [thick, -] (12.center) to (7.center);
		\draw [thick, ->] (1.center) to (14.center);
		\draw [thick, <-] (15.center) to (13.center);
		\draw [thick, -] (13.center) to (8.center);
		\draw [thick, -] (11.center) to (3.center);
		\draw [thick, -] (26.center) to (2.center);
		\draw [thick, -, looseness=1.25] (26.center) to [out=90,in=0] (1.center);
\end{tikzpicture}} .
\end{align*}
The first web is the middle diagram of \cref{double-rungs-3} where the ladders have been drawn using merges and splits, the first equality follows from using \cref{associativity} on the two leftmost merges, the second equality is obtained by drawing the merges and splits which form the closed loop as ladders, the next equality is an application of \cref{generalized-square-switch}, the last equality given by writing ladders as merges and splits and, finally, these two diagrams are seen to be the other two diagrams in \cref{double-rungs-3} after writing those ladders in terms of merges and splits.  An identical series of calculations also proves \cref{double-rungs-5}.

Entirely similar arguments along with the use of the symmetry of $\qwebs$ when reflecting across a vertical line yield the other relations. We leave those to the reader.
\end{proof}

\subsection{Crossings and the Hecke-Clifford Algebra}\label{SS:crossings}

Define the \emph{upward over-crossing} morphism in $\End_{\qwebs}\left(\up_{1}^2 \right)$ by
\begin{equation}\label{E:overcrossingdef}
\xy
(0,0)*{
	\begin{tikzpicture}[scale=.3, color=\clr]
	\draw [ thick, ->] (-1,-1) to (1,1);
	\draw [ thick, ->] (-0.25,0.25) to (-1,1);
	\draw [ thick] (0.25,-0.25) to (1,-1);
	\node at (-1,1.5) {\ss $1$};
	\node at (1,1.5) {\ss $1$};
	\node at (-1,-1.5) {\ss $1$};
	\node at (1,-1.5) {\ss $1$};
	\end{tikzpicture}
};
\endxy
\ := \ 
\xy
(0,0)*{
	\begin{tikzpicture}[color=\clr, scale=.3]
	\draw [ thick, directed=.75] (0,0.25) to (0,1.25);
	\draw [ thick, directed=1] (0,1.25) to [out=30,in=270] (1,2.5);
	\draw [ thick, directed=1] (0,1.25) to [out=150,in=270] (-1,2.5); 
	\draw [ thick, directed=.65] (1,-1) to [out=90,in=330] (0,0.25);
	\draw [ thick, directed=.65] (-1,-1) to [out=90,in=210] (0,0.25);
	\node at (-1,3) {\scriptsize $1$};
	\node at (1,3) {\scriptsize $1$};
	\node at (-1,-1.5) {\scriptsize $1$};
	\node at (1,-1.5) {\scriptsize $1$};
	\node at (-0.75,0.75) {\scriptsize $2$};
	\end{tikzpicture}
};
\endxy
\ -q^{-1} \ 
\xy
(0,0)*{
	\begin{tikzpicture}[color=\clr, scale=.3]
	\draw [thick, directed=1] (-2,-1) to (-2,2.5);
	\draw [thick, directed=1] (-4,-1) to (-4,2.5);
	\node at (-2,-1.5) {\scriptsize $1$};
	\node at (-2,3) {\scriptsize $1$};
	\node at (-4,-1.5) {\scriptsize $1$};
	\node at (-4,3) {\scriptsize $1$};
	\end{tikzpicture}
};
\endxy \ .
\end{equation}

\begin{definition}\label{Sergeev-webs}
	Fix $k\in\Z_{>0}$. For $i=1, \dotsc , k$ and $j = 1, \dotsc , k-1$, define the morphisms $c_{i}, T_{j} \in\End_{\qwebs}(\up_{1}^k)$ by
	\[
	c_i=
	\xy
	(0,0)*{
		\bt[color=\clr, scale=1.25]
		\draw[thick, directed=1] (-0.75,0) to (-0.75,0.5);
		\draw[thick, directed=1] (-0.25,0) to (-0.25,0.5);
		\draw[thick, directed=1] (0.25,0) to (0.25,0.5);
		\draw[thick, directed=1] (0.75,0) to (0.75,0.5);
		\draw[thick, directed=1] (0,0) to (0,0.5);
		\node at (0,-0.15) {\scriptsize $1$};
		\node at (0,0.65) {\scriptsize $1$};
		\node at (-0.25,-0.15) {\scriptsize $1$};
		\node at (-0.25,0.65) {\scriptsize $1$};
		\node at (-0.75,-0.15) {\scriptsize $1$};
		\node at (-0.75,0.65) {\scriptsize $1$};
		\node at (0.25,-0.15) {\scriptsize $1$};
		\node at (0.25,0.65) {\scriptsize $1$};
		\node at (.75,-0.15) {\scriptsize $1$};
		\node at (.75,0.65) {\scriptsize $1$};
		\node at (-0.5,0.25) { \ $\cdots$};
		\node at (0.5,0.25) { \ $\cdots$};
		\draw (0,0.25) \wdot;
		\et
	};
	\endxy \ ,
	\quad T_j=
	\xy
	(0,0)*{
		\bt[color=\clr, scale=1.25]
		\draw[thick, directed=1] (-0.75,0) to (-0.75,0.5);
		\draw[thick, directed=1] (-0.25,0) to (-0.25,0.5);
		\draw[thick, directed=1] (0.75,0) to (0.75,0.5);
		\draw[thick, directed=1] (1.25,0) to (1.25,0.5);
		\draw[thick, directed=1] (0,0) to (0.5,0.5);
		\draw[thick, directed=1] (0.2,0.3) to (0,0.5);
		\draw[thick, ] (0.5,0) to (0.3,0.2);
		\node at (0,-0.15) {\scriptsize $1$};
		\node at (0,0.65) {\scriptsize $1$};
		\node at (0.5,-0.15) {\scriptsize $1$};
		\node at (0.5,0.65) {\scriptsize $1$};
		\node at (-0.25,-0.15) {\scriptsize $1$};
		\node at (-0.25,0.65) {\scriptsize $1$};
		\node at (-0.75,-0.15) {\scriptsize $1$};
		\node at (-0.75,0.65) {\scriptsize $1$};
		\node at (0.75,-0.15) {\scriptsize $1$};
		\node at (0.75,0.65) {\scriptsize $1$};
		\node at (1.25,-0.15) {\scriptsize $1$};
		\node at (1.25,0.65) {\scriptsize $1$};
		\node at (-0.5,0.25) { \ $\cdots$};
		\node at (1,0.25) { \ $\cdots$};
		\et
	};
	\endxy
	\]  where the dot in $c_{i}$ is on the $i$th strand and $T_{j}$ is the crossing of the $j$th and $(j+1)$st strands.
\end{definition}

Drawing the morphisms $T_i$ as crossings is justified by the following lemma.

\begin{lemma}\label{L:srelations} For any $k \geq 1$ the following relations hold in $\qwebs$ for all admissible $i,j$:
	\begin{enumerate}
		\item  $T_{j}^{2}=\tq T_j+1$.
		\item $T_{i}T_{j}=T_{j}T_{i}$ if $|i-j| >1$.
		\item  $T_{i}T_{i+1}T_{i}=T_{i + 1}T_{i}T_{i+1}$.
		\item $c_{i}^{2}=1$, $c_{i}c_{j}=-c_{j}c_{i}$ if $i \neq j$.
		\item  $T_{i}c_{j}=c_{j}T_{i}$ if $|i-j| >1$.
		\item  $T_{i}c_{i}=c_{i+1}T_{i}$.
		\item  $c_iT_i=T_ic_{i+1}+\tq (c_i-c_{i+1})$.
	\end{enumerate}
\end{lemma}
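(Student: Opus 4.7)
The plan is to verify each of the seven relations by direct diagrammatic computation, using the defining relations of $\qwebs$ together with the consequences collected in \cref{additional-relations,Udot-relations}. Several of the relations will be essentially free from the formalism; others require genuine calculation.

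First I would dispatch the easy relations. Parts~(b) and~(e) follow immediately from the super-interchange law \cref{super-interchange}: when $|i-j|>1$ the generating diagrams in $T_i, T_j$ (respectively $T_i, c_j$) live on disjoint strands, and since $T_i$ is even the two morphisms commute without sign. The anticommutation part of~(d), $c_ic_j=-c_jc_i$ for $i\neq j$, is also a super-interchange statement applied to two odd morphisms on disjoint strands, which produces the sign $(-1)^{\p{c_i}\p{c_j}}=-1$. The identity $c_i^2=1$ is the $k=1$ case of \cref{dot-collision}, since $[1]_{q^2}=1$.

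Next I would verify~(a). Writing $D$ for the digon obtained by merging two $1$-strands to a $2$-strand and then splitting, the definition \cref{E:overcrossingdef} says $T_j=D-q^{-1}\cdot\id$. Composing two copies of $D$ yields a four-vertex diagram whose middle split--merge is a digon on a $2$-labelled edge, and by \cref{digon-removal} this inner digon equals $[2]_q$ times the identity on the $2$-strand. Thus $D^2=[2]_qD$. Substituting,
\[
T_j^2=D^2-2q^{-1}D+q^{-2}\id=([2]_q-2q^{-1})D+q^{-2}\id=\tq D+q^{-2}\id,
\]
and a direct check identifies the right-hand side with $\tq T_j+1$. For~(f) and~(g) I would similarly expand $T_ic_i$, $c_iT_i$, $T_ic_{i+1}$, $c_{i+1}T_i$ using the definition of $T_i$, and then repeatedly apply the dots-past-merges relation \cref{dots-past-merges} together with \cref{dot-on-k-strand} to slide the dot across the digon; the corrections produced by \cref{dots-past-merges} are exactly what yield the $\tq(c_i-c_{i+1})$ term in~(g) and the cancellations needed for~(f).

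The main obstacle is the braid relation~(c). Expanding both sides using $T_i=D_i-q^{-1}\id$ reduces the problem to a single identity among diagrams of three $1$-strands with two nested digons. I would draw $T_iT_{i+1}T_i$ and $T_{i+1}T_iT_{i+1}$ as ladder diagrams on the three strands, rewrite the digons as sequences of merges and splits, and then use \cref{associativity} to regroup. The resulting equality between ladders is precisely of the shape controlled by the ``double rungs'' relations \cref{double-rungs-1,double-rungs-2}, applied in the region where two rungs of thickness $1$ meet, together with \cref{rung-collision} to combine parallel rungs and \cref{digon-removal} to collapse the leftover digons. The linear terms in $\tq$ and $q^{-1}$ coming from the expansion of the three factors must be tracked carefully, and matching them across the two sides is the computational heart of the argument. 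Once the braid relation is in hand, all seven identities of the Hecke-Clifford presentation are established.
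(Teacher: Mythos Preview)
Your treatment of parts (a), (b), (d), and (e) is correct and matches the paper. However, your approach to the remaining parts diverges from the paper's and, more importantly, has genuine gaps.

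For part~(c), invoking the double-rungs relations \cref{double-rungs-1,double-rungs-2} is a misstep: those relations mix dot-free and two-dot diagrams (they are the odd Serre relations), so they are not the natural tool for a purely dot-free identity like the braid relation. The paper instead uses \cref{square-switch}. Concretely, writing $D_i$ for the dumbbell on strands $i,i+1$, one computes $D_1D_2D_1$ by regrouping the inner merge--split as a pair of rungs on the two-strand object $(2,1)$ via \cref{associativity}; then \cref{square-switch} with $(k,l)=(2,1)$ rewrites this inner piece as (merge-to-$3$ then split) plus the identity. Unwinding gives $D_1D_2D_1 = D_{123}+D_1$, where $D_{123}$ is the full merge-to-$3$-then-split. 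By symmetry $D_2D_1D_2 = D_{123}+D_2$, hence $D_1D_2D_1-D_2D_1D_2=D_1-D_2$, which is exactly what the cubic expansion of $T_iT_{i+1}T_i=T_{i+1}T_iT_{i+1}$ reduces to.

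For parts~(f) and~(g), your plan to ``slide the dot across the digon'' using only \cref{dots-past-merges} and \cref{dot-on-k-strand} does not close. Pushing the dot through the merge and then the split yields only the single relation
\[
c_iD - Dc_{i+1} \;=\; q^{2}\bigl(Dc_i - c_{i+1}D\bigr),
\]
which is one equation in the two quantities $Dc_i-c_{i+1}D$ and $c_iD-Dc_{i+1}$; \cref{dot-on-k-strand} at $k=2$ gives nothing new beyond this. To pin down each quantity separately you need \cref{square-switch-dots}: at $k=l=1$ its two equations specialize precisely to the identities $Dc_i-c_{i+1}D=q^{-1}(c_i-c_{i+1})$ and $c_iD-Dc_{i+1}=q(c_i-c_{i+1})$, from which (f) and (g) follow immediately upon substituting $T_i=D-q^{-1}$. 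This is what the paper's terse proof is pointing to when it cites \cref{digon-removal,square-switch,square-switch-dots}.
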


\begin{proof}
	Statements (b.) and (e.) follow immediately from the super-interchange law, while the rest may be verified by direct calculations involving \cref{digon-removal,square-switch,square-switch-dots}.
\end{proof}

The Hecke-Clifford superalgebra introduced by Olshanski in \cite{Olshanski} has even generators $T_{1}, \dotsc , T_{k-1}$, odd generators $c_{1}, \dotsc , c_{k}$, and defining relations precisely those listed in the previous lemma.   Thus there is a homomorphism of superalgebras,
\begin{equation}\label{E:Sergeevhomomorphism}
\xi_{k} :\HC_{k}(q) \to \End_{\qwebs}(\up_{1}^{k}),
\end{equation}
for any $k \geq 1$ which sends the generators of $\HC_{k}(q)$ to the morphisms in $\End_{\qwebs}(\up_{1}^{k})$ of the same name\footnote{To be precise, in \cite{Olshanski} the elements $c_{i}$ square to $-1$. Scaling those generators by $\sqrt{-1}$ yields the presentation used here.}.  In \cref{description-of-all-ones} we will see this map is an isomorphism. Meanwhile, for $x \in \HC_{k}(q)$ we abuse notation by writing $x \in \End_{\qwebs}(\up_{1}^{k})$ for $\xi_{k}(x)$.  In particular, if  $w=s_{i_{1}}\dotsb s_{i_{t}}$ is a reduced expression for $w \in \Sigma_{k}$, then there is a well-defined element $T_{w}:=T_{i_{1}}\dotsb T_{i_{t}} \in \HC_{k}(q)$ and we write $T_{w}$ for $\xi_{k}(T_{w})$ in $\End_{\qwebs}(\up_{1}^{k})$.

From part (a.) of the lemma it follows that $T_{j}^{-1}  = T_{j} - \tq$.  Consequently we extend our notation to include the \emph{upward under-crossing} morphism in $\End_{\qwebs}(\up_{1}^2)$, defined as
\begin{equation}\label{E:inverseupwardcrossing}
\xy
(0,0)*{\reflectbox{
		\begin{tikzpicture}[scale=.3, color=\clr]
		\draw [ thick, ->] (-1,-1) to (1,1);
		\draw [ thick, ->] (-0.25,0.25) to (-1,1);
		\draw [ thick] (0.25,-0.25) to (1,-1);
		\node at (-1,1.5) {\reflectbox{\ss $1$}};
		\node at (1,1.5) {\reflectbox{\ss $1$}};
		\node at (-1,-1.5) {\reflectbox{\ss $1$}};
		\node at (1,-1.5) {\reflectbox{\ss $1$}};
		\end{tikzpicture}
}};
\endxy
\ := \ 
\xy
(0,0)*{
	\begin{tikzpicture}[scale=.3, color=\clr]
	\draw [ thick, ->] (-1,-1) to (1,1);
	\draw [ thick, ->] (-0.25,0.25) to (-1,1);
	\draw [ thick] (0.25,-0.25) to (1,-1);
	\node at (-1,1.5) {\ss $1$};
	\node at (1,1.5) {\ss $1$};
	\node at (-1,-1.5) {\ss $1$};
	\node at (1,-1.5) {\ss $1$};
	\end{tikzpicture}
};
\endxy
\ - \tq \ 
\xy
(0,0)*{
	\begin{tikzpicture}[scale=.3, color=\clr]
	\draw [ thick, ->] (-1,-1) to (-1,1);
	\draw [ thick, ->] (1,-1) to (1,1);
	\node at (-1,1.5) {\ss $1$};
	\node at (1,1.5) {\ss $1$};
	\node at (-1,-1.5) {\ss $1$};
	\node at (1,-1.5) {\ss $1$};
	\end{tikzpicture}
};
\endxy
\ = \ 
\xy
(0,0)*{
	\begin{tikzpicture}[color=\clr, scale=.3]
	\draw [ thick, directed=.75] (0,0.25) to (0,1.25);
	\draw [ thick, directed=1] (0,1.25) to [out=30,in=270] (1,2.5);
	\draw [ thick, directed=1] (0,1.25) to [out=150,in=270] (-1,2.5); 
	\draw [ thick, directed=.65] (1,-1) to [out=90,in=330] (0,0.25);
	\draw [ thick, directed=.65] (-1,-1) to [out=90,in=210] (0,0.25);
	\node at (-1,3) {\scriptsize $1$};
	\node at (1,3) {\scriptsize $1$};
	\node at (-1,-1.5) {\scriptsize $1$};
	\node at (1,-1.5) {\scriptsize $1$};
	\node at (-0.75,0.75) {\scriptsize $2$};
	\end{tikzpicture}
};
\endxy
\ -q \ 
\xy
(0,0)*{
	\begin{tikzpicture}[color=\clr, scale=.3]
	\draw [thick, directed=1] (-2,-1) to (-2,2.5);
	\draw [thick, directed=1] (-4,-1) to (-4,2.5);
	\node at (-2,-1.5) {\scriptsize $1$};
	\node at (-2,3) {\scriptsize $1$};
	\node at (-4,-1.5) {\scriptsize $1$};
	\node at (-4,3) {\scriptsize $1$};
	\end{tikzpicture}
};
\endxy \ .
\end{equation} The fact the upward over-crossings satisfy \cref{L:srelations}(b.)-(c.) implies the upward under-crossings do as well.  More generally, any two webs of type $\up_{1}^{k} \to \up_{1}^{k}$ consisting of only upward over- and under-crossings which are equivalent as topological braids are equal as morphisms in $\qwebs$.

Given an element of the symmetric group on $k$ letters, $w \in S_{k}$, let $\ell(w)$ denote its length. 
\begin{lemma}\label{L:untwist-permutation}
	Let $k\in\Z_{>0}$ and $w\in S_k$. Then, 
	\begin{equation}\label{E:untwist-permutation}
	\xy
	(0,0)*{
		\bt[color=\clr, scale=.35]
		\draw [color=\clr,  thick, directed=1] (4,9) to (4,10);
		\draw [color=\clr,  thick, directed=0.75] (5,7.25) to [out=90,in=330] (4,9);
		\draw [color=\clr,  thick, directed=0.75] (3,7.25) to [out=90,in=210] (4,9);
		\draw [color=\clr,  thick ] (2.5,7.25) rectangle (5.5,5.75);
		\draw [color=\clr,  thick ] (3,5.75) to (3,4.75);
		\draw [color=\clr,  thick ] (5,5.75) to (5,4.75);
		\node at (4.1, 7.5) { $\cdots$};
		\node at (4.1, 5.25) { $\cdots$};
		\node at (4, 6.5) { $T_w$};
		\node at (4,10.5) {\scriptsize $k$};
		\node at (3,4.25) {\scriptsize $1$};
		\node at (5,4.25) {\scriptsize $1$};
		\et
	};
	\endxy  \ = \ q^{\ell(w)}
	\xy
	(0,0)*{
		\bt[color=\clr, scale=.35]
		\draw [ thick, directed=1] (4,9) to (4,10);
		\draw [ thick, directed=0.75] (5,7.25) to [out=90,in=330] (4,9);
		\draw [ thick, directed=0.75] (3,7.25) to [out=90,in=210] (4,9);
		\node at (4.1, 7.75) { $\cdots$};
		\node at (4,10.5) {\scriptsize $k$};
		\node at (3,6.75) {\scriptsize $1$};
		\node at (5,6.75) {\scriptsize $1$};
		\et
	};
	\endxy  .
	\end{equation}
\end{lemma}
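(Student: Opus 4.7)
My plan is to prove \cref{E:untwist-permutation} by induction on $\ell(w)$, reducing to the base case where $w = s_j$ is a simple transposition. The inductive step is immediate: if $w = w's_j$ with $\ell(w) = \ell(w')+1$, then $T_w = T_{w'}T_j$, and the crossing $T_j$ sits immediately below the big merge (acting only on the $j$-th and $(j+1)$-st strands); assuming the base case, this crossing contributes a factor of $q$, after which the induction hypothesis applied to $w'$ yields the remaining factor of $q^{\ell(w')}$. So everything reduces to showing that composing the big merge $\up_1^k \to \up_k$ with $T_j$ produces $q$ times the big merge.

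For this local claim, I would use associativity (\cref{associativity}) to reorganize the big merge so that the $j$-th and $(j+1)$-st 1-strands are first merged into a 2-strand, and then this 2-strand is merged with the remaining strands. Thus it suffices to prove the purely local identity
\begin{equation*}
\xy
(0,0)*{
	\bt[color=\clr, scale=.5]
	\draw [ thick, directed=1] (0,1.5) to (0,2);
	\draw [ thick, directed=0.75] (0.5,0.75) to [out=90,in=330] (0,1.5);
	\draw [ thick, directed=0.75] (-0.5,0.75) to [out=90,in=210] (0,1.5);
	\draw[ thick, ->] (-0.5,-0.5) to (0.5,0.75);
	\draw[ thick, ->] (0.2,0.2) to (-0.5,0.75);
	\draw[ thick, ] (-0.2,-0.07) to (0.5,-0.5);
	\node at (0,2.5) {\scriptsize $2$};
	\node at (-0.5,-1) {\scriptsize $1$};
	\node at (0.5,-1) {\scriptsize $1$};
	\et
};
\endxy
~=~ q~
\xy
(0,0)*{
	\bt[color=\clr, scale=.5]
	\draw [ thick, directed=1] (0,0.75) to (0,2);
	\draw [ thick, directed=0.75] (0.5,-0.5) to [out=90,in=330] (0,0.75);
	\draw [ thick, directed=0.75] (-0.5,-0.5) to [out=90,in=210] (0,0.75);
	\node at (0,2.5) {\scriptsize $2$};
	\node at (-0.5,-1) {\scriptsize $1$};
	\node at (0.5,-1) {\scriptsize $1$};
	\et
};
\endxy \ .
\end{equation*}
This is now a direct calculation. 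Expanding the crossing via its definition \cref{E:overcrossingdef}, the left-hand side equals the big merge composed with a split-then-merge through a 2-strand, minus $q^{-1}$ times the big merge. The split-then-merge digon (with $k=l=1$) can be collapsed using \cref{digon-removal} to give a factor of $[2]_q = q + q^{-1}$ times the identity on the 2-strand. Thus the left-hand side becomes $(q+q^{-1})\cdot\text{merge} - q^{-1}\cdot\text{merge} = q\cdot\text{merge}$, as required.

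I do not anticipate any serious obstacle here. The only point requiring mild care is making sure that in the inductive step the crossing $T_j$ really can be viewed as sitting adjacent to a local $1,1\to 2$ merge inside the big merge — this is precisely what \cref{associativity} provides, allowing the big merge to be decomposed so that the relevant two strands are combined first. Everything else is bookkeeping with the definition of $T_w$ in terms of a reduced expression and the super-interchange law \cref{super-interchange}, neither of which introduces signs because all crossings and merges are of parity $\0$.
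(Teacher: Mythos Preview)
Your proof is correct and follows essentially the same approach as the paper: establish the base case for a single crossing via \cref{E:overcrossingdef} and \cref{digon-removal}, then extend to general $w$ using associativity and induction on $\ell(w)$. One small slip in the inductive step: with $T_w = T_{w'}T_j$ it is $T_{w'}$, not $T_j$, that sits immediately below the merge (diagrams are read bottom to top), so you should either apply the induction hypothesis to $T_{w'}$ first and then absorb $T_j$, or instead factor $w = s_j w'$ so that $T_j$ is on top.
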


\begin{proof}
	First, note 
	\beq\label{untwist-crossing}
	\xy
	(0,0)*{
		\bt[color=\clr, scale=.35]
		\draw [ thick, directed=1] (0,2.75) to (0,3.5);
		\draw [ thick, directed=0.75] (1,1.75) to [out=90,in=330] (0,2.75);
		\draw [ thick, directed=0.75] (-1,1.75) to [out=90,in=210] (0,2.75);
		\draw [thick] (-1,-1) to (1,1.75);
		\draw [thick] (-0.25,0.75) to (-1,1.75);
		\draw [thick] (1,-1) to (0.25,0);
		\node at (0,3.95) {\scriptsize $2$};
		\node at (-1,-1.45) {\scriptsize $1$};
		\node at (1,-1.45) {\scriptsize $1$};
		\et
	};
	\endxy \ = \ 
	\xy
	(0,0)*{
		\begin{tikzpicture}[color=\clr, scale=.35]
		\draw [ thick, ] (0,0) to (0,.75);
		\draw [ thick, ] (0,.75) to [out=30,in=270] (1,1.75);
		\draw [ thick, ] (0,.75) to [out=150,in=270] (-1,1.75); 
		\draw [ thick, directed=0.75] (1,-1) to [out=90,in=330] (0,0);
		\draw [ thick, directed=0.75] (-1,-1) to [out=90,in=210] (0,0);
		\draw [ thick, directed=1] (0,2.75) to (0,3.5);
		\draw [ thick, directed=0.75] (1,1.75) to [out=90,in=330] (0,2.75);
		\draw [ thick, directed=0.75] (-1,1.75) to [out=90,in=210] (0,2.75);
		\node at (-1.5,1.75) {\scriptsize $1$};
		\node at (1.5,1.75) {\scriptsize $1$};
		\node at (-1,-1.45) {\scriptsize $1$};
		\node at (1,-1.45) {\scriptsize $1$};
		\node at (0,3.95) {\scriptsize $2$};
		\end{tikzpicture}
	};
	\endxy \ -q^{-1} \ 
	\xy
	(0,0)*{
		\bt[color=\clr, scale=.35]
		\draw [ thick, directed=1] (0,2.75) to (0,3.5);
		\draw [ thick, directed=0.75] (1,1.75) to [out=90,in=330] (0,2.75);
		\draw [ thick, directed=0.75] (-1,1.75) to [out=90,in=210] (0,2.75);
		\draw [thick] (-1,-1) to (-1,1.75);
		\draw [thick] (1,-1) to (1,1.75);
		\node at (0,3.95) {\scriptsize $2$};
		\node at (-1,-1.45) {\scriptsize $1$};
		\node at (1,-1.45) {\scriptsize $1$};
		\et
	};
	\endxy \ = \ [2]_{q}
	\xy
	(0,0)*{
		\bt[color=\clr, scale=.35]
		\draw [ thick, directed=1] (4,9) to (4,10);
		\draw [ thick, directed=0.75] (5,7.25) to [out=90,in=330] (4,9);
		\draw [ thick, directed=0.75] (3,7.25) to [out=90,in=210] (4,9);
		\node at (4,10.5) {\scriptsize $2$};
		\node at (3,6.75) {\scriptsize $1$};
		\node at (5,6.75) {\scriptsize $1$};
		\et
	};
	\endxy \ -q^{-1} \ 
	\xy
	(0,0)*{
		\bt[color=\clr, scale=.35]
		\draw [ thick, directed=1] (4,9) to (4,10);
		\draw [ thick, directed=0.75] (5,7.25) to [out=90,in=330] (4,9);
		\draw [ thick, directed=0.75] (3,7.25) to [out=90,in=210] (4,9);
		\node at (4,10.5) {\scriptsize $2$};
		\node at (3,6.75) {\scriptsize $1$};
		\node at (5,6.75) {\scriptsize $1$};
		\et
	};
	\endxy \ = \ q
	\xy
	(0,0)*{
		\bt[color=\clr, scale=.35]
		\draw [ thick, directed=1] (4,9) to (4,10);
		\draw [ thick, directed=0.75] (5,7.25) to [out=90,in=330] (4,9);
		\draw [ thick, directed=0.75] (3,7.25) to [out=90,in=210] (4,9);
		\node at (4,10.5) {\scriptsize $2$};
		\node at (3,6.75) {\scriptsize $1$};
		\node at (5,6.75) {\scriptsize $1$};
		\et
	};
	\endxy \ .
	\eeq
	The first equality is by definition and the second follows from \cref{digon-removal}. Since $T_w=T_{i_1}\cdots T_{i_l}$ for any reduced expression $w=s_{i_1}\cdots s_{i_{\ell }}$, a straightforward calculation using \cref{associativity,untwist-crossing} proves the statement in general.
\end{proof}
\noindent  If one instead uses under-crossings a similar calculation yields the same formula with $q$ replaced by $q^{-1}$.  Similar formulas also hold when crossings are above a split instead of below a merge.

\subsection{The clasp idempotents}\label{SS:ClaspIdempotents}  We now introduce an important family of morphisms which we call \emph{clasps}.

\begin{definition}\label{clasps}  For $k\in\Z_{\geq 1}$, the $k$-th \emph{clasp} $\Cl_{k} \in\End_{\qwebs}(\up_{1}^{k})$ is given by
	\[
	\Cl_k
	=\frac{1}{[k]_{q}!}
	\xy
	(0,0)*{
		\begin{tikzpicture}[color=\clr, scale=.3]
		\draw [ thick, directed=.55] (0,-1) to (0,.75);
		\draw [ thick, directed=1] (0,.75) to [out=30,in=270] (1,2.5);
		\draw [ thick, directed=1] (0,.75) to [out=150,in=270] (-1,2.5); 
		\draw [ thick, directed=.65] (1,-2.75) to [out=90,in=330] (0,-1);
		\draw [ thick, directed=.65] (-1,-2.75) to [out=90,in=210] (0,-1);
		\node at (-1,3) {\scriptsize $1$};
		\node at (0.1,1.75) {$\cdots$};
		\node at (1,3) {\scriptsize $1$};
		\node at (-1,-3.3) {\scriptsize $1$};
		\node at (0.1,-2.35) {$\cdots$};
		\node at (1,-3.3) {\scriptsize $1$};
		\node at (0.75,-0.25) {\scriptsize $k$};
		\end{tikzpicture}
	};
	\endxy \in \End_{\qwebs}\left(\up_{1}^{k} \right).
	\]   
\end{definition}
A calculation using \cref{digon-removal} shows $\Cl_k$ is an idempotent for all $k \in \Z_{\geq 1}$. 
The following lemma shows clasps admit a recursion formula similar to the \emph{Jones-Wenzl projectors} in the Temperley-Lieb algebra (e.g., see \cite{W}).

\begin{lemma}\label{clasp-recursion}
	For $k\in\Z_{>1}$,
	\begin{equation}\label{E:clasp2}
	\Cl_k
	= \frac{[2]_{q}[k-1]_{q}}{[k]_{q}}
	\xy
	(0,0)*{
		\begin{tikzpicture}[color=\clr, scale=.3]
		\draw [ thick] (-2,-9) to (-2,-8);
		\draw [ thick] (0,-9) to (0,-8);
		\draw [ thick] (2,-9) to (2,-8);
		\draw [ thick] (4,-9) to (4,-5.25);
		\draw [ thick, directed=1] (-2,-0) to (-2,1);
		\draw [ thick, directed=1] (0,0) to (0,1);
		\draw [ thick, directed=1] (2,0) to (2,1);
		\draw [ thick, directed=1] (4,-2.75) to (4,1);
		\draw [ thick, directed=0.75] (-2,-5.85) to (-2,-2.15);
		\draw [ thick, directed=0.75] (0,-5.85) to (0,-2.15);
		\draw [ thick] (-2.3,-8) rectangle (2.3,-5.85);
		\draw [ thick] (-2.3,-2.15) rectangle (2.3,0);
		\draw [ thick] (1.7, -5.25) rectangle (4.3, -2.75);
		\draw [ thick, ] (2,-5.85) to (2,-5.25);
		\draw [ thick, ] (2,-2.75) to (2,-2.15);
		\node at (-2,-9.45) {\scriptsize $1$};
		\node at (0,-9.45) {\scriptsize $1$};
		\node at (2,-9.45) {\scriptsize $1$};
		\node at (4,-9.45) {\scriptsize $1$};
		\node at (-2,1.55) {\scriptsize $1$};
		\node at (0,1.55) {\scriptsize $1$};
		\node at (2,1.55) {\scriptsize $1$};
		\node at (4,1.55) {\scriptsize $1$};
		\node at (3,-4) {\small $\Cl_2$};
		\node at (-2.5,-4) {\scriptsize $1$};
		\node at (0.5,-4) {\scriptsize $1$};
		\node at (-1,0.25) { \ $\cdots$};
		\node at (-1,-4) { \ $\cdots$};
		\node at (-1,-8.75) { \ $\cdots$};
		\node at (0,-1.125) {\small $\Cl_{k-1}$};
		\node at (0,-6.975) {\small $\Cl_{k-1}$};
		\end{tikzpicture}
	};
	\endxy
	-\frac{[k-2]_{q}}{[k]_{q}}
	\xy
	(0,0)*{
		\begin{tikzpicture}[color=\clr, scale=.3]
		\draw [ thick, ] (-2,-7) to (-2,-6);
		\draw [ thick, ] (0,-7) to (0,-6);
		\draw [ thick, ] (2,-7) to (2,-6);
		\draw [ thick, directed=1] (-2,-2) to (-2,-1);
		\draw [ thick, directed=1] (0,-2) to (0,-1);
		\draw [ thick, directed=1] (2,-2) to (2,-1);
		\draw [ thick, directed=1] (4,-7) to (4,-1);
		\draw [ thick] (-2.3,-6) rectangle (2.3,-2);
		\node at (-2,-7.45) {\scriptsize $1$};
		\node at (0,-7.45) {\scriptsize $1$};
		\node at (2,-7.45) {\scriptsize $1$};
		\node at (4,-7.45) {\scriptsize $1$};
		\node at (-2,-.45) {\scriptsize $1$};
		\node at (0,-.45) {\scriptsize $1$};
		\node at (2,-.45) {\scriptsize $1$};
		\node at (4,-.45) {\scriptsize $1$};
		\node at (-1,-1.75) { \ $\cdots$};
		\node at (-1,-6.5) { \ $\cdots$};
		\node at (0,-4) { $\Cl_{k-1}$};
		\end{tikzpicture}
	};
	\endxy .
	\end{equation}
\end{lemma}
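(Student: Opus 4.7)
My plan is to characterize $\Cl_k$ by its interaction with the total merge $M_k \colon \uparrow_1^k \to \uparrow_k$ (the morphism displayed in \cref{E:explosion}) and its vertical flip, the total split $S_k \colon \uparrow_k \to \uparrow_1^k$, and then to verify that the right-hand side of the claimed recursion satisfies the same characterizing identities. Writing $\Cl_2 = \tfrac{1}{[2]_q}\,\sigma_{1,1}\mu_{1,1}$, where $\mu_{a,b}$ and $\sigma_{a,b}$ denote the generating merge and split, and using the quantum integer identity $[k]_q = [2]_q[k-1]_q - [k-2]_q$, let $X_k$ denote the right-hand side of the recursion. The claim $\Cl_k = X_k$ then becomes an identity in $\End_{\qwebs}(\uparrow_1^k)$.

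The first step is to verify $M_k X_k = M_k$ and $X_k S_k = S_k$. Iterated application of the digon relation \cref{digon-removal}, together with \cref{associativity}, yields $M_{k-1}S_{k-1} = [k-1]_q!\,\id_{\uparrow_{k-1}}$, from which $M_{k-1}\Cl_{k-1} = M_{k-1}$ and $\Cl_{k-1}S_{k-1} = S_{k-1}$, and therefore $M_k(\Cl_{k-1}\otimes\id_1) = M_k$ and $(\Cl_{k-1}\otimes\id_1)S_k = S_k$. A second use of \cref{associativity} factors $M_k = \tilde M_k(\id_{k-2}\otimes\mu_{1,1})$, where $\tilde M_k\colon\uparrow_1^{k-2}\uparrow_2\to\uparrow_k$ is a partial merge; combined with $\mu_{1,1}\sigma_{1,1} = [2]_q\,\id_{\uparrow_2}$ from \cref{digon-removal}, this gives $M_k(\id_{k-2}\otimes\Cl_2) = M_k$, and symmetrically $(\id_{k-2}\otimes\Cl_2)S_k = S_k$. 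Substituting into the definition of $X_k$ and using $[2]_q[k-1]_q - [k-2]_q = [k]_q$ produces the two identities $M_kX_k = M_k$ and $X_kS_k = S_k$.

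Since $\Cl_k = \tfrac{1}{[k]_q!}S_kM_k$, the identities just established yield $\Cl_k X_k = X_k \Cl_k = \Cl_k$, and each term in the definition of $X_k$ already lies in the corner $(\Cl_{k-1}\otimes\id_1)\End_{\qwebs}(\uparrow_1^k)(\Cl_{k-1}\otimes\id_1)$. The hard part will be upgrading these relations, together with the corner membership, to the full equality $X_k = \Cl_k$, since a priori $X_k$ could differ from $\Cl_k$ by an element of the corner annihilated on both sides by $\Cl_k$. I plan to close this gap by directly verifying $X_k^2 = X_k$, which together with $\Cl_kX_k = X_k\Cl_k = \Cl_k$ shows that $X_k - \Cl_k$ is an idempotent orthogonal to $\Cl_k$; the remaining vanishing would then be extracted by a short computation in the corner using the square-switch relations \cref{square-switch,generalized-square-switch} and the derived identities of \cref{Udot-relations}. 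An attractive alternative, which I would pursue in parallel, is induction on $k$: substituting the inductive form of $\Cl_{k-1}$ into the expression for $E = (\Cl_{k-1}\otimes\id_1)(\id_{k-2}\otimes\Cl_2)(\Cl_{k-1}\otimes\id_1)$ should reduce the identification of $X_k - \Cl_k$ with $0$ to a calculation involving only $\Cl_{k-2}$ and $\Cl_2$, where the interaction between the neighbouring idempotents is controlled by \cref{associativity,digon-removal,square-switch-double-dots}.
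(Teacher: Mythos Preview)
Your preliminary identities $M_k X_k = M_k$ and $X_k S_k = S_k$ are correct but do not characterize $\Cl_k$: they only show $\Cl_k X_k = X_k\Cl_k = \Cl_k$, i.e.\ $\Cl_k \le X_k$ as idempotents (once you know $X_k^2=X_k$, which you have not established either). There can be many elements $Y$ in the corner $(\Cl_{k-1}\otimes\id)\End(\uparrow_1^k)(\Cl_{k-1}\otimes\id)$ satisfying $M_kY=M_k$ and $YS_k=S_k$; any $Y=\Cl_k+N$ with $M_kN=0=NS_k$ works. So the ``hard part'' you defer is the entire content of the lemma, and your plan for closing it (verify $X_k^2=X_k$, then extract vanishing via \cref{square-switch}) is both vague and circular: computing $E^2=(\Cl_{k-1}\otimes\id)(\id\otimes\Cl_2)(\Cl_{k-1}\otimes\id)(\id\otimes\Cl_2)(\Cl_{k-1}\otimes\id)$ already requires exactly the square-switch manipulation that proves the lemma outright.

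The paper's approach is a one-line direct calculation, with no need for your characterization or for induction in the form you sketch. Write $\Cl_k=\frac{1}{[k]_q!}(S_{k-1}\otimes\id)\,\sigma_{k-1,1}\mu_{k-1,1}\,(M_{k-1}\otimes\id)$ and apply \cref{square-switch} with labels $(k-1,1)$ to rewrite $\sigma_{k-1,1}\mu_{k-1,1}$ as the ladder passing through $\uparrow_{k-2}\uparrow_2$ minus $[k-2]_q\,\id_{\uparrow_{k-1}\uparrow_1}$. Sandwiching by $(S_{k-1}\otimes\id)$ and $(M_{k-1}\otimes\id)$, the identity term gives $-\frac{[k-2]_q}{[k]_q}(\Cl_{k-1}\otimes\id)$, while the other ladder, after using $M_{k-1}=\mu_{k-2,1}(M_{k-2}\otimes\id)$, $S_{k-1}=(S_{k-2}\otimes\id)\sigma_{k-2,1}$, $M_{k-2}S_{k-2}=[k-2]_q!$, and absorption of $\Cl_{k-2}$ into $\Cl_{k-1}$, gives exactly $\frac{[2]_q[k-1]_q}{[k]_q}(\Cl_{k-1}\otimes\id)(\id_1^{k-2}\otimes\Cl_2)(\Cl_{k-1}\otimes\id)$. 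This is the route indicated in the paper and in the cited \cite{RT,TVW}; your detour through $M_kX_k=M_k$ adds work without reducing what remains to be shown.
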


\begin{proof}
	The formula follows from an inductive argument and a calculation using \cref{square-switch}. See \cite[Lemma 2.13]{RT} and \cite[Lemma 2.12]{TVW} for similar arguments in other settings. Those proofs carry over essentially unchanged.
\end{proof}
We also have the following closed formula for clasps.

\begin{lemma}\label{L:claspsum} For $k\geq 1$ we have
	\begin{equation}\label{clasp-sum}
	\Cl_k \ = \ \frac{q^{\frac{-k(k-1)}{2}}}{[k]_{q}!}\sum_{\sigma\in S_k} q^{\ell(\sigma)}
	\xy
	(0,0)*{
		\bt[color=\clr, scale=.35]
		\draw [ thick ] (2.5,7.25) rectangle (5.5,4.25);
		\draw [ thick ] (3,4.25) to (3,3.25);
		\draw [ thick ] (5,4.25) to (5,3.25);
		\draw [ thick, directed=1] (3,7.25) to (3,8.25);
		\draw [ thick, directed=1 ] (5,7.25) to (5,8.25);
		\node at (4.1, 7.5) { $\cdots$};
		\node at (4.1, 3.75) { $\cdots$};
		\node at (4, 5.75) { $T_{\sigma}$};
		\node at (3,2.75) {\scriptsize $1$};
		\node at (5,2.75) {\scriptsize $1$};
		\node at (3,8.75) {\scriptsize $1$};
		\node at (5,8.75) {\scriptsize $1$};
		\et
	};
	\endxy \ .
	\end{equation}
\end{lemma}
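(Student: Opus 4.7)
The plan is to set $P_k$ equal to the right-hand side of \cref{clasp-sum} and prove $\Cl_k = P_k$ through a chain of absorption identities combined with a coefficient-comparison argument.

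First I would establish the absorption $P_k T_j = T_j P_k = q P_k$ for each simple reflection $s_j$, via a pairing argument on $\{\sigma, \sigma s_j\} \subset S_k$ using the quadratic relation $T_j^2 = \tq T_j + 1$ from \cref{L:srelations}(a.); iterating gives $T_\sigma P_k = P_k T_\sigma = q^{\ell(\sigma)} P_k$ for every $\sigma \in S_k$. Combined with the Poincar\'e-polynomial identity $\sum_{\sigma \in S_k} q^{2\ell(\sigma)} = q^{k(k-1)/2} [k]_{q}!$ (obtained by evaluating $\prod_{i=1}^k(1+t+\cdots+t^{i-1})$ at $t=q^2$), a direct computation yields $P_k^2 = P_k$. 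Next, applying \cref{L:untwist-permutation} termwise gives $m_k P_k = m_k$, while the reflected version of that lemma (noted in the paragraph just after its statement) gives $P_k s_k = s_k$. Since $\Cl_k = \frac{1}{[k]_{q}!} s_k m_k$, these identities immediately force the mutual absorption $\Cl_k P_k = P_k \Cl_k = \Cl_k$.

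To conclude $\Cl_k = P_k$, I would show that $\Cl_k$ lies in the $\K$-span of $\{T_\sigma : \sigma \in S_k\}$ by induction on $k$ using \cref{clasp-recursion}: the cases $k=1,2$ are direct (with $\Cl_2 = \frac{1}{[2]_{q}}(q^{-1} + T_1)$ via \cref{E:overcrossingdef}), and each web on the right side of \cref{E:clasp2} is a composition of morphisms of the form $\Cl_{k-1}\otimes \id$ and $\id^{\otimes(k-2)}\otimes \Cl_2$, which lie in the $T_\sigma$-span by the inductive hypothesis (and the span is closed under composition). Writing $\Cl_k = \sum_\sigma a_\sigma T_\sigma$ and comparing coefficients of each $T_\tau$ on both sides of the identity $T_j \Cl_k = q \Cl_k$ yields the relation $a_{s_j \tau} = q a_\tau$ whenever $\ell(s_j \tau) > \ell(\tau)$ (the descent case gives the same relation reversed). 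Iterating along a reduced expression for $\sigma$ forces $a_\sigma = a_e\, q^{\ell(\sigma)}$, so $\Cl_k = a_e\, q^{k(k-1)/2}[k]_{q}!\cdot P_k$. Idempotency of both $\Cl_k$ and $P_k$ forces the scalar $a_e\, q^{k(k-1)/2}[k]_{q}!$ to equal $0$ or $1$, and since $m_k \Cl_k = m_k \ne 0$ we have $\Cl_k \ne 0$, so the scalar equals $1$ and $\Cl_k = P_k$.

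The main obstacle is the verification that $\Cl_k$ lies in the $T_\sigma$-span, which requires inductively unwinding \cref{clasp-recursion}; all other steps are direct algebraic manipulations from the established relations and the closed-form Poincar\'e identity.
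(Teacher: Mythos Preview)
Your approach is genuinely different from the paper's. The paper proves the identity by a direct inductive computation: it plugs the inductive hypothesis $\Cl_{k-1}=P_{k-1}$ and the base case $\Cl_2=P_2$ into the recursion \cref{clasp-recursion} and simplifies (citing Kauffman--Lins for a parallel calculation). You instead characterize $P_k$ as the Hecke-symmetrizer idempotent via absorption and then force $\Cl_k$ to coincide with it. The virtue of your route is conceptual clarity; the virtue of the paper's route is that it is entirely self-contained at this point in the logical development.

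There are two issues with your argument as written. First, your step~7 ``compares coefficients of each $T_\tau$'' on both sides of $T_j\Cl_k=q\,\Cl_k$, which presupposes that the $T_\sigma$'s are linearly independent in $\End_{\qwebs}(\up_1^k)$. That is only established later in the paper (\cref{P:sergeev-isomorphism}), whose proof uses \cref{sergeev-generators}, which in turn invokes the present lemma. You can bypass this: once you know $\Cl_k=\sum_\sigma a_\sigma T_\sigma$ for \emph{some} coefficients (step~6), absorption gives directly $\Cl_k=\Cl_k P_k=\bigl(\sum_\sigma a_\sigma q^{\ell(\sigma)}\bigr)P_k=c'P_k$ without any independence assumption. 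Second, to pin down $c'=1$ you invoke $m_k\neq 0$ (equivalently $\Cl_k\neq 0$, equivalently $\id_{\up_k}\neq 0$). At this stage of the paper no functor to modules has been constructed, so nothing yet rules out $\id_{\up_k}=0$ in the abstract diagrammatic category, and if that held you would have $\Cl_k=0$ while $P_k$ need not vanish. A cleaner finish is available: from $\Cl_k=c'P_k$ and $m_kP_k=m_k$ you get $m_k\Cl_k=c'm_k$; but $m_k\Cl_k=m_k$ by \cref{digon-removal}, so $(1-c')m_k=0$, hence $(1-c')m_ks_k=(1-c')[k]_q!\,\id_{\up_k}=0$. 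This shows that the identity $\Cl_k=P_k$ holds whenever $\id_{\up_k}\neq 0$, but the degenerate case still needs to be excluded. The paper's head-on induction avoids both issues because it never appeals to uniqueness of coefficients or to nonvanishing.
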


\begin{proof} The result follows from an inductive argument using \cref{E:overcrossingdef,clasp-recursion}, the fact the $T_{w}$'s are defined using only over-crossings, and straightforward but somewhat delicate calculations.  See \cite[Section 3.2]{KL} for a similar argument.  Since all which is needed for this paper is the fact clasps can be written as a linear combination of $T_{w}$'s and this follows easily from the recursion formula, we omit the details of this calculation.
\end{proof}

The following result immediately follows from \cref{L:claspsum} and the fact that equivalent braids give equal morphisms in $\qwebs$.


\begin{lemma}\label{L:clasps-past-crossings}
	For $k\in\Z_{>0}$, we have
	\beq\label{clasps-past-crossings}
	\xy
	(0,0)*{
		\begin{tikzpicture}[color=\clr, scale=.3]
		\draw [ thick] (1.7, -5.25) rectangle (4.3, -2.75);
		\draw [ thick, ] (2,-6) to (2,-5.25);
		\draw [ thick, ->] (2,-2.75) to (2,-2) to (4,0) to (4,1);
		\draw [ thick] (4,-6) to (4,-5.25);
		\draw [ thick, ->] (4,-2.75) to (4,-2) to (6,0) to (6,1);
		\draw [ thick, ] (6,-6) to (6,-2) to (5,-1.5);
		\draw [ thick ] (4.25,-1.125) to (3.75,-0.875); 
		\draw [thick, ->] (3,-0.5) to (2,0) to (2,1);
		\node at (2,-6.6) {\scriptsize $1$};
		\node at (4,-6.6) {\scriptsize $1$};
		\node at (6,-6.6) {\scriptsize $1$};
		\node at (2,1.5) {\scriptsize $1$};
		\node at (4,1.5) {\scriptsize $1$};
		\node at (6,1.5) {\scriptsize $1$};
		\node at (3,-4) {\small $\Cl_k$};
		\node at (2.9,-5.6) { \ $\cdots$};
		\node at (2.9,-2.4) { \ $\cdots$};
		\node at (4.9,0.3) { \ $\cdots$};
		\end{tikzpicture}
	};
	\endxy =
	\xy
	(0,0)*{
		\begin{tikzpicture}[color=\clr, scale=.3]
		\draw [ thick] (3.7, 1) rectangle (6.3, 3.5);
		\draw [ thick, ->] (6,3.5) to (6,4.5);
		\draw [ thick, ] (2,-2.75) to (2,-2) to (4,0) to (4,1);
		\draw [ thick, ->] (4,3.5) to (4,4.5);
		\draw [ thick, ] (4,-2.75) to (4,-2) to (6,0) to (6,1);
		\draw [ thick, ] (6,-2.75) to (6,-2) to (5,-1.5);
		\draw [ thick ] (4.25,-1.125) to (3.75,-0.875); 
		\draw [thick, ->] (3,-0.5) to (2,0) to (2,4.5);
		\node at (2,-3.25) {\scriptsize $1$};
		\node at (4,-3.25) {\scriptsize $1$};
		\node at (6,-3.25) {\scriptsize $1$};
		\node at (2,5) {\scriptsize $1$};
		\node at (4,5) {\scriptsize $1$};
		\node at (6,5) {\scriptsize $1$};
		\node at (5,2.25) {\small $\Cl_k$};
		\node at (4.9,3.875) { \ $\cdots$};
		\node at (2.9,-2.4) { \ $\cdots$};
		\node at (4.9,0.3) { \ $\cdots$};
		\end{tikzpicture}
	};
	\endxy
	\eeq
	along with the relations obtained by reflecting the above about a vertical axis and/or by replacing all over-crossings with under-crossings.
\end{lemma}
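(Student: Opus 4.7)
The plan is to reduce the claim to the topological invariance of upward braids established in \cref{SS:crossings}. By \cref{L:claspsum} we may replace $\Cl_k$ on either side of \cref{clasps-past-crossings} by the explicit linear combination
\[
\Cl_k = \frac{q^{-k(k-1)/2}}{[k]_q!} \sum_{\sigma\in S_k} q^{\ell(\sigma)}\, T_\sigma,
\]
so it suffices to prove the corresponding equality term-by-term, namely that for each $\sigma\in S_k$ the web obtained by placing $T_\sigma$ on the $k$ bottom strands and then crossing those strands upward past the extra strand(s) equals the web obtained by first performing those crossings and then applying $T_\sigma$ at the top.

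After this reduction, both sides are pure braid diagrams on $k+(\text{extra})$ upward strands built only from the over-crossings $T_j$ (no dots, merges, or splits are involved). The remark at the end of \cref{SS:crossings} asserts that any two webs of type $\up_1^{\,m}\to\up_1^{\,m}$ built from upward over- and under-crossings which are isotopic as topological braids define equal morphisms in $\qwebs$. Thus I only need to check that ``do $T_\sigma$ first, then cross the block up'' and ``cross the block up first, then do $T_\sigma$'' are equivalent braids; this is the standard fact that a braid on $k$ strands can be slid along a bundle of crossings that moves the whole $k$-strand bundle to another location, since these crossings involve only disjoint generators and hence commute with the image of $T_\sigma$ up to braid isotopy. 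Summing the resulting term-by-term equalities with the coefficients $q^{\ell(\sigma)}/[k]_q!$ yields \cref{clasps-past-crossings}.

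The variants obtained by reflecting across a vertical axis, or by replacing every over-crossing with an under-crossing, are handled identically: reflection is compatible with the symmetry of $\qwebs$ (exchanging $q$ and $q^{-1}$) noted at the end of \cref{SS:UpwardWebs}, and under-crossings satisfy the same braid relations as over-crossings by \cref{L:srelations}(b)--(c) applied to $T_j^{-1} = T_j - \tq$, so the same braid-isotopy argument applies. The only potential subtlety, and the main thing to check carefully, is that the definition of $T_\sigma$ via a reduced expression yields a well-defined element and that the ``sliding'' step really is an honest braid isotopy rather than merely an equality up to lower-order corrections; both points are immediate from \cref{L:srelations} and from the fact that the crossings moving the whole block past the extra strand commute strand-wise with each individual $T_j$ appearing in $T_\sigma$.
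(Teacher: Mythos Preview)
Your proof is correct and is essentially the same as the paper's: the paper simply states that the result follows immediately from \cref{L:claspsum} together with the fact that equivalent braids give equal morphisms in $\qwebs$, which is exactly the reduction and braid-isotopy argument you spell out.
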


\subsection{Braidings in  \texorpdfstring{$\qwebs$}{Oriented Webs}}

We now introduce braidings between arbitrary objects of $\qwebs$.  We first define the braiding between two generating objects and also introduce a shorthand diagram for this morphism.

\begin{definition}\label{D:Upbraiding} For $k,l\in\Z_{>0}$ define
	\[\beta_{\up_{k}, \up_{l}}=
	\xy 
	(0,0)*{
		\begin{tikzpicture}[scale=.3, color=\clr]
		\draw [ thick, ->] (-1,-1) to (1,1);
		\draw [ thick, ->] (-0.25,0.25) to (-1,1);
		\draw [ thick] (0.25,-0.25) to (1,-1);
		\node at (-1,1.5) {\ss $l$};
		\node at (1,1.5) {\ss $k$};
		\node at (-1,-1.5) {\ss $k$};
		\node at (1,-1.5) {\ss $l$};
		\end{tikzpicture}
	};
	\endxy :=\frac{1}{[k]_{q}!\,[l]_{q}!}
	\xy
	(0,0)*{
		\bt[color=\clr, scale=.35]
		\draw [ thick, directed=1] (0, .75) to (0,1.5);
		\draw [ thick, directed=0.75] (1,-1) to [out=90,in=330] (0,.75);
		\draw [ thick, directed=0.75] (-1,-1) to [out=90,in=210] (0,.75);
		\draw [ thick, directed=1] (4, .75) to (4,1.5);
		\draw [ thick, directed=0.75] (5,-1) to [out=90,in=330] (4,.75);
		\draw [ thick, directed=0.75] (3,-1) to [out=90,in=210] (4,.75);
		\draw [ thick, directed=0.75] (0,-6.5) to (0,-5.75);
		\draw [ thick, ] (0,-5.75) to [out=30,in=270] (1,-4);
		\draw [ thick, ] (0,-5.75) to [out=150,in=270] (-1,-4); 
		\draw [ thick, directed=0.75] (4,-6.5) to (4,-5.75);
		\draw [ thick, ] (4,-5.75) to [out=30,in=270] (5,-4);
		\draw [ thick, ] (4,-5.75) to [out=150,in=270] (3,-4); 
		\draw [ thick ] (5,-1) to (1,-4);
		\draw [ thick ] (3,-1) to (-1,-4);
		\draw [ thick ] (5,-4) to (3.4,-2.8);
		\draw [ thick ] (2.6,-2.2) to (2.2,-1.9); 
		\draw [ thick ] (1.8,-1.6) to (1,-1);
		\draw [ thick ] (3,-4) to (2.2,-3.4);
		\draw [ thick ] (1.8,-3.1) to (1.4,-2.8);
		\draw [ thick ] (0.6,-2.2) to (-1,-1);
		\node at (2.6, -0.5) {\scriptsize $1$};
		\node at (5.4, -0.5) {\scriptsize $1$};
		\node at (1.4, -0.5) {\scriptsize $1$};
		\node at (-1.4, -0.5) {\scriptsize $1$};
		\node at (2.6, -4.5) {\scriptsize $1$};
		\node at (5.4, -4.5) {\scriptsize $1$};
		\node at (1.4, -4.5) {\scriptsize $1$};
		\node at (-1.4, -4.5) {\scriptsize $1$};
		\node at (0.1, -0.65) { $\cdots$};
		\node at (4.1, -0.65) { $\cdots$};
		\node at (0.1, -4.6) { $\cdots$};
		\node at (4.1, -4.6) { $\cdots$};
		\node at (0,2) {\scriptsize $l$};
		\node at (4,2) {\scriptsize $k$};
		\node at (0,-7) {\scriptsize $k$};
		\node at (4,-7) {\scriptsize $l$};
		\et
	};
	\endxy \ .
	\]
\end{definition} 
\noindent This morphism is the \emph{over-crossing of type $\up_{k}\up_{l} \to \up_{l}\up_{k}$}.  There is an obvious under-crossing analogue which  is the two-sided inverse  to $\beta_{\up_{k}, \up_{l}}$ under composition,  thanks to \cref{L:clasps-past-crossings}.

\begin{lemma}\label{L:braidingforups}
	The following relations hold in $\qwebs$ for all $h,k,l\in\Z_{>0}$. 
	\begin{enumerate}
		\item 
		\[
		\xy
		(0,0)*{
			\bt[color=\clr, scale=1.25]
			\draw[thick, ->] (0,-0.5) to (0.5,0) to (0,0.5);
			\draw[thick, ] (0.5,-0.5) to (0.325, -0.325); 
			\draw [thick, ] (0.175,-0.175) to (0,0) to (0.175,0.175);
			\draw[thick, ->] (0.325,0.325) to (0.5,0.5);
			\node at (0,-0.65) {\scriptsize $k$};
			\node at (0,.65) {\scriptsize $k$};
			\node at (0.5,-.65) {\scriptsize $l$};
			\node at (0.5,.65) {\scriptsize $l$};
			\et
		};
		\endxy=
		\xy
		(0,0)*{
			\bt[color=\clr, scale=1.25]
			\draw[thick, directed=1] (0.5,-0.5) to (0.5,0.5);
			\draw[thick, rdirected=0.05] (0,0.5) to (0,-0.5);
			\node at (0,-.65) {\scriptsize $k$};
			\node at (0,.65) {\scriptsize $k$};
			\node at (0.5,-.65) {\scriptsize $l$};
			\node at (0.5,.65) {\scriptsize $l$};
			\et
		};
		\endxy=
		\xy
		(0,0)*{
			\bt[color=\clr, scale=1.25]
			\draw[thick, ->] (0.175,0.325) to (0,0.5);
			\draw[thick, ] (0,-0.5) to (0.175,-0.325);
			\draw[thick, ] (0.325,-0.175) to (0.5,0) to (0.325,0.175);
			\draw [thick, ->] (0.5,-0.5) to (0,0) to (0.5,0.5);
			\node at (0,-0.65) {\scriptsize $k$};
			\node at (0,.65) {\scriptsize $k$};
			\node at (0.5,-.65) {\scriptsize $l$};
			\node at (0.5,.65) {\scriptsize $l$};
			\et
		};
		\endxy ,
		\]
		\item 
		\[
		\xy
		(0,0)*{
			\bt[color=\clr, scale=1.25]
			\draw[thick, directed=1] (0,0) to (1,1) to (1,1.5);
			\draw[thick, ] (0.5,0) to (0.375,0.125);
			\draw[thick, ->] (0.125,0.375) to (0,0.5) to (0,1) to (0.5,1.5);
			\draw[thick, ] (1,0) to (1,0.5) to (0.875,0.625);
			\draw[thick, ] (0.625,0.875) to (0.375,1.125);
			\draw[thick, ->] (0.125,1.375) to (0,1.5);
			\node at (0,-0.15) {\scriptsize $h$};
			\node at (0,1.65) {\scriptsize $l$};
			\node at (0.5,-0.15) {\scriptsize $k$};
			\node at (0.5,1.65) {\scriptsize $k$};
			\node at (1,-0.15) {\scriptsize $l$};
			\node at (1,1.65) {\scriptsize $h$};
			\et
		};
		\endxy=
		\xy
		(0,0)*{
			\bt[color=\clr, scale=1.25]
			\draw[thick, directed=1] (0,0) to (0,0.5) to (1,1.5);
			\draw[thick, ] (0.5,0) to (1,0.5) to (1,1) to (0.875,1.125);
			\draw[thick, ->] (0.625,1.375) to (0.5,1.5);
			\draw[thick, ] (1,0) to (0.875,0.125);
			\draw[thick, ] (0.625,0.375) to (0.375,0.625);
			\draw[thick, ->] (0.125,0.875) to (0,1) to (0,1.5);
			\node at (0,-0.15) {\scriptsize $h$};
			\node at (0,1.65) {\scriptsize $l$};
			\node at (0.5,-0.15) {\scriptsize $k$};
			\node at (0.5,1.65) {\scriptsize $k$};
			\node at (1,-0.15) {\scriptsize $l$};
			\node at (1,1.65) {\scriptsize $h$};
			\et
		};
		\endxy ,
		\]
		\item 
		\[
		\xy
		(0,0)*{
			\bt[scale=.35, color=\clr]
			\draw [ thick, directed=1] (0, .75) to (0,1.5) to (2,3.5);
			\draw [ thick, directed=.65] (1,-1) to [out=90,in=330] (0,.75);
			\draw [ thick, directed=.65] (-1,-1) to [out=90,in=210] (0,.75);
			\draw [ thick, ] (2,-1) to (2,1.5) to (1.25,2.25);
			\draw [thick, ->] (0.75,2.75) to (0,3.5);
			\node at (2, 4) {\scriptsize $h\! +\! k$};
			\node at (-1,-1.5) {\scriptsize $h$};
			\node at (1,-1.5) {\scriptsize $k$};
			\node at (2,-1.5) {\scriptsize $l$};
			\node at (0,4) {\scriptsize $l$};
			\et
		};
		\endxy=
		\xy
		(0,0)*{
			\bt[scale=.35, color=\clr]
			\draw [ thick, directed=1] (0, .75) to (0,1.5);
			\draw [ thick, directed=.65] (1,-1) to [out=90,in=330] (0,.75);
			\draw [ thick, directed=.65] (-1,-1) to [out=90,in=210] (0,.75);
			\draw [ thick, ] (-3,-3) to (-1,-1);
			\draw [ thick, ] (-1,-3) to (1,-1);
			\draw [ thick, ] (1,-3) to (0.1,-2.4);
			\draw [thick, ] (-0.5,-2) to (-1.1,-1.6);
			\draw [ thick, ->] (-1.7, -1.2) to (-2,-1) to (-2,1.5);
			\node at (0, 2) {\scriptsize $h\! +\! k$};
			\node at (-1,-3.5) {\scriptsize $k$};
			\node at (-3,-3.5) {\scriptsize $h$};
			\node at (-2,2) {\scriptsize $l$};
			\node at (1,-3.5) {\scriptsize $l$};
			\et
		};
		\endxy \ ,\quad\quad
		\xy
		(0,0)*{\rotatebox{180}{
				\bt[scale=.35, color=\clr]
				\draw [ thick, rdirected=0.65] (0, .75) to (0,1.5);
				\draw [ thick, ] (1,-1) to [out=90,in=330] (0,.75);
				\draw [ thick, ] (-1,-1) to [out=90,in=210] (0,.75);
				\draw [ thick, <-] (-3,-3) to (-1,-1);
				\draw [ thick, <-] (-1,-3) to (1,-1);
				\draw [ thick, <-] (1,-3) to (0.1,-2.4);
				\draw [thick, ] (-0.5,-2) to (-1.1,-1.6);
				\draw [ thick, ] (-1.7, -1.2) to (-2,-1) to (-2,1.5);
				\node at (0, 2) {\rotatebox{180}{\scriptsize $h\! +\! k$}};
				\node at (-1,-3.5) {\rotatebox{180}{\scriptsize $h$}};
				\node at (-3,-3.5) {\rotatebox{180}{\scriptsize $k$}};
				\node at (-2,2) {\rotatebox{180}{\scriptsize $l$}};
				\node at (1,-3.5) {\rotatebox{180}{\scriptsize $l$}};
				\et
		}};
		\endxy=
		\xy
		(0,0)*{\rotatebox{180}{
				\bt[scale=.35, color=\clr]
				\draw [ thick, rdirected=0.15] (0, .75) to (0,1.5) to (2,3.5);
				\draw [ thick, <-] (1,-1) to [out=90,in=330] (0,.75);
				\draw [ thick, <-] (-1,-1) to [out=90,in=210] (0,.75);
				\draw [ thick, <-] (2,-1) to (2,1.5) to (1.25,2.25);
				\draw [thick, ] (0.75,2.75) to (0,3.5);
				\node at (2, 4) {\rotatebox{180}{\scriptsize $h\! +\! k$}};
				\node at (-1,-1.5) {\rotatebox{180}{\scriptsize $k$}};
				\node at (1,-1.5) {\rotatebox{180}{\scriptsize $h$}};
				\node at (2,-1.5) {\rotatebox{180}{\scriptsize $l$}};
				\node at (0,4) {\rotatebox{180}{\scriptsize $l$}};
				\et
		}};
		\endxy ,
		\]
		\item 
		\[
		\xy
		(0,0)*{
			\bt[color=\clr, scale=1.25]
			\draw[thick, directed=1] (0,0) to (0.5,1);
			\draw[thick, ] (0.5,0) to (0.325,0.35);
			\draw[thick, ->] (0.175,0.65) to (0,1);
			\node at (0,-0.15) {\scriptsize $k$};
			\node at (0,1.15) {\scriptsize $l$};
			\node at (0.5,-0.15) {\scriptsize $l$};
			\node at (0.5,1.15) {\scriptsize $k$};
			\draw
			(0.0875,0.175) \wdot;
			\et
		};
		\endxy=
		\xy
		(0,0)*{
			\bt[color=\clr, scale=1.25]
			\draw[thick, directed=1] (0,0) to (0.5,1);
			\draw[thick, ] (0.5,0) to (0.325,0.35);
			\draw[thick, ->] (0.175,0.65) to (0,1);
			\node at (0,-0.15) {\scriptsize $k$};
			\node at (0,1.15) {\scriptsize $l$};
			\node at (0.5,-0.15) {\scriptsize $l$};
			\node at (0.5,1.15) {\scriptsize $k$};
			\draw
			(0.4125,0.825) \wdot;
			\et
		}; 
		\endxy \ .
		\]
	\end{enumerate}
	In addition the relations obtained by reflecting the above about a vertical axis and/or by replacing all over-crossings with under-crossings also hold.
\end{lemma}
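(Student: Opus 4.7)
My plan is to exploit the fact that $\beta_{\up_k, \up_l}$ is, up to normalization, an explosion into thickness-$1$ strands followed by a positive braid in the Hecke-Clifford picture followed by an implosion. Since \cref{E:explosion} makes explosions associative (\cref{associativity}) and the thin crossings satisfy the full braid relations as topological braids (see the remark after \cref{E:inverseupwardcrossing}), each of (a)-(c) should reduce to a combination of: (i) a braid identity at the level of thin strands, (ii) a bookkeeping computation turning stacked splits/merges into a digon that can be evaluated by \cref{digon-removal}, and (iii) the absorption result of \cref{L:clasps-past-crossings} which lets clasps slide through the braids. In what follows I will freely insert or remove clasps above explosions or below implosions, since a clasp next to an explosion is the identity (by the digon formula the merge of all thin strands into $\up_{k+l}$ followed by a split absorbs the clasp with factor $[k]_q! \, [l]_q!$, cancelling the normalization $1/([k]_q![l]_q!)$ in the definition of $\beta$).

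For (a), I would stack an over-crossing on an under-crossing and observe that the middle block is a positive braid composed with its inverse on $k+l$ thin strands, hence equal to the identity braid in $\qwebs$. The top and bottom are then left with an explosion immediately followed by an implosion on each bundle, and \cref{digon-removal} together with iteration turns these into $[k]_q!\,[l]_q!$ times the identity on $\up_k \up_l$, exactly cancelling the two normalization constants. The under/over composition is symmetric. For (b), I would explode all three strands into thickness-$1$ bundles and note that both sides become the normalized explosion-braid-implosion in which the internal braid is one of the two standard representatives of the positive braid realizing the permutation $(1\, 2\, 3) \in S_3$ on blocks. These two braids are topologically equivalent, so they are equal in $\qwebs$; the identification of the outer split/merge patterns is just \cref{associativity}.

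For (c), I would use \cref{L:clasps-past-crossings} to slide the relevant clasp through the positive braid before merging. Concretely, in the left-hand side of the first identity, the merge at the top absorbs an inserted clasp $\Cl_{h+k}$; by \cref{L:clasps-past-crossings} this clasp can be pushed to the bottom past the crossings, at which point the explosion splits neatly into the two-step explosion of $\up_h \up_k$, giving the right-hand side after cleaning up by \cref{associativity}. The second identity follows by the vertical-reflection symmetry of $\qwebs$. For (d), I would use \cref{dot-on-any-strand} to absorb the dot at the bottom of $\up_k$ onto any one of the $k$ thin strands after explosion; inside the braid the dot is then moved across the thin crossings using the Hecke-Clifford relation \cref{L:srelations}(g). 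The difference terms $\tq(c_i - c_{i+1})$ are antisymmetric across adjacent thin strands and, after being merged back into $\up_l$ by the implosion on the right, are killed by the implicit symmetrization in the merge (equivalently, by the clasp $\Cl_l$ that the implosion absorbs). What remains is a single $c_j$ on a thin strand inside the merged $\up_l$, which by \cref{dot-on-any-strand} again equals the single dot on $\up_l$ drawn on the top right of the second diagram.

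The main obstacle will be (d): the propagation of the dot through the positive braid generates many Hecke-Clifford error terms, and the cancellation is not automatic at the level of a single crossing. The essential point is that these errors only survive modulo the clasp $\Cl_l$ (or $\Cl_k$) that can be inserted for free at the merge, and $\Cl_l$ annihilates antisymmetric differences of the $c_i$'s as in \cref{L:untwist-permutation}. Getting this clean will likely require writing the sum explicitly as in \cref{clasp-sum} and checking that all correction terms pair up to zero. Once this is in place, the remaining parts follow by straightforward topological reasoning on the exploded braid diagrams.
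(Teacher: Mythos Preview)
Your approach to (a), (b), and (c) matches the paper's: explode via \cref{D:Upbraiding}, use the thin-strand braid relations from \cref{L:srelations} (together with the remark after \cref{E:inverseupwardcrossing} that equivalent braids give equal morphisms), slide clasps with \cref{L:clasps-past-crossings}, and close up with \cref{digon-removal}. That part is fine.

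Your argument for (d), however, is built on the wrong Hecke--Clifford relation. In $\beta_{\up_k,\up_l}$ the $k$-strand is the \emph{over}-strand, and the dot in both diagrams of (d) sits on this over-strand (bottom-left $\up_k$ to top-right $\up_k$). After exploding, the thin $k$-strand carrying the dot passes \emph{over} each thin $l$-strand; at each such crossing the relevant identity is \cref{L:srelations}(f), $T_ic_i=c_{i+1}T_i$, which moves the dot through \emph{with no correction terms at all}. Relation (g) governs dots on the under-strand and is simply not what occurs here. Consequently there is nothing to cancel, and your proposed symmetrization argument is both unnecessary and, as written, misplaced: the error terms you describe from (g) would involve a dot on a thin $k$-strand minus a dot on an adjacent thin $l$-strand, and these do \emph{not} feed into a common merge, so the clasp $\Cl_l$ would not annihilate them. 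You also conclude that the surviving dot lands ``inside the merged $\up_l$'', but the statement of (d) has the dot remaining on $\up_k$ throughout.

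The fix is immediate once you switch to (f): explode, invoke \cref{dot-on-any-strand} to put the dot on any thin $k$-strand at the bottom, slide it freely across all $l$ over-crossings by (f), and use \cref{dot-on-any-strand} again at the top merge into $\up_k$. The paper packages this slightly differently, deducing (d) from \cref{D:Upbraiding}, \cref{dot-on-any-strand}, and part (c) (using (c) to push the merge/split through the thick crossing so that only a thin over-strand carries the dot), but the content is the same once the correct relation (f) is used.
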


\begin{proof} Relations (a.) and (b.) follow by applying \cref{D:Upbraiding} to the crossings, using \cref{clasps-past-crossings} to reduce to the case of strands of thickness $1$ on which the analogous relation from \cref{L:srelations} can be used and, finally, \cref{digon-removal} to simplify.  In the case of (c.) we explain the equation on the left, as the one on the right and their under-crossing analogues are similar.  By replacing the over-crossing with its definition from \cref{D:Upbraiding} and by exploding the two legs of the clasp into strands of thickness $1$  (see the proof of \cref{sergeev-generators} for another time when something similar is done), this reveals a scalar multiple of the clasp $\Cl_{h+k}$ below a series of over-crossing by strands of thickness $1$. Using \cref{L:clasps-past-crossings} to move the clasp through these crossings along with \cref{digon-removal} to simplify as needed yields the desired result.  Finally, (d.) follows by using \cref{D:Upbraiding}, \cref{dot-on-any-strand}, and part (c.).  The other cases follow by applying the left-right symmetry of webs and/or by similar calculations using under-crossings.
\end{proof}

We now define the braiding for arbitrary objects in $\qwebs$.

\begin{definition}\label{D:generalupbraiding} Let $a=(a_{1}, \dotsc , a_{r})$ and $b=(b_{1}, \dotsc , b_{s})$  be tuples of nonnegative integers and let $\ob{a}=\up_{a}$ and $\ob{b}=\up_{b}$ be the corresponding objects of $\qwebs$.  Define 
	\[
	\beta_{\ob{a},\ob{b}} =  
	\xy
	(0,0)*{
		\bt[color=\clr, scale=.35]
		\draw [ thick, directed=1] (0,-1) to (0,0) to (5,5) to (5,6);
		\draw [ thick, directed=1] (3,-1) to (3,0) to (8,5) to (8,6);
		\draw [ thick, ] (5,-1) to (5,0) to (4.25,0.75);
		\draw [ thick, ] (3.75,1.25) to (2.75,2.25);
		\draw [ thick, -> ] (2.25,2.75) to (0,5) to (0,6);
		\draw [ thick, ] (8,-1) to (8,0) to (5.75,2.25);
		\draw [ thick, ] (5.25,2.75) to (4.25,3.75);
		\draw [thick, -> ] (3.75,4.35) to (3,5) to (3,6);
		\node at (1.5,-0.5) {$\cdots$};
		\node at (1.5,5.5) {$\cdots$};
		\node at (6.5,-0.5) {$\cdots$};
		\node at (6.5,5.5) {$\cdots$};
		\node at (0,-1.5) {\scriptsize ${a}_1$};
		\node at (3,-1.5) {\scriptsize ${a}_r$};
		\node at (5,-1.5) {\scriptsize ${b}_1$};
		\node at (8,-1.5) {\scriptsize ${b}_{s}$};
		\node at (0,6.5) {\scriptsize ${b}_1$};
		\node at (3,6.5) {\scriptsize ${b}_{s}$};
		\node at (5,6.5) {\scriptsize ${a}_1$};
		\node at (8,6.5) {\scriptsize ${a}_r$};
		\et
	};
	\endxy
	\] 
\end{definition} 
\noindent  We refer to the above diagram as the \emph{over-crossing of type $\ob{a}\ob{b} \to \ob{b}\ob{a}$}.  There is an obvious under-crossing analogue which is the two-sided inverse under composition.

Let $\qwebseven$ denote the monoidal subcategory of $\qwebs$ consisting of all objects, and all morphisms which are linear combinations of diagrams with no dots. \cref{L:braidingforups} implies the following result.
\begin{theorem}\label{T:braiding}  The family of morphisms $\left\{ \beta^{\pm 1}_{\up_{\ob{a}}, \up_{\ob{b}}} \right\}$ define a  braiding on the monoidal category $\qwebseven$.
\end{theorem}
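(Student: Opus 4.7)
The plan is to verify that the defined family $\{\beta^{\pm 1}_{\up_{\ob{a}},\up_{\ob{b}}}\}$ satisfies the three requirements of a braiding on $\qwebseven$: (i) each $\beta_{\ob{a},\ob{b}}$ is an isomorphism; (ii) the family is natural in both arguments; (iii) the two hexagon identities hold. Invertibility is already built into the definition: replacing every over-crossing in \cref{D:generalupbraiding} with its under-crossing analogue gives a morphism which is a two-sided inverse, since \cref{L:braidingforups}(a) supplies the inverse at each thick-strand crossing and composing/tensoring these invertibilities yields invertibility of the full $\beta_{\ob{a},\ob{b}}$.

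For the hexagon identities, I would unfold the definition of $\beta_{\ob{a}\ob{b},\ob{c}}$ from \cref{D:generalupbraiding}. Because that definition draws the braiding of a concatenation as the obvious iterated braid diagram, the identity
\[
\beta_{\ob{a}\ob{b},\ob{c}} = (\beta_{\ob{a},\ob{c}}\otimes \id_{\ob{b}})\circ(\id_{\ob{a}}\otimes\beta_{\ob{b},\ob{c}})
\]
is essentially tautological at the level of diagrams; likewise for $\beta_{\ob{a},\ob{b}\ob{c}}$. One then observes compatibility with the unit object $\unit$ holds because the empty diagram crossing any bundle of strands is the identity. The hexagon for $\beta^{-1}$ follows by passing to under-crossings.

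The real content is naturality. Since every morphism in $\qwebseven$ is a linear combination of webs built by horizontal and vertical composition from merges and splits (no dots), it suffices to verify that $\beta$ commutes with these two generators, for then a straightforward induction on the number of generating morphisms in a web yields naturality. The key naturality squares with respect to a single merge $\up_{h}\up_{k}\to\up_{h+k}$ (respectively a split) are precisely the four diagrams in \cref{L:braidingforups}(c); the identity that must be checked is
\[
\beta_{\up_{h}\up_{k},\up_{l}} = (\text{merge}\otimes\id_{\up_{l}})\circ(\id_{\up_{h}}\otimes\beta_{\up_{k},\up_{l}})\circ(\beta_{\up_{h},\up_{l}}\otimes\id_{\up_{k}})^{-1}\circ\cdots
\]
which, after using the already-verified hexagon identity to rewrite $\beta_{\up_{h}\up_{k},\up_{l}}$, reduces to exactly the relation in \cref{L:braidingforups}(c). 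The analogous statement on the other side follows by the left--right symmetry of $\qwebs$, and the under-crossing versions follow either by the same symmetry or by inverting both sides.

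The main obstacle I anticipate is the bookkeeping in reducing general naturality to the generating merges and splits: to compose a web with a braiding $\beta_{\ob{a},\ob{b}}$ on the right, one must slide the entire braid past a web built inductively from merges, splits, and identities, and then invoke \cref{L:braidingforups}(c) at each generator while using \cref{L:braidingforups}(b) (the braid relation) to rearrange the strands cleanly. Once the inductive structure is in place the verification is routine, but setting up the induction so that parts (b), (c) of \cref{L:braidingforups} apply at each step is the technical heart of the argument.
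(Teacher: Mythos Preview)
Your proposal is correct and is essentially the same approach as the paper's: the paper simply states that \cref{L:braidingforups} implies the result, and you have accurately unpacked this by identifying that part (a) gives invertibility, part (c) gives naturality with respect to the generating merges and splits of $\qwebseven$, and the hexagon identities are immediate from \cref{D:generalupbraiding}. One small remark: your displayed equation in the naturality paragraph is garbled (the types do not match and it trails off with ``$\cdots$''), but the intended statement---that naturality of $\beta$ in the first variable with respect to a merge, after applying the hexagon, reduces to the first identity of \cref{L:braidingforups}(c)---is exactly right.
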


From \cref{L:braidingforups} we see dots move freely through crossings as long as they travel on the over-crossing strand.  However, \cref{L:srelations} already shows this fails if the dot travels along the under-crossing strand.  One might hope for a  ``thick strand'' version of the Hecke-Clifford relation given in \cref{L:srelations}(g.)  which describes how to move a dot under a crossing when the strands have thickness greater than $1$.  Small examples hint at such a formula.  As it is beyond the scope of this paper we leave it as an open question.

\section{Functors  \texorpdfstring{$\Pi_m$ and $\Psi$}{Pim and Psim}, and  \texorpdfstring{$\End_{\qwebs}(\up_{1}^k)$}{Endups}}

We now relate the diagrammatic supercategory $\qwebs$ to the supercategory $\bUdot (\fq_{m})_{\geq 0}$ introduced in \cref{SS:IdempotentAlgebra} and to the category of $U_{q}\left(\fq_{n} \right)$-supermodules $\mods$ introduced in \cref{SS:superfunctor}.

\subsection{The Functor  \texorpdfstring{$\Pi_m$}{Pim}}

\begin{proposition}\label{pi-functor}
For every $m\geq 1$ there exists a functor of monoidal supercategories 
\[
\Pi_m: \bUdot(\fq(m))_{\geq 0} \to\qwebs
\]
given on objects $\lambda = \sum_{i=1}^{m} \lambda_{i}\varepsilon_{i} \in X(T_{m})_{\geq 0}$ by 
\[
\Pi_m \left(\lambda \right) = \up_{\lambda} := \up_{\lambda_{1}} \up_{\lambda_{2}}\dotsb \up_{\lambda_{m}},
\]
and on the divided powers of the generating morphisms by 
\begin{equation*}
\Pi_m(e_i^{(j)}1_{\lambda})=
\xy
(0,0)*{
\begin{tikzpicture}[color=\clr ]
	\draw[ thick, directed=.25, directed=1] (0,0) to (0,1.5);
	\node at (0,-0.15) {\scriptsize $\lambda_{i}$};
	\node at (0,1.65) {\scriptsize $\lambda_{i}\!+\!j$};
	\draw[ thick, directed=.25, directed=1] (1,0) to (1,1.5);
	\node at (1,-0.15) {\scriptsize $\lambda_{i+1}$};
	\node at (1,1.65) {\scriptsize $\lambda_{i+1}\!-\!j$};
	\draw[ thick, directed=.55] (1,0.75) to (0,0.75);
	\node at (0.5,1) {\scriptsize$j$};
	\draw [thick, directed=1] (2,0) to (2,1.5);
	\draw [thick, directed=1] (2.75,0) to (2.75,1.5);
	\draw [thick, directed=1] (-1.75,0) to (-1.75,1.5);
	\draw [thick, directed=1] (-1,0) to (-1,1.5);
	\node at (-1.75,-0.15) {\scriptsize $\lambda_1$};
	\node at (-1,-0.15) {\scriptsize $\lambda_{i-1}$};
	\node at (-1.75,1.65) {\scriptsize $\lambda_1$};
	\node at (-1,1.65) {\scriptsize $\lambda_{i-1}$};
	\node at (-1.35,0.75) {$\cdots$};
	\node at (2,-0.15) {\scriptsize $\lambda_{i+2}$};
	\node at (2.75,-0.15) {\scriptsize $\lambda_m$};
	\node at (2,1.65) {\scriptsize $\lambda_{i+2}$};
	\node at (2.75,1.65) {\scriptsize $\lambda_m$};
	\node at (2.4,0.75) {$\cdots$};
\end{tikzpicture}
};
\endxy \ ,
\end{equation*}
\begin{equation*}
\Pi_m(f_i^{(j)}1_{\lambda})=
\xy
(0,0)*{
\begin{tikzpicture}[color=\clr ]
	\draw[ thick, directed=.25, directed=1] (0,0) to (0,1.5);
	\node at (0,-0.15) {\scriptsize $\lambda_{i}$};
	\node at (0,1.65) {\scriptsize $\lambda_{i}\!-\!j$};
	\draw[ thick, directed=.25, directed=1] (1,0) to (1,1.5);
	\node at (1,-0.15) {\scriptsize $\lambda_{i+1}$};
	\node at (1,1.65) {\scriptsize $\lambda_{i+1}\!+\!j$};
	\draw[ thick, directed=.55] (0,0.75) to (1,0.75);
	\node at (0.5,1) {\scriptsize$j$};
	\draw [thick, directed=1] (2,0) to (2,1.5);
	\draw [thick, directed=1] (2.75,0) to (2.75,1.5);
	\draw [thick, directed=1] (-1.75,0) to (-1.75,1.5);
	\draw [thick, directed=1] (-1,0) to (-1,1.5);
	\node at (-1.75,-0.15) {\scriptsize $\lambda_1$};
	\node at (-1,-0.15) {\scriptsize $\lambda_{i-1}$};
	\node at (-1.75,1.65) {\scriptsize $\lambda_1$};
	\node at (-1,1.65) {\scriptsize $\lambda_{i-1}$};
	\node at (-1.35,0.75) {$\cdots$};
	\node at (2,-0.15) {\scriptsize $\lambda_{i+2}$};
	\node at (2.75,-0.15) {\scriptsize $\lambda_m$};
	\node at (2,1.65) {\scriptsize $\lambda_{i+2}$};
	\node at (2.75,1.65) {\scriptsize $\lambda_m$};
	\node at (2.4,0.75) {$\cdots$};
\end{tikzpicture}
};
\endxy \ ,
\end{equation*}
\begin{equation*}
\Pi_m(\bar{e}_{i}^{(j)}1_{\lambda})=
\xy
(0,0)*{
\begin{tikzpicture}[color=\clr ]
	\draw[ thick, directed=.25, directed=1] (0,0) to (0,1.5);
	\node at (0,-0.15) {\scriptsize $\lambda_{i}$};
	\node at (0,1.65) {\scriptsize $\lambda_{i}\!+\!j$};
	\draw[ thick, directed=.25, directed=1] (1,0) to (1,1.5);
	\node at (1,-0.15) {\scriptsize $\lambda_{i+1}$};
	\node at (1,1.65) {\scriptsize $\lambda_{i+1}\!-\!j$};
	\draw[ thick, directed=.55] (1,0.75) to (0,0.75);
	\node at (0.5,1) {\scriptsize$j$};
	\draw  (0.25,0.75) \wdot;
	\draw [thick, directed=1] (2,0) to (2,1.5);
	\draw [thick, directed=1] (2.75,0) to (2.75,1.5);
	\draw [thick, directed=1] (-1.75,0) to (-1.75,1.5);
	\draw [thick, directed=1] (-1,0) to (-1,1.5);
	\node at (-1.75,-0.15) {\scriptsize $\lambda_1$};
	\node at (-1,-0.15) {\scriptsize $\lambda_{i-1}$};
	\node at (-1.75,1.65) {\scriptsize $\lambda_1$};
	\node at (-1,1.65) {\scriptsize $\lambda_{i-1}$};
	\node at (-1.35,0.75) {$\cdots$};
	\node at (2,-0.15) {\scriptsize $\lambda_{i+2}$};
	\node at (2.75,-0.15) {\scriptsize $\lambda_m$};
	\node at (2,1.65) {\scriptsize $\lambda_{i+2}$};
	\node at (2.75,1.65) {\scriptsize $\lambda_m$};
	\node at (2.4,0.75) {$\cdots$};
\end{tikzpicture}
};
\endxy \ ,
\end{equation*}
\begin{equation*}
\Pi_m(\bar{f}_{i}^{(j)}1_{\lambda})=
\xy
(0,0)*{
\begin{tikzpicture}[color=\clr ]
	\draw[ thick, directed=.25, directed=1] (0,0) to (0,1.5);
	\node at (0,-0.15) {\scriptsize $\lambda_{i}$};
	\node at (0,1.65) {\scriptsize $\lambda_{i}\!-\!j$};
	\draw[ thick, directed=.25, directed=1] (1,0) to (1,1.5);
	\node at (1,-0.15) {\scriptsize $\lambda_{i+1}$};
	\node at (1,1.65) {\scriptsize $\lambda_{i+1}\!+\!j$};
	\draw[ thick, directed=.55] (0,0.75) to (1,0.75);
	\node at (0.5,1) {\scriptsize$j$};
	\draw  (0.75,0.75) \wdot;
	\draw [thick, directed=1] (2,0) to (2,1.5);
	\draw [thick, directed=1] (2.75,0) to (2.75,1.5);
	\draw [thick, directed=1] (-1.75,0) to (-1.75,1.5);
	\draw [thick, directed=1] (-1,0) to (-1,1.5);
	\node at (-1.75,-0.15) {\scriptsize $\lambda_1$};
	\node at (-1,-0.15) {\scriptsize $\lambda_{i-1}$};
	\node at (-1.75,1.65) {\scriptsize $\lambda_1$};
	\node at (-1,1.65) {\scriptsize $\lambda_{i-1}$};
	\node at (-1.35,0.75) {$\cdots$};
	\node at (2,-0.15) {\scriptsize $\lambda_{i+2}$};
	\node at (2.75,-0.15) {\scriptsize $\lambda_m$};
	\node at (2,1.65) {\scriptsize $\lambda_{i+2}$};
	\node at (2.75,1.65) {\scriptsize $\lambda_m$};
	\node at (2.4,0.75) {$\cdots$};
\end{tikzpicture}
};
\endxy \ ,
\end{equation*}
\begin{equation*}
\Pi_m(\bar{K}_{i}1_{\lambda})=
\xy
(0,0)*{
\begin{tikzpicture}[scale=1.25, color=\clr ]
	\draw [thick, directed=1] (0,0) to (0,1);
	\draw [thick, directed=1] (0.5,0) to (0.5,1);
	\node at (0,-0.15) {\scriptsize $\lambda_1$};
	\node at (0.5,-0.15) {\scriptsize $\lambda_{i-1}$};
	\node at (0,1.15) {\scriptsize $\lambda_1$};
	\node at (0.5,1.15) {\scriptsize $\lambda_{i-1}$};
	\node at (0.275,0.5) {$\cdots$};
	\draw[thick, directed=1] (1,0) to (1,1);
	\node at (1,-0.15) {\scriptsize $\lambda_{i}$};
	\node at (1,1.15) {\scriptsize $\lambda_{i}$};
	\draw (1,0.5) \wdot;
	\draw [thick, directed=1] (1.5,0) to (1.5,1);
	\draw [thick, directed=1] (2,0) to (2,1);
	\node at (1.5,-0.15) {\scriptsize $\lambda_{i+1}$};
	\node at (2,-0.15) {\scriptsize $\lambda_m$};
	\node at (1.5,1.15) {\scriptsize $\lambda_{i+1}$};
	\node at (2,1.15) {\scriptsize $\lambda_m$};
	\node at (1.775,0.5) {$\cdots$};
\end{tikzpicture}
};
\endxy \ .
\]
Moreover, $\Pi_m$ is a full functor onto the full subcategory of $\qwebs$ consisting of objects $\left\{\up_{a} \mid a \in \Z_{\geq 0}^{m} \right\}$.
\end{proposition}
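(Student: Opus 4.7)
The plan is to prove the two parts of the proposition separately: first that $\Pi_m$ is well-defined as a functor of monoidal supercategories (which amounts to verifying that each defining relation of $\bUdot(\fq_m)_{\geq 0}$ holds in $\qwebs$ after applying $\Pi_m$), and second that $\Pi_m$ is full onto the specified subcategory. I would first reduce the problem by defining $\Pi_m$ on the bare generators $E_i, F_i, \bar{E}_i, \bar{F}_i, \bar{K}_i$ (taking the $j=1$ instances of the displayed formulas), then check that the formula for the divided power $E_i^{(j)}$ (resp.\ $F_i^{(j)}$) as a single rung of thickness $j$ agrees with $\Pi_m(E_i)^j/[j]_q!$ (resp.\ $\Pi_m(F_i)^j/[j]_q!$) by using \cref{associativity} to explode the rung into merges and splits of thickness $1$ and \cref{digon-removal} to collect scalars.

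For well-definedness, I would verify the defining relations from \cref{D:dotU2} one by one. The Chevalley-type relation $(E_iF_j - F_jE_i)1_\lambda = \delta_{i,j}[(\lambda,\alpha_i)]_q 1_\lambda$ is immediate from the super-interchange law for $|i-j|>1$, and for $i=j$ it is precisely the content of \cref{square-switch}. The Serre relations for $E_i$ and $F_i$ reduce via \cref{associativity} and \cref{digon-removal} to identities among thickness-$1$ webs; these calculations parallel those appearing in the existing literature on quantum webs for $\fgl_n$. The relation $\bar{K}_i \bar{K}_j + \bar{K}_j\bar{K}_i = 2\delta_{i,j}[2(\lambda,\vareps_i)]_{q^2}/[2]_{q^2} \cdot 1_\lambda$ follows from the super-interchange law (giving the anticommutativity when $i\neq j$) and the dot-collision relation \cref{dot-collision} (for $i=j$). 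The mixed relations involving $\bar{K}_i$ with $E_j,F_j$ and the identification of $\bar{E}_i,\bar{F}_i$ as prescribed commutators reduce to \cref{dots-past-merges} together with \cref{square-switch-dots}. The relations $E_i\bar{E}_i - \bar{E}_iE_i = 0 = F_i\bar{F}_i - \bar{F}_iF_i$ come from a direct diagrammatic check using the dot-on-any-strand relation \cref{dot-on-any-strand} together with associativity. The quadratic relations between neighbouring $\bar{E}_i$'s (and their $F$-analogues) and the super-Serre relations are handled by unfolding the odd generators via \cref{square-switch-dots} and then applying \cref{double-rungs-1,double-rungs-2}, together with their derived cousins \cref{double-rungs-3,double-rungs-5}.

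For fullness, the point is that every web between objects of the form $\up_a$ (with $a \in \Z_{\geq 0}^m$) is a linear combination of vertical/horizontal compositions of merges, splits, and dots. Every merge and split can be written as a single rung (as in \cref{E:rightladder,E:leftladder} with $k=0$ or $l=0$) sandwiched between identities, so merges and splits of thickness $j$ lie in the image of $E_i^{(j)}$ or $F_i^{(j)}$ for the appropriate $i$ determined by the positions of the strands. Dots on a thick strand are in the image of $\bar{K}_i$ (using \cref{dot-on-any-strand} to identify dots placed in different positions within a merged bundle), and dots occurring on rungs of merges and splits correspond to images of $\bar{E}_i^{(j)}$ and $\bar{F}_i^{(j)}$. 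Thus by successive vertical decomposition of any web into its elementary pieces and invocation of the definitions of $\Pi_m$ on generators and divided powers, every such web lies in the image of $\Pi_m$.

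The main obstacle I anticipate is verifying the super-Serre-type and higher-order mixed relations among the odd generators $\bar{E}_i, \bar{F}_i$ (the last three lines of \cref{D:dotU2}). These require fairly delicate diagrammatic manipulations on ladders with dots, exploiting the full strength of \cref{double-rungs-1,double-rungs-2} alongside the derived relations in \cref{Udot-relations}, and careful bookkeeping of signs coming from the super-interchange law. By comparison the fullness claim, although nontrivial, is essentially a structural observation once the ladder description of merges and splits and the identification of dots with $\bar{K}_i$ are in place.
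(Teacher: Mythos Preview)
Your approach is essentially the same as the paper's: verify well-definedness by checking each defining relation of $\bUdot(\fq_m)_{\geq 0}$ against the web relations (and their derived consequences in \cref{additional-relations,Udot-relations}), and then deduce fullness from the observation that every merge and split is a special case of a ladder, hence in the image of some $E_i^{(j)}$ or $F_i^{(j)}$.

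One small omission: in the Chevalley relation $(E_iF_j - F_jE_i)1_\lambda = \delta_{i,j}[(\lambda,\alpha_i)]_q 1_\lambda$ you address only $i=j$ (via \cref{square-switch}) and $|i-j|>1$ (via super-interchange), but the case $|i-j|=1$ is not covered by either, since the two rungs share a common strand. This case requires \cref{two-rung-switch} (and its dotted analogue \cref{two-rung-switch-with-dots} is similarly needed for the relations $E_i\bar F_j - \bar F_j E_i$ and $\bar E_i F_j - F_j\bar E_i$ when $|i-j|=1$). Apart from this, your relation-by-relation attributions are close to the paper's, differing only in minor details of which derived identity is cited.
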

\begin{proof}
To show $\Pi_m$ is well-defined it suffices to verify the defining relations of $\bUdot(\fqt(m))_{\geq 0}$ hold in $\qwebs$. This follows by direct calculations using the defining relations of $\qwebs$ along with the identities proven in \cref{SS:AdditionalRelations}, as we briefly explain.  In \cref{D:dotU2}, the relation listed in the first line follows from \cref{double-rungs-1} when $i=j\pm 1$, by \cref{square-switch} when $i=j$, and by the super-interchange law for the other cases, the relations in the second line hold by the super-interchange law, the Serre relations in the third and fourth lines hold by \cref{double-rungs-1} and its reflection across a vertical line through the center, the relation on the fifth line follows by the super-interchange law when $i\neq j$ and by \cref{dot-collision} when $i=j$, the relations on the sixth and eighth lines follow by an application of \cref{dots-past-merges} (or the version obtained by reflecting it across its vertical axis), the relations on the seventh and ninth lines follows from the super-interchange law, the relations on the tenth and eleventh rows follow from \cref{double-rungs-2} when $i = j \pm  1$, \cref{square-switch-dots} when $i=j$, and from the super-interchange law for the other cases,   the relations on the twelveth line follow from \cref{associativity,dot-on-k-strand}, the relations on the thirteenth and fourtenth lines follow from \cref{double-rungs-1} or its reflection across the vertical line through its center, the ``odd'' Serre relations on the fifteenth and sixteenth lines follows from \cref{double-rungs-5}

That the image of $\Pi_m$ lies in the given subcategory is immediate.  
Since every merge and split is a special case of a ladder it follows we can obtain every possible diagram which is a merge, split, or dot tensored on the left and right by identity morphisms and which lies in this subcategory.  Since every web which is a morphism in this subcategory is a composition of such diagrams, it follows $\Pi_{m}$ is full.
\end{proof}

\begin{remark}\label{R:Compatiblity2}  Recall the functors $\Theta_{m',m}$ from \cref{R:Compatibility}. The $\Pi$ functors are compatible in the sense  $\Pi_{m'} \circ \Theta_{m',m}$ and $\Pi_{m}$ are isomorphic functors for all positive integers $m' \geq m$.
\end{remark}

\subsection{The Functor  \texorpdfstring{$\Psi$}{Psim}}

Going forward it will be convenient to write  $S_{q}^{\uparrow_{d}}\left(V_{n} \right)=S_{q}^{d}\left(V_{n} \right)$ for $d \geq 0$ and, more generally,
\[
S_{q}^{\ob{a}}\left(V_{n} \right)=S_{q}^{\uparrow_{a}}\left(V_{n} \right)  = S_{q}^{\uparrow_{a_{1}}}\left(V_{n} \right) \otimes \dotsb \otimes S_{q}^{\uparrow_{a_{r}}}\left(V_{n} \right)
\]
for $\ob{a}=\uparrow_{a}=\uparrow_{a_{1}}\dotsb \uparrow_{a_{r}}$.

\begin{proposition}\label{psi-functor}
For every $n \geq 1$ there exists an essentially surjective functor of monoidal supercategories, 
\[
\Psi_{n}: \qwebs\to\mods,
\]
given an object $\up_{\lambda} = \up_{\lambda_{1}}\dotsb \up_{\lambda_{t}}$ by 
\[
\Psi_{n}\left(\up_{\lambda} \right)=\AA_{q}(V_{m} \star V_{n})_{\lambda} \cong S_{q}^{\uparrow_{\lambda}}\left(V_{n} \right)=S_{q}^{\lambda_{1}}(V_{n}) \otimes\dotsb \otimes S_{q}^{\lambda_{t}}(V_{n}),
\] where we choose $m \geq t$ and $\lambda = (\lambda_{1}, \dotsc , \lambda_{t})$ is identified with the weight $(\lambda_{1}, \dotsc , \lambda_{t}, 0, \dotsc ,0)$ for $U_{q}\left(\fq_{m} \right)$ where there are $m-t$ zeros. The functor is independent of this choice of $m$.   

On morphisms the functor is defined by sending the dot, merge, and split, respectively, to the following maps: 
\begin{align}\label{dotmorphism}
\Psi_{n}\left(
\xy
(0,0)*{
\begin{tikzpicture}[scale=1.25, color=\clr ]
	\draw[thick, directed=1] (1,0) to (1,1);
	\node at (1,-0.15) {\scriptsize $k$};
	\node at (1,1.15) {\scriptsize $k$};
	\draw (1,0.5) \wdot ;
\end{tikzpicture}
};
\endxy\right)&=\Phi_{1,n}(\bar{K}_{1}1_{(k)}): S^{k}_{q}(V_{n}) \to S^{k}_{q}(V_{n}),\\ \label{mergemorphism}
\Psi_{n}\left(
\xy
(0,0)*{
\begin{tikzpicture}[scale=.35, color=\clr ]
	\draw [ thick, directed=1] (0, .75) to (0,2);
	\draw [ thick, directed=.65] (1,-1) to [out=90,in=330] (0,.75);
	\draw [ thick, directed=.65] (-1,-1) to [out=90,in=210] (0,.75);
	\node at (0, 2.5) {\scriptsize $k\! +\! l$};
	\node at (-1,-1.5) {\scriptsize $k$};
	\node at (1,-1.5) {\scriptsize $l$};
\end{tikzpicture}
};
\endxy\right)&=\Phi_{2,n}(\etilde_1^{(l)}1_{(k,l)}): S^{k}_{q}(V_{n})\otimes S^{l}_{q}(V_{n}) \to S^{k+l}_{q}(V_{n}),\\ \label{splitmorphism}
\Psi_{n}\left(
\xy
(0,0)*{
\begin{tikzpicture}[scale=.35, color=\clr ]
	\draw [ thick, directed=0.65] (0,-0.5) to (0,.75);
	\draw [ thick, directed=1] (0,.75) to [out=30,in=270] (1,2.5);
	\draw [ thick, directed=1] (0,.75) to [out=150,in=270] (-1,2.5); 
	\node at (0, -1) {\scriptsize $k\! +\! l$};
	\node at (-1,3) {\scriptsize $k$};
	\node at (1,3) {\scriptsize $l$};
\end{tikzpicture}
};
\endxy\right)&=\Phi_{2,n}(\etilde_1^{(k)}1_{(0,k+l)}): S^{k+l}_{q}(V_{n})\to S^{k}_{q}(V_{n})\otimes S^{l}_{q}(V_{n}).
\end{align}
Moreover, for any $m,n \geq 1$ we have $\Psi_{n} \circ \Pi_{m} = \Phi_{m,n}$.
\end{proposition}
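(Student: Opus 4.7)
The strategy is to define $\Psi_n$ on objects and on the three kinds of generating morphisms exactly as prescribed in the statement, then verify that the defining relations of $\qwebs$ are sent to identities in $\mods$. Since the prescribed values on generators are already of the form $\Phi_{m,n}(\,\cdot\,)$ for small $m$ ($m=1$ for the dot, $m=2$ for the merge and split), the plan will be to lift each defining relation of $\qwebs$ to a corresponding equality of morphisms in $\bUdot(\q_m)_{\geq 0}$ for sufficiently large $m$, and then apply the already-established functor $\Phi_{m,n}$ of \cref{Phi-up}. Independence of the chosen $m$ on objects will follow from \cref{R:Compatibility} together with the canonical identification $\AA_{q,\lambda} \cong S_q^{\lambda_1}(V_n)\otimes\dotsb\otimes S_q^{\lambda_t}(V_n)$ from \cref{L:weight-space-isomorphism}, where a weight $\lambda=(\lambda_1,\dotsc,\lambda_t)$ for $U_q(\q_m)$ may be extended by zeros without changing the associated supermodule.

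To verify the relations, I will proceed relation by relation, in each case choosing $m$ just large enough to accommodate all the labels on a diagram and lifting both sides through $\Pi_m$ (using \cref{pi-functor}) to morphisms in $\bUdot(\q_m)_{\geq 0}$. The relation \cref{associativity} lifts to the identity $E_i^{(a)}E_{i+1}^{(b)} = E_i^{(a)}E_{i+1}^{(b)}$ (and its split analogue), which is trivially preserved. The relation \cref{digon-removal} is the Chevalley computation $E_i^{(l)}F_i^{(l)}\,1_{(k+l,0)} = \qbinom{k+l}{l}\,1_{(k+l,0)}$ holding in $\bUdot(\q_m)_{\geq 0}$. The relation \cref{dot-collision} is the $\bar K_i$-identity on the fifth line of \cref{D:dotU2}. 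The two halves of \cref{dots-past-merges} are the commutation relations between $\bar K_i$ and the even Chevalley generators $E_i,F_i$ on lines six through nine of the same definition (combined with the defining relation for $\bar E_i$ and $\bar F_i$). The square-switch relations \cref{square-switch,square-switch-dots} are the $[E_i,F_j]$-commutators, in their even and twisted ($\bar E_i, \bar F_i$) versions, and \cref{double-rungs-1,double-rungs-2} are the classical and odd Serre relations given in the last lines of \cref{D:dotU2}. Finally, \cref{dumbbell-relation} decomposes after applying \cref{associativity} into a combination of the preceding identities. In each case, applying $\Phi_{m,n}$ to the lifted identity in $\bUdot(\q_m)_{\geq 0}$ yields the desired identity in $\End_{U_q(\q_n)}(\AA_q)$, so $\Psi_n$ is well defined.

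Essential surjectivity is immediate from the construction: every object of $\mods$ has the form $S_q^{d_1}(V_n)\otimes\dotsb\otimes S_q^{d_t}(V_n)$, which is in the image of $\Psi_n$ via $\up_{d_1}\dotsb\up_{d_t}$. For the final compatibility statement $\Psi_n\circ\Pi_m = \Phi_{m,n}$, I will observe that both sides are functors of monoidal supercategories on $\bUdot(\q_m)_{\geq 0}$, so it suffices to check agreement on generating morphisms; this holds by construction, since $\Psi_n$ was defined on the diagrammatic generators of $\qwebs$ to be $\Phi_{m,n}$ applied to the corresponding $\bUdot$-generators, and $\Pi_m$ sends Chevalley-type generators of $\bUdot(\q_m)_{\geq 0}$ (including divided powers and $\bar K_i$) to the appropriate ladders and dots in $\qwebs$.

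The main technical obstacle will be the dot-related relations \cref{dots-past-merges,square-switch-dots,double-rungs-2}, where the odd parity of the dot requires careful bookkeeping of signs via the super-interchange law \cref{E:superinterchange}; these signs must be matched against the signs that appear in the twisted commutation relations between $\bar K_i$, $\bar E_i$, $\bar F_i$, and the even generators in $\bUdot(\q_m)_{\geq 0}$. A secondary issue is that the ladders with edges of thickness $>1$ that appear in relations \cref{double-rungs-1,double-rungs-2} are not themselves single generators in $\bUdot$; their lifts involve divided powers $E_i^{(r)}$ and $F_i^{(r)}$, so expanding and identifying them on both sides of each relation will require the decomposition rules \cref{rung-collision} that rewrite composed ladders in terms of a single ladder times a quantum binomial coefficient, mirroring the divided power commutation identities in $\bUdot(\q_m)_{\geq 0}$.
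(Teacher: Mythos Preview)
Your overall strategy—lifting each defining relation of $\qwebs$ to a relation in $\bUdot(\q_m)_{\geq 0}$ and then applying $\Phi_{m,n}$—is essentially what the paper does for \emph{most} of the relations, and your identification of \cref{dot-collision,square-switch,square-switch-dots,double-rungs-1,double-rungs-2} with the corresponding defining relations of $\bUdot(\q_m)$ is correct. However, there is a genuine gap: your treatment of \cref{associativity} and \cref{dumbbell-relation} does not go through.

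For associativity, you claim both sides lift to the same expression $E_i^{(a)}E_{i+1}^{(b)}$ in $\bUdot$, but this is not so. The two sides of (co)associativity lift, via the prescribed values on merges and splits, to genuinely \emph{different} words in divided powers of $E_1,E_2$ acting on different weight spaces (for splits, the paper writes these as $E_{2}^{(k)}E_{1}^{(h)}E_{2}^{(h)}1_{(0,0,h+k+l)}$ versus $E_{1}^{(h)}E_{2}^{(h+k)}1_{(0,0,h+k+l)}$). Their equality is not among the defining relations in \cref{D:dotU2}, and you have not explained why it should be a consequence of them. The paper instead verifies associativity by a direct calculation on $\AA_q$: it first computes that the merge is literally multiplication in $S_q(V_n)$, so merge-associativity is associativity of that algebra, and then checks split-coassociativity using the explicit formula of \cref{L:Aqcalculation}.

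For the dumbbell relation \cref{dumbbell-relation}, you assert it ``decomposes after applying \cref{associativity} into a combination of the preceding identities.'' This is not justified: the dumbbell relation is an independent defining relation of $\qwebs$ (it is precisely what is used in \cref{pi-functor} to verify the odd relations of $\bUdot$), and there is no relation in \cref{D:dotU2} whose image under $\Pi_m$ is \cref{dumbbell-relation}. The paper handles it by a direct calculation on $V_n\otimes V_n$ via the explicit formulas recorded in \cref{L:functoronkeymorphisms}. Your argument must include such direct computations on $\AA_q$ for these two relations; the purely categorical lifting strategy is insufficient on its own.
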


\begin{proof} We first observe it follows from the action of $U_{q}(\fq_{m})$ on $\AA_{q}$ that, for all $a,b,c \geq 0$ and all $1 \geq r \geq b$,
\[
1 \otimes \Phi_{2,n} \left( E_{1}^{(r)}1_{(a,b)} \right) = \sum_{c \geq 0} \Phi_{3,n}\left(E_{2}^{(r)}1_{(c,a,b)} \right)
\] as maps $S_{q}^{c}(V_{n}) \otimes S_{q}^{a} \otimes S_{q}^{b}(V_{n}) \to S_{q}^{c}(V_{n}) \otimes S_{q}^{a+r} \otimes S_{q}^{b-r}(V_{n})$. Similar statements hold for the other generators of $U_{q}(\fq_{m})$.

Using the map $\bar{\rho}$ in \cref{P:basicpropertiesofAq}(f.) when $m=2$ we identify $S^{k}_{q}(V_{n}) \otimes S^{l}_{q}(V_{n})$ with the subspace of $\AA_{q}$ spanned by the monomials 
\[
\prod_{r=1}^{k} t_{1, x_{r}} \prod_{s=1}^{l} t_{2,y_{s}},
\] where $x_{1}, \dotsc x_{r}, y_{1}, \dotsc , y_{s} \in I_{n|n}$. Acting on this monomial with $ E_{1}^{(l)} \in U_{q}(\fq_{m})$ on $\AA_{q}$ and using the fact $E_{1} \cdot t_{1,x}=0$ for all $x \in I_{n|n}$ and \cref{L:Aqcalculation}  yields 
\[
 E_{1}^{(l)} \cdot \left(  \prod_{r=1}^{k} t_{1, x_{r}} \prod_{s=1}^{l} t_{2,y_{s}}\right) = \left(\prod_{r=1}^{k} t_{1, x_{r}}  \right) E_{1}^{(l)} \left(\prod_{s=1}^{l} t_{2,y_{s}}= \right) = \prod_{r=1}^{k} t_{1, x_{r}} \prod_{s=1}^{l} t_{1,y_{s}}.
\]  From this we see the map given by the merge is nothing but the multiplication map on $S_{q}(V_{n})$.  Therefore the left hand relation given in \cref{associativity} holds.  We next verify the right hand relation in \cref{associativity}. From the observation at the beginning of the proof this amounts to verifying the actions of 
\[
E_{2}^{(k)}1_{(h,0,k+l)}E_{1}^{(h)}1_{(0,h,k+l)}E_{2}^{(h)}1_{(0,0,h+k+l)} \;\; \text{ and } \;\;  E_{1}^{(h)}1_{(0,h+k,l)} E_{2}^{(h+k)}1_{(0,0,h+k+l)}
\]  on $\AA_{q}$ coincide. This follows from a calculation using \cref{L:Aqcalculation}.

Relation \cref{digon-removal} follows from the fact the left hand side is given by the action of 
\[
E^{(l)}_{1}1_{(k,l)}E^{(k)}_{1}1_{(0,k+l)} =  \qbinom{k+l}{l} E_{1}^{(k+l)}1_{(0,k+l)}
\]
on $\AA_{q}$ along with \cref{L:Aqcalculation}.  Relation \cref{dumbbell-relation} follows by a direct calculation using \cref{E:tensorspaceisomorphism}. The reader can find the relevant generating morphisms computed in \cref{L:functoronkeymorphisms}.

That the remaining relations defining $\qwebs$ are satisfied follows from using the observation at the beginning of the proof, expressing the relations terms of the action of $\bUdot (\fq_{m})$ on $\AA_{q}$, and showing the relations hold thanks to known relations of $\bUdot (\fq_{m})$.  For example, relation \cref{dot-collision} follows immediately from the fifth defining relation of $\bUdot(\fq_{m})$.  Relation \ref{dots-past-merges} follows from an inductive argument using the seventh and twelfth relations of $\bUdot (\fq_{m})$.  For the other relations we first observe the maps on $\AA_{q}$ given by the ladders \cref{E:rightladder} and \cref{E:leftladder} are given by the action of $E_{1}^{(j)}1_{(k,l)}$ and $F_{1}^{(j)}1_{(k,l)}$, respectively (c.f.\ \cref{R:astute} for the latter case).  This along with the observation at the beginning of the proof allows us to deduce relation \cref{square-switch} from the first relation for $\bUdot (\fq_{m})$.   The remaining $\qwebs$ relations drawn with ladders follow from the even and odd Serre relations.  The other relations obtained by reflection and/or rung reversal are similarly verified.  Taken all together the defining relations of $\qwebs$ hold and hence the functor $\Phi$ is well-defined.

Finally, that the functor is essentially surjective is clear.
\end{proof}

\begin{remark}\label{R:astute}  The astute reader will have noticed the functor $\Psi$ could have instead been defined on merges and splits via the action of the appropriate divided powers of $F_{1}$. As discussed in \cite[Remark 2.19]{RT} for type $A$, doing so yields the same maps and, hence, the same functor.
\end{remark}

In what follows, we will need explicit formulas for the the $U_q(\q_n)$-module homomorphisms which $\Psi_n$ associates to merges, splits, crossings, and dots of thin strands.  We record these homorphisms in the following lemma and in the proof we illustrate the techniques required to calculate these homorphisms.

\begin{lemma}\label{L:functoronkeymorphisms} Passing through the isomorphisms given in \cref{L:weight-space-isomorphism} yields the following statements:

\begin{enumerate}
\item  The morphism
\[
\Psi_{n}\left(
\xy
(0,0)*{
\begin{tikzpicture}[scale=1, color=\clr ]
	\draw[thick, directed=1] (1,0) to (1,1);
	\node at (1,-0.15) {\scriptsize $1$};
	\node at (1,1.15) {\scriptsize $1$};
	\draw (1,0.5) \wdot ;
\end{tikzpicture}
};
\endxy\right): V_{n} \to V_{n}
\] is given by $v_{b} \mapsto \sqrt{-1} \cdot (-1)^{\p{{b}}} v_{-b}$ for all $b \in I_{n|n}$. 
\item 
The morphism
\[
\Psi_{n}\left(
\xy
(0,0)*{
\begin{tikzpicture}[scale=.35, color=\clr ]
	\draw [ thick, directed=1] (0, .75) to (0,2);
	\draw [ thick, directed=.65] (1,-1) to [out=90,in=330] (0,.75);
	\draw [ thick, directed=.65] (-1,-1) to [out=90,in=210] (0,.75);
	\node at (0, 2.5) {\scriptsize $2$};
	\node at (-1,-1.5) {\scriptsize $1$};
	\node at (1,-1.5) {\scriptsize $1$};
\end{tikzpicture}
};
\endxy\right)
: V_n \otimes V_n \to S_q^2(V_n)
\]
is given by $v_a \otimes v_b \mapsto v_a v_b$ for all $a, b \in I_{n|n}$. 
\item The morphism 
\[
\Psi_{n}\left(
\xy
(0,0)*{
\begin{tikzpicture}[scale=.35, color=\clr ]
	\draw [ thick, directed=0.65] (0,-0.5) to (0,.75);
	\draw [ thick, directed=1] (0,.75) to [out=30,in=270] (1,2.5);
	\draw [ thick, directed=1] (0,.75) to [out=150,in=270] (-1,2.5); 
	\node at (0, -1) {\scriptsize $ 2$};
	\node at (-1,3) {\scriptsize $1$};
	\node at (1,3) {\scriptsize $1$};
\end{tikzpicture}
};
\endxy\right):
 S_q^2(V_n) \to V_n \otimes V_n\]
 is given by 

\[
v_a v_b \mapsto q^{-1} v_{a} \otimes v_{b} + T(v_a \otimes v_b)
\]
for all $a, b \in I_{n|n}$, where $T$ is the $U_q(\q_n)$-homomorphism from \cref{E:Braiding11}.
\item  The morphism 
\[
\Psi_{n}\left( \xy
(0,0)*{
\begin{tikzpicture}[scale=.3, color=\clr]
	\draw [ thick, ->] (-1,-1) to (1,1);
	\draw [ thick, ->] (-0.25,0.25) to (-1,1);
	\draw [ thick] (0.25,-0.25) to (1,-1);
	\node at (-1,1.5) {\ss $1$};
	\node at (1,1.5) {\ss $1$};
	\node at (-1,-1.5) {\ss $1$};
	\node at (1,-1.5) {\ss $1$};
	\end{tikzpicture}
};
\endxy \right): V_{n} \otimes V_{n} \to V_{n}\otimes V_{n}
\] is given by 
\[
v_{a} \otimes v_b \mapsto T(v_a \otimes v_b)
\] for all $a,b \in I_{n|n}$. 
\item  The morphism 
\[
\Psi_{n}\left( \xy
(0,0)*{\reflectbox{
\begin{tikzpicture}[scale=.3, color=\clr]
	\draw [ thick, ->] (-1,-1) to (1,1);
	\draw [ thick, ->] (-0.25,0.25) to (-1,1);
	\draw [ thick] (0.25,-0.25) to (1,-1);
	\node at (-1,1.5) {\reflectbox{\ss $1$}};
	\node at (1,1.5) {\reflectbox{\ss $1$}};
	\node at (-1,-1.5) {\reflectbox{\ss $1$}};
	\node at (1,-1.5) {\reflectbox{\ss $1$}};
	\end{tikzpicture}
}};
\endxy \right): V_{n} \otimes V_{n} \to V_{n}\otimes V_{n}
\] is given by 
\[
v_{a} \otimes v_b \mapsto   T^{-1}(v_a \otimes v_b) = T(v_a \otimes v_b) - \tq v_a \otimes v_b
\] for all $a,b \in I_{n|n}$.
\end{enumerate}

\end{lemma}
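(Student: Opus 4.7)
The plan is to verify the five formulas in the order (a), (b), (c), (d), (e), using the definition of $\Psi_n$ from \cref{psi-functor} together with the explicit action of the generators of $\Udot(\fq_m)_{\geq 0}$ on $\AA_q$ recorded in \cref{SS:Aqdef}, and the isomorphisms of \cref{L:weight-space-isomorphism} and \cref{E:tensorspaceisomorphism}.

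Part (a) will follow from a direct application of the formula for $\bar{K}_1$ acting on $\AA_q$: one computes $\bar{K}_1 \cdot t_{1,b} = \sqrt{-1}\,(-1)^{\p{b}} t_{-1,b}$ and then invokes \cref{E:AqRel1} to identify $t_{-1,b} = t_{1,-b}$. Transporting through $\rho_1$ (which sends $v_b$ to $t_{1,b}$) produces the stated formula. For part (b), the merge is $\Phi_{2,n}(E_1 1_{(1,1)})$ acting on $\AA_{q,\varepsilon_1+\varepsilon_2}$. Using the coproduct $\Delta(E_1)=E_1\otimes K_1^{-1}K_2+1\otimes E_1$ together with the action of $E_1$ and $K_1^{-1}K_2$ on the generators $t_{i,j}$, a short calculation yields $E_1 \cdot (t_{1,a} t_{2,b}) = t_{1,a} t_{1,b}$, which via $\rho_1$ corresponds to $v_a v_b \in S_q^2(V_n)$.

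The heart of the argument is part (c), which I expect to be the main obstacle. The split is $\Phi_{2,n}(E_1 1_{(0,2)})$, and under the identification $v_a v_b \leftrightarrow t_{2,a} t_{2,b}$ (via $\rho_2$), the same coproduct computation gives
\[
E_1 \cdot (t_{2,a} t_{2,b}) \;=\; q\, t_{1,a} t_{2,b} + t_{2,a} t_{1,b}.
\]
The second term is not in the standard basis $\{t_{1,?}\,t_{2,?}\}$ for $\AA_{q,\varepsilon_1+\varepsilon_2}$, so we must rewrite it by means of relation \cref{E:AqRel2} (equivalently the simpler forms in \cref{R:AlternateAq}). A careful case analysis on the signs of $a$ and $b$, using the definition of $\varphi(a,b)$ in \cref{E:phidef} and the identity $q - \tq = q^{-1}$, will show that the combined expression for $E_1 \cdot (t_{2,a} t_{2,b})$ equals $q^{-1} t_{1,a} t_{2,b}$ plus precisely the three-term formula for $T(v_a \otimes v_b)$ in \cref{E:Braiding11} (rewritten in the $t$-notation via \cref{E:tensorspaceisomorphism}). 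The technical obstacle here is purely bookkeeping: matching the parity factors and Kronecker deltas produced by \cref{E:AqRel2} with the three branches in the formula for $T$.

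Parts (d) and (e) are then formal consequences. For (d), the over-crossing is defined in \cref{E:overcrossingdef} as split-after-merge minus $q^{-1}$ times the identity, so combining (b) and (c) yields $v_a\otimes v_b \mapsto (q^{-1} v_a\otimes v_b + T(v_a\otimes v_b)) - q^{-1} v_a\otimes v_b = T(v_a\otimes v_b)$. For (e), the under-crossing is defined in \cref{E:inverseupwardcrossing} as the over-crossing minus $\tq$ times the identity; since $T$ satisfies $T^2 = \tq T + 1$ (by \cref{L:srelations}(a), or directly from \cref{E:Braiding11}), we have $T^{-1} = T - \tq$, and the stated formula follows immediately from (d).
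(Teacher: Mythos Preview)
Your proposal is correct and follows essentially the same approach as the paper: the paper also derives (d) and (e) formally from (b), (c), and the crossing definitions, invokes the proof of \cref{psi-functor} for (b), and carries out precisely the computation you outline for (c), namely $E_1\cdot(t_{2,a}t_{2,b})=q\,t_{1,a}t_{2,b}+t_{2,a}t_{1,b}$ followed by straightening $t_{2,a}t_{1,b}$ via the defining relations of $\AA_q$. The only cosmetic difference is that the paper records the straightening of $t_{2,a}t_{1,b}$ as a single formula rather than a case analysis, but the content is identical.
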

\begin{proof}
Note that (d.) and (e.) can be deduced directly from (b.) and (c.), along with \cref{E:overcrossingdef} and \cref{E:inverseupwardcrossing}.  The formula for (b.) is deduced in the proof of \cref{psi-functor}.  Parts (a.) and (c.) can be verified by direct calculations using the explicit formulas for the action of $U_{q}(\fq_{m})$ on $\AA_{q}$ given in \cref{SS:Aqdef} along with the identifications given in \cref{E:tensorspaceisomorphism,L:weight-space-isomorphism}.  

We verify (c.).  From \cref{splitmorphism}, the supermodule homomorphism in (c.) is defined by calculating the morphism $\Phi_{2,n}(E_1 1_{(0, 2)})$.  From the definition of the functor $\Phi_{2,n}$, this is the unique homomorphism making the following diagram commute:
$$
\begin{tikzcd}[column sep = huge]
S_q^2(V_n) \arrow{d}{\cong} \arrow{r}{\Phi_{2,n}(E_1 1_{(0, 2)})} & V_n \otimes V_n \arrow{d}{\cong} \\
\AA_{q,2\varepsilon_2} \arrow{r}{E_1} & \AA_{q, \varepsilon_1 + \varepsilon_2} 
\end{tikzcd}
$$ 
where the arrow labeled by $E_1$ is the $U_q(\q_n)$-module homomorphism induced by the action of $E_1 \in U_q(\q_2)$ on the $U_q(\q_2) \otimes U(\q_n)$-module $\AA_q = \AA_q(V_2 \star V_n)$, and the vertical isomorphisms are both given by \cref{L:weight-space-isomorphism}.  

Using the formulas from \cref{SS:Aqdef}, 
$$
E_1 \cdot ( t_{2a} t_{2b} ) =  \left(E_1 \cdot  t_{2a} \right) \left(K_1^{-1} K_{2} \cdot t_{2,b}\right) + t_{2,a}(E_1 \cdot t_{2,b}) = qt_{1a} t_{2,b} + t_{2,a} t_{1,b}.
$$
The monomial $t_{2,a} t_{1,b}$ is not in the order specified in \cref{P:basicpropertiesofAq}.  Applying the defining relations for $\AA_q$ we write this monomial in terms of monomials in the specified order:
$$
t_{2,a} t_{1,b} = -\tq t_{1,a}t_{2,b} +  (-1)^{\varphi(a,b)} (-1)^{\p{a} \p{b}} t_{1,b} t_{2,a} + \tq \delta_{a < b} t_{1,a} t_{2,b} + \delta_{-b < d} (-1)^{\p{b}} t_{1,-a} t_{2,-b}.
$$
Hence, 
$$
E_1 \cdot t_{2,a} t_{2,b} = q^{-1} t_{1,a}t_{2,b} + q^{\varphi(a,b)} (-1)^{\p{a} \p{b}} t_{1,b} t_{2,a} + \tq \delta_{a < b} t_{1,a} t_{2,b} + \delta_{-b < d} (-1)^{\p{b}} t_{1,-a} t_{2,-b}.
$$
After identifying $\AA_{q, 2 \varepsilon_2}$ with $S^2_q(V_n)$ and $\AA_{q,\varepsilon_1 + \varepsilon_2}$ with $V_n \otimes V_n$, it follows that $\Phi_{2,n}(E_1 1_{(0,2)})$ is the $U_q(\q_n)$-homomorphism 
\begin{align*}
v_a v_b &\mapsto q^{-1} v_a \otimes v_b + q^{\varphi(a,b)} (-1)^{\p{a} \p{b}} v_b \otimes v_a + \tq \delta_{a < b} v_a \otimes v_b + \delta_{-b < d} (-1)^{\p{b}} v_{-a} \otimes v_{-b}\\ 
&= q^{-1} v_a \otimes v_b + T(v_a \otimes v_b).
\end{align*}
\end{proof}

\begin{remark}\label{R:CompatibleWithOlshanski} By comparing with \cite[Section 3]{BGJKW} we see the formulas obtained above in (a.), (d.), and (e.) define the same $U_{q}(\fq_{n})$-supermodule homomorphisms used therein and in \cite{Olshanski}.
\end{remark}

\subsection{Description of \texorpdfstring{$\End_{\qwebs}(\up_{1}^k)$}{End(upk)}}\label{description-of-all-ones}

We next describe the endomorphism algebra $\End_{\qwebs}(\up_{1}^k)$ and show it is isomorphic to the Hecke-Clifford superalgebra.

\begin{lemma}\label{sergeev-generators}
The superalgebra $\End_{\qwebs}(\up_{1}^k)$ is generated by $c_1,\dots, c_k, T_1,\dots,T_{k-1}$.
\end{lemma}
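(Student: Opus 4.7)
The plan is to prove by induction on the number of merges (equivalently, splits) appearing in a web $w \in \End_{\qwebs}(\up_{1}^{k})$ that $w$ is a linear combination of products of $c_{1}, \dots, c_{k}, T_{1}, \dots, T_{k-1}$. In the base case, a web containing no merges consists entirely of vertical thin strands decorated with dots, and by the super-interchange law \cref{E:superinterchange} it reorders into a product of the $c_{i}$'s.

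For the inductive step, the key tool is the identity obtained by combining \cref{digon-removal} (applied iteratively) with \cref{clasp-sum}: an $m$-fold merge placed directly beneath an $m$-fold split equals
\[ [m]_{q}!\,\Cl_{m} \;=\; q^{-m(m-1)/2}\sum_{\tau\in S_{m}} q^{\ell(\tau)}T_{\tau}, \]
a linear combination of braid words in the $T_{j}$'s. Substituting this identity for a bare merge--split pair inside $w$ produces a sum of webs, each with strictly fewer merges, so the induction closes provided we can always expose such a bare pair.

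To expose a bare pair in a general $w$, I would pick a merge $\mu$ whose output thick edge does not feed into another merge, and trace this edge upward to the first split $\sigma$ terminating it. Using \cref{associativity} to normalize merges and splits into the canonical $k$-fold form of \cref{E:explosion}, then using \cref{dots-past-merges} and \cref{dot-on-any-strand} to push all dots off of the thick edge onto thin strands, and finally using the square-switch and double-rung relations \cref{square-switch,square-switch-dots,double-rungs-1,double-rungs-2} together with the auxiliary identities of \cref{additional-relations,Udot-relations} to commute any intervening rungs past the segment between $\mu$ and $\sigma$, one arranges $\mu$ directly underneath $\sigma$ with no intervening structure, ready for the clasp--sum substitution.

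The main obstacle lies in executing this rearrangement cleanly, as the defining relations can interact in subtle ways for an intricate web. A more economical alternative is to induct lexicographically on the pair (maximum thickness of an internal edge, number of internal vertices at that maximum thickness), peeling off the highest-thickness layer using the clasp recursion \cref{clasp-recursion} together with \cref{L:clasps-past-crossings}, thereby reducing to the case of strictly smaller maximum thickness at each step until only thickness-$1$ and thickness-$2$ internal edges remain and $w$ is visibly a word in the $c_i$'s and $T_j$'s.
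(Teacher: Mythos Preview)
Your key ingredient is right: the clasp formula $[m]_{q}!\,\Cl_{m}=q^{-m(m-1)/2}\sum_{\tau}q^{\ell(\tau)}T_{\tau}$ is exactly what converts thick structure into words in the $T_j$'s. But the inductive scheme you build around it has a real gap.

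When you pick a merge $\mu$ whose output feeds (after dots) into a split $\sigma$, the inputs of $\mu$ and outputs of $\sigma$ need not be thin, so $\sigma\circ\mu$ is not a clasp and your substitution does not apply directly. Your proposed fix of ``normalizing via associativity'' and ``commuting intervening rungs'' is where the argument breaks down: there are no rungs on the edge between $\mu$ and $\sigma$ (that edge only carries dots), and pushing those dots onto thin strands via \cref{dot-on-k-strand} actually \emph{introduces} an extra merge and split, so your induction variable (number of merges) can increase. The lexicographic alternative in your last paragraph has the same problem---the clasp recursion \cref{clasp-recursion} does not obviously lower the maximum edge thickness, and you have not said what ``peeling off a layer'' means concretely.

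The paper avoids all of this by a single global rewriting rather than an induction. First use \cref{dot-on-k-strand} to put every dot on a thin edge. Then, at every trivalent vertex, \emph{insert} a digon (split into $1$'s, then merge back) on each of its three edges using \cref{digon-removal}; this costs only an invertible scalar. Now each original merge of $\up_h\up_l$ is sandwiched between the re-merge of $h$ thin strands, the re-merge of $l$ thin strands, and the explosion of the output into $h+l$ thin strands; associativity collapses this to a merge of $h+l$ thin strands followed by a split into $h+l$ thin strands, i.e.\ $[h+l]_{q}!\,\Cl_{h+l}$. Splits are handled identically. After this one step the web is a network of clasps joined by thin strands with dots, and \cref{L:claspsum} finishes the job. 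No induction, no commuting of rungs, no tracking of vertex counts.
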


\begin{proof}
It suffices to prove every web $w\in\End_{\qwebs}(\up_{1}^k)$ can be written as a linear combination of webs consisting solely of strands of thickness one which are upward over- and under-crossings and dots.  Given a web $w$,  by \cref{dot-on-k-strand} we may assume without loss of generality every dot in $w$ is on a $1$-strand. Next, every merge and split in $w$ can have its edges expanded into $1$-strands using \cref{digon-removal}.  For example, here is an expanded merge:
\[
\xy
(0,0)*{
\begin{tikzpicture}[scale=.35]
	\draw [color=\clr,  thick, directed=1] (-1,3) to (-1,6);
	\draw [color=\clr,  thick, rdirected=.55,looseness=1.25] (-1,3) to [out=200,in=90] (-3,0);
	\draw [color=\clr,  thick, rdirected=.55, looseness=1.25] (-1,3) to [out=340,in=90] (1,0);
	\node at (-1,6.35) {\scriptsize $h\! +\! l$};
	\node at (-3,-.35) {\scriptsize $h$};
	\node at (1,-.35) {\scriptsize $l$};
\end{tikzpicture}
};
\endxy
=
\frac{1}{[h]_{q}! \ [l]_{q}! \ [h+l]_{q}!}
\xy
(0,0)*{
\begin{tikzpicture}[scale=.35]
	\draw [color=\clr,  thick, directed=1] (-1,5.5) to (-1,6);
	\draw [color=\clr,  thick, directed=.65] (-1,3) to (-1,3.5);
	\draw [color=\clr,  thick, directed=.35] (-1,3.5) to [out=150,in=210] (-1,5.5);
	\draw [color=\clr,  thick, directed=.35] (-1,3.5) to [out=30,in=330] (-1,5.5);
	\draw [color=\clr,  thick, rdirected=.65] (-1,3) to (-1.45,2.333);
	\draw [color=\clr,  thick, rdirected=.55] (-2.55,.67) to (-3,0);
	\draw [color=\clr,  thick, directed=.75] (-2.55,.67) to [out=116.3,in=176.3] (-1.45,2.333);
	\draw [color=\clr,  thick, directed=.75] (-2.55,.67) to [out=-3.7,in=296.3] (-1.45,2.333);
	\draw [color=\clr,  thick, rdirected=.65] (-1,3) to (-.55,2.333);
	\draw [color=\clr,  thick, rdirected=.55] (0.55,.67) to (1,0);
	\draw [color=\clr,  thick, directed=.75] (0.55,.67) to [out=187.7,in=243.7] (-.55,2.333);
	\draw [color=\clr,  thick, directed=.75] (0.55,.67) to [out=63.7,in=363.7] (-.55,2.333);
	\draw [color=\clr, dashed] (-3,0.85) rectangle (1,5);
	\node at (-1,6.35) {\scriptsize $h\! +\! l$};
	\node at (-3,-.35) {\scriptsize $h$};
	\node at (1,-.35) {\scriptsize $l$};
	\node at (-1.85,4.5) {\scriptsize $1$};
	\node at (-0.15,4.5) {\scriptsize $1$};
	\node at (-0.95,4.5) {\small $\cdots$};
	\node at (0.05,1.55) { \rotatebox{33.7}{\small $\cdots$}};
	\node at (-0.7,1.15) {\scriptsize $1$};
	\node at (0.7,1.85) {\scriptsize $1$};
	\node at (-2,1.45) { \rotatebox{-33.7}{\small $\cdots$}};
	\node at (-1.3,1.15) {\scriptsize $1$};
	\node at (-2.7,1.85) {\scriptsize $1$};
\end{tikzpicture}
};
\endxy.
\]
As discussed at the end of \cref{SS:UpwardWebs}, the web enclosed by the dashed rectangle above is $[h+l]_{q}!\,\Cl_{h+l}$.  Having used this technique at every merge and split, the only edges labelled by $k >1$ begin and end with explosions into strands of thickness $1$.   Moreover, like the one in the dashed rectangle above, these are scalar multiples of clasps.  By  \cref{L:claspsum} each such clasp  can be rewritten as a sum of upward crossings.   That is, we have rewritten $w$ as a linear combination of webs involving strands of thickness $1$ involving only upward crossings and dots, as desired.
\end{proof}


\begin{proposition}\label{P:sergeev-isomorphism}  For every $k\in\Z_{>0}$ the map 
\[
\xi_k:\HC_{k}(q)  \to  \End_{\qwebs}(\up_{1}^k)
\] of \cref{E:Sergeevhomomorphism} is a superalgebra isomorphism.  Moreover, the map obtained by composing this homomorphism with $\Psi_{n}:\End_{\qwebs}(\up_{1}^{k}) \to \End_{\fq (n)}\left(V^{\otimes k} \right)$ coincides with the map $\psi$ given in \cref{T:OlshanskiDuality}.
\end{proposition}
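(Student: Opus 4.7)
\medskip

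The plan is to prove the second assertion first, then use it together with Olshanski's duality to bootstrap the first. For surjectivity of $\xi_k$, the work is already essentially done: \cref{sergeev-generators} shows that $\End_{\qwebs}(\up_1^k)$ is generated as a superalgebra by the morphisms $c_1,\dots,c_k,T_1,\dots,T_{k-1}$, and these are precisely the images under $\xi_k$ of the generators of $\HC_k(q)$ (with $\xi_k$ a well-defined superalgebra homomorphism by \cref{L:srelations}).

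Next I would verify the claim $\Psi_n\circ\xi_k=\psi$. Since both are superalgebra homomorphisms out of $\HC_k(q)$, it suffices to compare them on the generators $c_i$ and $T_j$. Using \cref{L:functoronkeymorphisms}(a) one computes that $\Psi_n$ sends the dot on a thickness-one strand to the map $v_b\mapsto\sqrt{-1}\,(-1)^{\p{b}}v_{-b}$, which is exactly the operator by which Olshanski's $c_1$ acts on $V_n^{\otimes k}$ in its first tensor factor; more generally, the super-interchange law together with the monoidal structure of $\Psi_n$ identifies $\Psi_n(c_i)$ with Olshanski's action of $c_i$. Likewise, by \cref{L:functoronkeymorphisms}(d) the upward crossing is sent to the braiding $T$ of \cref{E:Braiding11}, which by construction is $\psi(T_1)$; by monoidality this extends to all $T_j$. (This is essentially the content of \cref{R:CompatibleWithOlshanski}.) Hence $\Psi_n\circ\xi_k$ and $\psi$ agree on all generators of $\HC_k(q)$, proving the second statement.

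For injectivity of $\xi_k$, suppose $x\in\HC_k(q)$ satisfies $\xi_k(x)=0$. Then for every $n\geq 1$ we have $\psi(x)=\Psi_n(\xi_k(x))=0$ in $\End_{U_q(\fq_n)}(V_n^{\otimes k})$. By \cref{P:OlshanskiKernel}, however, $\psi$ is injective as soon as $k<(n+1)(n+2)/2$; choosing any $n$ large enough relative to the fixed $k$ forces $x=0$. Combined with surjectivity from the first paragraph, this yields the desired isomorphism.

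I do not expect a serious obstacle here: everything reduces to a check on generators (for the second assertion) plus an appeal to known semisimplicity/double-centralizer results (for injectivity). The only mild subtlety is being careful with signs and with $\sqrt{-1}$'s when matching $\Psi_n(c_i)$ to Olshanski's $c_i$, since different normalizations of the Clifford generators appear in the literature; this is precisely the rescaling already noted in the footnote to \cref{E:Sergeevhomomorphism}, so the check goes through cleanly.
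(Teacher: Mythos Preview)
Your proposal is correct and follows essentially the same approach as the paper: surjectivity via \cref{sergeev-generators}, commutativity $\Psi_n\circ\xi_k=\psi$ checked on generators using \cref{L:functoronkeymorphisms} (exactly as in \cref{R:CompatibleWithOlshanski}), and injectivity by choosing $n$ large enough that $\psi$ is injective. The paper organizes the last step as a commutative triangle with $n>k$ fixed at the outset, but the logic is identical.
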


\begin{proof} As discussed before \cref{E:Sergeevhomomorphism}, $\xi_{k}$ gives a well-defined homomorphism.  By \cref{sergeev-generators} $\xi_k$ is surjective.  

We now prove injectivity.  Fix $n > k$. Since by \cref{E:tensorspaceisomorphism}  $\AA_{q, \varepsilon_{1}+\dotsb \varepsilon_{k}} \cong V_n^{\otimes k}$, the functor $\Psi_{n}$ induces a map of superalgebras $\End_{\qwebs}(\up_{1}^k) \to \End_{U_{q}(\q_{n})}(V_n^{\otimes k})$ which we call by the same name.  Taken together with the superalgebra map $\psi: \HC_k(q) \to \End_{U_{q}(\q_{n})}(V_n^{\otimes k})$ from \cref{T:OlshanskiDuality} we have the following diagram of superalgebra maps.
\[
\xy
(0,0)*{
\begin{tikzpicture}
\node (S) at (0,0) {$\HC_{k}(q)$};
\node (P) at (3,0) {$\End_{U_{q}(\q_{n})}(V_n^{\otimes k})$};
\node (W) at (0,-2) {$\End_{\qwebs}(\up_{1}^k)$};
\path[->]
(S) edge [above] node {$\psi$} (P)
(S) edge [left] node {$\xi_k$} (W)
(W) edge [below] node {\quad $\Psi_{n}$} (P);
\end{tikzpicture}
};
\endxy \ .
\]  As discussed in \cref{R:CompatibleWithOlshanski}, the calculations in \cref{L:functoronkeymorphisms} imply this diagram commmutes.   The injectivity of $\xi_{k}$ follows from the fact that $\psi$ is an isomorphism by \cref{T:OlshanskiDuality,P:OlshanskiKernel}.
\end{proof}

\subsection{The fullness of \texorpdfstring{$\Psi_{n}$}{Psin} }\label{S:fullness}

Given a tuple of integers $a=(a_{1}, \dotsc , a_{r})$, let $|a| = a_{1}+\dotsb +a_{r}$.

\begin{theorem}\label{T:fullness} For every $n \geq 1$, the functor $\Psi_{n}: \qwebs \to \mods$ is full.
\end{theorem}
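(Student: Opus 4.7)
The plan is to reduce fullness on arbitrary $\Hom$-spaces to the case already handled by $\End_{\qwebs}(\up_1^k)$, using the fact that each object $\ob{a}$ is realized (up to a nonzero scalar) as a retract of a tensor power of $\up_1$ via iterated merges and splits. To begin, observe that every web in $\qwebs$ preserves the total thickness $|a|=a_1+\cdots+a_r$ of its source, and every morphism in $\mods$ preserves the polynomial degree of the source. Consequently both $\Hom$-spaces vanish whenever $|a|\neq |b|$, so we may fix $k$ and restrict to objects $\ob{a},\ob{b}$ with $|a|=|b|=k$.

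For each tuple $a=(a_1,\dotsc ,a_r)$ with $|a|=k$, let $M_a:\up_1^{k}\to\ob{a}$ denote the web built from iterated merges ($\up_1^{a_i}\to \up_{a_i}$ on each block), and let $S_a:\ob{a}\to\up_1^{k}$ denote the corresponding iterated split. Repeated application of \cref{digon-removal} yields
\[
M_a\circ S_a = C_a\cdot \id_{\ob{a}} \qquad \text{with} \qquad C_a = \prod_{i=1}^r [a_i]_q! \neq 0 \text{ in }\K.
\]
Applying $\Psi_n$ and using \cref{L:functoronkeymorphisms} together with the identification of iterated merges with the multiplication map on $S_q(V_n)$, we see that $\Psi_n(M_a):V_n^{\otimes k}\twoheadrightarrow S_q^{\ob{a}}(V_n)$ is the canonical surjection and $\tfrac{1}{C_a}\Psi_n(M_a)\circ \Psi_n(S_a)=\id_{S_q^{\ob{a}}(V_n)}$.

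Now let $f\in \Hom_{U_q(\fq_n)}(S_q^{\ob{a}}(V_n),S_q^{\ob{b}}(V_n))$ be arbitrary, and form the composite
\[
\tilde f := \Psi_n(S_b)\circ f\circ \Psi_n(M_a)\ \in\ \End_{U_q(\fq_n)}(V_n^{\otimes k}).
\]
By the Hecke-Clifford-Sergeev-Olshanski duality recalled in \cref{T:OlshanskiDuality}, the map $\psi:\HC_k(q)\to \End_{U_q(\fq_n)}(V_n^{\otimes k})$ is surjective. Combining this with \cref{P:sergeev-isomorphism}, which identifies $\xi_k:\HC_k(q)\xrightarrow{\sim}\End_{\qwebs}(\up_1^{k})$ and shows $\Psi_n\circ\xi_k=\psi$, we may lift $\tilde f$ to some $g\in\End_{\qwebs}(\up_1^{k})$ with $\Psi_n(g)=\tilde f$.

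Finally, define the morphism
\[
h := \frac{1}{C_a C_b}\, M_b\circ g\circ S_a \ \in\ \Hom_{\qwebs}(\ob{a},\ob{b}).
\]
Using the digon identities from Step 1 on both sides,
\[
\Psi_n(h) = \frac{1}{C_a C_b}\,\Psi_n(M_b)\circ \Psi_n(S_b)\circ f\circ \Psi_n(M_a)\circ \Psi_n(S_a) = f,
\]
which shows that $f$ lies in the image of $\Psi_n$, establishing fullness. The main content of the argument is entirely concentrated in \cref{P:sergeev-isomorphism} and \cref{T:OlshanskiDuality}; the remaining step is the mild bookkeeping of scalars in the digon reductions, which is routine since $\K$ has characteristic zero and every $[j]_q!$ is invertible.
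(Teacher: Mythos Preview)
Your proof is correct and follows essentially the same approach as the paper's: both reduce to the $\up_1^k$ case via the merge/split retract structure (digon removal) and then invoke the surjectivity coming from Olshanski duality through \cref{P:sergeev-isomorphism} and \cref{T:OlshanskiDuality}. The paper packages this as a commutative square with an embedding $\alpha$ and a diagram chase using the clasp idempotents $e_a,e_b$, whereas you write out the lift $h=\tfrac{1}{C_aC_b}M_b\circ g\circ S_a$ explicitly; these are two presentations of the same argument.
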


\begin{proof}  Let $a=(a_{1}, \dotsc , a_{r})$ and $b=(b_{1}, \dotsc , b_{s})$ be tuples of nonnegative integers.  We first observe $\Hom_{\qwebs}(\uparrow_{a},\uparrow_{b})=0$ unless $|a|=|b|$.  Likewise, by weight considerations $\Hom_{U_{q}\left(\fq_{n} \right)}\left( S_{q}^{\uparrow_{a}}(V_{n}), S_{q}^{\uparrow_{b}}(V_{n})\right)=0$ unless $|a|=|b|$.  Thus we may assume $|a|=|b|$ in what follows.

There is a map of superspaces 
\[
\Hom_{\qwebs}\left(  \uparrow_{a}, \uparrow_{b}\right) \xrightarrow{\alpha} \Hom_{\qwebs}\left( \up_{1}^{|a|}, \up_{1}^{|b|}\right)
\]
given by pre- and post-composing with merges and splits to explode all the boundary strands:
\[
\xy
(0,0)*{
\begin{tikzpicture} [scale=1, color=\clr]
	\draw[ thick] (-1.5,1.25) rectangle (0.5,.75);
	\draw[ thick, directed=0.55] (-1.25,0) to (-1.25,.75);
	\draw[ thick, directed=0.55] (-1.25,1.25) to (-1.25,2);
	\draw[ thick, directed=0.55] (.25,0) to (.25,.75);
	\draw[ thick, directed=0.55] (.25,1.25) to (.25,2);
	\node at (-0.5,1.75) {$\cdots$};
	\node at (-0.5,.25) {$\cdots$};
	\node at (-0.5,1) {$u$};
	\node at (-1.55,1.675) {\scriptsize ${b}_1$};
	\node at (-1.55,0.325) {\scriptsize ${a}_1$};
	\node at (0.65,1.675) {\scriptsize ${b}_{s}$};
	\node at (0.65,0.325) {\scriptsize ${a}_r$};
\end{tikzpicture}
};
\endxy
\mapsto
\xy
(0,0)*{
\begin{tikzpicture} [scale=1, color=\clr]
	\draw[ thick] (-1.5,1.25) rectangle (0.5,.75);
	\draw[ thick, directed=0.55] (-1.25,0) to (-1.25,.75);
	\draw[ thick, directed=0.55] (-1.25,1.25) to (-1.25,2);
	\draw[ thick, directed=0.55] (.25,0) to (.25,.75);
	\draw[ thick, directed=0.55] (.25,1.25) to (.25,2);
	\node at (-0.5,1.75) {$\cdots$};
	\node at (-0.5,.25) {$\cdots$};
	\node at (-0.5,1) {$u$};
	\node at (-1.55,1.675) {\scriptsize ${b}_1$};
	\node at (-1.55,0.325) {\scriptsize ${a}_1$};
	\node at (0.65,1.675) {\scriptsize ${b}_{s}$};
	\node at (0.65,0.325) {\scriptsize ${a}_r$};
	\draw [ thick, directed=.55] (-1,-.5) to [out=90,in=330] (-1.25,0);
	\draw [ thick, rdirected=.65] (-1.25,0) to [out=210,in=90] (-1.5,-.5);
	\draw [ thick, rdirected=.65] (0.25,0) to [out=330,in=90] (0.5,-.5);
	\draw [ thick, rdirected=.65] (0.25,0) to [out=210,in=90] (0,-.5);
	\draw [ thick, directed=.55] (-1.25,2) to [out=30,in=270] (-1,2.5);
	\draw [ thick, directed=.55] (-1.25,2) to [out=150,in=270] (-1.5,2.5);
	\draw [ thick, directed=.55] (0.25,2) to [out=30,in=270] (0.5,2.5);
	\draw [ thick, directed=.55] (0.25,2) to [out=150,in=270] (0,2.5);
	\node at (-1.25,2.65) {\small \  $\cdots$};
	\node at (-1.25,-.65) {\small  \ $\cdots$};
	\node at (0.25,2.65) {\small  \ $\cdots$};
	\node at (0.25,-.65) {\small  \ $\cdots$};
	\node at (-1,2.65) {\scriptsize $1$};
	\node at (-1.5,2.65) {\scriptsize $1$};
	\node at (-1,-.65) {\scriptsize $1$};
	\node at (-1.5,-.65) {\scriptsize $1$};
	\node at (0,2.65) {\scriptsize $1$};
	\node at (0.5,2.65) {\scriptsize $1$};
	\node at (0,-.65) {\scriptsize $1$};
	\node at (0.5,-.65) {\scriptsize $1$};
\end{tikzpicture}
};
\endxy.
\]  This map is an embedding.  Up to a scalar the left inverse is given by applying a complementary set of splits and merges to rejoin the strands and applying relation \cref{digon-removal}.  Since the map is given by composing and tensoring diagrams, via the functor there is corresponding map $\tilde{\alpha}$ which makes the following  diagram of superspace maps commute:

\begin{equation}\label{E:PsiBox}
\begin{tikzcd}
\Hom_{\qwebs}(\uparrow_{a}, \uparrow_{b})  \arrow[r, hook, "\alpha"] \arrow[d,  "\Psi_{n}"] & \Hom_{\qwebs}\left( \up_{1}^{|{a}|}, \up_{1}^{|{b}|}\right) \arrow[d,  "\Psi_{n}"] \\
\Hom_{U_{q}\left( \fq_{n}\right)}\left(  S_{q}^{\uparrow_{a}}(V_{n}),S_{q}^{\uparrow_{b}}(V_{n})\right)\arrow[r, hook, "\tilde{\alpha}"] & \Hom_{U_{q}(\fq_{n})}\left( V_{n}^{\otimes |{a}|}, V_{n}^{\otimes |{b}|}\right).
\end{tikzcd}
\end{equation}
Furthermore, $e_{{a}}= \Cl_{a_{1}} \otimes \dotsb \otimes \Cl_{a_{r}}, e_{{b}} = \Cl_{b_{1}} \otimes \dotsb \otimes \Cl_{b_{s}} \in \End_{\qwebs}(\uparrow_{1}^{|{a}|})$  are idempotents, the image of $\alpha$ is precisely $e_{{b}}\Hom_{\qwebs}(\uparrow^{{a}}, \uparrow^{{b}})e_{{a}}$, and the image of $\tilde{\alpha}$ is $\Psi_{n}(e_{{b}})\Hom_{q(n)}(V^{{a}}, V^{{b}})\Psi_{n}(e_{{a}})$.  By \cref{P:sergeev-isomorphism} and \cref{T:OlshanskiDuality}  the $\Psi_{n}$ on the right side of \cref{E:PsiBox} is surjective.  This along with a diagram chase implies $\Psi_{n}$ is surjective on the left side of the diagram.  Together with the discussion of the first paragraph it follows $\Psi_{n}$ is a full functor. 
\end{proof} 

\section{Oriented Webs}\label{S:OrientedWebs}
We next introduce oriented webs.  On the representation theory side this corresponds to including  duals.

\subsection{Oriented Webs}

\begin{definition}\label{D:orientedwebs}
The category $\qwebsupdown$ is the monoidal supercategory with generating objects 
\[
\{\up_{k}, \down_k \mid k \in \Z_{\geq 0}\}
\]
and generating morphisms
 \[
\xy
(0,0)*{
\begin{tikzpicture}[color=\clr, scale=1]
	\draw[color=\clr, thick, directed=1] (1,0) to (1,1);
	\node at (1,-0.15) {\scriptsize $k$};
	\node at (1,1.15) {\scriptsize $k$};
	\draw (1,0.5) \wdot;
\end{tikzpicture}
};
\endxy \ ,\quad\quad
\xy
(0,0)*{
\begin{tikzpicture}[color=\clr, scale=.35]
	\draw [color=\clr,  thick, directed=1] (0, .75) to (0,2);
	\draw [color=\clr,  thick, directed=.65] (1,-1) to [out=90,in=330] (0,.75);
	\draw [color=\clr,  thick, directed=.65] (-1,-1) to [out=90,in=210] (0,.75);
	\node at (0, 2.5) {\scriptsize $k\! +\! l$};
	\node at (-1,-1.5) {\scriptsize $k$};
	\node at (1,-1.5) {\scriptsize $l$};
\end{tikzpicture}
};
\endxy \ ,\quad\quad
\xy
(0,0)*{
\begin{tikzpicture}[color=\clr, scale=.35]
	\draw [color=\clr,  thick, directed=.65] (0,-0.5) to (0,.75);
	\draw [color=\clr,  thick, directed=1] (0,.75) to [out=30,in=270] (1,2.5);
	\draw [color=\clr,  thick, directed=1] (0,.75) to [out=150,in=270] (-1,2.5); 
	\node at (0, -1) {\scriptsize $k\! +\! l$};
	\node at (-1,3) {\scriptsize $k$};
	\node at (1,3) {\scriptsize $l$};
\end{tikzpicture}
};
\endxy \ , \quad \quad 
\xy
(0,0)*{
\bt[scale=.35, color=\clr]
	\draw [ thick, looseness=2, directed=0.99] (1,2.5) to [out=270,in=270] (-1,2.5);
	\node at (-1,3) {\scriptsize $k$};
	\node at (1,3) {\scriptsize $k$};
\et
};
\endxy \ ,\quad\quad
\xy
(0,0)*{
\bt[scale=.35, color=\clr]
	\draw [ thick, looseness=2, directed=0.99] (1,2.5) to [out=90,in=90] (-1,2.5);
	\node at (-1,2) {\scriptsize $k$};
	\node at (1,2) {\scriptsize $k$};
\et
};
\endxy \ ,
\]
for all $k,l\in\Z_{>0}$.  We call these the \emph{dot}, \emph{merge}, \emph{split},  \emph{leftward cup}, and \emph{leftward cap}, respectively.  The parity is given by declaring the dot to be odd and the other generating morphisms to be even.

The relations imposed on the generators of $\qwebsupdown$ are the relations of $\qwebs$, declaring the morphisms given in \cref{E:leftwardcrossing} are invertible, and declaring equations \cref{straighten-zigzag,delete-bubble} hold.  
\end{definition}


The first relation is the following straightening rules for cups and caps:
\begin{equation}\label{straighten-zigzag}
\xy
(0,0)*{\reflectbox{\rotatebox{180}{
\bt[scale=1.25, color=\clr]
	\draw [ thick, looseness=2, ] (1,0) to [out=90,in=90] (0.5,0);
	\draw [ thick, looseness=2, ] (1.5,0) to [out=270,in=270] (1,0);
	\draw[ thick, directed=1] (0.5,0) to (0.5,-0.5);
	\draw[ thick, ] (1.5,0) to (1.5,0.5);
	\node at (0.5,-0.65) {\scriptsize \reflectbox{\rotatebox{180}{$k$}}};
	\node at (1.5,0.65) {\scriptsize \reflectbox{\rotatebox{180}{$k$}}};
\et
}}};
\endxy=
\xy
(0,0)*{
\bt[scale=1.25, color=\clr]
	\draw[thick, directed=1] (1,0) to (1,1);
	\node at (1,-0.15) {\scriptsize $k$};
	\node at (1,1.15) {\scriptsize $k$};
\et
};
\endxy \ \quad\quad\text{and}\quad\quad
\xy
(0,0)*{
\bt[scale=1.25, color=\clr]
	\draw [ thick, looseness=2, ] (1,0) to [out=90,in=90] (0.5,0);
	\draw [ thick, looseness=2, ] (1.5,0) to [out=270,in=270] (1,0);
	\draw[ thick, directed=1] (0.5,0) to (0.5,-0.5);
	\draw[ thick, ] (1.5,0) to (1.5,0.5);
	\node at (0.5,-0.65) {\scriptsize $k$};
	\node at (1.5,0.65) {\scriptsize $k$};
\et
};
\endxy=
\xy
(0,0)*{
\bt[scale=1.25,  color=\clr]
	\draw[thick, directed=1] (1,1) to (1,0);
	\node at (1,-0.15) {\scriptsize $k$};
	\node at (1,1.15) {\scriptsize $k$};
\et
};
\endxy \ .
\end{equation}

To state the second relation, we first define the \emph{leftward under-crossing} and \emph{leftward over-crossing} in $\qwebsupdown$ for all $k, l \geq 1$, respectively,  by 
\begin{equation}\label{E:leftwardcrossing}
\xy
(0,0)*{
\bt[scale=1.25, color=\clr]
	\draw[thick, rdirected=0.2] (0,0) to (0.2,0.2);
	\draw[thick ] (0.3,0.3) to (0.5,0.5);
	\draw[thick, directed=1] (0.5,0) to (0,0.5);
	\node at (0,-0.15) {\scriptsize $k$};
	\node at (0,0.65) {\scriptsize $l$};
	\node at (0.5,-0.15) {\scriptsize $l$};
	\node at (0.5,0.65) {\scriptsize $k$};
\et
};
\endxy := \ 
\xy
(0,0)*{
\bt[scale=1.25, color=\clr]
	\draw[thick, ] (0,0) to (0.5,0.5);
	\draw [ thick, directed=1] (0.5,0.5) to (0.5,1);
	\draw[thick, ] (0.5,0) to (0.3,0.2);
	\draw[thick, ] (0.2,0.3) to (0,0.5);
	\draw [ thick, looseness=2, ] (1,0) to [out=270,in=270] (0.5,0);
	\draw [thick, ] (1,1) to (1,0);
	\draw [ thick, ] (0,-0.5) to (0,0);
	\draw [ thick, looseness=2, ] (0,0.5) to [out=90,in=90] (-0.5,0.5);
	\draw [ thick, directed=1] (-0.5,0.5) to (-0.5,-0.5);
	\node at (-0.5,-0.65) {\scriptsize $k$};
	\node at (0,-0.65) {\scriptsize $l$};
	\node at (0.5,1.15) {\scriptsize $l$};
	\node at (1,1.15) {\scriptsize $k$};
\et
};
\endxy \ , \quad\quad
\xy
(0,0)*{
\bt[scale=1.25, color=\clr]
	\draw[thick, rdirected=0.1] (0,0) to (0.5,0.5);
	\draw[thick ] (0.5,0) to (0.33,0.18);
	\draw[thick, directed=1] (0.18,.33) to (0,0.5);
	\node at (0,-0.15) {\scriptsize $k$};
	\node at (0,0.65) {\scriptsize $l$};
	\node at (0.5,-0.15) {\scriptsize $l$};
	\node at (0.5,0.65) {\scriptsize $k$};
\et
};
\endxy := \ 
\xy
(0,0)*{
\bt[scale=1.25, color=\clr]
	\draw[thick, ] (0,0) to (0.2,0.2);
	\draw[thick, ] (0.3,0.3) to (0.5,0.5);
	\draw [ thick, directed=1] (0.5,0.5) to (0.5,1);
	\draw[thick, ] (0.5,0) to (0,0.5);
	\draw [ thick, looseness=2, ] (1,0) to [out=270,in=270] (0.5,0);
	\draw [thick, ] (1,1) to (1,0);
	\draw [ thick, ] (0,-0.5) to (0,0);
	\draw [ thick, looseness=2, ] (0,0.5) to [out=90,in=90] (-0.5,0.5);
	\draw [ thick, directed=1] (-0.5,0.5) to (-0.5,-0.5);
	\node at (-0.5,-0.65) {\scriptsize $k$};
	\node at (0,-0.65) {\scriptsize $l$};
	\node at (0.5,1.15) {\scriptsize $l$};
	\node at (1,1.15) {\scriptsize $k$};
\et
};
\endxy
.
\end{equation}  We impose on $\qwebsupdown$ the requirement every leftward under- and over-crossing be invertible. In other words, for every $k,l \geq 1$ we assume the existence of two additional generating morphisms of type $\up_{l}\down_{k} \to \down_{k}\up_{l}$ which are two-sided inverses under composition to the leftward over- and under-crossing morphisms, respectively.  We diagrammatically represent these inverses by
\begin{equation}\label{E:rightwardcrossing}
\xy
(0,0)*{
	\bt[yscale=1.25, xscale = -1.25, color=\clr]
	\draw[thick, rdirected=0.2] (0,0) to (0.2,0.2);
	\draw[thick ] (0.3,0.3) to (0.5,0.5);
	\draw[thick, directed=1] (0.5,0) to (0,0.5);
	\node at (0,-0.15) {\scriptsize $k$};
	\node at (0,0.65) {\scriptsize $l$};
	\node at (0.5,-0.15) {\scriptsize $l$};
	\node at (0.5,0.65) {\scriptsize $k$};
	\et
};
\endxy := \left( 
\xy
(0,0)*{
	\bt[yscale=1.25, xscale = 1.25, color=\clr]
	\draw[thick, rdirected=0.2] (0,0) to (0.2,0.2);
	\draw[thick ] (0.3,0.3) to (0.5,0.5);
	\draw[thick, directed=1] (0.5,0) to (0,0.5);
	\node at (0,-0.15) {\scriptsize $k$};
	\node at (0,0.65) {\scriptsize $l$};
	\node at (0.5,-0.15) {\scriptsize $l$};
	\node at (0.5,0.65) {\scriptsize $k$};
	\et
};
\endxy \right)^{-1}  , \quad\quad
\xy
(0,0)*{
	\bt[yscale=1.25, xscale = -1.25, color=\clr]
	\draw[thick, rdirected=0.1] (0,0) to (0.5,0.5);
	\draw[thick ] (0.5,0) to (0.33,0.18);
	\draw[thick, directed=1] (0.18,.33) to (0,0.5);
	\node at (0,-0.15) {\scriptsize $k$};
	\node at (0,0.65) {\scriptsize $l$};
	\node at (0.5,-0.15) {\scriptsize $l$};
	\node at (0.5,0.65) {\scriptsize $k$};
	\et
};
\endxy := \ \left(
\xy
(0,0)*{
	\bt[scale=1.25, color=\clr]
	\draw[thick, rdirected=0.1] (0,0) to (0.5,0.5);
	\draw[thick ] (0.5,0) to (0.33,0.18);
	\draw[thick, directed=1] (0.18,.33) to (0,0.5);
	\node at (0,-0.15) {\scriptsize $k$};
	\node at (0,0.65) {\scriptsize $l$};
	\node at (0.5,-0.15) {\scriptsize $l$};
	\node at (0.5,0.65) {\scriptsize $k$};
	\et
};
\endxy 
\right)^{-1},
\end{equation}
and we refer to these morphism as \textit{rightward over-crossing} and \textit{rightward under-crossings}, respectively.

Using these rightward crossings we define the \emph{rightward cap} and \emph{rightward cup} for every $k \geq 1$  by

\begin{equation}\label{rightward-cup-and-cap}
\xy
(0,0)*{
\bt[scale=.35, color=\clr]
	\draw [ thick, looseness=2, rdirected=-0.95] (1,4.5) to [out=90,in=90] (-1,4.5);
	\node at (-1.75,4.75) {\scriptsize $k$};
\et
};
\endxy := q^{k(k-1)}
\xy
(0,0)*{
\bt[scale=.35, color=\clr]
	\draw [ thick, looseness=2, directed=0.99] (1,4.5) to [out=90,in=90] (-1,4.5);
	\draw [ thick, directed=1 ] (-1,2.5) to (1,4.5);
	\draw [ thick, rdirected=-.85 ] (1,2.5) to (.25,3.25);
	\draw [ thick ] (-.25,3.75) to (-1,4.5);
	\node at (-1.75,2.55) {\scriptsize $k$};
\et
};
\endxy \   \quad\quad\text{and}\quad\quad
\xy
(0,0)*{
\bt[scale=.35, color=\clr]
	\draw [ thick, looseness=2, rdirected=-0.95 ] (1,2.5) to [out=270,in=270] (-1,2.5);
	\node at (-1.75,2.5) {\scriptsize $k$};
\et
};
\endxy := q^{-k(k-1)}
\xy
(0,0)*{
\bt[scale=.35, color=\clr]
	\draw [ thick, looseness=2, directed=0.99 ] (1,2.5) to [out=270,in=270] (-1,2.5);
	\draw [ thick,  ] (-1,2.5) to (-.25,3.25);
	\draw [ thick, directed=1 ] (.25,3.75) to (1,4.5);
	\draw [ thick, rdirected=-0.90] (1,2.5) to (-1,4.5);
	\node at (-1.75,4.75) {\scriptsize $k$};
\et
};
\endxy \ .
\end{equation} 
  The final relation we impose on $\qwebsupdown$ is that the clockwise bubbles of thickness $1$, with and without a dot, vanish:

\begin{equation}\label{delete-bubble}
\xy
(0,0)*{
\bt[scale=.35, color=\clr]
	\draw [ thick, looseness=2, ] (1,2.5) to [out=270,in=270] (-1,2.5);
	\draw [ thick, looseness=2, rdirected=0.05] (1,2.5) to [out=90,in=90] (-1,2.5);
	\node at (-1.5,3) {\scriptsize $1$};
\et
};
\endxy = 0 =
\xy
(0,0)*{
\bt[scale=.35, color=\clr]
	\draw [ thick, looseness=2, ] (1,2.5) to [out=270,in=270] (-1,2.5);
	\draw [ thick, looseness=2, rdirected=0.05] (1,2.5) to [out=90,in=90] (-1,2.5);
	\draw (-1,2.5) \wdot  ;
	\node at (-1.5,3) {\scriptsize $1$};
\et
};
\endxy.
\end{equation}

\subsection{Additional Morphisms and Relations}\label{SS:OrientedAdditionalRelations}

We define the downward dot, merge, and split as

\begin{equation*}
\xy
(0,0)*{
\bt[scale=1, color=\clr]
	\draw[thick, directed=1] (1,1) to (1,0);
	\node at (1,-0.15) {\scriptsize $k$};
	\node at (1,1.15) {\scriptsize $k$};
	\draw (1,0.5) \wdot;
\et
};
\endxy:=
\xy
(0,0)*{
\bt[scale=1, color=\clr]
	\draw [ thick, looseness=2, ] (1,0.25) to [out=90,in=90] (0.5,0.25);
	\draw [ thick, looseness=2, ] (1.5,-0.25) to [out=270,in=270] (1,-0.25);
	\draw [ thick, ] (1,-0.25) to (1,0.25);
	\draw[ thick, directed=1] (0.5,0.25) to (0.5,-0.75);
	\draw[ thick, ] (1.5,-0.25) to (1.5,0.75);
	\draw (1,0) \wdot;
	\node at (0.5,-0.9) {\scriptsize $k$};
	\node at (1.5,0.9) {\scriptsize $k$};
\et
};
\endxy \ , \hspace{.2in}
%
%
\xy
(0,0)*{
\bt[scale=.25, color=\clr]
	\draw [ thick, directed=1] (0,0.75) to (0,-0.5);
	\draw [ thick, rdirected=.45] (0,.75) to [out=30,in=270] (1,2.5);
	\draw [ thick, rdirected=.45] (0,.75) to [out=150,in=270] (-1,2.5); 
	\node at (0, -1) {\scriptsize $k\! +\! l$};
	\node at (-1,3) {\scriptsize $k$};
	\node at (1,3) {\scriptsize $l$};
\et
};
\endxy \ := 
\xy
(0,0)*{
\bt[scale=.25, color=\clr]
	\draw [ thick, ] (0, .75) to (0,1.5);
	\draw [ thick, looseness=2, ] (0,1.5) to [out=90,in=90] (-2,1.5);
	\draw [ thick, directed=1] (-2, 1.5) to (-2,-4);
	\draw [ thick, directed=.65] (1,-1) to [out=90,in=330] (0,.75);
	\draw [ thick, directed=.65] (-1,-1) to [out=90,in=210] (0,.75);
	\draw [ thick, looseness=2, ] (1,-1) to [out=270,in=270] (3,-1);
	\draw [ thick, looseness=1.5, ] (-1,-1) to [out=270,in=270] (5,-1);
	\draw [ thick, ] (3,-1) to (3,3);
	\draw [ thick, ] (5,-1) to (5,3);
	\node at (-2, -4.5) {\scriptsize $k\! +\! l$};
	\node at (3,3.5) {\scriptsize $k$};
	\node at (5,3.5) {\scriptsize $l$};
\et
};
\endxy,  \hspace{.2in}  \text{and}  \hspace{.2in}
\xy
(0,0)*{
\bt[scale=.25, color=\clr]
	\draw [ thick, rdirected=.55] (0, .75) to (0,2);
	\draw [ thick, rdirected=0.1] (1,-1) to [out=90,in=330] (0,.75);
	\draw [ thick, rdirected=0.1] (-1,-1) to [out=90,in=210] (0,.75);
	\node at (0, 2.5) {\scriptsize $k\! +\! l$};
	\node at (-1,-1.75) {\scriptsize $k$};
	\node at (1,-1.75) {\scriptsize $l$};
\et
};
\endxy  :=
\xy
(0,0)*{
\bt[scale=.25, color=\clr]
	\draw [ thick, directed=0.75] (0,0) to (0,.75);
	\draw [ thick, looseness=2, ] (0,0) to [out=270,in=270] (2,0);
	\draw [ thick, ] (2,0) to (2,5);
	\draw [ thick, ] (0,.75) to [out=30,in=270] (1,2.5);
	\draw [ thick, ] (0,.75) to [out=150,in=270] (-1,2.5); 
	\draw [ thick, looseness=2, ] (-1,2.5) to [out=90,in=90] (-3,2.5);
	\draw [ thick, looseness=1.5, ] (1,2.5) to [out=90,in=90] (-5,2.5);
	\draw [ thick, directed=1] (-3,2.5) to (-3,-1);
	\draw [ thick, directed=1] (-5,2.5) to (-5,-1);
	\node at (2, 5.5) {\scriptsize $k\! +\! l$};
	\node at (-3,-1.5) {\scriptsize $l$};
	\node at (-5,-1.5) {\scriptsize $k$};
\et
};
\endxy \ . 
\end{equation*}
Applying leftward cups and caps to these and using \cref{straighten-zigzag} to simplify shows dots, merges, and splits move freely over leftward cups and caps in the obvious sense.

We define downward over- and under-crossings as

\begin{equation*}
\xy
(0,0)*{
\bt[scale=1.25, color=\clr]
	\draw[thick, rdirected=0.1] (0,0) to (0.5,0.5);
	\draw[thick, rdirected=0.2] (0.5,0) to (0.3,0.2);
	\draw[thick, ] (0.2,0.3) to (0,0.5);
	\node at (0,-0.15) {\scriptsize $k$};
	\node at (0,0.65) {\scriptsize $l$};
	\node at (0.5,-0.15) {\scriptsize $l$};
	\node at (0.5,0.65) {\scriptsize $k$};
\et
};
\endxy := \ 
\xy
(0,0)*{
\bt[scale=1, color=\clr]
	\draw[thick, ] (0,0) to (0.5,0.5);
	\draw[thick, ] (0.5,0) to (0.3,0.2);
	\draw[thick, ] (0.2,0.3) to (0,0.5);
	\draw [ thick, looseness=2, ] (1,0) to [out=270,in=270] (0.5,0);
	\draw [ thick, looseness=1.5, ] (1.5,0) to [out=270,in=270] (0,0);
	\draw [thick, ] (1,1.25) to (1,0);
	\draw [thick, ] (1.5,1.25) to (1.5,0);
	\draw [ thick, looseness=2, ] (0,0.5) to [out=90,in=90] (-0.5,0.5);
	\draw [ thick, looseness=1.5, ] (0.5,0.5) to [out=90,in=90] (-1,0.5);
	\draw [ thick, directed=1] (-0.5,0.5) to (-0.5,-0.75);
	\draw [ thick, directed=1] (-1,0.5) to (-1,-0.75);
	\node at (-1,-0.9) {\scriptsize $k$};
	\node at (-0.5,-0.9) {\scriptsize $l$};
	\node at (1,1.4) {\scriptsize $l$};
	\node at (1.5,1.4) {\scriptsize $k$};
\et
};
\endxy \hspace{.2in} \text{and} \hspace{.2in}
\xy
(0,0)*{
\bt[scale=1.25, color=\clr]
	\draw[thick, rdirected=0.1] (0,0) to (0.2,0.2);
	\draw[thick, ] (0.3,0.3) to (0.5,0.5);
	\draw[thick, rdirected=0.1] (0.5,0) to (0,0.5);
	\node at (0,-0.15) {\scriptsize $k$};
	\node at (0,0.65) {\scriptsize $l$};
	\node at (0.5,-0.15) {\scriptsize $l$};
	\node at (0.5,0.65) {\scriptsize $k$};
\et
};
\endxy := \ 
\xy
(0,0)*{
\bt[scale=1, color=\clr]
	\draw[thick, ] (0,0) to (0.2,0.2);
	\draw[thick, ] (0.3,0.3) to (0.5,0.5);
	\draw[thick, ] (0.5,0) to (0,0.5);
	\draw [ thick, looseness=2, ] (1,0) to [out=270,in=270] (0.5,0);
	\draw [ thick, looseness=1.5, ] (1.5,0) to [out=270,in=270] (0,0);
	\draw [thick, ] (1,1.25) to (1,0);
	\draw [thick, ] (1.5,1.25) to (1.5,0);
	\draw [ thick, looseness=2, ] (0,0.5) to [out=90,in=90] (-0.5,0.5);
	\draw [ thick, looseness=1.5, ] (0.5,0.5) to [out=90,in=90] (-1,0.5);
	\draw [ thick, directed=1] (-0.5,0.5) to (-0.5,-0.75);
	\draw [ thick, directed=1] (-1,0.5) to (-1,-0.75);
	\node at (-1,-0.9) {\scriptsize $k$};
	\node at (-0.5,-0.9) {\scriptsize $l$};
	\node at (1,1.4) {\scriptsize $l$};
	\node at (1.5,1.4) {\scriptsize $k$};
\et
};
\endxy.
\end{equation*}
Having over- and under-crossings of strands of arbitrary orientation and labels we can mimic \cref{D:generalupbraiding} to define mutually inverse over- and under-crossing morphisms for arbitrary pairs of objects in $\qwebsupdown$.  Using these definitions it is straightforward to verify the relations in \cref{L:braidingforups} hold for all possible orientations.  As merges and splits go through upward crossings by \cref{L:braidingforups}(c.), this implies they go through leftward, rightward, and downward  crossings as well.  

Also, by applying leftward cups and caps to  \cref{associativity,digon-removal,dot-collision,dots-past-merges,dumbbell-relation,square-switch,square-switch-dots,double-rungs-1,double-rungs-2} one obtains a parallel set of relations on downward oriented diagrams.  We freely use these in what follows. When computing these the reader is advised to keep in mind the effect of the super interchange law when diagrams have multiple dots.  For example, \cref{dot-collision} becomes

\begin{equation}\label{reverse-dot-collision}
\xy
(0,0)*{
\begin{tikzpicture}[scale=.3, color=\clr] 
	\draw [ thick, rdirected=.05] (1,-2.75) to (1,2.5);
	\node at (1,3) {\scriptsize $k$};
	\node at (1,-3.25) {\scriptsize $k$};
	\draw (1,0.5) \wdot ;
	\draw (1,-0.5) \wdot ;
\end{tikzpicture}
};
\endxy=-[k]_{q^{2}}
\xy
(0,0)*{
\begin{tikzpicture}[scale=.3, color=\clr] 
	\draw [ thick, rdirected=.05] (1,-2.75) to (1,2.5);
	\node at (1,3) {\scriptsize $k$};
	\node at (1,-3.25) {\scriptsize $k$};
\end{tikzpicture}
};
\endxy.
\end{equation}

Since dots move freely across leftward cups and caps, and dots also move freely along the over strand in upward crossings by \cref{L:braidingforups}(d) it follows dots freely travel along the over-strand in leftward, rightward, and downward crossings as well.  However, we remind the reader when a dot moves through a crossing by traveling along the under-strand there is the Hecke-Clifford relation so some care must be taken when moving dots around web diagrams. 

For calculations it is useful to know the following relations involving strands of thickness $1$.
\begin{lemma}\label{L:thicknessonerelations}  The following relations hold in $\qwebsupdown$.  All edges are labelled by $1$.
\begin{enumerate}
\item 
\begin{equation}\label{E:inverseleftwardcrossing}
 \xy
(0,0)*{
\begin{tikzpicture}[scale=.3, color=\clr]
	\draw [ thick, <-] (-1,-1) to (1,1);
	\draw [ thick, ->] (-0.25,0.25) to (-1,1);
	\draw [ thick, -] (0.25,-0.25) to (1,-1);
	\end{tikzpicture}
};
\endxy
 \ = \ 
\xy
(0,0)*{\reflectbox{
\begin{tikzpicture}[scale=.3, color=\clr]
	\draw [ thick, ->] (-1,-1) to (1,1);
	\draw [ thick, -] (-0.25,0.25) to (-1,1);
	\draw [ thick, ->] (0.25,-0.25) to (1,-1);
	\end{tikzpicture}
}};
\endxy
 \ - \tq \ 
 \xy
(0,0)*{
\begin{tikzpicture}[scale=.3, color=\clr]
	\draw [ thick, <-, looseness=1.5] (-1,1) to [out=270, in=270] (1,1);
	\draw [ thick, <-, looseness=1.5] (-1,-1) to [out=90, in=90] (1,-1);
	\end{tikzpicture}
};
\endxy \ .
\end{equation}

\item 
\begin{equation}\label{E:inverserightwardcrossing}
 \xy
(0,0)*{
\begin{tikzpicture}[scale=.3, color=\clr]
	\draw [ thick, ->] (-1,-1) to (1,1);
	\draw [ thick, -] (-0.25,0.25) to (-1,1);
	\draw [ thick, ->] (0.25,-0.25) to (1,-1);
	\end{tikzpicture}
};
\endxy
 \ = \ 
\xy
(0,0)*{\reflectbox{
\begin{tikzpicture}[scale=.3, color=\clr]
	\draw [ thick, <-] (-1,-1) to (1,1);
	\draw [ thick, ->] (-0.25,0.25) to (-1,1);
	\draw [ thick, -] (0.25,-0.25) to (1,-1);
	\end{tikzpicture}
}};
\endxy
 \ - \tq  \ 
 \xy
(0,0)*{
\begin{tikzpicture}[scale=.3, color=\clr]
	\draw [ thick, ->, looseness=1.5] (-1,1) to [out=270, in=270] (1,1);
	\draw [ thick, ->, looseness=1.5] (-1,-1) to [out=90, in=90] (1,-1);
	\end{tikzpicture}
};
\endxy \ .
\end{equation}

\item 
\begin{equation}\label{E:inversedownwardcrossing}
\xy
(0,0)*{\reflectbox{
\begin{tikzpicture}[scale=.3, color=\clr]
	\draw [ thick, <-] (-1,-1) to (1,1);
	\draw [ thick, -] (-0.25,0.25) to (-1,1);
	\draw [ thick, ->] (0.25,-0.25) to (1,-1);
	\end{tikzpicture}
}};
\endxy
 \ = \ 
 \xy
(0,0)*{
\begin{tikzpicture}[scale=.3, color=\clr]
	\draw [ thick, <-] (-1,-1) to (1,1);
	\draw [ thick, -] (-0.25,0.25) to (-1,1);
	\draw [ thick, ->] (0.25,-0.25) to (1,-1);
	\end{tikzpicture}
};
\endxy
 \ - \tq  \ 
 \xy
(0,0)*{
\begin{tikzpicture}[scale=.3, color=\clr]
	\draw [ thick, ->] (-.5,1) to  (-.5,-1);
	\draw [ thick, ->] (.5,1) to (.5,-1);
	\end{tikzpicture}
};
\endxy \ .
\end{equation}

\item  
\begin{equation}\label{E:leftcircleiszero}
\begin{tikzpicture}[scale=.3, color=\clr,baseline = 0]
	\draw[thick, -, looseness=1.5] (-1,0) to [out=270, in=270] (1,0);
	\draw[thick, <-, looseness=1.5] (-1,0) to [out=90, in=90] (1,0);
\end{tikzpicture} = 0  \ .
\end{equation}

\item   
\begin{equation}\label{thickness-one-rightward-cup-and-cap}
\xy
(0,0)*{
\bt[scale=.35, color=\clr]
	\draw [ thick, looseness=2, directed=0.99] (1,4.5) to [out=90,in=90] (-1,4.5);
	\draw [ thick, directed=1 ] (0.15,3.65) to (1,4.5);
	\draw [ thick,  ] (-1,2.5) to (-0.15,3.35);
	\draw [ thick, rdirected=-.9 ] (1,2.5) to (-1,4.5);
\et
};
\endxy 
 \ = \
\xy
(0,0)*{
\bt[scale=.35, color=\clr]
	\draw [ thick, looseness=2, rdirected=-0.95] (1,4.5) to [out=90,in=90] (-1,4.5);
\et
};
\endxy \   \quad\quad\text{and}\quad\quad
\xy
(0,0)*{
\bt[scale=.35, color=\clr]
	\draw [ thick, looseness=2, directed=0.99 ] (1,2.5) to [out=270,in=270] (-1,2.5);
	\draw [ thick, directed=1 ] (-1,2.5) to (1,4.5);
	\draw [ thick, rdirected=-0.85] (1,2.5) to (0.15,3.35);
	\draw [ thick] (-0.15,3.65) to (-1,4.5);
\et
};
\endxy
 \ = \
\xy
(0,0)*{
\bt[scale=.35, color=\clr]
	\draw [ thick, looseness=2, rdirected=-0.95 ] (1,2.5) to [out=270,in=270] (-1,2.5);
\et
};
\endxy .
\end{equation}

\item  
\begin{equation}\label{E:leftcirclewithdotiszero}
\begin{tikzpicture}[scale=.3, color=\clr,baseline = 0]
	\draw[thick, -, looseness=1.5] (-1,0) to [out=270, in=270] (1,0);
	\draw[thick, <-, looseness=1.5] (-1,0) to [out=90, in=90] (1,0);
	\draw (1,0.15) \wdot ;
\end{tikzpicture} = 0  \ .
\end{equation}

\item 
\begin{equation}\label{E:dotthroughupupwardcrossing}
\xy
(0,0)*{\reflectbox{
\begin{tikzpicture}[scale=.3, color=\clr]
	\draw [ thick, -] (-1,-1) to (-0.25,-0.25);
	\draw [ thick, ->] (0.25,0.25) to (1,1);
	\draw [ thick, ->] (1,-1) to (-1,1);
	\draw (0.55,0.55) \wdot ;
	\end{tikzpicture}
}};
\endxy
 \ = \ 
\xy
(0,0)*{\reflectbox{
\begin{tikzpicture}[scale=.3, color=\clr]
	\draw [ thick, -] (-1,-1) to (-0.25,-0.25);
	\draw [ thick, ->] (0.25,0.25) to (1,1);
	\draw [ thick, ->] (1,-1) to (-1,1);
	\draw (-0.55,-0.55) \wdot ;
	\end{tikzpicture}
}};
\endxy
 \ + \tq  \ 
\left( 
 \xy
(0,0)*{
\begin{tikzpicture}[scale=.3, color=\clr]
	\draw [ thick, ->] (-.5,-1) to (-.5,1);
	\draw [ thick, ->] (.5,-1) to (.5,1);
	\draw (-.5,0) \wdot ;
	\end{tikzpicture}
};
\endxy
\ - \
  \xy
(0,0)*{
\begin{tikzpicture}[scale=.3, color=\clr]
	\draw [ thick, ->] (-.5,-1) to (-.5,1);
	\draw [ thick, ->] (.5,-1) to (.5,1);
	\draw (.5,0) \wdot ;
	\end{tikzpicture}
};
\endxy
 \right) \ .
\end{equation}

\begin{equation}\label{E:dotthroughupupwardcrossing2}
\xy
(0,0)*{\reflectbox{
\begin{tikzpicture}[scale=.3, color=\clr]
	\draw [ thick, ->] (-1,-1) to (1,1);
	\draw [ thick, ->] (-0.25,0.25) to (-1,1);
	\draw [ thick, -] (0.25,-0.25) to (1,-1);
	\draw (-0.55,0.55) \wdot ;
	\end{tikzpicture}
}};
\endxy
 \ = \ 
\xy
(0,0)*{\reflectbox{
\begin{tikzpicture}[scale=.3, color=\clr]
	\draw [ thick, ->] (-1,-1) to (1,1);
	\draw [ thick, ->] (-0.25,0.25) to (-1,1);
	\draw [ thick, -] (0.25,-0.25) to (1,-1);
	\draw (0.55,-0.55) \wdot ;
	\end{tikzpicture}
}};
\endxy
 \ + \tq  \ 
\left( 
 \xy
(0,0)*{
\begin{tikzpicture}[scale=.3, color=\clr]
	\draw [ thick, ->] (-.5,-1) to (-.5,1);
	\draw [ thick, ->] (.5,-1) to (.5,1);
	\draw (-.5,0) \wdot ;
	\end{tikzpicture}
};
\endxy
\ - \
  \xy
(0,0)*{
\begin{tikzpicture}[scale=.3, color=\clr]
	\draw [ thick, ->] (-.5,-1) to (-.5,1);
	\draw [ thick, ->] (.5,-1) to (.5,1);
	\draw (.5,0) \wdot ;
	\end{tikzpicture}
};
\endxy
 \right) \ .
\end{equation}

\item 
\begin{equation}\label{E:dotthroughleftwardcrossing}
\xy
(0,0)*{\reflectbox{
\begin{tikzpicture}[scale=.3, color=\clr]
	\draw [ thick, ->] (-1,-1) to (1,1);
	\draw [ thick, -] (-0.25,0.25) to (-1,1);
	\draw [ thick, ->] (0.25,-0.25) to (1,-1);
	\draw (0.55,-0.55) \wdot ;
	\end{tikzpicture}
}};
\endxy
 \ = \ 
\xy
(0,0)*{\reflectbox{
\begin{tikzpicture}[scale=.3, color=\clr]
	\draw [ thick, ->] (-1,-1) to (1,1);
	\draw [ thick, -] (-0.25,0.25) to (-1,1);
	\draw [ thick, ->] (0.25,-0.25) to (1,-1);
	\draw (-0.6,0.6) \wdot ;
	\end{tikzpicture}
}};
\endxy
 \ + \tq  \ 
\left( 
 \xy
(0,0)*{
\begin{tikzpicture}[scale=.3, color=\clr]
	\draw [ thick, <-, looseness=1.8] (-1,1.25) to [out=270, in=270] (1,1.25);
	\draw [ thick, <-, looseness=1.8] (-1,-1.25) to [out=90, in=90] (1,-1.25);
	\draw (-0.85,-0.55) \wdot ;
	\end{tikzpicture}
};
\endxy
\ - \
  \xy
(0,0)*{
\begin{tikzpicture}[scale=.3, color=\clr]
	\draw [ thick, <-, looseness=1.8] (-1,1.25) to [out=270, in=270] (1,1.25);
	\draw [ thick, <-, looseness=1.8] (-1,-1.25) to [out=90, in=90] (1,-1.25);
	\draw (0.85,0.55) \wdot ;
	\end{tikzpicture}
};
\endxy
 \right) \ .
\end{equation}
\begin{equation}\label{E:dotthroughleftwardcrossing2}
\xy
(0,0)*{\reflectbox{
\begin{tikzpicture}[scale=.3, color=\clr]
	\draw [ thick, -] (-1,-1) to (-.25,-.25);
	\draw [ thick, ->] (.25,.25) to (1,1);
	\draw [ thick, ->] (-1,1) to (1,-1);
	\draw (0.55,0.55) \wdot ;
	\end{tikzpicture}
}};
\endxy
 \ = \ 
\xy
(0,0)*{\reflectbox{
\begin{tikzpicture}[scale=.3, color=\clr]
	\draw [ thick, -] (-1,-1) to (-.25,-.25);
	\draw [ thick, ->] (.25,.25) to (1,1);
	\draw [ thick, ->] (-1,1) to (1,-1);
	\draw (-0.6,-0.6) \wdot ;
	\end{tikzpicture}
}};
\endxy
 \ + \tq  \ 
\left( 
 \xy
(0,0)*{
\begin{tikzpicture}[scale=.3, color=\clr]
	\draw [ thick, <-, looseness=1.8] (-1,1.25) to [out=270, in=270] (1,1.25);
	\draw [ thick, <-, looseness=1.8] (-1,-1.25) to [out=90, in=90] (1,-1.25);
	\draw (-0.85,-0.55) \wdot ;
	\end{tikzpicture}
};
\endxy
\ - \
  \xy
(0,0)*{
\begin{tikzpicture}[scale=.3, color=\clr]
	\draw [ thick, <-, looseness=1.8] (-1,1.25) to [out=270, in=270] (1,1.25);
	\draw [ thick, <-, looseness=1.8] (-1,-1.25) to [out=90, in=90] (1,-1.25);
	\draw (0.85,0.55) \wdot ;
	\end{tikzpicture}
};
\endxy
 \right) \ .
\end{equation}

\item  
\begin{equation}\label{E:dotthroughrightwardcrossing}
\xy
(0,0)*{\reflectbox{
\begin{tikzpicture}[scale=.3, color=\clr]
	\draw [ thick, <-] (-1,-1) to (-0.25,-0.25);
	\draw [ thick, -] (0.25,0.25) to (1,1);
	\draw [ thick, ->] (1,-1) to (-1,1);
	\draw (0.55,0.55) \wdot ;
	\end{tikzpicture}
}};
\endxy
 \ = \ 
\xy
(0,0)*{\reflectbox{
\begin{tikzpicture}[scale=.3, color=\clr]
	\draw [ thick, <-] (-1,-1) to (-0.25,-0.25);
	\draw [ thick, -] (0.25,0.25) to (1,1);
	\draw [ thick, ->] (1,-1) to (-1,1);
	\draw (-0.55,-0.55) \wdot ;
	\end{tikzpicture}
}};
\endxy
 \ + \tq  \ 
\left( 
 \xy
(0,0)*{
\begin{tikzpicture}[scale=.3, color=\clr]
	\draw [ thick, ->, looseness=1.8] (-1,1.25) to [out=270, in=270] (1,1.25);
	\draw [ thick, ->, looseness=1.8] (-1,-1.25) to [out=90, in=90] (1,-1.25);
	\draw (-0.85,-0.55) \wdot ;
	\end{tikzpicture}
};
\endxy
\ - \
  \xy
(0,0)*{
\begin{tikzpicture}[scale=.3, color=\clr]
	\draw [ thick, ->, looseness=1.8] (-1,1.25) to [out=270, in=270] (1,1.25);
	\draw [ thick, ->, looseness=1.8] (-1,-1.25) to [out=90, in=90] (1,-1.25);
	\draw (0.85,0.55) \wdot ;
	\end{tikzpicture}
};
\endxy
 \right) \ .
\end{equation}
\begin{equation}\label{E:dotthroughrightwardcrossing2}
\xy
(0,0)*{\reflectbox{
\begin{tikzpicture}[scale=.3, color=\clr]
	\draw [ thick, <-] (-1,-1) to (1,1);
	\draw [ thick, ->] (-0.25,0.25) to (-1,1);
	\draw [ thick, -] (0.25,-0.25) to (1,-1);
	\draw (0.55,-0.55) \wdot ;
	\end{tikzpicture}
}};
\endxy
 \ = \ 
\xy
(0,0)*{\reflectbox{
\begin{tikzpicture}[scale=.3, color=\clr]
	\draw [ thick, <-] (-1,-1) to (1,1);
	\draw [ thick, ->] (-0.25,0.25) to (-1,1);
	\draw [ thick, -] (0.25,-0.25) to (1,-1);
	\draw (-0.55,0.55) \wdot ;
	\end{tikzpicture}
}};
\endxy
 \ + \tq  \ 
\left( 
 \xy
(0,0)*{
\begin{tikzpicture}[scale=.3, color=\clr]
	\draw [ thick, ->, looseness=1.8] (-1,1.25) to [out=270, in=270] (1,1.25);
	\draw [ thick, ->, looseness=1.8] (-1,-1.25) to [out=90, in=90] (1,-1.25);
	\draw (-0.85,-0.55) \wdot ;
	\end{tikzpicture}
};
\endxy
\ - \
  \xy
(0,0)*{
\begin{tikzpicture}[scale=.3, color=\clr]
	\draw [ thick, ->, looseness=1.8] (-1,1.25) to [out=270, in=270] (1,1.25);
	\draw [ thick, ->, looseness=1.8] (-1,-1.25) to [out=90, in=90] (1,-1.25);
	\draw (0.85,0.55) \wdot ;
	\end{tikzpicture}
};
\endxy
 \right) \ .
\end{equation}

\item \begin{equation}\label{E:dotthroughdownwardcrossing}
\xy
(0,0)*{\reflectbox{
\begin{tikzpicture}[scale=.3, color=\clr]
	\draw [ thick, <-] (-1,-1) to (-0.25,-0.25);
	\draw [ thick, -] (0.25,0.25) to (1,1);
	\draw [ thick, <-] (1,-1) to (-1,1);
	\draw (0.55,0.55) \wdot ;
	\end{tikzpicture}
}};
\endxy
 \ = \ 
\xy
(0,0)*{\reflectbox{
\begin{tikzpicture}[scale=.3, color=\clr]
	\draw [ thick, <-] (-1,-1) to (-0.25,-0.25);
	\draw [ thick, -] (0.25,0.25) to (1,1);
	\draw [ thick, <-] (1,-1) to (-1,1);
	\draw (-0.55,-0.55) \wdot ;
	\end{tikzpicture}
}};
\endxy
 \ + \tq  \ 
\left( 
 \xy
(0,0)*{
\begin{tikzpicture}[scale=.3, color=\clr]
	\draw [ thick, <-] (-.5,-1) to (-.5,1);
	\draw [ thick, <-] (.5,-1) to (.5,1);
	\draw (-.5,0) \wdot ;
	\end{tikzpicture}
};
\endxy
\ - \
  \xy
(0,0)*{
\begin{tikzpicture}[scale=.3, color=\clr]
	\draw [ thick, <-] (-.5,-1) to (-.5,1);
	\draw [ thick, <-] (.5,-1) to (.5,1);
	\draw (.5,0) \wdot ;
	\end{tikzpicture}
};
\endxy
 \right) \ .
\end{equation}

\begin{equation}\label{E:dotthroughdownwardcrossing2}
\xy
(0,0)*{\reflectbox{
\begin{tikzpicture}[scale=.3, color=\clr]
	\draw [ thick, <-] (-1,-1) to (1,1);
	\draw [ thick, -] (-0.25,0.25) to (-1,1);
	\draw [ thick, ->] (0.25,-0.25) to (1,-1);
	\draw (-0.55,0.55) \wdot ;
	\end{tikzpicture}
}};
\endxy
 \ = \ 
\xy
(0,0)*{\reflectbox{
\begin{tikzpicture}[scale=.3, color=\clr]
	\draw [ thick, <-] (-1,-1) to (1,1);
	\draw [ thick, -] (-0.25,0.25) to (-1,1);
	\draw [ thick, ->] (0.25,-0.25) to (1,-1);
	\draw (0.55,-0.55) \wdot ;
	\end{tikzpicture}
}};
\endxy
 \ + \tq  \ 
\left( 
 \xy
(0,0)*{
\begin{tikzpicture}[scale=.3, color=\clr]
	\draw [ thick, <-] (-.5,-1) to (-.5,1);
	\draw [ thick, <-] (.5,-1) to (.5,1);
	\draw (-.5,0) \wdot ;
	\end{tikzpicture}
};
\endxy
\ - \
  \xy
(0,0)*{
\begin{tikzpicture}[scale=.3, color=\clr]
	\draw [ thick, <-] (-.5,-1) to (-.5,1);
	\draw [ thick, <-] (.5,-1) to (.5,1);
	\draw (.5,0) \wdot ;
	\end{tikzpicture}
};
\endxy
 \right) \ .
\end{equation}

\item 
\begin{equation}\label{thickness-one-rightward-cup-and-cap-with-dot}
\xy
(0,0)*{
\bt[scale=.35, color=\clr]
	\draw [ thick, looseness=2, rdirected=-0.95] (1,4.5) to [out=90,in=90] (-1,4.5);
	\draw (-0.9,5) \wdot ;
\et
};
\endxy =
\xy
(0,0)*{
\bt[scale=.35, color=\clr]
	\draw [ thick, looseness=2, rdirected=-0.95] (1,4.5) to [out=90,in=90] (-1,4.5);
	\draw (0.9,5) \wdot ;
\et
};
\endxy \   \quad\quad\text{and}\quad\quad
\xy
(0,0)*{
\bt[scale=.35, color=\clr]
	\draw [ thick, looseness=2, rdirected=-0.95 ] (1,2.5) to [out=270,in=270] (-1,2.5);
	\draw (-0.9,2) \wdot ;
\et
};
\endxy = 
\xy
(0,0)*{
\bt[scale=.35, color=\clr]
	\draw [ thick, looseness=2, rdirected=-0.95 ] (1,2.5) to [out=270,in=270] (-1,2.5);
	\draw (0.9,2) \wdot ;
\et
};
\endxy \ .
\end{equation}
\end{enumerate} 
\end{lemma}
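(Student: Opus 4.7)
The strategy is to verify each relation by first establishing its upward-strand analogue via the Hecke-Clifford relations from \cref{L:srelations}, then rotating to the required orientation using leftward cups and caps together with \cref{straighten-zigzag} and the definition of the rightward cup/cap from \cref{rightward-cup-and-cap}. All calculations take place in the thin-strand subalgebra where, by \cref{P:sergeev-isomorphism}, the upward endomorphism algebra is isomorphic to the Hecke-Clifford algebra and its relations are fully available.

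Part (c) \cref{E:inversedownwardcrossing} is immediate from \cref{L:srelations}(a), since the Hecke relation $T_j^2 = \tq T_j + 1$ is equivalent to $T_j^{-1} = T_j - \tq$; applying cups and caps to both ends of this identity and straightening via \cref{straighten-zigzag} yields the downward form. For (a) \cref{E:inverseleftwardcrossing} and (b) \cref{E:inverserightwardcrossing}, I would begin from the same upward Hecke inversion and conjugate by a cap on the upper-left and a cup on the lower-right (respectively upper-right and lower-left), following exactly the construction in \cref{E:leftwardcrossing,E:rightwardcrossing} of the leftward and rightward crossings. The straightening axiom then converts the upward identity $I_{\up\up}$ on the right-hand side of the Hecke inversion into the cap-cup configuration shown in the statement.

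Part (e) \cref{thickness-one-rightward-cup-and-cap} is simply the definition \cref{rightward-cup-and-cap} specialized to $k=1$, where the prefactor $q^{k(k-1)}$ equals $1$. From (e), relation (k) \cref{thickness-one-rightward-cup-and-cap-with-dot} follows by combining it with the fact that a dot moves freely along the over-strand of an upward crossing (\cref{L:braidingforups}(d)) and across leftward cups and caps (from \cref{straighten-zigzag} applied to a dotted strand). For (d) \cref{E:leftcircleiszero}, I would form the counterclockwise bubble as the composition of a leftward cup with a rightward cap of thickness $1$, then expand the rightward cap using (e); after simplification via \cref{straighten-zigzag} what remains is a clockwise bubble of thickness $1$, which vanishes by \cref{delete-bubble}. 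Relation (f) \cref{E:leftcirclewithdotiszero} is the dotted analogue, proved identically using the dotted case of \cref{delete-bubble} together with (k).

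The dot-commutation relations (g), (h), (i), and (j) all originate from \cref{L:srelations}(g), the Hecke-Clifford commutation $c_i T_i - T_i c_{i+1} = \tq(c_i - c_{i+1})$. Rearranged, this directly gives \cref{E:dotthroughupupwardcrossing} and its under-crossing variant \cref{E:dotthroughupupwardcrossing2}. For the leftward, rightward, and downward analogues I would rotate the upward relation by composing with cups and caps (using (e) to translate between leftward and rightward cups and caps); the two identity terms $c_i$ and $c_{i+1}$ on the right-hand side of \cref{L:srelations}(g) transform into the two oriented cup-cap configurations with a dot on the over-strand end or under-strand end as stated. The main obstacle is the sign bookkeeping forced by the super-interchange law: since dots have parity $\1$, sliding them past other odd morphisms during the rotation introduces signs that must be reconciled with the orientation signs produced by the cups and caps, and each of the eight individual equations in (g)--(j) must be checked separately. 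Once a consistent sign convention is fixed at the outset, however, these verifications reduce to straightforward diagrammatic manipulations.
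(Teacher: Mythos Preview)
Your overall strategy of rotating the upward Hecke-Clifford relations via cups and caps is correct and matches the paper's approach for parts (a), (c), (g), (h), and (j). However, there is a genuine error in your treatment of part (e), and it propagates to (d), (f), and (k).

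You claim that \cref{thickness-one-rightward-cup-and-cap} is ``simply the definition \cref{rightward-cup-and-cap} specialized to $k=1$.'' This is false. Look carefully at the crossings: in the definition \cref{rightward-cup-and-cap}, the rightward cap is built from a leftward cap composed with a rightward \emph{over}-crossing (the upward strand passes over), while in the left-hand side of the first equality of \cref{thickness-one-rightward-cup-and-cap} the crossing is a rightward \emph{under}-crossing (the upward strand passes under). The analogous switch occurs for the cup. So \cref{thickness-one-rightward-cup-and-cap} is the nontrivial assertion that, for thickness $1$, both crossing conventions yield the same rightward cap and cup. This does not follow from the definition alone.

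The paper's logical order is therefore the reverse of yours. First one proves (b) by conjugating \cref{E:inverseleftwardcrossing} by rightward crossings. Then one proves (d) by closing off \cref{E:inverserightwardcrossing} with a leftward cup below and leftward cap above, obtaining an identity of the form (clockwise bubble) $=$ (counterclockwise bubble) $-\ \tq\cdot$(clockwise)(counterclockwise); since the clockwise bubble vanishes by \cref{delete-bubble}, the counterclockwise bubble vanishes too. Only then does one prove (e), by applying a single leftward cap (resp.\ cup) to \cref{E:inverserightwardcrossing} and killing the resulting bubble term via \cref{delete-bubble} or the freshly proved (d). Likewise (k) is not obtained from (e) and \cref{L:braidingforups}(d) as you propose, but rather by capping off \cref{E:dotthroughrightwardcrossing} and using the earlier parts to eliminate the correction terms. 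Your argument for (d) using (e) is circular once (e) is seen to be nontrivial, and your direct route to (k) does not go through because dots on the rightward cup and cap must ultimately be controlled by the dot-through-rightward-crossing relation (i).
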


\begin{proof} To prove \cref{E:inverseleftwardcrossing,E:inversedownwardcrossing} apply leftward cup(s) and cap(s) to \cref{E:inverseupwardcrossing}.  To prove \cref{E:inverserightwardcrossing}, vertically compose a rightward under-crossing and a rightward over-crossing above and below, respectively, on \cref{E:inverseleftwardcrossing}.  Relation \cref{E:dotthroughupupwardcrossing} is precisely the Hecke-Clifford relation from \cref{L:srelations}(g.).  Relation \cref{E:dotthroughupupwardcrossing2} follows from this one by composing on the top and bottom with the upward oriented under-crossing.     

To prove \cref{E:leftcircleiszero} compose a leftward cup and cap on the bottom and top, respectively, of \cref{E:inverserightwardcrossing}. Doing so yields
\begin{equation}\label{E:bubbleequation}
\begin{tikzpicture}[scale=.3, color=\clr,baseline = 0]
	\draw[thick, -, looseness=1.5] (-1,0) to [out=270, in=270] (1,0);
	\draw[thick, ->, looseness=1.5] (-1,0) to [out=90, in=90] (1,0);
\end{tikzpicture} 
=
\begin{tikzpicture}[scale=.3, color=\clr,baseline = 0]
	\draw[thick, -, looseness=1.5] (-1,0) to [out=270, in=270] (1,0);
	\draw[thick, <-, looseness=1.5] (-1,0) to [out=90, in=90] (1,0);
\end{tikzpicture}
- \tq 
\begin{tikzpicture}[scale=.3, color=\clr,baseline = 0]
	\draw[thick, -, looseness=1.5] (-1,0) to [out=270, in=270] (1,0);
	\draw[thick, ->, looseness=1.5] (-1,0) to [out=90, in=90] (1,0);
\end{tikzpicture} \;\;
\begin{tikzpicture}[scale=.3, color=\clr,baseline = 0]
	\draw[thick, -, looseness=1.5] (-1,0) to [out=270, in=270] (1,0);
	\draw[thick, <-, looseness=1.5] (-1,0) to [out=90, in=90] (1,0);
\end{tikzpicture} .
\end{equation}  This equation along with  \cref{delete-bubble} implies \cref{E:leftcircleiszero}.   Both equalities in \cref{thickness-one-rightward-cup-and-cap} follow from applying a leftward cap or cup to \cref{E:inverserightwardcrossing} and using \cref{delete-bubble} or \cref{E:leftcircleiszero} to simplify.  Using the fact dots move freely through overcrossings and leftward cups and caps, \cref{thickness-one-rightward-cup-and-cap} and \cref{delete-bubble} imply \cref{E:leftcirclewithdotiszero}.    

To prove \cref{E:dotthroughleftwardcrossing,E:dotthroughdownwardcrossing}  apply leftward cups and caps to \cref{E:dotthroughupupwardcrossing,E:dotthroughupupwardcrossing2}.   Using relation \cref{E:dotthroughupupwardcrossing2} instead of \cref{E:dotthroughupupwardcrossing} and making similar arguments verifies all the remaining relations except \cref{thickness-one-rightward-cup-and-cap-with-dot}.    Applying a leftward cap (resp.\ a leftward cup) to \cref{E:dotthroughrightwardcrossing}, the fact that dots move freely over leftward cups, caps, and over-crossings along with the previous parts of the lemma yields the left (resp.\ right) equality in \cref{thickness-one-rightward-cup-and-cap-with-dot}.

\end{proof}

\begin{lemma}\label{L:straighteningtwists} For any $k \geq 1$, the following formulas involving rightward cups and caps hold in $\qwebsupdown$:

\begin{enumerate}
\item 
\begin{equation}\label{E:straighten-twists1}
\xy
(0,0)*{
\bt[scale=1.25, color=\clr]
        \draw[thick]  (1,0) to (1,0.35);
	\draw[thick] plot [smooth, tension=1.1] coordinates {(1,0.35) (0.95,0.45)};
	\draw[thick] plot [smooth, tension=1.1] coordinates {(0.9,0.55) (0.8,0.65) (0.625,0.5) (0.8,0.35) (1.0,0.65)};
        \draw[thick, directed=1]  (1,0.65) to (1,1);
	\node at (1,-0.15) {\scriptsize $k$};
\et
};
\endxy = q^{k(k-1)}
\xy
(0,0)*{
\bt[scale=1.25, color=\clr]
	\draw[thick, directed=1] (1,0) to (1,1);
	\node at (1,-0.15) {\scriptsize $k$};
\et
};
\endxy
= 
\xy
(0,0)*{
\bt[scale=1.25, color=\clr]
        \draw[thick]  (1,0) to (1,0.35);
	\draw[thick] plot [smooth, tension=1.1] coordinates {(1,0.35) (1.2,0.65) (1.375,0.5) (1.25,0.35) (1.10,0.45)};
	\draw[thick] plot [smooth, tension=1] coordinates {(1,0.65) (1.05,0.55)};
        \draw[thick, directed=1]  (1,0.65) to (1,1);
	\node at (1,-0.15) {\scriptsize $k$};
\et
};
\endxy \ ,  
\end{equation}

\item 
\begin{equation}\label{E:straighten-twists2}
\xy
(0,0)*{
\bt[scale=1.25, color=\clr]
        \draw[thick]  (1,0) to (1,0.35);
	\draw[thick] plot [smooth, tension=1.1] coordinates {(1,0.35) (0.8,0.65) (0.625,0.5) (0.75,0.35) (0.9,0.45)};
	\draw[thick] plot [smooth, tension=1] coordinates {(1,0.65) (0.95,0.55)};
        \draw[thick, directed=1]  (1,0.65) to (1,1);
	\node at (1,-0.15) {\scriptsize $k$};
\et
};
\endxy = q^{-k(k-1)}
\xy
(0,0)*{
\bt[scale=1.25, color=\clr]
	\draw[thick, directed=1] (1,0) to (1,1);
	\node at (1,-0.15) {\scriptsize $k$};
\et
};
\endxy 
=
\xy
(0,0)*{
\bt[scale=1.25, color=\clr]
        \draw[thick]  (1,0) to (1,0.35);
	\draw[thick] plot [smooth, tension=1.1] coordinates {(1.1,0.55) (1.25,0.65) (1.375,0.5) (1.25,0.35) (1.10,0.45) (1,0.65)};
	\draw[thick] plot [smooth, tension=1] coordinates {(1,0.35) (1.05,0.45)};
        \draw[thick, directed=1]  (1,0.65) to (1,1);
	\node at (1,-0.15) {\scriptsize $k$};
\et
};
\endxy \ ,
\end{equation}

\item 
\begin{equation}\label{dot-through-rightward-cup-and-cap}
\xy
(0,0)*{
\bt[scale=.35, color=\clr]
	\draw [ thick, looseness=2, rdirected=-0.95] (1,4.5) to [out=90,in=90] (-1,4.5);
	\node at (-1.75,4.75) {\scriptsize $k$};
	\draw (-0.85,5) \wdot ;
\et
};
\endxy =
\xy
(0,0)*{
\bt[scale=.35, color=\clr]
	\draw [ thick, looseness=2, rdirected=-0.95] (1,4.5) to [out=90,in=90] (-1,4.5);
	\node at (-1.75,4.75) {\scriptsize $k$};
	\draw (0.85,5) \wdot ;
\et
};
\endxy \   \quad\quad\text{and}\quad\quad
\xy
(0,0)*{
\bt[scale=.35, color=\clr]
	\draw [ thick, looseness=2, rdirected=-0.95 ] (1,2.5) to [out=270,in=270] (-1,2.5);
	\node at (-1.75,2.5) {\scriptsize $k$};
	\draw (-0.85,2) \wdot ;
\et
};
\endxy = 
\xy
(0,0)*{
\bt[scale=.35, color=\clr]
	\draw [ thick, looseness=2, rdirected=-0.95 ] (1,2.5) to [out=270,in=270] (-1,2.5);
	\node at (-1.75,2.5) {\scriptsize $k$};
	\draw (0.85,2) \wdot ;
\et
};
\endxy \ .
\end{equation}
\item 
\begin{equation}\label{E:pitchforkrelations}
\xy
(0,0)*{
\begin{tikzpicture}[baseline=0,color=\clr, scale=.25]
		\node [style=none] (0) at (-2, 0) {};
		\node [style=none] (1) at (2, 0) {};
		\node [style=none] (2) at (0, 0) {};
		\node [style=none] (3) at (-2.25, 2.25) {};
		\node [style=none] (4) at (-1.5, 1.5) {};
		\node [style=none] (5) at (-1, 1) {};
		\node at (2.75,0) {\scriptsize $k$};
		\node at (.75,0) {\scriptsize $h$};
		\draw [thick, <-,looseness=1.50] (0.center) to [out=90,in=90] (1.center);
		\draw [thick, ->] (4.center) to  (3.center);
		\draw [thick, -] (2.center) to  (5.center);
\end{tikzpicture}}
\endxy
=
\xy
(0,0)*{
\begin{tikzpicture}[baseline=0,color=\clr, scale=.25]
		\node [style=none] (0) at (-2, 0) {};
		\node [style=none] (1) at (2, 0) {};
		\node [style=none] (2) at (0, 0) {};
		\node [style=none] (3) at (1.5, 1.5) {};
		\node [style=none] (4) at (2.25, 2.25) {};
		\node [style=none] (5) at (1, 1) {};
		\node at (2.75,0) {\scriptsize $k$};
		\node at (-0.75,0) {\scriptsize $h$};
		\draw [thick, <-, looseness=1.50] (0.center) to [out=90,in=90] (1.center);
		\draw [thick, <-] (4.center) to (3.center);
		\draw [thick, -] (2.center) to (5.center);
\end{tikzpicture}}
\endxy
\   \quad\quad\text{and}\quad\quad
\xy
(0,0)*{
\begin{tikzpicture}[baseline=0,color=\clr, scale=.25]
		\node [style=none] (0) at (-2, 0) {};
		\node [style=none] (1) at (2, 0) {};
		\node [style=none] (2) at (-1, -1) {};
		\node [style=none] (3) at (-2.25, -2.25) {};
		\node [style=none] (4) at (-1.5, -1.5) {};
		\node [style=none] (5) at (0, 0) {};
		\node at (2.75,0) {\scriptsize $k$};
		\node at (.75,0) {\scriptsize $h$};
		\draw [thick, <-, looseness=1.50] (0.center) to [out=270,in=270] (1.center);
		\draw [thick, ->] (4.center) to (3.center);
		\draw [thick, -] (2.center) to (5.center);
\end{tikzpicture}}
\endxy
=
\xy
(0,0)*{
\begin{tikzpicture}[baseline=0,color=\clr, scale=.25]
		\node [style=none] (0) at (-2, 0) {};
		\node [style=none] (1) at (2, 0) {};
		\node [style=none] (2) at (1, -1) {};
		\node [style=none] (3) at (2.25, -2.25) {};
		\node [style=none] (4) at (1.5, -1.5) {};
		\node [style=none] (5) at (0, 0) {};
		\node at (2.75,0) {\scriptsize $k$};
		\node at (-0.75,0) {\scriptsize $h$};
		\draw [thick, <-, looseness=1.50] (0.center) to [out=270,in=270] (1.center);
		\draw [thick, ->] (4.center) to (3.center);
		\draw [thick, -] (2.center) to (5.center);
\end{tikzpicture}}
\endxy   \ .
\end{equation} 
\item  
\begin{equation}\label{straighten-right-zigzag}
\xy
(0,0)*{\reflectbox{\rotatebox{180}{
\bt[scale=1, color=\clr]
	\draw [ thick, looseness=2, ] (1,0) to [out=90,in=90] (0.5,0);
	\draw [ thick, looseness=2, ] (1.5,0) to [out=270,in=270] (1,0);
	\draw[ thick, ] (0.5,0) to (0.5,-0.5);
	\draw[ thick, ->] (1.5,0) to (1.5,0.5);
	\node at (0.5,-0.65) {\scriptsize \reflectbox{\rotatebox{180}{$k$}}};
\et
}}};
\endxy=
\xy
(0,0)*{
\bt[scale=1, color=\clr]
	\draw[thick, <-] (1,0) to (1,1);
	\node at (1,1.15) {\scriptsize $k$};
\et
};
\endxy \ \quad\quad\text{and}\quad\quad
\xy
(0,0)*{
\bt[scale=1, color=\clr]
	\draw [ thick, looseness=2, ] (1,0) to [out=90,in=90] (0.5,0);
	\draw [ thick, looseness=2, ] (1.5,0) to [out=270,in=270] (1,0);
	\draw[ thick, -] (0.5,0) to (0.5,-0.5);
	\draw[ thick, ->] (1.5,0) to (1.5,0.5);
	\node at (0.5,-0.65) {\scriptsize $k$};
\et
};
\endxy=
\xy
(0,0)*{
\bt[scale=1,  color=\clr]
	\draw[thick, <-] (1,1) to (1,0);
	\node at (1,-0.15) {\scriptsize $k$};
\et
};
\endxy \ .
\end{equation}
\end{enumerate}
The relations obtained by from (d.) by changing under/over-crossings to over/under-crossings and/or changing the strand orientations also hold.
\end{lemma}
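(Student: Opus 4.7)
The proof plan is to reduce every identity in the lemma to relations that are already available: the defining relations of $\qwebsupdown$, the leftward straightening relations of \cref{straighten-zigzag}, the invertibility of the leftward crossings imposed in \cref{E:leftwardcrossing}, the relations collected in \cref{L:thicknessonerelations}, and the fact established in \cref{L:braidingforups} (together with the remarks in \cref{SS:OrientedAdditionalRelations}) that merges, splits, and dots pass through crossings of arbitrary orientation.

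For parts (a) and (b), I would unfold the ``twists'' depicted in \cref{E:straighten-twists1} and \cref{E:straighten-twists2} using the definition of the rightward cup and cap given in \cref{rightward-cup-and-cap}, which produces the scalar factors $q^{\pm k(k-1)}$ together with upward crossings and a leftward cup/cap pair. The remaining diagram is a Reidemeister-I type kink, which collapses to the identity after applying \cref{straighten-zigzag} and the invertibility of the leftward crossings from \cref{E:leftwardcrossing}. The second equality in each part is handled identically, or deduced from the first by pre-composing with the appropriate inverse crossing.

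For (c), it suffices to slide a dot from the upward strand of a rightward cup/cap to the downward strand. Since the rightward cup/cap is by definition a leftward cup/cap composed with an upward over-crossing, since dots travel freely along the over-strand of an upward crossing by \cref{L:braidingforups}(d), and since dots pass freely over leftward cups and caps (immediate from the definition of the downward dot), the result follows at once. For (d), the pitchfork-type equalities are an instance of the observation made in \cref{SS:OrientedAdditionalRelations} that merges and splits move through crossings of all orientations; expanding the rightward cup/cap by its definition reduces each equality in \cref{E:pitchforkrelations} to this already-known sliding property. For (e), substitute the definitions of the rightward cup and cap into the left-hand side of \cref{straighten-right-zigzag}: the interior collapses because an upward over-crossing cancels with an upward under-crossing by \cref{E:leftwardcrossing}, and what remains is a leftward zig-zag to which \cref{straighten-zigzag} applies.

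The main obstacle will be (a) and (b): the twist diagrams are small but the unfolding must be done carefully, since it must produce exactly the scalars $q^{\pm k(k-1)}$ with no residual sign or $\tq$-correction. This is where the precise normalization of \cref{rightward-cup-and-cap} is essential, and the bookkeeping of crossings relative to the clasps hidden inside thick-strand cups and caps (via the expansions used in the proof of \cref{sergeev-generators}) must be done in the order dictated by \cref{L:clasps-past-crossings}. Once this is settled, the other parts are essentially mechanical consequences of earlier relations and of the fact that all identities of \cref{L:braidingforups} propagate to cups, caps, and all orientations via the leftward straightening rule.
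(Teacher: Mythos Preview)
Your plan for parts (d) and (e) is fine and matches the paper. The gaps are in (a), (b), and (c).

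\textbf{Parts (a) and (b).} The two kinks appearing in each of \cref{E:straighten-twists1} and \cref{E:straighten-twists2} are \emph{not} symmetric with respect to the definition \cref{rightward-cup-and-cap}. A left-kink on an upward strand decomposes as a leftward cap on top and a \emph{rightward cup} on the bottom; a right-kink decomposes as a \emph{rightward cap} on top and a leftward cup on the bottom. Since $\rcap$ carries the scalar $q^{k(k-1)}$ while $\rcup$ carries $q^{-k(k-1)}$, only one of the two equalities in each part (the second in (a), the first in (b)) unfolds directly to the claimed scalar; the residual diagram is then a genuine zig-zag plus a leftward/rightward crossing pair that cancels by invertibility. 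For the \emph{other} equality in each part your unfolding produces the wrong power of $q$, and ``pre-composing with the appropriate inverse crossing'' does not convert one kink into the other. The paper handles these remaining equalities by induction on $k$: for $k=1$ one uses \cref{E:inverseupwardcrossing} to swap the crossing type, which introduces a $\tq$-multiple of a closed bubble, and this vanishes by \cref{delete-bubble}. The inductive step explodes the strand via \cref{digon-removal}, applies the inductive hypothesis, and recombines using \cref{L:untwist-permutation}. Your bookkeeping-with-clasps remark gestures toward this, but the crucial input \cref{delete-bubble} is never invoked, and without it the $\tq$-correction does not disappear.

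\textbf{Part (c).} The crossing in the definition of the rightward cap is a \emph{rightward} crossing (one strand up, one down), not an upward one. To carry the dot from the $\uparrow_k$ leg to the $\downarrow_k$ leg you can slide it freely along the over-strand and across the leftward cap, but you must then bring it back through the \emph{under}-strand of that crossing. That step is governed by the Hecke--Clifford relations of \cref{L:thicknessonerelations}(h)--(j), which produce $\tq$-correction terms; for $k=1$ these become dotted bubbles and vanish by \cref{delete-bubble} (this is exactly \cref{thickness-one-rightward-cup-and-cap-with-dot}). For general $k$ the paper inducts: explode the strand into thicknesses $1$ and $k-1$ using \cref{dot-on-k-strand}, push the pieces around the cap using the $k=1$ case and the fact that merges/splits slide through everything, and reassemble. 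Your one-line argument ``the result follows at once'' skips the under-strand passage and hence the essential use of \cref{delete-bubble}.
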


\begin{proof}  The second equality of \cref{E:straighten-twists1}  follows from the definition of the rightward cap and the fact the rightward over-crossing is the inverse of the leftward under-crossing, along with applications of \cref{straighten-zigzag}.  Similarly, the left equality in \cref{E:straighten-twists2} follows from the fact the rightward under-crossing is the inverse of the leftward over-crossing.

For the other equalities of parts (a.) and (b.), we argue via induction on $k$.  The case $k=1$ follows from using \cref{E:inverseupwardcrossing}  to replace the over-crossing with an under-crossing and then applying the case considered in the previous paragraph and the fact undotted bubbles labelled by $1$ are zero.  We explain the inductive step for the left equality of \cref{E:straighten-twists1}. The right equality of \cref{E:straighten-twists2} is entirely similar.  First, use \cref{digon-removal} to explode the strand of this leftward under-twist into two strands, one strand of thickness $k-1$ and one strand of thickness $1$.  The fact merges and splits freely move over cups, caps, and through crossings allows us to draw the left most diagram of \cref{E:straighten-twists1} as a pair of parallel strands of the same shape as the original diagram along  with a split at the bottom and a merge at the top.  Applying the inductive assumption to simplify, the result is a web with upwards over-crossings between a merge and split.  Using \cref{L:untwist-permutation} to remove these crossings and \cref{digon-removal} to close up the strands results in a scalar multiple of an upward arrow labelled by $k$.  After simplification, the scalar which results is on the one given in \cref{E:straighten-twists1}. 

To prove (c.) we induct on $k$ with the base case of $k = 1$ given by rewriting the rightward cup and cap according to \cref{rightward-cup-and-cap}, then dragging the dot along the over-strand of the leftward crossing and the leftward cup or cap.  To pull the dot along the under-strand of the crossing, we use \cref{E:dotthroughleftwardcrossing} or \cref{E:dotthroughleftwardcrossing2}.  We explain the inductive step for the left equality and leave the similar right equality to the reader.  For $k>1$, we proceed by exploding the thickness $k$ strand just above the dot on the leftmost diagram into strands of thickness $1$ and $k-1$ and use \cref{dot-on-k-strand}(b.) to move the dot onto these strand labelled by $1$.  Since the merge and split can be moved freely over rightward cap and since by the base case the dot can as well, we can move each of these to the right hand side of the cap.  Applying leftward cups and caps to \cref{dot-on-k-strand}(b.) shows there is an analogous relation for downward oriented strands and, consequently, we can close up our exploded strands to obtain the right hand side of the first equality. 

To prove the left equality in (d.), we draw the leftward over-crossing using the definition given in \cref{E:leftwardcrossing}, use \cref{straighten-zigzag} to simplify, and the result is the desired equality.  Similar arguments apply for the other versions of the left equality which involve a leftward cap.  To prove those which involve a rightward cap, we draw the rightward cap using the definition given in \cref{rightward-cup-and-cap} and the desired equality then follows from the previously proven leftward cap cases along with applications of the appropriately oriented analogues of the relations given in \cref{L:braidingforups}. The various versions of the second equality involving a cup are proved identically.

To prove (e.), draw the rightward cups and caps according to their definitions as in \cref{rightward-cup-and-cap} and use the previous parts of the lemma along with \cref{straighten-zigzag} to straighten.
\end{proof}

Given a crossing of a strand labelled by $k$ and a strand labelled by $l$, we say it has \emph{crossing degree} $kl$ regardless of the orientation of the strands or which strand is the over-crossing strand.  More generally, the \emph{total crossing degree} of a web is the sum of the crossing degrees of all the crossings in the web.  The total crossing degree does depend on how the web is drawn.

In what follows we adopt the convention that orientations of edges are omitted whenever the statement is true regardless of orientation.
\begin{lemma}\label{L:dotsthroughcrossings}  let $h, k \geq 0$.  Then,
\begin{equation}\label{E:dotthroughcrossing}
\xy
(0,0)*{\reflectbox{
\begin{tikzpicture}[scale=.3, color=\clr]
	\draw [ thick, -] (-1,-1) to (-0.25,-0.25);
	\draw [ thick, -] (0.25,0.25) to (1,1);
	\draw [ thick, -] (1,-1) to (-1,1);
	\node at (-1,-1.5) {\reflectbox{\scriptsize $k$}};
	\node at (1,-1.5) {\reflectbox{\scriptsize $h$}};
	\draw (0.55,0.55) \wdot ;
	\end{tikzpicture}
}};
\endxy
 \ = \ 
\xy
(0,0)*{\reflectbox{
\begin{tikzpicture}[scale=.3, color=\clr]
	\draw [ thick, -] (-1,-1) to (-0.25,-0.25);
	\draw [ thick, -] (0.25,0.25) to (1,1);
	\draw [ thick, -] (1,-1) to (-1,1);
	\node at (-1,-1.5) {\reflectbox{\scriptsize $k$}};
	\node at (1,-1.5) {\reflectbox{\scriptsize $h$}};
	\draw (-0.55,-0.55) \wdot ;
	\end{tikzpicture}
}};
\endxy
 \ +  \ 
\left( ***
 \right) \ ,
\end{equation}

\begin{equation}\label{E:dotthroughcrossing2}
\xy
(0,0)*{\reflectbox{
\begin{tikzpicture}[scale=.3, color=\clr]
	\draw [ thick, -] (-1,-1) to (1,1);
	\draw [ thick, -] (-0.25,0.25) to (-1,1);
	\draw [ thick, -] (0.25,-0.25) to (1,-1);
	\node at (-1,-1.5) {\reflectbox{\scriptsize $k$}};
	\node at (1,-1.5) {\reflectbox{\scriptsize $h$}};
	\draw (-0.55,0.55) \wdot ;
	\end{tikzpicture}
}};
\endxy
 \ = \ 
\xy
(0,0)*{\reflectbox{
\begin{tikzpicture}[scale=.3, color=\clr]
	\draw [ thick, -] (-1,-1) to (1,1);
	\draw [ thick, -] (-0.25,0.25) to (-1,1);
	\draw [ thick, -] (0.25,-0.25) to (1,-1);
	\node at (-1,-1.5) {\reflectbox{\scriptsize $k$}};
	\node at (1,-1.5) {\reflectbox{\scriptsize $h$}};
	\draw (0.55,-0.55) \wdot ;
	\end{tikzpicture}
}};
\endxy
 \ +  \ 
\left( ***
 \right) \ ,
\end{equation} where $(***)$ is a linear combination of diagrams of total crossing degree strictly less than $hk$. 
\end{lemma}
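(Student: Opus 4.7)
The plan is to prove the statement by induction on the sum of thicknesses $h+k$. The base case $h=k=1$ is precisely the content of relations \cref{E:dotthroughleftwardcrossing}--\cref{E:dotthroughdownwardcrossing2} in \cref{L:thicknessonerelations}, which cover every combination of orientations and over/under configurations that \cref{E:dotthroughcrossing} and \cref{E:dotthroughcrossing2} range over. In each base case, the correction term is a scalar multiple of a difference of diagrams in which the former crossing has been replaced by parallel strands (or by a cup-cap pair in the case of leftward/rightward crossings), so it contains no crossing between those two strands and thus has total crossing degree $0 < 1 \cdot 1$.

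For the inductive step, I may assume $k > 1$ (the case $h>1$ is analogous by left-right symmetry, and the case $k=h=1$ is the base case). Just below the dot on the thickness-$k$ strand, apply \cref{digon-removal} to split it (up to the nonzero scalar $[k]_q$) into a thickness-$(k-1)$ strand and a thickness-$1$ strand, then use \cref{dot-on-any-strand} to slide the dot onto the thickness-$1$ sub-strand. Since merges and splits move freely through crossings (the analogue of \cref{L:braidingforups}(c.) for arbitrary orientations, established in \cref{SS:OrientedAdditionalRelations}), the original thickness-$k$/thickness-$h$ crossing decomposes into a thickness-$1$/thickness-$h$ crossing stacked with a thickness-$(k-1)$/thickness-$h$ crossing. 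The total crossing degree of this reconfigured diagram remains $h + (k-1)h = kh$.

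Next, I would move the dot through the thickness-$1$/thickness-$h$ crossing using the inductive hypothesis (with sum of thicknesses $h+1 < h+k$), and then through the thickness-$(k-1)$/thickness-$h$ crossing using the inductive hypothesis again (with sum $h + (k-1) < h+k$). Each of these incurs a main term with the dot on the far side of that sub-crossing, plus a correction term of total crossing degree strictly less than the degree of the sub-crossing it replaces. Finally, re-merge the thickness-$1$ and thickness-$(k-1)$ strands on the opposite side of the crossing using \cref{digon-removal} (absorbing the scalar $[k]_q$), which reconstitutes a thickness-$k$ strand and yields the right-hand main term of \cref{E:dotthroughcrossing} or \cref{E:dotthroughcrossing2} as appropriate. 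The sum of all correction terms has total crossing degree bounded by $(h + (k-1)h) - 1 = kh - 1$, as required.

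The main obstacle will be verifying that re-merging strands does not increase total crossing degree: after exploding to thickness-$1$ strands, the total crossing degree of any diagram equals the number of thin crossings, and re-merging groups thin strands without introducing new crossings, so the bound is preserved. A secondary bookkeeping task is to track how the correction diagrams transform under the re-merge, in particular showing that the cup-cap and parallel-strand resolutions produced by the base case become webs of strictly smaller total crossing degree rather than reorganizing into a full thickness-$k$/thickness-$h$ crossing. This follows because in the exploded picture, at least one thin crossing between the two strand families is genuinely removed, and re-merging cannot restore it.
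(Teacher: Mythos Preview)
Your overall approach is sound and close in spirit to the paper's, but one step is confused and should be removed. After exploding the thickness-$k$ under-strand into thickness-$1$ and thickness-$(k-1)$ pieces with the dot on the thickness-$1$ piece, the dot lives on the $1$-strand only; it never touches the $(k-1)$-strand. Hence a \emph{single} inductive application---moving the dot through the $1/h$ crossing---already places the dot on the far side of every crossing it meets. Your proposed second step ``and then through the thickness-$(k-1)$/thickness-$h$ crossing'' has no content, since the dot is not on that strand and there is nothing to move. Once the dot is below the $1/h$ crossing, slide the merge (still sitting at the top) back down through both sub-crossings via \cref{L:braidingforups}(c.) and apply \cref{dot-on-k-strand} in reverse to reassemble the thick crossing with the dot below it.

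The paper organizes the same idea directly rather than by induction on $h+k$: it explodes the $k$-strand into $1+(k-1)$ as you do, but then also fully explodes the $h$-strand into $h$ parallel thickness-$1$ strands, so the dot on the thin strand passes through exactly $h$ thin crossings, each handled by a single application of the base case from \cref{L:thicknessonerelations}. Your induction simply unwinds to the same sequence of base-case moves. Your crossing-degree bookkeeping in the final paragraph is correct and is exactly what is needed: merges and splits carry no crossing degree, so the correction terms (whether or not one chooses to re-merge them) remain strictly below $hk$.
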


\begin{proof}  Consider the diagram on the left side of the equality in \cref{E:dotthroughcrossing}.  Using \cref{dot-on-k-strand} or its downward analogue, we can replace the dot with a split into strands labelled by $k-1$ and $1$, a dot on the strand labelled by $1$, followed by a merge.  We can then move the merge through the crossing using the appropriately oriented version of \cref{L:braidingforups}(c.).  On the other hand, after scaling by $1/[h]_{q}!$ the strand labelled by $h$ can blown apart into strands of thickness $1$ at a point below the initial crossing thanks to the appropriately oriented version of \cref{digon-removal}.  The  merge which was just created can also be moved over the parallel strands labelled by $1$ and $k-1$.  Having done so the result is a web which consisting of a strand labelled by $k-1$ passing under $l$ parallel strands labelled by $1$ along with a strand with a dot labelled by $1$ and parallel to the strand labelled by $k-1$ also passing under those same $l$ strands labelled by $1$.  Furthermore, the  strand labelled by $1$ which is parallel to the strand labelled by $k-1$ has a dot which is to the northwest of all the crossings.   Note that the resulting diagram still has total crossing degree $kl$.

Using the appropriately oriented Hecke-Clifford relation given by \cref{L:thicknessonerelations} the dot can be moved successively through the crossings of strands of thickness $1$ modulo linear combinations of diagrams with total crossing degree $kl-1$.  After moving the dot through all the crossings, the result will one web with total crossing degree $kl$ which can be simplified to the first diagram on the right hand side of the equality in \cref{E:dotthroughcrossing} plus a linear combination of diagrams of total crossing degree strictly less than $kl$, as claimed.  

The second equality is argued similarly.
\end{proof}

\begin{proposition}\label{L:bubblespop} If $d$ is a nonempty web of type $\unit \to \unit$, then $d=0$.  In particular,  $\End_{\qwebsupdown}(\unit ) = \K$.  
\end{proposition}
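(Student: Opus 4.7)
The plan is to reduce any nonempty closed web $d$ to a linear combination of disjoint unions of simple thickness-$1$ circles (clockwise or counterclockwise, with or without a dot), each of which has already been shown to vanish: counterclockwise circles by \cref{E:leftcircleiszero} and \cref{E:leftcirclewithdotiszero}, and clockwise circles (both dotted and undotted versions) by \cref{delete-bubble}.

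First I would explode every edge of $d$ into parallel thickness-$1$ strands. Exactly as in the proof of \cref{sergeev-generators}, each merge and split is expanded using \cref{digon-removal}, inserting clasps between the endpoints of each exploded region; the analogous maneuver applied to downward merges and splits, and to cups and caps (after using the straightening rules of \cref{straighten-zigzag} and \cref{straighten-right-zigzag} to realize them via merges and splits), reduces every $k$-labelled edge with $k \geq 2$ to a bundle of thickness-$1$ strands tied off by clasps. Each clasp is then replaced by a linear combination of webs involving only thickness-$1$ strands and upward crossings via \cref{L:claspsum}. After this reduction $d$ is written as a linear combination of closed webs consisting solely of thickness-$1$ strands, crossings (of all four orientations), cups, caps, and dots.

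Next I would induct on the total number of crossings $C$ in such a thickness-$1$ closed web. For the base case $C = 0$ the diagram is topologically a disjoint union of oriented circles carrying some dots. By \cref{dot-collision} at $k=1$ (two dots on a $1$-strand equal an undotted strand) together with the fact that dots move freely across cups and caps at thickness $1$ (a direct consequence of \cref{straighten-zigzag}, \cref{straighten-right-zigzag}, and \cref{thickness-one-rightward-cup-and-cap-with-dot}), each circle reduces to carry at most one dot, whence each circle is zero by the citations in the first paragraph. For the inductive step, select a crossing in the diagram and apply the appropriate inverse relation from \cref{E:inverseupwardcrossing}, \cref{E:inverseleftwardcrossing}, \cref{E:inverserightwardcrossing}, or \cref{E:inversedownwardcrossing}, expressing the chosen crossing as the opposite crossing plus a $\tq$-multiple of a smoothing with one fewer crossing. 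Combining these skein moves with (i) the Reidemeister-$2$ cancellations that hold automatically from the invertibility of crossings, (ii) the Reidemeister-$3$-type braid relations from the appropriately-oriented versions of \cref{L:braidingforups}, and (iii) the $k=1$ framing-free Reidemeister-$1$ relations from \cref{E:straighten-twists1} and \cref{E:straighten-twists2} (the framing scalar is $q^{0} = 1$ at thickness $1$), any nontrivial configuration of crossings can be unknotted modulo terms with strictly fewer crossings. Dot movement through crossings via \cref{E:dotthroughupupwardcrossing}--\cref{E:dotthroughdownwardcrossing2} introduces additional error terms, each with strictly fewer crossings, and is therefore absorbed by the inductive hypothesis.

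The main obstacle will be organizing the inductive step so that the crossing count genuinely decreases: the raw skein relations swap over- for under-crossings without reducing $C$, so progress comes only from combining them with Reidemeister-$2$ cancellations and from the smoothing terms. The bookkeeping required to track how dots slide across mixed-orientation crossings using \cref{L:thicknessonerelations} is the other delicate point, but in every case the error terms strictly simplify, so the induction closes and the simple-circle relations finish the proof.
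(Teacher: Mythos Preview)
Your proposal is correct and follows essentially the same route as the paper: reduce to thickness-$1$ strands via explosion and \cref{L:claspsum}, then induct on the number of crossings using the skein relations, Reidemeister-type moves, and dot-sliding rules of \cref{L:thicknessonerelations} and \cref{L:straighteningtwists} to reduce to disjoint circles, which vanish by \cref{delete-bubble} and \cref{E:leftcircleiszero}. Your identification of the subtle point—that skein alone swaps over/under without lowering $C$, so progress relies on the standard unknotting-by-crossing-changes argument combined with Reidemeister moves—is exactly what the paper leaves implicit; one minor inaccuracy is that cups and caps are not realized via merges and splits (they are primitive), but exploding a thick cup/cap works because merges and splits pass freely through cups and caps, which suffices for your reduction.
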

\begin{proof} Let $d$ be a nonempty web of type $\unit  \to \unit  $.  We will show $d=0$ in  $\End_{\qwebsupdown}(\unit )$.  Throughout we will ignore the scalars which  appear as they will not play a role.  Using \cref{straighten-zigzag,digon-removal} we can ``explode'' each strand of thickness $k>1$ into strands of thickness $1$.  For the moment let us assume there are no dots.  Since merges and splits move freely through crossings, cups, and caps it follows the merge at the top of the point of explosion can be freely moved around $d$ until it arrives at the bottom of a merge or split.  Similarly, the split at the bottom of each point of explosion can moved freely moved around $d$ until it arrives at the top of a merge or split. As discussed at the end of \cref{SS:UpwardWebs}, after moving all merges and splits until they are adjacent to other merges and splits, the end result is a collection of clasp idempotents connected together by strands of thickness $1$.  Applying \cref{L:claspsum} to these clasp idempotents reduces $d$ to a linear combination of webs consisting solely of strands of thickness $1$. See \cref{sergeev-generators} for another example of this technique.  If there are dots on strands, then \cref{dots-past-merges} allows us first to move dots past merges and splits and onto the strands of thickness one. Thereafter we can argue as above as dots on strands of thickness $1$ can remain fixed and one can still apply the above argument.  Thus we may reduce all webs to a linear combination of webs with strands of thickness $1$.

We have reduced ourself to showing $d=0$ when $d$ is a link constructed of oriented strands of thickness $1$, possibly labelled with dots.  Using the relations established in \cref{L:srelations,L:thicknessonerelations,L:straighteningtwists}, such a web can have over-crossings changed to under-crossings, have twists straightened, and have dots moved through crossings, all modulo linear combinations of similar diagrams but with fewer crossings.  Arguing by induction on the number of crossings shows $d$ can be reduced to a linear combination of products of concentric circles consisting of oriented strands of thickness $1$, possibly labelled with dots.  But any such web is zero by \cref{delete-bubble,E:leftcircleiszero}.   
\end{proof}

\subsection{A Ribbon Category}\label{SS:RibbonCategory}

The objects of $\qwebsupdown$ are words in $\left\{\uparrow_{k}, \downarrow_{k} \mid k\geq 0 \right\}$.  We extend our earlier notation by writing $\ob{a}$ for an object in $\qwebsupdown$ (e.g.\ $\ob{a} = \uparrow_{a_{1}} \downarrow_{a_{2}}\downarrow_{a_{3}}$).  Since a tuple of integers does not determine an object of $\qwebsupdown$,  it is convenient to write $\uddoublearrow_{k}$ for either $\up_{k}$ or $\down_{k}$ when the orientation is not important. More generally, for any tuple of nonnegative integers ${a}=(a_{1}, \dotsc , a_{r})$ we write $\uddoublearrow_{{a}} = \uddoublearrow_{a_{1}}\uddoublearrow_{a_{2}}\dotsb \uddoublearrow_{a_{r}}$ for an object in $\qwebsupdown$ when the orientation of the objects is not needed. 

 Given a nonnegative integer $a$, define the \emph{twist morphism} $\theta_{a} = q^{a(a-1)}\Id_{\uddoublearrow_{a}}$.  More generally, recursively define the \emph{twist morphism} $\theta_{\ob{c}}:{\ob{c}} \to {\ob{c}}$ for an arbitrary object ${\ob{c}}$ to be the unique morphism which satisfies the formula 
\[
\theta_{{\ob{a}}{\ob{b}}} = \left( \theta_{{\ob{a}}} \otimes \theta_{{\ob{b}}}\right) \beta_{{\ob{b}},{\ob{a}}}\beta_{{\ob{a}},{\ob{b}}}
\] for all objects ${\ob{a}},{\ob{b}}$.

Given $k \geq 1$, let $\uddoublearrow'_{k}$ denote the object with the orientation reversed (ie., $\up'_{k}=\down_{k}$ and $\down'_{k}=\up_{k}$).
 Given a tuple of nonnegative integers $(a_{1}, \dotsc , a_{t})$ there are obvious morphisms
\begin{align*}
(\uddoublearrow'_{a_{t}}\dotsb \uddoublearrow'_{a_{1}})(\uddoublearrow_{a_{1}}\dotsb \uddoublearrow_{a_{t}}) &\to \unit, \\
(\uddoublearrow_{a_{1}}\dotsb \uddoublearrow_{a_{t}})(\uddoublearrow'_{a_{t}}\dotsb \uddoublearrow'_{a_{1}}) &\to \unit,
\end{align*}
and
\begin{align*}
\unit  &\to (\uddoublearrow_{a_{1}}\dotsb \uddoublearrow_{a_{t}})(\uddoublearrow'_{a_{t}}\dotsb \uddoublearrow'_{a_{1}}), \\
\unit  &\to (\uddoublearrow'_{a_{t}}\dotsb \uddoublearrow'_{a_{1}})(\uddoublearrow_{a_{1}}\dotsb \uddoublearrow_{a_{t}}),
\end{align*} given by drawing a ``rainbow''  of appropriately oriented caps and cups, respectively.  One can easily check they satisfy analogues of the straightening rules given in \cref{straighten-zigzag,straighten-right-zigzag}. 

Since we have crossings of any two strands of arbitrary orientation and thickness, given a tuple of nonnegative integers $\ob{a}=(a_{1}, \dotsc , a_{t})$ and $\ob{b}=(b_{1}, \dotsc , b_{u})$ we can use the self-evident generalization of \cref{D:generalupbraiding} to define a braiding morphism between arbitrary objects in $\qwebsupdown$,
\[
\beta_{\ob{a},\ob{b}}: \ob{a}\ob{b} \to \ob{b}\ob{a}.
\]
Since the individual crossings satisfy the analogue of \cref{L:braidingforups}, it is not hard to verify these new morphisms satisfy the multi-strand analogue of \cref{L:braidingforups}. In short, for arbitrary objects in $\qwebsupdown$ we have constructed over- and under-crossings, twist maps, cups, and caps. 

\begin{theorem}\label{T:ribboncategory}  Let $\qwebsupdowneven$ be the subcategory of $\qwebsupdown$ consisting of all objects and all morphisms which can be written as linear combinations of webs without dots. Then  $\qwebsupdowneven$ is a ribbon category. 

Moreover, any two webs in $\qwebsupdown$  which are equal via planar isotopies which leave dots fixed along with applications of the relations given in \ref{L:straighteningtwists} are equal as morphisms in $\qwebsupdown$. 
\end{theorem}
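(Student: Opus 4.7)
The plan is to verify that $\qwebsupdowneven$ satisfies the defining axioms of a strict ribbon category: monoidal structure, braiding, left/right rigid duality, and a twist natural automorphism compatible with both braiding and duality. I would first observe that $\qwebsupdowneven$ is a well-defined monoidal subcategory of $\qwebsupdown$, since the generators (merges, splits, cups, caps, and composite crossings) are all dot-free, so composition and tensor product preserve the property of being a linear combination of dot-free webs.

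Next I would verify the braiding data. The morphisms $\beta_{\ob{a},\ob{b}}$ for arbitrary objects were defined by generalizing \cref{D:generalupbraiding} to all orientations using the crossings constructed in \cref{SS:OrientedAdditionalRelations}; they are invertible because their single-strand components are, by \cref{E:inverseupwardcrossing} and \cref{L:thicknessonerelations}. The two hexagon axioms and naturality reduce, after exploding thick strands into thickness-one strands via \cref{digon-removal} (as in the proof of \cref{sergeev-generators} and \cref{L:bubblespop}), to the analogues for all orientations of \cref{L:braidingforups}(b) and (c), which hold by the discussion in \cref{SS:OrientedAdditionalRelations}. Duality is then immediate: the leftward cups and caps supply a left duality via the zigzag identities \cref{straighten-zigzag}, while the rightward cups and caps of \cref{rightward-cup-and-cap} supply a right duality via \cref{straighten-right-zigzag}.

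For the twist, the recursive definition builds in the axiom $\theta_{\ob{a}\ob{b}} = (\theta_{\ob{a}}\otimes\theta_{\ob{b}})\circ\beta_{\ob{b},\ob{a}}\circ\beta_{\ob{a},\ob{b}}$ tautologically, so only the balancing identity $\theta_{\ob{a}^{*}}=(\theta_{\ob{a}})^{*}$ and naturality remain. By the recursive definition both conditions reduce to the case of a single generator $\uddoublearrow_{k}$, where the topological left and right twists are identified with $q^{\pm k(k-1)}\,\mathrm{Id}$ precisely by \cref{E:straighten-twists1} and \cref{E:straighten-twists2}; these match the scalar $q^{k(k-1)}$ in the definition of $\theta_{\uddoublearrow_{k}}$. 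Naturality of $\theta$ with respect to merges, splits, cups, and caps then follows from the pitchfork identities \cref{E:pitchforkrelations} together with \cref{dot-through-rightward-cup-and-cap} and the fact, recorded after \cref{straighten-zigzag}, that merges and splits slide freely through cups, caps, and crossings.

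The main obstacle is that many of these verifications, while topologically obvious, must be carried out strand by strand using the established relations. I would organize the argument so that the ribbon axioms are first checked for strands of thickness one (using \cref{L:srelations}, \cref{L:thicknessonerelations}, and \cref{L:straighteningtwists}) and then bootstrapped to arbitrary thickness via the clasp-based definition of the general crossings and the sliding properties of merges/splits. Finally, for the second statement I would argue that any planar isotopy fixing the dots decomposes as a sequence of super-interchange moves (valid in any monoidal supercategory) together with straightening/pitchfork moves; since every such local move is precisely an instance of \cref{straighten-zigzag}, \cref{straighten-right-zigzag}, or \cref{E:pitchforkrelations} (all items of \cref{L:straighteningtwists}), the isotopic webs are equal as morphisms.
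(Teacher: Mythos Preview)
Your verification of the ribbon axioms is essentially the same as the paper's, only spelled out in more detail; the paper simply says the relations established earlier (the multi-strand analogues of \cref{L:braidingforups}, the zigzags \cref{straighten-zigzag} and \cref{straighten-right-zigzag}, and the twist identities \cref{E:straighten-twists1}--\cref{E:straighten-twists2}) amount to the axioms in \cite[Section~1.4]{Turaev}.

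Where you diverge is in the second statement. The paper does not attempt to decompose planar isotopies into elementary moves by hand. Instead, having established that $\qwebsupdowneven$ is a ribbon category, it invokes the general diagrammatic calculus for ribbon categories developed in \cite[Section~3]{Turaev}: once the relations of \cite[Lemma~3.4]{Turaev} are known to hold, isotopic ribbon graphs (with the blackboard framing) automatically represent equal morphisms, and one localizes away from the dots to extend this to $\qwebsupdown$. Your direct approach is in the right spirit but is phrased a bit loosely: planar isotopy of string diagrams is already accounted for by coherence/interchange in any monoidal (super)category, so it does not ``decompose into'' straightening and pitchfork moves---those are precisely the \emph{additional} relations of \cref{L:straighteningtwists} that the theorem statement allows. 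Moreover, if you intend genuine isotopy of the underlying ribbon graphs (rather than just planar isotopy of the drawn diagram), you also need the Reidemeister~II/III analogues for crossings, which you do not mention; these hold by \cref{L:braidingforups}(a),(b) and their oriented variants. The paper's appeal to Turaev packages all of this in one stroke.
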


\begin{proof}  The category $\qwebsupdowneven$ is a monoidal category with braidings, twists, and dualities.  By the relations verified above these morphisms satisfy the axioms of a ribbon category as given in, for example, \cite[Section 1.4]{Turaev} and this proves the first statement.  As described in \cite[Section 3]{Turaev} there is a diagrammatic calculus for such categories.  Since $\qwebsupdowneven$ satisfies the relations listed in \cite[Lemma 3.4]{Turaev}, it follows this calculus applies to $\qwebsupdowneven$ where we use the blackboard framing to view webs as ribbon graphs.  In particular, by working locally and leaving dots fixed, this implies any two webs which are equal via planar isotopies which leave dots fixed are equal as morphisms in $\qwebsupdown$, as claimed.
\end{proof}

\subsection{Isomorphisms between morphism spaces in \texorpdfstring{$\qwebsupdown $}{Oriented Webs} }\label{SS:Useful isomorphisms}

The symmetric group on $t$ letters, $S_{t}$, has a right action by place permutation on the set of all objects of $\qwebsupdown$ which are the tensor product of $t$ generating objects. Namely,  $\ob{a}\cdot \sigma = \uddoublearrow_{{a}}  \cdot \sigma  = \uddoublearrow_{a_{\sigma^{-1}(1)}} \uddoublearrow_{a_{\sigma^{-1}(2)}} \dotsb \uddoublearrow_{a_{\sigma^{-1}(t)}}$ for $\ob{a}=\uddoublearrow_{{a}} = \uddoublearrow_{a_{1}} \uddoublearrow_{a_{2}} \dotsb \uddoublearrow_{a_{t}}$ and $\sigma \in S_{t}$.  Moreover, if ${\ob{a}}$ is the tensor product of $t$ generating objects and $\sigma \in S_{t}$, then we can use a reduced expression for $\sigma$ and appropriately oriented over-crossings to define a morphism $d_{\sigma}  \in \Hom_{\qwebsupdown}\left( \ob{a},  \ob{a} \cdot \sigma \right)$.  The morphism $d_{\sigma}$ is invertible with the inverse given by using a reduced expression for $\sigma^{-1}$ and the under-crossing braidings.  

More generally, given objects ${\ob{a}}$ and ${\ob{b}}$ which are the tensor product of $t$ and $u$ generating objects, respectively, and any $\sigma \in S_{t}$ and $\tau \in S_{u}$, then the map $ w \mapsto d_{\tau} \circ w  \circ d^{-1}_{\sigma}$ defines a superspace isomorphism,
\begin{equation}\label{E:twistingstrandisomorphism}
\Hom_{\qwebsupdown}\left({\ob{a}}, {\ob{b}} \right) \xrightarrow{\cong} \Hom_{\qwebsupdown} \left({\ob{a}} \cdot \sigma, {\ob{b}} \cdot \tau  \right).
\end{equation} 

Given objects of the form ${\ob{a}} = \down_{a_{1}}\dotsb \down_{a_{r}}\up_{a_{r+1}}\dotsb \up_{a_{t}}$ and ${\ob{b}} = \up_{b_{1}}\dotsb \up_{b_{s}}\down_{b_{s+1}}\dotsb \down_{b_{u}}$, there is a parity preserving superspace isomorphism,
\begin{equation}\label{E:cupcapisomorphism}
\Hom_{\qwebsupdown}({\ob{a}}, { \ob{b}}) \xrightarrow{\cong} \Hom_{\qwebsupdown}\left( \up_{a_{r+1}}\dotsb \up_{a_{t}}\up_{b_{u}}\dotsb \up_{b_{s+1}}, \up_{a_{r}}\dotsb \up_{a_{1}}\up_{b_{1}}\dotsb \up_{b_{s}} \right),
\end{equation}

 given on diagrams by
\[
\xy
(0,0)*{
\begin{tikzpicture}[scale=.3, color=\clr]
	\draw [ thick] (-2.3,-8) rectangle (4.3,-6);
	\node at (1,-7) {$w$};
	\draw [ thick, rdirected=0.75] (-2,-9.5) to (-2,-8);
	\draw [ thick, rdirected=0.75] (0,-9.5) to (0,-8);
	\draw [ thick, directed=0.75] (2,-9.5) to (2,-8);
	\draw [ thick, directed=0.75] (4,-9.5) to (4,-8);
	\node at (2,-10.25) {\scriptsize $a_{r+1}$};
	\node at (4,-10.25) {\scriptsize $a_t$};
	\node at (-2,-10.25) {\scriptsize $a_1$};
	\node at (-1,-9.125) {\,$\cdots$};
	\node at (3,-9.125) {\,$\cdots$};
	\node at (0,-10.25) {\scriptsize $a_r$};
	\draw [ thick, rdirected=0.75] (2,-6) to (2,-4.5);
	\draw [ thick, rdirected=0.75] (4,-6) to (4,-4.5);
	\draw [ thick, directed=0.75] (-2,-6) to (-2,-4.5);
	\draw [ thick, directed=0.75] (0,-6) to (0,-4.5);
	\node at (2,-3.75) {\scriptsize $b_{s+1}$};
	\node at (3,-5.625) {\,$\cdots$};
	\node at (-1,-5.625) {\,$\cdots$};
	\node at (4,-3.75) {\scriptsize $b_u$};
	\node at (-2,-3.75) {\scriptsize $b_1$};
	\node at (0,-3.75) {\scriptsize $b_s$};
\end{tikzpicture}
};
\endxy\mapsto
\xy
(0,0)*{
\begin{tikzpicture}[scale=.3, color=\clr]
	\draw [ thick] (-2.3,-8) rectangle (4.3,-6);
	\draw [ thick, looseness=2, directed=0.5] (6,-6) to [out=90,in=90] (4,-6);
	\draw [ thick, looseness=1.75, directed=0.5] (8,-6) to [out=90,in=90] (2,-6);
	\node at (1,-7) {$w$};
	\draw [ thick, looseness=2, rdirected=0.5] (-4,-8) to [out=270,in=270] (-2,-8);
	\draw [ thick, looseness=1.75, rdirected=0.5] (-6,-8) to [out=270,in=270] (0,-8);
	\draw [ thick, directed=0.4] (6,-11) to (6,-6);
	\draw [ thick, directed=0.4] (8,-11) to (8,-6);
	\draw [ thick, directed=0.65] (4,-11) to (4,-8);
	\draw [ thick, directed=0.65] (2,-11) to (2,-8);
	\node at (2,-11.75) {\scriptsize $a_{r+1}$};
	\node at (-5,-5.5) { \ $\cdots$};
	\node at (4,-11.75) {\scriptsize $a_{t}$};
	\node at (6,-11.75) {\scriptsize $b_{u}$};
	\node at (-1,-5.5) {\,$\cdots$};
	\node at (8,-11.75) {\scriptsize $b_{s+1}$};
	\draw [ thick, directed=0.5] (-2,-6) to (-2,-3);
	\draw [ thick, directed=0.5] (0,-6) to (0,-3);
	\draw [ thick, directed=0.7] (-6,-8) to (-6,-3);
	\draw [ thick, directed=0.7] (-4,-8) to (-4,-3);
	\node at (-6,-2.25) {\scriptsize $a_{r}$};
	\node at (3,-8.5) {\,$\cdots$};
	\node at (-4,-2.25) {\scriptsize $a_{1}$};
	\node at (-2,-2.25) {\scriptsize $b_{1}$};
	\node at (7,-8.5) {\,$\cdots$};
	\node at (0,-2.25) {\scriptsize $b_{s}$};
\end{tikzpicture}
};
\endxy.
\]
The inverse is given by a similar map thanks to relation \cref{straighten-zigzag}.

As in the proof of \cref{T:fullness}, whenever a morphism between $\Hom$-spaces in $\qwebsupdown$ is defined by applying a combination of compositions and tensor products of morphisms we can apply the functor $\Psi$ given by \cref{T:Psiupdown} to obtain a corresponding homomorphism of $\U_{q}(\fq_{n} )$-supermodules.  In particular, commuting diagrams go to commuting diagrams and isomorphisms go to isomorphisms.

\subsection{Relating the two diagrammatic supercategories}\label{SS:FurtherFunctors}

\begin{theorem}\label{T:Thetafunctor}  There is a fully faithful functor of monoidal supercategories 
\[
\qwebs \to \qwebsupdown 
\] which takes objects and morphisms in $\qwebs$ to objects and morphisms of the same name in $\qwebsupdown$.

\end{theorem}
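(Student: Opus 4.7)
The functor $\Theta$ is well-defined by inspection. Every object and generating morphism of $\qwebs$ has a namesake counterpart in $\qwebsupdown$, and the defining relations of $\qwebs$ (\cref{associativity,digon-removal,dot-collision,dots-past-merges,dumbbell-relation,square-switch,square-switch-dots,double-rungs-1,double-rungs-2}) are imposed in $\qwebsupdown$ by construction (see \cref{D:orientedwebs}). Hence the assignment extends uniquely to a monoidal superfunctor.

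For fullness, my plan is to show that any web $w$ representing a morphism in $\Hom_{\qwebsupdown}(\ob{a},\ob{b})$ with $\ob{a},\ob{b}$ purely upward can be rewritten, using the $\qwebsupdown$ relations, as a linear combination of webs containing no cups, caps, or downward-oriented edges. Because the whole boundary of $w$ points upward, every cup in $w$ must be paired with a cap via a chain of downward-oriented edges in the interior. I would proceed by induction on the total number of cups and caps in $w$, using three ingredients in turn: (i) the ribbon isotopy from \cref{T:ribboncategory} together with the pitchfork relations \cref{E:pitchforkrelations} to arrange the web so that each cup/cap pair bounds a region containing only downward strands (no merges or splits caught inside); (ii) the relations in \cref{L:thicknessonerelations} and \cref{dot-through-rightward-cup-and-cap} to migrate any dots off these downward regions onto the surrounding upward portion; and (iii) the straightening rules \cref{straighten-zigzag,straighten-right-zigzag} to eliminate each matched cup/cap pair. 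Any residual closed downward bubbles evaluate to zero by \cref{delete-bubble} and \cref{E:leftcircleiszero}, and any closed scalars that arise from the evaluation of isotopy-induced loops land in $\K$ and are absorbed into the coefficient.

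For faithfulness, I would use the functor $\Psi_n$ of \cref{psi-functor}. One first observes that $\Psi_n:\qwebs\to\mods$ factors through $\Theta$ via an extension to $\qwebsupdown\to\modsupdown$; this extension is obtained by sending the additional generators of $\qwebsupdown$ to the obvious evaluation/coevaluation homomorphisms on $S_{q}^{\uddoublearrow_k}(V_n)$, which is a straightforward compatibility check independent of this theorem. Fix objects $\ob{a},\ob{b}$ of $\qwebs$. By the embedding $\alpha$ appearing in the proof of \cref{T:fullness}, $\Hom_{\qwebs}(\ob{a},\ob{b})$ injects into $\Hom_{\qwebs}(\up_{1}^{|a|},\up_{1}^{|b|})$ in a way compatible with $\Psi_n$. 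By \cref{P:sergeev-isomorphism} the latter endomorphism superalgebra is identified with $\HC_{k}(q)$, and by \cref{P:OlshanskiKernel} the map $\psi:\HC_k(q)\to\End_{U_q(\fq_n)}(V_n^{\otimes k})$ is injective whenever $k<(n+1)(n+2)/2$. Choosing $n$ large enough that both $|a|,|b|<(n+1)(n+2)/2$ makes $\Psi_n$ injective on $\Hom_{\qwebs}(\ob{a},\ob{b})$, and hence $\Theta$ is faithful on that Hom-space. Letting $\ob{a},\ob{b}$ vary finishes faithfulness.

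The main obstacle will be the inductive bookkeeping in the fullness step. When a dot is dragged off a downward strand by \cref{dot-through-rightward-cup-and-cap} or relocated via a crossing, the Hecke-Clifford style relations of \cref{L:thicknessonerelations} introduce correction terms that may themselves contain new cup/cap configurations or additional dots, and one must choose a complexity measure (for example, lexicographic order on the pair consisting of the total number of cups/caps and the total weighted dot count on downward strands) that strictly decreases under these moves so that the reduction procedure terminates. A further subtlety is that the leftward crossing is only available through its formal inverse in \cref{E:leftwardcrossing}, so each reduction step that crosses a downward strand must be justified by first reducing to an instance in which these inverse morphisms can be eliminated via the relations already derived in \cref{L:thicknessonerelations,L:straighteningtwists}.
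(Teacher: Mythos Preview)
Your well-definedness argument matches the paper's. The fullness and faithfulness arguments both differ from the paper's in interesting ways.

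For \emph{fullness}, the paper avoids your inductive bookkeeping entirely. Rather than matching and cancelling cup/cap pairs, it first pushes all merges, splits, and dots \emph{upward past} every crossing, cup, and cap (these slide freely, and \cref{L:dotsthroughcrossings} handles dots through crossings modulo lower-crossing-degree terms). This factors each web as an upper part consisting only of upward merges, splits, and dots, sitting atop a lower part consisting only of crossings, cups, and caps with purely upward boundary. The lower part is then a labelled braid in the dot-free ribbon category $\qwebsupdowneven$, and by ribbon isotopy (with \cref{L:straighteningtwists} standing in for the first Reidemeister move) it equals a scalar multiple of a diagram built from purely upward crossings. This two-stage decomposition sidesteps the dot-migration and Hecke--Clifford correction-term issues you flag: dots are moved once to the top and never interact with the braid-untangling step, so no delicate complexity measure is needed.

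For \emph{faithfulness}, your approach via $\Psi_n$ for large $n$ is a genuinely different and valid route. The paper argues directly from the fullness rewriting that any relation among upward morphisms in $\qwebsupdown$ is already expressible in $\qwebs$; this is terse and relies on the reader accepting that the rewriting procedure respects relations. Your approach instead uses an external representation-theoretic check: since the well-definedness of the extended $\Psi_n$ on $\qwebsupdown$ is a relation verification that does not invoke this theorem, the factorisation $\Psi_n = \Psi_n^{\uparrow\downarrow}\circ\Theta$ is available, and injectivity of $\Psi_n$ on $\Hom_{\qwebs}(\ob a,\ob b)$ for $n$ large (via \cref{P:sergeev-isomorphism} and \cref{P:OlshanskiKernel}) then forces $\Theta$ to be faithful. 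This is cleaner in that it avoids any question of how the extra $\qwebsupdown$ relations interact with upward webs.
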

\begin{proof}  By construction the defining relations of $\qwebs$ hold in $\qwebsupdown$.  Consequently there is a well-defined functor of monoidal supercategories.  We next consider the question of the functor being full and faithful. Say $\ob{a}$ and $\ob{b}$ are objects of $\qwebs$ and say $d$ is a web in $\Hom_{\qwebsupdown}(\ob{a}, \ob{b})$.  That is, $d$ is an oriented web of type $\ob{a}\to \ob{b}$ where its bottom and top are upward oriented.  Between the bottom and top, $d$ will consist of dots, merges, splits, cups, caps, and crossings, and some of the internal edges in $d$ may well be downward oriented.

Merges and splits can freely move through crossings, cups, and caps. Dots can move freely over cups and caps.  These facts along with an argument by induction on the total crossing number of a diagram shows every morphism in $\qwebsupdown$ of type $\ob{a} \to \ob{b}$ can be written as a linear combination of diagrams where all merges, splits, and dots are upward oriented and above all crossings, cups, and caps.  That is, $d$ is equal to a linear combination of diagrams of type $\ob{a} \to \ob{b}$ where each diagram is the vertical composition of two diagrams: both have upward oriented bottoms and tops, the upper diagrams consist of only upward oriented merges, splits, and dots, and the lower diagrams consist of only cups, caps, and crossings.  

In particular, the upper diagrams are morphisms in $\qwebs$.  On the other hand, the lower diagrams are (labelled) braids which lie in the  subcategory $\qwebsupdowneven$.  However, this is a ribbon category and so satisfies the Reidemeister moves (where \cref{L:straighteningtwists}(a.)~and~(b.) stands in for the first Reidemeister move).  Thus, up to scaling by a power of $q$, the lower diagrams can be replaced with any diagram which is isotopic via boundary preserving isotopies.  In particular, the strands in the lower diagrams can be replaced by diagrams consisting solely of upward oriented crossings.  Thus each lower diagram is equal to a scalar multiples of some web which lies in $\qwebs$.  

In short, every diagram in $\qwebsupdown$ of type of $\ob{a} \to \ob{b}$ can written as a linear combination of diagrams of type $\ob{a} \to \ob{b}$ which lie in $\qwebs$.  Since every morphism in $\Hom_{\qwebsupdown}(\ob{a}, \ob{b})$ can be expressed as a linear combination of webs which lie in $\Hom_{\qwebs}(\ob{a}, \ob{b})$, fullness follows.  Similarly, any relation among morphisms in $\Hom_{\qwebsupdown}(\ob{a}, \ob{b})$ can be expressed using morphisms in $\Hom_{\qwebs}(\ob{a}, \ob{b})$ and so faithfulness also follows.
\end{proof}
We will identify $\qwebs$ as a full subcategory of $\qwebsupdown$ via this functor.

\section{Main Theorems and Applications}

\subsection{The oriented \texorpdfstring{$\Psi$}{Psi} functor}\label{SS:orientedpsifunctor}

 Let $\modsupdown$ denote the full monoidal subsupercategory of $U_{q}(\fq_{n})$-supermodules generated by the objects 
\[
\left\{S_{q}^{p}\left( V_{n}\right), S_{q}^{p} \left(V_{n}  \right)^{*} \mid p\geq 1 \right\}.
\]
 Define  $\K$-linear maps by
\[
\begin{array}{c}
\ev_{k }:  S_{q}^{k}(V_{n})^{*}\otimes S_{q}^{k}(V_{n}) \to \K, \\
 f \otimes x \mapsto f(x),
\end{array}
\]
and
\[
\begin{array}{c}
\coev_{k }:  \K \to S_{q}^{k}(V_{n})\otimes S_{q}^{k}(V_{n})^{*}, \\
1 \mapsto \sum_{} w_{i} \otimes w^{*}_{i},
\end{array}
\]
where $\left\{w_{i} \right\}$ is a homogeneous basis for $S_{q}^{k}(V_{n})$ and $w^{*}_{j} \in S_{q}^{k}(V_{n})^{*}$ is defined by $w^{*}_{j}(v_{i}) = \delta_{i,j}$. These are $U_{q}(\fq_{n})$-supermodule maps and are known to be independent of choice of basis. We call them evaluation and coevaluation, respectively.

It is convenient to extend our earlier notation by writing $S_{q}^{\uparrow_{d}}\left(V_{n} \right) = S_{q}^{d}\left(V_{n} \right)$ and $S_{q}^{\downarrow_{d}}\left(V_{n} \right) = S_{q}^{d}\left(V_{n} \right)^{*}$.  More generally, given an object in $\qwebsupdown$, $\ob{a} = \uddoublearrow_{a}= \uddoublearrow_{a_{1}}\dotsb \uddoublearrow_{a_{r}}$, we write 
\[
S_{q}^{\ob{a}}\left(V_{n} \right) =S_{q}^{\uddoublearrow_{a}}\left(V_{n} \right) = S_{q}^{\uddoublearrow_{a_{1}}}\left(V_{n} \right) \otimes \dotsb \otimes S_{q}^{\uddoublearrow_{a_{r}}}\left(V_{n} \right).
\]
The following result shows the functor $\Psi_n : \qwebs \to \mods$ can be extended to $\Psi_n: \qwebsupdown\to \modsupdown$.

\begin{theorem}\label{T:Psiupdown} There is an essentially surjective, full functor of symmetric monoidal supercategories
\[
\Psi_n : \qwebsupdown \to \modsupdown
\] given on generating objects by 
\begin{align*}
\Psi_n \left(\up_{k} \right) &= S^{\uparrow_{k}}(V_{n}),\\
 \Psi_n \left(\down_{k} \right) &= S^{\downarrow_{k}}(V_{n})
\end{align*}
and on generating morphisms by defining it on the dot, merge, and split as in \cref{psi-functor} and
\[
\Psi_n\left( 
\xy
(0,0)*{
\bt[scale=.35, color=\clr]
	\draw [ thick, looseness=2, directed=0.99] (1,2.5) to [out=270,in=270] (-1,2.5);
	\node at (-1,3) {\scriptsize $k$};
	\node at (1,3) {\scriptsize $k$};
\et
};
\endxy \right)=\coev_k,\quad\quad
\Psi_n\left(
\xy
(0,0)*{
\bt[scale=.35, color=\clr]
	\draw [ thick, looseness=2, directed=0.99] (1,2.5) to [out=90,in=90] (-1,2.5);
	\node at (-1,2) {\scriptsize $k$};
	\node at (1,2) {\scriptsize $k$};
\et
};
\endxy
\right)=\ev_k.
\]
\end{theorem}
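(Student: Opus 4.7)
The plan is to reduce the construction to verifying a short list of new relations, then deduce fullness from the already-established oriented-free case.  Since $\Psi_n$ is already defined on objects and morphisms of $\qwebs$ by \cref{psi-functor}, and $\qwebs$ embeds as a full subcategory of $\qwebsupdown$ by \cref{T:Thetafunctor}, we only need to extend $\Psi_n$ to the new generating objects $\down_k$ and the new generating morphisms (leftward cups/caps), and then check that the additional defining relations of $\qwebsupdown$ go to true identities of $U_q(\fq_n)$-supermodule maps.  Sending $\down_k$ to $S_q^k(V_n)^{\oldstar}$ (equivalently, to $S_q^k(V_n)^*$, which are isomorphic by \cref{P:dualtwist}), and sending leftward cups and caps to the standard coevaluation and evaluation maps $\coev_k$ and $\ev_k$ of the dual pairing, the extension on objects and generating morphisms is fixed.

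First I would verify the \emph{zigzag} relations \cref{straighten-zigzag}.  These are nothing but the snake identities for the duality $S_q^k(V_n)^* \dashv S_q^k(V_n)$ and hold in any monoidal category with rigid duals; explicitly, $(\ev_k \otimes \Id)\circ(\Id \otimes \coev_k) = \Id$ and its mirror hold on $S_q^k(V_n)$ and its dual by the definition of $\ev_k$ and $\coev_k$.  Next I would verify the \emph{bubble} relations \cref{delete-bubble}.  The clockwise circle labelled by $1$ is the composition $\ev_1 \circ \coev_1$ (up to the obvious $\tau$-twist in identifying $V_n^{\oldstar} \cong V_n^*$), which evaluates to the categorical super-dimension of $V_n$; since $\dim V_{n,\0}=\dim V_{n,\1}=n$, this super-dimension is $n-n=0$.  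The dotted clockwise bubble is the super-trace of the action of $\bar K_1$ (the $\tq$-normalized image of the dot morphism under $\Psi_n$); since $\bar K_1$ swaps $V_{n,\0}$ and $V_{n,\1}$, it has zero supertrace for parity reasons.  Both halves of \cref{delete-bubble} are therefore satisfied.

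The main obstacle is then verifying that the images of the leftward over-crossing and leftward under-crossing in \cref{E:leftwardcrossing} are genuinely invertible supermodule maps, since in $\qwebsupdown$ these are imposed by \emph{declaring} two-sided inverses to exist.  To do this I would not try to write closed formulas in arbitrary thickness.  Instead, having already established (via \cref{T:Thetafunctor} and the upward setting) that $\Psi_n$ produces an over-crossing isomorphism $\beta_{\up_k,\up_l}$ on $S_q^k(V_n) \otimes S_q^l(V_n)$ with a two-sided inverse $\beta_{\up_k,\up_l}^{-1}$, I would rewrite the leftward crossings using the straightening rules \cref{straighten-zigzag} as conjugates of $\beta_{\up_k,\up_l}^{\pm 1}$ by zigzag isomorphisms involving $\ev$ and $\coev$.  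In pictures, the image of the leftward crossing is
\[
\Psi_n\!\left(
\xy
(0,0)*{
\bt[scale=1.25, color=\clr]
	\draw[thick, rdirected=0.2] (0,0) to (0.2,0.2);
	\draw[thick ] (0.3,0.3) to (0.5,0.5);
	\draw[thick, directed=1] (0.5,0) to (0,0.5);
	\node at (0,-0.15) {\scriptsize $k$};
	\node at (0,0.65) {\scriptsize $l$};
	\node at (0.5,-0.15) {\scriptsize $l$};
	\node at (0.5,0.65) {\scriptsize $k$};
\et
};
\endxy\right)
= (\Id_{\down_k} \otimes \ev_l \otimes \Id_{\up_k}) \circ (\Id_{\down_k} \otimes \Psi_n(\beta_{\up_l,\up_k}) \otimes \Id_{\up_k}) \circ (\coev_k \otimes \Id_{\up_l} \otimes \coev_l),
\]
which is a composition of the snake isomorphisms with an iso, hence an iso.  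An identical argument using $\beta_{\up_l,\up_k}^{-1}$ handles the leftward under-crossing, and the snake identities guarantee these candidate inverses are honest two-sided inverses of each other.  Once this is verified, $\Psi_n$ is well-defined on all of $\qwebsupdown$.

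Finally, essential surjectivity is immediate because every generator of $\modsupdown$ is in the image.  For fullness, I would exploit the parity-preserving superspace isomorphism \cref{E:cupcapisomorphism}: it reduces any Hom-space between mixed-orientation objects in $\qwebsupdown$ to a Hom-space between all-upward objects, and the analogous isomorphism on the representation-theoretic side (built from the same $\ev$'s and $\coev$'s) intertwines with $\Psi_n$ by construction.  Fullness on the reduced $\qwebs$-side is \cref{T:fullness}, so a diagram chase over \cref{E:cupcapisomorphism} yields fullness of $\Psi_n$ on $\qwebsupdown$.  The functor is symmetric monoidal because the braidings defined via over/under crossings in \cref{T:ribboncategory} are, by \cref{L:functoronkeymorphisms}, sent to the braidings arising from the action of $T$ on tensor powers, which are natural with respect to all morphisms in $\modsupdown$.
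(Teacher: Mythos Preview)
Your overall architecture (extend on new generators, verify the new relations, then reduce fullness to the upward case via \cref{E:cupcapisomorphism} and \cref{T:fullness}) matches the paper, and your treatment of the zigzag relations and of the \emph{dotted} bubble by parity is fine.  However, the two central new verifications --- the undotted bubble and the invertibility of the leftward crossings --- both contain genuine gaps.

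For the undotted clockwise bubble, it is \emph{not} $\ev_1\circ\coev_1$: those two maps cannot even be composed, since $\coev_1\colon\K\to V_n\otimes V_n^*$ while $\ev_1\colon V_n^*\otimes V_n\to\K$.  The clockwise bubble is (rightward cap)$\circ\coev_1$, and the rightward cap is \emph{defined} using the inverse of the leftward crossing, so one must first know that inverse exists and then compute the rightward cap explicitly.  The paper does exactly this in \cref{L:onecases}(c), obtaining $v_a\otimes v_b^*\mapsto \delta_{a,b}(-1)^{|a|}q^{(-1)^{|a|}2a-(2n+1)}$; composing with $\coev_1$ then gives a sum that vanishes by an identity from \cite[Lemma~3.11]{BGJKW}, not by the naive super-dimension count $n-n$.

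For invertibility of the leftward crossings, your displayed formula is ill-typed (its domain is $\up_l$, not $\down_k\up_l$), and the underlying idea --- that the ``mate'' of an isomorphism under one-sided duality is automatically an isomorphism --- is false in general.  A simple counterexample in vector spaces: for $c=\mathrm{flip}-\tfrac{1}{n}\mathrm{id}$ on $V\otimes V$ (an isomorphism), the mate $V^*\otimes V\to V\otimes V^*$ is $A\mapsto A-\tfrac{1}{n}\mathrm{tr}(A)\,\mathrm{Id}$ under the identification with $\End(V)$, which kills all scalars.  The paper therefore proceeds differently: it computes the images of the thickness-$1$ leftward crossings explicitly (\cref{L:onecases}(a),(b)), checks by triangularity that they are invertible, defines the thickness-$1$ rightward crossings to be those inverses, and then for general $k,l$ builds the inverse by exploding into thin strands and using the thin rightward crossings (the morphism $\tilde\beta_{\down_k,\up_l}$ in the proof).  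This explicit step is unavoidable with the tools at hand.
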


To prove the functor given in \cref{T:Psiupdown} is well-defined requires verifying the defining relations of $\qwebsupdown$.  To do so it is helpful to first compute what happens on strands of thickness $1$.  

\begin{lemma}\label{L:onecases} The following statements hold:
\begin{enumerate}
\item 
\begin{equation*}
\Phi_{n}\left(\xy
(0,0)*{
\bt[scale=1.25, color=\clr]
	\draw[thick, rdirected=0.2] (0,0) to (0.2,0.2);
	\draw[thick ] (0.3,0.3) to (0.5,0.5);
	\draw[thick, directed=1] (0.5,0) to (0,0.5);
	\node at (0,-0.15) {\scriptsize $1$};
	\node at (0,0.65) {\scriptsize $1$};
	\node at (0.5,-0.15) {\scriptsize $1$};
	\node at (0.5,0.65) {\scriptsize $1$};
\et
};
\endxy  \right): V^{*}_{n} \otimes V_{n} \to V_{n} \otimes V_{n}^{*}
\end{equation*} is given for all $a,b \in I_{n|n}$ by 
\begin{equation*}
v_a^* \otimes v_b \mapsto (-1)^{\p{a}\p{b}} q^{\varphi(b,a)} v_b \otimes v_a^* + \delta_{a,b} \tq \sum_{\substack{k \in I_{n|n} \\ b < k}} v_k \otimes v_k^* + \delta_{a,-b} \sum_{\substack{k \in I_{n|n} \\ -b < k}} (-1)^{\p{k}} v_{-k} \otimes v_k^*.
\end{equation*}  Moreover this map is invertible.
\item 
\begin{equation*}
\Phi_{n}\left(\xy
(0,0)*{
\bt[scale=1.25, color=\clr]
	\draw[thick, rdirected=0.1] (0,0) to (0.5,0.5);
	\draw[thick ] (0.5,0) to (0.33,0.18);
	\draw[thick, directed=1] (0.18,.33) to (0,0.5);
	\node at (0,-0.15) {\scriptsize $1$};
	\node at (0,0.65) {\scriptsize $1$};
	\node at (0.5,-0.15) {\scriptsize $1$};
	\node at (0.5,0.65) {\scriptsize $1$};
\et
};
\endxy  \right): V^{*}_{n} \otimes V_{n} \to V_{n} \otimes V_{n}^{*}
\end{equation*}
is given  for all $a,b \in I_{n|n}$ by  
\[
v_a^* \otimes v_b \mapsto  (-1)^{\p{a}\p{b}} q^{\varphi(b,a)} v_b \otimes v_a^* - \delta_{a,b} \tq \sum_{\substack{k \in I_{n|n}\\  k \leq b}} v_k \otimes v_k^* + \delta_{a,-b} \tq \sum_{\substack{k \in I_{n|n}\\ -b < k}} (-1)^{\p{k}}  v_{-k} \otimes v_k^*.
\]  Moreover this map is invertible.
\item 
\begin{equation*}
\Psi_{n}\left( \xy
(0,0)*{
\bt[scale=.35, color=\clr]
	\draw [ thick, looseness=2, rdirected=-0.95] (1,4.5) to [out=90,in=90] (-1,4.5);
	\node at (-1.5,4.75) {\scriptsize $1$};
\et
};
\endxy\right) : V_{n} \otimes V^{*}_{n} \to \K 
\end{equation*} is given for all $a,b \in I_{n|n}$ by 
\[
v_{a} \otimes v^{*}_{b} \mapsto \delta_{a,b}(-1)^{\p{a}}q^{(-1)^{\p{a}}2a-(2n+1)}. 
\]
\end{enumerate}
\end{lemma}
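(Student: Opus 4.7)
The strategy for each part is to use the relevant definition to express the morphism in question as a composition of morphisms whose images under $\Psi_n$ are already known (either from \cref{L:functoronkeymorphisms} or from the definition of $\Psi_n$ on the cup/cap generators in \cref{T:Psiupdown}), and then evaluate on basis vectors. For (a), the leftward under-crossing is defined in \cref{E:leftwardcrossing} as a composition of a leftward cup, an upward over-crossing, and a leftward cap (suitably tensored with identities). Applying $\Psi_n$ term by term gives $\coev_1$, the map $T$ from \cref{E:Braiding11} (by \cref{L:functoronkeymorphisms}(d)), and $\ev_1$. Evaluating on $v_a^* \otimes v_b$: first insert $\coev_1 = \sum_i v_i \otimes v_i^*$ on the right (keeping track of Koszul signs from moving the odd factor $v_i^*$ past $v_b$); apply $T$ to the middle pair $v_b \otimes v_i$, which by \cref{E:Braiding11} produces three terms, one generic and two $\delta$-terms; finally apply $\ev_1$ on the first two factors, which picks out $i=a$ in the generic term and produces the sums over $k$ in the $\delta$-terms. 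Simplifying the $q$-powers (using $\varphi(b,a)=\varphi(a,b)$ and the parity of $a$) gives the stated formula.

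For (b), the argument is identical except that the upward over-crossing is replaced by the upward under-crossing, whose image under $\Psi_n$ is $T^{-1}=T-\tq\cdot\id$ by \cref{L:functoronkeymorphisms}(e). The $T$-piece contributes the same generic term as in (a), while the $-\tq\cdot\id$ piece passes through the $\ev/\coev$ sandwich to give a ``dual trace'' equal to $-\tq\sum_k v_k\otimes v_k^*$ on the diagonal $a=b$; the shift of the summation range from $b<k$ in (a) to $k\leq b$ in (b) arises from combining this correction with the generic term. For (c), observe that at $k=1$ the scalar $q^{k(k-1)}$ equals $1$, so by \cref{rightward-cup-and-cap} the rightward cap equals $\ev_1$ composed with a rightward over-crossing on $\up_1\down_1$; the latter is, by definition \cref{E:rightwardcrossing}, the inverse of the leftward under-crossing computed in (a) (on the appropriately swapped objects), so its image under $\Psi_n$ is the inverse of the map from (a). Composing with $\ev_1$ and evaluating on $v_a\otimes v_b^*$ enforces $a=b$ and yields the explicit power of $q$ and sign after a short calculation distinguishing the $\p{a}=\0$ and $\p{a}=\1$ cases.

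For the invertibility statements in (a) and (b): one may either exhibit explicit two-sided inverses (obtained from the analogous construction with the opposite crossing, the two formulas differing by a $\tq$-correction coming from \cref{E:inverseleftwardcrossing}) or argue structurally that since $T$ is invertible on $V_n\otimes V_n$ and the coevaluation/evaluation are a non-degenerate duality pairing, the triple composition defining each leftward crossing is automatically an isomorphism. The main obstacle throughout is the detailed bookkeeping of signs produced by the super-interchange law whenever an odd vector is moved past another, together with the case analysis coming from the $\delta$-constraints $a<b$ and $-a<b$ in \cref{E:Braiding11}: carrying these inequalities through the cup/cap insertions, while simultaneously tracking the Koszul signs and the various $q^{\pm 1}$ factors that appear from rebracketing, accounts for the bulk of the technical work. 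In particular, verifying the asymmetry between the second term (summed over $b<k$) and the third term (summed over $-b<k$) in (a), and the corresponding shift in (b), is the place where care is most needed.
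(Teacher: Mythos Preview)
Your strategy for parts (a) and (b) is exactly what the paper does (and leaves to the reader): unwind the definition of the leftward crossing in \cref{E:leftwardcrossing} as $(\ev_1\otimes\id\otimes\id)\circ(\id\otimes T^{\pm1}\otimes\id)\circ(\id\otimes\id\otimes\coev_1)$ and compute on basis vectors using \cref{L:functoronkeymorphisms}.

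Your approaches to invertibility and to (c) differ from the paper's, and here the paper's route is cleaner. For invertibility the paper simply observes that the explicit formulas in (a) and (b) are, in a suitable ordering of $I_{n|n}\times I_{n|n}$, upper-triangular with units $\pm q^{\pm1}$ on the diagonal; this is immediate from the formulas once they are written down and avoids any abstract reasoning. Your structural argument (``mate of an isomorphism is an isomorphism'') is correct, but note that it implicitly uses a \emph{right} duality on $V_n$ (an $\ev': V_n\otimes V_n^*\to\K$ and $\coev':\K\to V_n^*\otimes V_n$), which exists in $\svec$ because the spaces are finite-dimensional, though it is not among the data defining $\Psi_n$ at this stage. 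The paper's matrix argument sidesteps this.

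For (c) the paper does not invert the map from (a) at all. Instead it quotes \cite[Lemma~3.13]{BGJKW} to know that the displayed formula is a $U_q(\fq_n)$-supermodule homomorphism $V_n\otimes V_n^*\to\K$, and then checks by direct calculation that precomposing it with the map from (a) yields $\ev_1$. Since (a) is invertible, this forces the formula to equal $\ev_1\circ(\text{map from (a)})^{-1}=\Psi_n(\rcap_1)$. Your proposal---compute the inverse of (a) explicitly and then compose with $\ev_1$---is valid in principle, but the inverse of a triangular map with nontrivial off-diagonal terms (the $\delta_{a,b}$ and $\delta_{a,-b}$ corrections) is not as immediate as your phrase ``enforces $a=b$'' suggests; the generic case $a\neq\pm b$ is indeed easy, but extracting the precise power of $q$ on the diagonal from the full inverse is considerably more bookkeeping than the paper's one-line composition check.
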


\begin{proof}  The formulas in statements (a.) and (b.) follow by straightforward calculations which we leave to the reader.  In both cases the formulas show we can choose an appropriate ordering of the indices so the maps are upper triangular matrices with $\pm  q^{\pm 1}$ on the diagonal when written as a matrices with respect to this ordered basis.  Consequently both maps are invertible. 

We now prove (c.).  In \cite[Lemma 3.13]{BGJKW} it is shown the formula given in (c.) defines a $\U_{q}(\fq_{n})$-supermodule homomorphism.  Precomposing this with the map given in (a.) yields the evaluation map $\Psi_{n}(\lcap) :V_{n}^{*}\otimes V_{n} \to \K$.  From this and the definition of the rightward cap we deduce this morphism is the rightward cap on a strand labelled by $1$.
\end{proof}

We now prove \cref{T:Psiupdown}.  

\begin{proof} Since $\Psi_{n}$ is already known to preserve the defining relations of $\qwebs$ it suffices to verify relations \cref{straighten-zigzag,delete-bubble} and the invertibility of \cref{E:leftwardcrossing}. Since leftward cups and caps go to coevaluation and evaluation, a standard calculation verifies \cref{straighten-zigzag}. 

The second equality in \cref{delete-bubble} follows from parity considerations since the trivial $U_{q}(\fq)$-module has no nontrivial odd endomorphisms.  To verify the first equality requires we compute the composite $ \Psi_{n}\left({}_{\color{\clr }{1}} \rcap \right)  \circ \coev_{1}: \K \to V_{n} \otimes V_{n}^{*} \to \K$.  That this is zero is verified in the proof of \cite[Theorem 3.16]{BGJKW}; more specifically, it follows from \cref{L:onecases}(c.) and an identity given in \cite[Lemma 3.11]{BGJKW}.

We next verify the leftward crossings defined by \cref{E:leftwardcrossing} are sent to invertible maps in $\modsupdown$. From \cref{L:onecases}(a.)~and~(b.) this is true when the strands are labelled by $1$'s. We define $\Psi_{n}$ on the rightward over-crossing (resp. rightward under-crossing) of strands labelled by $1$ by sending it to the inverse of the map in \cref{L:onecases}(a.) (resp.\ \cref{L:onecases}(b.)). 

We now consider the general case.  Define
\[
\tilde{\beta}_{\down_{k}, \up_{l}} :=\frac{1}{[k]_{q}!\,[l]_{q}!}
\xy
(0,0)*{
\bt[color=\clr, scale=.35]
	\draw [ thick, rdirected=1] (0, .75) to (0,1.5);
	\draw [ thick, rdirected=0.75] (1,-1) to [out=90,in=330] (0,.75);
	\draw [ thick, rdirected=0.75] (-1,-1) to [out=90,in=210] (0,.75);
	\draw [ thick, directed=1] (4, .75) to (4,1.5);
	\draw [ thick, directed=0.75] (5,-1) to [out=90,in=330] (4,.75);
	\draw [ thick, directed=0.75] (3,-1) to [out=90,in=210] (4,.75);
	\draw [ thick, directed=0.75] (0,-6.5) to (0,-5.75);
	\draw [ thick, ] (0,-5.75) to [out=30,in=270] (1,-4);
	\draw [ thick, ] (0,-5.75) to [out=150,in=270] (-1,-4); 
	\draw [ thick, rdirected=0.75] (4,-6.5) to (4,-5.75);
	\draw [ thick, ] (4,-5.75) to [out=30,in=270] (5,-4);
	\draw [ thick, ] (4,-5.75) to [out=150,in=270] (3,-4); 
	\draw [ thick ] (5,-1) to (1,-4);
	\draw [ thick ] (3,-1) to (-1,-4);
	\draw [ thick ] (5,-4) to (3.4,-2.8);
	\draw [ thick ] (2.6,-2.2) to (2.2,-1.9); 
	\draw [ thick ] (1.8,-1.6) to (1,-1);
	\draw [ thick ] (3,-4) to (2.2,-3.4);
	\draw [ thick ] (1.8,-3.1) to (1.4,-2.8);
	\draw [ thick ] (0.6,-2.2) to (-1,-1);
	\node at (2.6, -0.5) {\scriptsize $1$};
	\node at (5.4, -0.5) {\scriptsize $1$};
	\node at (1.4, -0.5) {\scriptsize $1$};
	\node at (-1.4, -0.5) {\scriptsize $1$};
	\node at (2.6, -4.5) {\scriptsize $1$};
	\node at (5.4, -4.5) {\scriptsize $1$};
	\node at (1.4, -4.5) {\scriptsize $1$};
	\node at (-1.4, -4.5) {\scriptsize $1$};
	\node at (0.1, -0.65) { $\cdots$};
	\node at (4.1, -0.65) { $\cdots$};
	\node at (0.1, -4.6) { $\cdots$};
	\node at (4.1, -4.6) { $\cdots$};
	\node at (0,2) {\scriptsize $l$};
	\node at (4,2) {\scriptsize $k$};
	\node at (0,-7) {\scriptsize $k$};
	\node at (4,-7) {\scriptsize $l$};
\et
};
\endxy \ .
\]  This diagram is constructed from the tensor product and composition of upward and downward merges and splits, as well as rightward crossings on strands labelled by $1$'s.  As the functor $\Psi_{n}$ is defined on these we can construct a morphism in $\modsupdown$ by taking the corresponding tensor product and composition of the images under $\Psi_{n}$ of these constituent morphisms. One can verify the resulting map is the inverse under composition for the leftward over-crossing given in \cref{E:leftwardcrossing} by computing diagrammatically and noting the calculation only uses relations which have already been verified.  That is, 
\begin{equation*}
\Phi_{n}\left(\xy
(0,0)*{
\bt[scale=1.25, color=\clr]
	\draw[thick, rdirected=0.2] (0,0) to (0.2,0.2);
	\draw[thick ] (0.3,0.3) to (0.5,0.5);
	\draw[thick, directed=1] (0.5,0) to (0,0.5);
	\node at (0,-0.15) {\scriptsize $k$};
	\node at (0.5,-0.15) {\scriptsize $l$};
\et
};
\endxy  \right) \in \End_{U_{q}(\fq_{n})}\left(S^{k}(V_{n})^{*} \otimes S_{q}^{l}(V_{n}),  S_{q}^{l}(V_{n}) \otimes S^{k}(V_{n})^{*} \right)
\end{equation*} is invertible.  A similar construction provides an inverse for
\begin{equation*}
\Phi_{n}\left(\xy
(0,0)*{
\bt[scale=1.25, color=\clr]
	\draw[thick, rdirected=0.1] (0,0) to (0.5,0.5);
	\draw[thick ] (0.5,0) to (0.33,0.18);
	\draw[thick, directed=1] (0.18,.33) to (0,0.5);
	\node at (0,-0.15) {\scriptsize $k$};
	\node at (0.5,-0.15) {\scriptsize $l$};
\et
};
\endxy  \right) \in \End_{U_{q}(\fq_{n})}\left(S^{k}(V_{n})^{*} \otimes S_{q}^{l}(V_{n}),  S_{q}^{l}(V_{n}) \otimes S^{k}(V_{n})^{*} \right).
\end{equation*}

That the functor is essentially surjective is obvious.  It remains to prove the functor is full.  Using the isomorphisms defined in \cref{SS:Useful isomorphisms} it suffices to prove fullness for objects which lie in $\qwebs$.  By \cref{T:Thetafunctor} we have $\Hom_{\qwebs}\left(\uparrow_{{a}}, \uparrow_{{b}} \right) \cong \Hom_{\qwebsupdown}\left(\uparrow_{{a}}, \uparrow_{{b}} \right) $ for any pair of objects $\uparrow_{{a}}, \uparrow_{{b}}$ which lie in $\qwebs$.  Fullness of $\Psi_{n}$ then follows from the fullness of $\Psi_{n}$ proven in  \cref{T:fullness} for the upward oriented case.
\end{proof}

\subsection{Equivalences of Categories}\label{SS:EquivalencesofCategories}  Recall in \cref{SS:Olshanskiduality} we fixed an even element 
\[
e_{\lambda} \in \HC_{k}(q) \cong \End_{\qwebs}\left(\up_{1}^{k} \right)
\]  for each strict partition $\lambda \in \SP (k)$.  Applying the isomorphisms given by \cref{P:sergeev-isomorphism,T:Thetafunctor}, we abuse notation by writing $e_{\lambda} \in \End_{\qwebs}\left(\up_{1}^{k} \right) \cong \End_{\qwebsupdown}\left(\up_{1}^{k} \right)$ for any $\lambda \in \SP (k)$.  In particular, for $n \geq 1$ let $\lambda(n)$ be the strict partition of $k=(n+1)(n+2)/2$ given by $\lambda(n) = (n+1, n, n-1, \dotsc , 1)$ and let $e_{\lambda(n)} \in \End_{\qwebsupdown}\left(\up_{1}^{k} \right) $
 be the corresponding morphism.

\begin{definition}\label{D:qnWebs} Define $\webs$ (resp.\ $\websupdown$) to be the monoidal supercategory given by the same generators and relations as $\qwebs$ (resp.\ $\qwebsupdown$) along with the relation $e_{\lambda(n)}=0$.
\end{definition}

We are now prepared to state and prove one of the main theorems of the paper.  

\begin{theorem}\label{T:maintheorem} The functor $\Psi_{n}: \qwebsupdown \to \modsupdown$ induces functors 
\begin{align*}
\Psi_{n}: & \webs \to \mods, \\
\Psi_{n}: & \websupdown \to \modsupdown.
\end{align*}
These functors are equivalences of monoidal supercategories.
\end{theorem}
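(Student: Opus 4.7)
My plan is to establish well-definedness, essential surjectivity, and fullness as immediate consequences of prior results, and then reduce faithfulness to a combinatorial/representation-theoretic claim about $\HC_k(q)$.

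First, the functors descend to the quotient categories: by \cref{P:OlshanskiKernel}, $\Psi_n(e_{\lambda(n)}) = 0$ because $\ell(\lambda(n)) = n+1 > n$. Essential surjectivity of $\Psi_n$ on the quotients is inherited from \cref{psi-functor,T:Psiupdown}, and fullness is inherited from \cref{T:fullness,T:Psiupdown}. So only faithfulness requires work.

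Faithfulness would proceed by successive reductions. Using the isomorphisms \cref{E:twistingstrandisomorphism,E:cupcapisomorphism} together with the fully faithful embedding $\qwebs \hookrightarrow \qwebsupdown$ from \cref{T:Thetafunctor}, faithfulness on an arbitrary $\Hom_{\websupdown}(\ob{a},\ob{b})$ reduces to faithfulness on $\Hom_{\webs}(\up_a,\up_b)$. The $\alpha$-embedding from the proof of \cref{T:fullness}, which satisfies $\tilde{\alpha}\circ\Psi_n = \Psi_n\circ\alpha$ with both $\alpha$ and $\tilde{\alpha}$ injective, then further reduces to faithfulness on $\End_{\webs}(\up_1^k)$. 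Combined with \cref{P:sergeev-isomorphism}, this makes the task: show that the two-sided ideal $J \subseteq \HC_k(q)$ consisting of elements that vanish in $\End_{\webs}(\up_1^k)$ equals $\ker\psi$, which by \cref{P:OlshanskiKernel} is $I_n := \langle e_\mu : \ell(\mu)>n\rangle$.

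Containment $J \supseteq I_n$ follows from well-definedness of $\Psi_n$ on the quotient. The reverse $I_n \subseteq J$ is the crux: I would induct on $|\mu|$ to show each $e_\mu$ with $\ell(\mu)>n$ lies in the tensor ideal $\mathcal{I}$ of $\qwebs$ generated by $e_{\lambda(n)}$. The base case $|\mu| = (n+1)(n+2)/2$ forces $\mu = \lambda(n)$, since $\lambda(n)$ is the unique strict partition of minimal size with $\ell > n$. For $|\mu|>|\lambda(n)|$, a case analysis on whether $\ell(\mu)=n+1$ or $\ell(\mu)>n+1$ produces a removable box of $\mu$ whose removal yields a strict $\nu \subset \mu$ with $\ell(\nu)>n$. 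By induction $e_\nu\in\mathcal{I}$, hence $e_\nu\otimes 1_{\up_1} \in\mathcal{I}$ as an element of $\HC_{|\mu|}(q)$ via the natural embedding $\HC_{|\nu|}(q)\otimes\HC_1(q)\hookrightarrow\HC_{|\mu|}(q)$. The branching rule for Hecke-Clifford representations asserts $D^\nu$ appears in the restriction of $D^\mu$ to $\HC_{|\nu|}(q)$ precisely when $\nu\to\mu$ is an edge in the Young graph of strict partitions, so the $M(E_\mu)$-component of $e_\nu\otimes 1_{\up_1}$ (extracted by central projection in \cref{E:ArtinWedderburn}) is nonzero. Since $\mathcal{I}\cap\HC_{|\mu|}(q)$ is a two-sided ideal and $M(E_\mu)$ is a simple superalgebra, this forces $M(E_\mu)\subseteq\mathcal{I}$, whence $e_\mu \in \mathcal{I}$.

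The main obstacle is this last inductive step: the fact that the single relation $e_{\lambda(n)}=0$ generates $I_n$ under tensor-ideal closure mixes combinatorics of strict partitions (existence of the box-removal $\nu\subset\mu$ preserving $\ell > n$) with the Hecke-Clifford branching rule, and so enters representation-theoretically rather than via diagrammatic manipulation.
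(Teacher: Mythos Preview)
Your overall strategy matches the paper's: reduce via the cup/cap and braiding isomorphisms to upward objects, then via the $\alpha$-embedding to $\End(\up_1^d) \cong \HC_d(q)$, and show that each $e_\gamma$ with $\ell(\gamma)>n$ lies in the tensor ideal $\mathcal{I}$ generated by $e_{\lambda(n)}$. (A slip: ``$J \supseteq I_n$'' and ``$I_n \subseteq J$'' are the same statement; you meant that $J \subseteq I_n$ is automatic from well-definedness and the crux is the reverse inclusion $I_n \subseteq J$.) The divergence is only in this last step. The paper does it in one shot: citing \cite[Corollary~8.1.5]{BrownKujawa} together with \cref{T:Grantcharovtheorem}(d.), the simple $L_q(\gamma)$ appears in $L_q(\lambda(n)) \otimes V_n^{\otimes(d-k)}$ whenever $\ell(\gamma)>n$, so the single element $e_{\lambda(n)} \otimes \Id_{V_n}^{\otimes(d-k)} \in \HC_d(q)$ already has nonzero $M(E_\gamma)$-component, and simplicity of $M(E_\gamma)$ finishes immediately. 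Your inductive approach via box-removal and the Hecke--Clifford branching rule is correct and arguably more hands-on---the strict-partition combinatorics you describe goes through, and $\lambda(n)$ is indeed the unique strict partition of its size with more than $n$ parts---but it trades one external input for another (the branching rule is not established in the paper either) and adds an induction where none is needed. Both routes terminate in the same ``nonzero element in a simple superalgebra generates the whole block'' punchline.
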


\begin{proof}  In both cases $\Psi_{n}:  \qwebs  \to \mods$ and $\Psi_{n}:  \qwebsupdown \to \modsupdown$ are full functors of monoidal categories.  Furthermore, in both cases $\Psi_{n}(e_{\lambda(n)})=0$ by \cref{P:OlshanskiKernel}.  Therefore in both cases $\Psi_{n}$ induces a full functor of monoidal categories as in the statement of the theorem.  We abuse notation by also calling these functors $\Psi_{n}$.  By construction they are essentially surjective.  

It remains to show it is faithful in both cases.  Using the isomorphisms in \cref{SS:Useful isomorphisms} we may assume without loss of generality that both objects consist of only upward oriented arrows.  Combining \cref{T:Thetafunctor,T:fullness} we may assume without loss of generality  we are in the case of $\Psi_{n}:  \qwebsupdown \to \modsupdown$.

Let $\mathcal{I}$ be the tensor ideal of $\qwebsupdown$ generated by $e_{\lambda(n)}$.  Since $\websupdown = \qwebsupdown / \mathcal{I}$, to show faithfulness amounts to showing if $f \in \Hom_{\qwebsupdown}(\uparrow_{{a}}, \uparrow_{{b}})$ and $\Psi_{n}(f)=0$ for tuples $a=(a_{1}, \dotsc , a_{r})$ and $b=(b_{1}, \dotsc , b_{s})$, then $f \in \mathcal{I}(\uparrow_{{a}}, \uparrow_{{b}})$. Just as in the proof of \cref{T:fullness}, we may assume $|{a}| = |{b}|$ since otherwise $\Psi_{n}$ is trivially faithful. Set $d=|{a}|=|{b}|$.  

As in the proof of \cref{T:fullness}, we can compose with merges and splits to induce the following commutative diagram of morphisms:

\begin{equation}\label{E:PsiBox2}
\begin{tikzcd}
\Hom_{\qwebsupdown   }(\up_{{a}}, \up_{{b}} )  \arrow[r, hook, "\alpha"] \arrow[d,  "\Psi_{n}"] & \Hom_{\qwebsupdown   }\left( \up_{1}^{d}, \up_{1}^{d}\right) \arrow[d,  "\Psi_{n}"]   \\
\Hom_{U_{q}\left(\fq_{n} \right)}\left( S^{\uparrow_{a}}(V_{n}),S^{\uparrow_{b}}(V_{n})\right)\arrow[r, hook, "\tilde{\alpha}"] & \Hom_{U_{q}\left(\fq_{n} \right)}\left( V_{n}^{\otimes d}, V_{n}^{\otimes d}\right).
\end{tikzcd}
\end{equation} 

Thus for $f \in \Hom_{\qwebsupdown   }(\uparrow_{{a}}, \uparrow_{{b}} ) $ we have $\Psi_{n}(f) =0$ if and only if $\Psi_{n}(\alpha(f)) =0$.  We claim $\alpha(f) \in \mathcal{I}\left(\up_{1}^{d}, \up_{1}^{d} \right)$.  Once the claim is proven, the proof of faithfulness will follow from the fact $\alpha$ has a left inverse given by composing by merges and splits and so, since $\mathcal{I}$ is a tensor ideal, the left inverse of $\alpha$ takes elements of $\mathcal{I}(\uparrow_{1}^{d}, \uparrow_{1}^{d})$ to $\mathcal{I}(\uparrow_{{a}}, \uparrow_{{b}})$.  In particular, we will have $f  \in \mathcal{I}(\uparrow_{{a}}, \uparrow_{{b}})$ as desired.

Therefore we have reduced ourselves to showing the claim that  $\Psi_{n}(\alpha(f)) =0$ implies $\alpha(f) \in \mathcal{I}\left(\up_{1}^{d}, \up_{1}^{d} \right)$.  If  $\Psi_{n}(\alpha(f)) =0$, then from \cref{P:sergeev-isomorphism,P:OlshanskiKernel} it follows 
\[
\alpha(f) = \sum_{\gamma_{i}} g_{i}e_{\gamma_{i}}h_{i},
\] where the sum is over all strict partitions $\gamma_{i}$ of $d$ with $\ell (\gamma_{i})>n$ and $g_{i},h_{i} \in \End_{\qwebsupdown}(\uparrow_{1}^{d})$.  Since $\mathcal{I}$ is a tensor ideal, if $e_{\gamma_{i}} \in \mathcal{I}\left(\up_{1}^{d}, \up_{1}^{d} \right)$ for all $i$, then $\alpha(f) \in \mathcal{I}\left(\up_{1}^{d}, \up_{1}^{d} \right)$ as desired.

That is, we have reduced ourselves to proving if $\gamma$ is a strict partition of $d$ with $\ell(\gamma) > n$, then $e_{\gamma} \in \mathcal{I}(\uparrow_{1}^{d}, \uparrow_{1}^{d})$.  Fix such a partition $\gamma$.  Let $k =|\lambda(n)|= (n+1)(n+2)/2$.  Since $\gamma$ is a partition of $d$ with $\ell (\gamma) > n$, we have $d \geq k$. Also, from \cite[Corollary 8.1.5]{BrownKujawa} we know in the classical setting $L(\gamma)$ appears as a direct summand of $L(\lambda(n)) \otimes V_{n}^{\otimes (d-k)}$.  By \cref{T:Grantcharovtheorem}(d.) the same is true in the quantum setting.  It follows from considering the total weight that $L_{q}((\lambda(n))$ is a direct summand of $V_{n}^{\otimes k}$ and, hence, $L_{q}(\gamma)$ appears as a direct summand in $L_{q}(\lambda(n)) \otimes V_{n}^{\otimes (d-k)}\subseteq V_{n}^{\otimes k} \otimes V^{\otimes d-k} \subseteq V_{n}^{\otimes d}$.    From the discussion after \cref{E:Olshanskidecomposition} this implies
\[
\left( e_{\lambda(n)} \otimes \Id_{V_{n}}^{\otimes (d-k)}\right) V_{n}^{\otimes r} = \left( e_{\lambda(n)}V_{n}^{\otimes k} \right) \otimes V_{n}^{\otimes (d-k)}
\] contains $L_{q}(\gamma)$ as a direct summand.  That is, if we use the decomposition given in \cref{E:ArtinWedderburn} and write 
\[
e_{\lambda(n)} \otimes \Id_{V_{n}}^{\otimes (d-k)} = \sum_{\mu\in \SP (d)} x_{\mu},
\] with $x_{\mu}$ an even element in $M(E_{\mu})$, then the discussion after \cref{E:Olshanskidecomposition} shows $x_{\gamma} \neq 0$.  Let $\iota_{\gamma} \in \HC_{k}(q)$ denote the idempotent corresponding to the identity of $M(E_{\gamma})$.  Then $\iota_{\gamma}\left( e_{\lambda(n)} \otimes \Id_{V_{n}}^{\otimes (d-k)}\right) = x_{\gamma}$ is a nonzero even element of $M(E_{\gamma})$.  Since  $M(E_{\gamma})$ is a simple superalgebra it follows that $e_{\gamma} = \sum_{t} a_{t}x_{\gamma}b_{t}$ for some $a_{t},b_{t} \in \HC_{d}(q) \cong \End_{\qwebsupdown}\left(\uparrow_{1}^{d} \right)$. Putting these together yields
\[
e_{\gamma} = \sum_{t} a_{t}\iota_{\gamma}\left( e_{\lambda(n)} \otimes \Id_{V_{n}}^{\otimes (r-k)}\right)b_{t}.
\]
Recall, $\mathcal{I}$ is a tensor ideal which contains $e_{\lambda(n)}$.  Since it is closed under tensor products, compositions, and linear combinations, the previous equation demonstrates $e_{\gamma} \in \mathcal{I}(\uparrow_{1}^{d},\uparrow_{1}^{d})$, as desired. 
\end{proof}

\subsection{The field of rational functions}\label{SS:rationalfunctions}  Before continuing we explain how to deduce analogues of our main results for the ground field $\k = \C (q)$ of rational functions in the variable $q$.  We first observe that all the mathematical objects introduced so far can be defined over $\k$.  In this section we write a subscript $\k$ or $\K$ to indicate the relevant base field.  For example, $U_{q}(\fq_{n})_{\k}$ and $U_{q}(\fq_{n})_{\K}$, $\HC_{q}(k)_{\k}$ and $\HC_{q}(k)_{\K}$, $\AA_{q, \k}$ and $\AA_{q, \K}$,  $\qwebsk$ and $\qwebsK$,  $\qwebsupdownk$ and $\qwebsupdownK$, and so on.  In nearly every case base change behaves well in the sense there are canonical isomorphisms $U_{q}(\fq_{n})_{\k} \otimes_{\k} \K \cong  U_{q}(\fq_{n})_{\K}$, $\AA_{q, \k} \otimes_{\k} \otimes \K \cong \AA_{q, \K}$, $\qwebsk \otimes_{\k} \K \cong \qwebsK$, and so forth. 

There is one crucial exception.  By character considerations and the discussion in \cite{GJKK,GJKKashK} it follows that $L_{q}(\lambda)_{\k} \otimes_{\k} \K$ is isomorphic to the direct sum of one or two simple $U_{q}(\fq_{n})_{\K}$-supermodules, both isomorphic to $L_{q}(\lambda)_{\K}$.  However, there is an important exception to the exception: the results of \cite{GJKK,GJKKashK} imply $S_{q}^{d}\left(V_{n} \right)_{\k} \otimes_{\k} \K \cong S_{q}^{d}\left(V_{n} \right)_{\K}$ for all $d \geq 0$.  From this it follows  we have
\begin{equation}\label{E:keyisom}
S_{q}^{\ob{a}}\left(V_{n} \right)_{\k} \otimes_{\k} \K \cong S_{q}^{\ob{a}}\left(V_{n} \right)_{\K}
\end{equation}
for any object $\ob{a} \in \qwebsupdownk$. 

 A base change argument shows \cref{L:weight-space-isomorphism} also holds over $\k$ and since the relevant algebraic and diagrammatic calculations could have been done over $\k$ one can define the functors $\Phi_{m,n}$, $\Pi_{m}$, and $\Psi_{n}$ over $\k$ without difficulty.  By \cite[Proposition 2.1]{JN} $\HC_{k}(q)_{\k}$ is semisimple and by Olshanki's double centralizer theorem (e.g.\ see \cite[Theorem 3.28]{BGJKW}) the map 
\[
\psi: \HC_{k} \to \End_{U_{q}(\fq_{n})_{\k}}\left(V_{n, \k}^{\otimes k} \right)
\] is surjective and an isomorphism if $n \gg k$.  Consequently the functor $\Psi_{n}$ is still essentially surjective and full.  

 Moreover, by a standard result regarding base change (c.f.\ \cite[Lemma 29.3]{CR}) and \cref{E:keyisom} we have, for all objects $\ob{a}$ and $\ob{b}$ in $\qwebsk$, the following isomorphisms of superspaces:
\begin{align*}
\Hom_{U_{q}(\fq_{n})_{\k}}\left(S^{\ob{a}}_{q}(V_{n})_{\k}, S_{q}^{\ob{b}}\left(V_{n} \right)_{\k} \right) \otimes_{\k} \K &\cong \Hom_{U_{q}(\fq_{n})_{\K}}\left(S^{\ob{a}}_{q}(V_{n})_{\k} \otimes_{\k} \K, S_{q}^{\ob{b}}\left(V_{n} \right)_{\k} \otimes_{\k} \K \right) \\
& \cong  \Hom_{U_{q}(\fq_{n})_{\K}}\left(S^{\ob{a}}_{q}(V_{n})_{\K}, S_{q}^{\ob{b}}\left(V_{n} \right)_{\K} \right). 
\end{align*}
 From these it follows a morphism is in the kernel of the functor $\Psi_{n}: \qwebsupdownk \to \modsupdownk$ if and only if it is in the kernel of $\Psi_{n}: \qwebsupdownK \to \modsupdownK$ after base change.

Since $\HC_{q}(k)_{\k}$ is semisimple but not split semisimple, a decomposition as in \cref{E:ArtinWedderburn} still holds by the graded version of the Artin-Wedderburn theorem, but now each simple summand is of the form $M(\tilde{E}_{\lambda})$ for some division superalgebra $\tilde{E}_{\lambda}$ (that is, a superalgebra in which every homogeneous element is invertible).  Nevertheless, one can still define the even elements $e_{\lambda} \in M(\tilde{E}_{\lambda})_{\k}$ as before. Since under base change $M(\tilde{E}_{\lambda}) \otimes_{\k} \K \cong M(E_{\lambda})$ for all $\lambda \in \SP (k)$, we could just as well have used the elements $e_{\lambda} \otimes 1 \in M(E_{\lambda})$ when we worked over $\K$.  If we define $\websk$ and $\websupdownk$ as before by imposing the relation $e_{\lambda(n)}=0$, then again $\Psi_{n}$ induces well-defined essentially surjective functors $\Psi_{n}: \websk \to \modsk$ and $\Psi_{n}: \websupdownk \to \modsupdownk$.  Applying the discussion in the previous paragraph along with base change arguments show these are equivalences of monoidal supercategories, as desired.

We remark the defining relations of $\qwebs$ only require coefficients from $\Z[q,q^{-1}]$.  One could choose to define $\qwebs$ as a $\Z[q,q^{-1}]$-linear monoidal supercategory.  Moreover, the crossing morphisms can be defined in this category using formulas analogous to those in \cite[Corollary 6.2.3]{CKM} and, hence, $\qwebsupdown$ can also be defined as a  $\Z[q,q^{-1}]$-linear monoidal supercategory.  These supercategories would then relate to the $\Z[q,q^{-1}]$-form for $U_{q}(\fq_{n})$ introduced in \cite[Section 8]{DW}. 

\subsection{Endomorphism superalgebras and the quantum walled Brauer-Clifford superalgebra}\label{SS:quantumBCalgebra}

Recall, given an object $\ob{a}$ in $\qwebsupdown$ we write $S^{\ob{a}}_{q}\left(V_{n} \right)$ for $\Psi_{n}\left(\ob{a} \right)$.  If $\ob{a}= \uddoublearrow_{a_{1}} \dotsb \uddoublearrow_{a_{t}}$, then we write $\tilde{\ob{a}}$ for the object $\uddoublearrow_{a_{t}}\dotsb \uddoublearrow_{a_{1}}$ and write $|\ob{a}|$ for $\sum_{i=1}^{t} a_{i}$.

\begin{theorem}\label{T:EndomorphismAlgebras}  Let $\ob{c}$ be an arbitrary object of $\qwebsupdown$. Then 
\[
\Psi_{n}: \End_{\qwebsupdown}\left(\ob{c} \right) \to \End_{U_{q}\left(\fq_{n} \right)}\left(S_{q}^{\ob{c}}\left(V_{n} \right) \right)
\] is a surjective superalgebra homomorphism.  Moreover this map is an isomorphism whenever $|\ob{c}| < (n+1)(n+2)/2$.

\end{theorem}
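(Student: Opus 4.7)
The surjectivity is immediate: $\Psi_{n}\colon\qwebsupdown\to\modsupdown$ is a full functor by \cref{T:Psiupdown}, so its restriction to each endomorphism superspace is surjective, and because $\Psi_{n}$ is a functor compatible with composition, this restriction is a superalgebra homomorphism. Thus the substantive claim is the injectivity statement. Set $d=|\ob{c}|$ and $k_{0}=(n+1)(n+2)/2$, and assume $d<k_{0}$.

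The plan is to reduce to the case $\ob{c}=\up_{1}^{d}$, where we can apply what we already know about the Hecke--Clifford superalgebra. First, using the cup/cap isomorphism \cref{E:cupcapisomorphism} (together with the place-permutation isomorphisms \cref{E:twistingstrandisomorphism} to rearrange the $\up$'s and $\down$'s in $\ob{c}$ into the form required by \cref{E:cupcapisomorphism}), I will identify $\End_{\qwebsupdown}(\ob{c})$ with some $\Hom_{\qwebsupdown}(\up_{a},\up_{b})$, where $a$ and $b$ are tuples of nonnegative integers with $|a|=|b|=d$. Since these identifications are defined by pre- and post-composition with cups, caps and crossings, and since $\Psi_{n}$ sends these to the corresponding evaluations, coevaluations and braidings on $U_{q}(\fq_{n})$-supermodules, the identification intertwines $\Psi_{n}$ on $\End_{\qwebsupdown}(\ob{c})$ with $\Psi_{n}$ on $\Hom_{\qwebsupdown}(\up_{a},\up_{b})$. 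Second, I will apply the embedding $\alpha$ from the proof of \cref{T:fullness} that explodes each boundary strand into strands of thickness one, yielding a commutative diagram
\[
\begin{tikzcd}
\End_{\qwebsupdown}(\ob{c}) \arrow[r,"\cong"] \arrow[d,"\Psi_{n}"] & \Hom_{\qwebsupdown}(\up_{a},\up_{b}) \arrow[r,hook,"\alpha"] \arrow[d,"\Psi_{n}"] & \End_{\qwebsupdown}(\up_{1}^{d}) \arrow[d,"\Psi_{n}"] \\
\End_{U_{q}(\fq_{n})}\!\bigl(S_{q}^{\ob{c}}(V_{n})\bigr) \arrow[r,"\cong"] & \Hom_{U_{q}(\fq_{n})}\!\bigl(S_{q}^{\up_{a}}(V_{n}),S_{q}^{\up_{b}}(V_{n})\bigr) \arrow[r,hook,"\tilde{\alpha}"] & \End_{U_{q}(\fq_{n})}\!\bigl(V_{n}^{\otimes d}\bigr).
\end{tikzcd}
\]
Here the horizontal arrows labelled $\alpha$ and $\tilde{\alpha}$ are both injective (via a left inverse obtained by closing up the exploded strands using \cref{digon-removal}).

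To finish, I will invoke what is known about the rightmost column. By \cref{P:sergeev-isomorphism}, $\End_{\qwebsupdown}(\up_{1}^{d})\cong\End_{\qwebs}(\up_{1}^{d})\cong\HC_{d}(q)$, and under this identification the rightmost vertical $\Psi_{n}$ coincides with the Olshanski map $\psi\colon\HC_{d}(q)\to\End_{U_{q}(\fq_{n})}(V_{n}^{\otimes d})$. By \cref{P:OlshanskiKernel}, $\psi$ is injective precisely when $d<k_{0}$, which holds by hypothesis. A direct diagram chase now gives injectivity of the leftmost $\Psi_{n}$: given $f\in\End_{\qwebsupdown}(\ob{c})$ with $\Psi_{n}(f)=0$, its image in $\End_{\qwebsupdown}(\up_{1}^{d})$ is sent by $\Psi_{n}$ to zero (by commutativity of the right square and injectivity of $\tilde{\alpha}$), hence equals zero, and finally $f=0$ follows from injectivity of the composite map across the top row.

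I do not anticipate a significant obstacle: once the cup/cap isomorphism is checked to intertwine the two $\Psi_{n}$'s (which is a formality given that $\Psi_{n}$ is a monoidal functor and cups/caps map to (co)evaluations), the rest is a diagram chase powered by Olshanski's double centralizer theorem in the sharp form stated in \cref{P:OlshanskiKernel}. The mildest technical point is keeping track of the natural parity-preserving isomorphisms in the string of identifications when $\ob{c}$ mixes $\up$'s and $\down$'s, but this is already handled by the machinery assembled in \cref{SS:Useful isomorphisms}.
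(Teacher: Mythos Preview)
Your proposal is correct and follows essentially the same approach as the paper: reduce via the cup/cap and place-permutation isomorphisms of \cref{SS:Useful isomorphisms} to the purely upward case, embed into $\End_{\qwebsupdown}(\up_{1}^{d})$ via the explosion map $\alpha$, and then invoke \cref{P:sergeev-isomorphism} together with \cref{P:OlshanskiKernel}. The only cosmetic difference is that the paper arranges the intermediate step as an actual endomorphism space $\End_{\qwebsupdown}(\uparrow_{a}\uparrow_{b})$ (via a second explicit cup/cap isomorphism that wraps the downward strands around), whereas you land in a general $\Hom_{\qwebsupdown}(\up_{a},\up_{b})$; both versions feed equally well into the diagram chase.
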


\begin{proof}  That the map is a surjective superalgebra homomorphism follows from the fact $\Psi_{n}$ is a full superfunctor.  The only thing which needs to be verified is injectivity.

Recalling our convention that edges labelled by $0$ may be included or omitted at will, we may choose tuples of nonnegative integers ${a}=(a_{1}, \dotsc , a_{r})$ and  ${b}=(b_{1}, \dotsc , b_{r})$ so the object can be written in the form 
\[
\ob{c}=\uparrow_{a_{1}}\downarrow_{b_{1}}\uparrow_{a_{2}}\downarrow_{b_{2}}\dotsb \uparrow_{a_{r}}\downarrow_{b_{r}}.
\]  Having done so there is the following commutative diagram:
\begin{equation}\label{E:AnotherPsibox}
\begin{tikzcd}
\End_{\qwebsupdown}\left( \ob{c}\right) \arrow[d,  "\Psi_{n}"]  \arrow[r, "\cong"] & \End_{\qwebsupdown}\left(\uparrow_{{a}}\downarrow_{\tilde{{b}}}\right)  \arrow[d,  "\Psi_{n}"]   \arrow[r, "\cong"] & \End_{\qwebsupdown}\left(\uparrow_{\ob{a}}\uparrow_{\ob{b}}\right)  \arrow[d,  "\Psi_{n}"]  \arrow[r, hook, "\alpha"]&  \End_{\qwebsupdown}\left( \up_{1}^{|{a}|+|{b}|}\right) \arrow[d,  "\Psi_{n}"]   \\
\End_{U_{q}\left(\fq_{n} \right)}\left(S_{q}^{\ob{c}}(V_{n}) \right) \arrow[r, "\cong"]&\End_{U_{q}\left(\fq_{n} \right)}\left(S_{q}^{\uparrow_{{a}}\downarrow_{\tilde{{b}}}}(V_{n}) \right)  \arrow[r, "\cong"]&\End_{U_{q}\left(\fq_{n} \right)}\left( S_{q}^{\uparrow_{{a}}\uparrow_{{b}}}(V_{n})\right)  \arrow[r, hook, "\tilde{\alpha}"]&   \End_{U_{q}\left(\fq_{n} \right)}\left( V_{n}^{|{a}|+|{b}|}\right).
\end{tikzcd}
\end{equation} The first isomorphism in the top row is the superalgebra isomorphism given by $w \mapsto d_{\sigma} \circ w \circ d^{-1}_{\sigma}$, where $d_{\sigma}$ is as in \cref{SS:Useful isomorphisms}. The second isomorphism in the top row is the map of superspaces given on diagrams by
\[
\xy
(0,0)*{
\begin{tikzpicture}[scale=.25, color=\clr]
	\draw [ thick] (-2.3,-8) rectangle (4.3,-6);
	\node at (1,-7) {$w$};
	\draw [ thick, directed=0.75] (-2,-9.5) to (-2,-8);
	\draw [ thick, directed=0.75] (0,-9.5) to (0,-8);
	\draw [ thick, rdirected=0.75] (2,-9.5) to (2,-8);
	\draw [ thick, rdirected=0.75] (4,-9.5) to (4,-8);
	\node at (2,-10.25) {\scriptsize $b_{r}$};
	\node at (4,-10.25) {\scriptsize $b_{1}$};
	\node at (-2,-10.25) {\scriptsize $a_1$};
	\node at (-1,-9.125) {\,$\cdots$};
	\node at (3,-9.125) {\,$\cdots$};
	\node at (0,-10.25) {\scriptsize $a_r$};
	\draw [ thick, rdirected=0.75] (2,-6) to (2,-4.5);
	\draw [ thick, rdirected=0.75] (4,-6) to (4,-4.5);
	\draw [ thick, directed=0.75] (-2,-6) to (-2,-4.5);
	\draw [ thick, directed=0.75] (0,-6) to (0,-4.5);
	\node at (2,-3.75) {\scriptsize $b_{r}$};
	\node at (3,-5.625) {\,$\cdots$};
	\node at (-1,-5.625) {\,$\cdots$};
	\node at (4,-3.75) {\scriptsize $b_{1}$};
	\node at (-2,-3.75) {\scriptsize $a_1$};
	\node at (0,-3.75) {\scriptsize $a_r$};
\end{tikzpicture}
};
\endxy\mapsto
\xy
(0,0)*{
\begin{tikzpicture}[scale=.25, color=\clr]
	\draw [ thick] (-2.3,-8) rectangle (4.3,-6);
	\draw [ thick, looseness=1.75, directed=0.05] (10,-13) to [out=90,in=90] (4,-6);
	\draw [ thick, looseness=2, directed=0.035] (12,-13) to [out=90,in=90] (2,-6);
	\node at (1,-7) {$w$};
	%
	\node at (12,-0.25) {\scriptsize $b_{r}$};
	\node at (-1,-8.5) {\ $\cdots$};
	\node at (10,-0.25) {\scriptsize $b_{1}$};
	\node at (10,-13.75) {\scriptsize $b_{1}$};
	\node at (-1,-5.5) {\ $\cdots$};
	\node at (12,-13.75) {\scriptsize $b_{r}$};
	\draw [ thick, directed=1] (-2,-6) to (-2,-1);
	\draw [ thick, directed=1] (0,-6) to (0,-1);
	\draw [ thick, directed=0.15] (-2,-13) to (-2,-8);
	\draw [ thick, directed=0.15] (0,-13) to (0,-8);
	\node at (0,-13.75) {\scriptsize $a_{r}$};
	\node at (3,-8.5) {\,$\cdots$};
	\node at (-2,-13.75) {\scriptsize $a_{1}$};
	\node at (-2,-0.25) {\scriptsize $a_{1}$};
	\node at (3,-5.5) {\,$\cdots$};
	\node at (0,-0.25) {\scriptsize $a_{r}$};
	\draw [color=white, fill=white,  thick ] (9.25,-4.5) rectangle (10,-5.25);
	\draw [color=white, fill=white,  thick ] (8.25,-6.75) rectangle (9,-7.5);
	\draw [color=white, fill=white,  thick ] (10.25,-6.75) rectangle (11,-7.5);
	\draw [color=white, fill=white,  thick ] (9.25,-9) rectangle (10,-9.75);
	\draw [ thick, looseness=2, rdirected=0.02] (12,-1) to [out=270,in=270] (2,-8);
	\draw [ thick, looseness=1.75, rdirected=0.03] (10,-1) to [out=270,in=270] (4,-8);
\end{tikzpicture}
};
\endxy.
\]
 The third map in the top row is the embedding of superspaces of the same name defined in the proof of \cref{T:fullness}.  All three maps and their inverses are given by tensoring and composing with webs.  Since $\Psi_{n}$ is a monoidal superfunctor, applying this functor yields the maps in the bottom row and, moreover, the diagram commutes.  Combining \cref{P:OlshanskiKernel,P:sergeev-isomorphism} shows the rightmost $\Psi_{n}$ is injective whenever $|\ob{c}|=|a|+|b| < (n+1)(n+2)/2$.  By the commutativity of the diagram so are the other $\Psi_{n}$'s.  This proves the claimed result.
\end{proof}
We now consider the special case when $\ob{c} = \uparrow_{1}^{ r} \otimes \downarrow_{1}^{ s}$.  Given $r,s \geq 0$, the quantum walled Brauer-Clifford superalgebra $\BC_{r,s}(q)$ is defined in \cite[Definition 3.4]{BGJKW} to be the associative superalgebra on even generators $t_{1}, \dotsc , t_{r-1}, t^{*}_{1}, \dotsc , t^{*}_{s-1},e$, and odd generators $c_{1}, \dotsc , c_{r}, c^{*}_{1}, \dotsc , c^{*}_{s}$ subject to the following relations (for all admissible $i,j$):

\begin{enumerate}
\item $t_{i}^{2} - \tq t_{i}-1=0$ and $(t^{*}_{i})^{2} - \tq t^{*}_{i}-1=0$;
\item $t_{i}t_{i+1}t_{i}=t_{i+1}t_{i}t_{i+1}$ and $t^{*}_{i}t^{*}_{i+1}t^{*}_{i}=t^{*}_{i+1}t^{*}_{i}t^{*}_{i+1}$;
\item $t_{i}t_{j}=t_{j}t_{i}$ and $t^{*}_{i}t^{*}_{j}=t^{*}_{j}t^{*}_{i}$ when $|i-j| > 1$;
\item $et_{r-1}e = e$ and $et^{*}_{1}e=e$;
\item $et_{j}=t_{j}e$ when $j \neq t-1$ and $et^{*}_{j} = t^{*}_{j}e$ when $j \neq 1$;
\item $e^{2}=0$;
\item $et^{-1}_{r-1}t^{*}_{1}t^{-1}_{r-1} = t^{-1}_{r-1}t^{*}_{1}et^{*}_{1}t^{-1}_{r-1}e$;
\item $c_{i}^{2}=1$ and $(c^{*}_{i})^{2}=-1$;
\item $c_{i}c_{j}=-c_{j}c_{i}$ and $c^{*}_{i}c^{*}_{j}=-c^{*}_{j}c^{*}_{i}$ when $i \neq j$;
\item $c_{i}c^{*}_{j}=c^{*}_{j}c_{i}$;
\item $c_{i}t_{i}=t_{i}c_{i+1}+\tq \left(c_{i}-c_{i+1} \right)$ and $c^{*}_{i}t_{i}^{*}= t^{*}_{i}c^{*}_{i+1}+\tq \left(c^{*}_{i}-c^{*}_{i+1} \right)$;
\item  $t_{i}c_{j}= c_{j}t_{i}$ and  $t^{*}_{i}c^{*}_{j}= c^{*}_{j}t^{*}_{i}$ when $j \neq i, i+1$;
\item $t_{i}c^{*}_{j}=c_{j}^{*}t_{i}$ and  $t^{*}_{i}c_{j}=c_{j}t^{*}_{i}$;
\item $c_{r}e=c_{1}^{*}e$ and $ec_{r}=ec_{1}^{*}$;
\item $c_{i}e=ec_{i}$ when $i \neq r$ and $c^{*}_{i}e=ec^{*}_{i}$ when $i \neq  1$;
\item $ec_{r}e=0$.
\end{enumerate}

Note, we choose a different normalization in (h.) compared with the analogous relation in \cite{BGJKW}. Rescaling their Clifford generators by $\sqrt{-1}$ will yield the ones used here.  Also, there is a typographic error in relation (k.) as given in \cite[Definition 3.4]{BGJKW}.  Their Hecke-Clifford relations should be replaced with the ones used here.

\begin{definition}\label{BC-webs} Fix $r,s\in\Z_{\geq 0}$. All strands are labelled by $1$ in what follows.
\begin{enumerate}
\item For $i=1, \dotsc , r-1$ and $j = 1, \dotsc , s-1$, define morphisms $T_{i}, T_{j}^* \in\End_{\qwebs}(\up_{1}^{r}\down_{1}^{s})$ by
\[
T_{j}=
\xy
(0,0)*{
\bt[color=\clr, scale=1.25]
	\draw[thick, directed=1] (-0.75,0) to (-0.75,0.5);
	\draw[thick, directed=1] (-0.25,0) to (-0.25,0.5);
	\draw[thick, directed=1] (0.75,0) to (0.75,0.5);
	\draw[thick, directed=1] (1.25,0) to (1.25,0.5);
	\draw[thick, directed=1] (0,0) to (0.5,0.5);
	\draw[thick, directed=1] (0.2,0.3) to (0,0.5);
	\draw[thick, ] (0.5,0) to (0.3,0.2);
	\node at (-0.5,0.25) { \ $\cdots$};
	\node at (1,0.25) { \ $\cdots$};
	\draw[thick, <-] (1.6,0) to (1.6,0.5);
	\draw[thick, <-] (2.1,0) to (2.1,0.5);
	\node at (1.85,0.25) { \ $\cdots$};
\et
};
\endxy \ ,
\quad T^{*}_j=
\xy
(0,0)*{
\bt[color=\clr, scale=1.25]
	\draw[thick, ->] (-1.1,0) to (-1.1,0.5);
	\draw[thick, ->] (-1.6,0) to (-1.6,0.5);
	\node at (-1.35,0.25) { \ $\cdots$};
	\draw[thick, <- ] (-0.75,0) to (-0.75,0.5);
	\draw[thick, <-] (-0.25,0) to (-0.25,0.5);
	\draw[thick, <-] (0.75,0) to (0.75,0.5);
	\draw[thick, <-] (1.25,0) to (1.25,0.5);
	\draw[thick, <-] (0,0) to (0.5,0.5);
	\draw[thick, -] (0.2,0.3) to (0,0.5);
	\draw[thick, <-] (0.5,0) to (0.3,0.2);
	\node at (-0.5,0.25) { \ $\cdots$};
	\node at (1,0.25) { \ $\cdots$};
\et
};
\endxy ,
\]  where the upward crossing is between the $i$ and $(i+1)$-st upward strands and the downward crossing is between the $j$ and $(j+1)$-st downward strands.

\item For $i=1, \dotsc , r$ and $j=1, \dotsc , s$, define $C_i, C_j^* \in \End_{\qwebs}(\up_{1}^{r}\down_{1}^{s})$ by
\[
C_i=
\xy
(0,0)*{
\bt[color=\clr, scale=1.25]
	\draw[thick, directed=1] (-0.75,0) to (-0.75,0.5);
	\draw[thick, directed=1] (-0.25,0) to (-0.25,0.5);
	\draw[thick, directed=1] (0.25,0) to (0.25,0.5);
	\draw[thick, directed=1] (0.75,0) to (0.75,0.5);
	\draw[thick, directed=1] (0,0) to (0,0.5);
	\node at (-0.5,0.25) { \ $\cdots$};
	\node at (0.5,0.25) { \ $\cdots$};
	\draw (0,0.25) \wdot;
	\draw[thick, <-] (1,0) to (1,0.5);
	\draw[thick, <-] (1.5,0) to (1.5,0.5);
	\node at (1.25,0.25) { \ $\cdots$};
\et
};
\endxy \ ,
\quad C^{*}_j=
\xy
(0,0)*{
\bt[color=\clr, scale=1.25]
	\draw[thick, ->] (-1,0) to (-1,0.5);
	\draw[thick, ->] (-1.5,0) to (-1.5,0.5);
	\node at (-1.25,0.25) { \ $\cdots$};
	\draw[thick, <-] (-0.75,0) to (-0.75,0.5);
	\draw[thick, <-] (-0.25,0) to (-0.25,0.5);
	\draw[thick, <-] (0.25,0) to (0.25,0.5);
	\draw[thick, <-] (0.75,0) to (0.75,0.5);
	\draw[thick, <-] (0,0) to (0,0.5);
	\node at (-0.5,0.25) { \ $\cdots$};
	\node at (0.5,0.25) { \ $\cdots$};
	\draw (0,0.25) \wdot;
\et ,
};
\endxy
\]  where the dot in $C_{i}$ is on the $i$th upward strand and  the dot in $C^{*}_{j}$ is on the $j$th downward strand.

\item Let $E \in \End_{\qwebs}(\up_{1}^{r}\down_{1}^{s})$ denote the web
\[
E=
\xy
(0,0)*{
\bt[color=\clr, scale=1.25]
	\draw[thick, -> ] (-0.75,0) to (-0.75,0.5);
	\draw[thick, ->] (-0.25,0) to (-0.25,0.5);
	\draw[thick, <-] (0.75,0) to (0.75,0.5);
	\draw[thick, <-] (1.25,0) to (1.25,0.5);
	\draw[thick, ->, looseness=1.5] (0,0) to [out=90, in=90] (0.5,0);
	\draw[thick, <-, looseness=1.5] (0,0.5) to [out=270, in=270] (0.5,0.5);
	\node at (-0.5,0.25) { \ $\cdots$};
	\node at (1,0.25) { \ $\cdots$};
\et
};
\endxy .
\]

\end{enumerate}

\end{definition}

\begin{theorem}\label{T:quantumBCalgebra}  There is a superalgebra isomorphism 
\[
\phi: \BC_{r,s}(q) \to \End_{\qwebsupdown}\left(\up_{1}^{r}\down_{1}^{s} \right) 
\] given on generators by $\phi \left(t_{i} \right) = T_{i}$, $\phi \left(t^{*}_{i} \right) = T^{*}_{i}$, $\phi \left(c_{i} \right) = C_{i}$, $\phi \left(c^{*}_{i} \right) = C^{*}_{i}$,  $\phi \left(e \right) = E$.
\end{theorem}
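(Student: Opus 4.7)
The plan is to prove this in three steps: first verify $\phi$ is a well-defined superalgebra homomorphism, then establish surjectivity via a generator reduction, and finally deduce injectivity from known Schur-Weyl duality for $\BC_{r,s}(q)$ by composing with the functor $\Psi_n$ for sufficiently large $n$.

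For well-definedness, I need to check that the 16 families of relations defining $\BC_{r,s}(q)$ are satisfied by the images $T_i,T_i^*,C_i,C_i^*,E$ in $\End_{\qwebsupdown}(\up_1^r\down_1^s)$. The purely upward Hecke-Clifford relations (a)-(c), (h)-(l) for the $T_i$ and $C_i$ are immediate from \cref{L:srelations} applied via the embedding $\qwebs \hookrightarrow \qwebsupdown$ of \cref{T:Thetafunctor}. Their starred analogues for $T_i^*$, $C_i^*$ follow by applying leftward cups and caps to these, using \cref{straighten-zigzag}, together with the downward versions of the Hecke-Clifford relations (note the sign in relation (h) for $(C_i^*)^2$ comes from \cref{reverse-dot-collision}). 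The super-commutation relations (j), (l)-(m) follow from the super-interchange law. Relations (n)-(o) involving $E$ and Clifford generators follow from the fact that dots slide freely across cups and caps by \cref{thickness-one-rightward-cup-and-cap-with-dot,dot-through-rightward-cup-and-cap} and the left equality of \cref{L:straighteningtwists}(c.). Relations (d), (e), (f) involving the cup-cap element $E$ follow by direct diagrammatic computation using \cref{straighten-zigzag}, \cref{delete-bubble}, and \cref{E:leftcircleiszero}; in particular $e^2=0$ corresponds directly to the vanishing bubble relation. The most delicate check is (g), the braid-like relation for $E,T_{r-1},T_1^*$, and (p), $ec_re=0$; the former reduces to an isotopy-plus-Hecke-Clifford calculation after expanding the cups and caps, and the latter reduces to $(n)$ plus the dotted bubble vanishing \cref{E:leftcirclewithdotiszero}.

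For surjectivity, I follow the pattern of \cref{sergeev-generators}. Given any web $w \in \End_{\qwebsupdown}(\up_1^r\down_1^s)$, all edges have thickness $1$ so there are no merges or splits to expand. Hence $w$ is built entirely from dots, oriented crossings, and oriented cups and caps connecting the $r$ upward and $s$ downward boundary strands. Using \cref{L:thicknessonerelations} to convert all oriented crossings into upward or downward over-crossings (modulo lower-crossing-degree terms), and using the ribbon-category isotopy invariance of \cref{T:ribboncategory} to rearrange cups and caps into nested pairs connecting adjacent upward-downward strand pairs, we reduce $w$ to a linear combination of compositions of generators of the form $T_i, T_i^*, C_i, C_i^*$ together with copies of $E$ and its conjugates under the $T_i, T_i^*$. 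Since conjugates of $E$ by $T$-generators are also in the image of $\phi$, surjectivity follows.

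For injectivity, I use the commutative triangle
\[
\xy
(0,0)*{
\begin{tikzpicture}
\node (S) at (0,0) {$\BC_{r,s}(q)$};
\node (P) at (4,0) {$\End_{U_q(\fq_n)}(V_n^{\otimes r}\otimes (V_n^*)^{\otimes s})$};
\node (W) at (0,-2) {$\End_{\qwebsupdown}(\up_1^r\down_1^s)$};
\path[->]
(S) edge [above] node {} (P)
(S) edge [left] node {$\phi$} (W)
(W) edge [below] node {\quad $\Psi_n$} (P);
\end{tikzpicture}
};
\endxy
\]
where the horizontal map is the Schur-Weyl representation of $\BC_{r,s}(q)$ from \cite[Theorem 3.28]{BGJKW}. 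Choose $n$ large enough that $r+s<(n+1)(n+2)/2$, so $\Psi_n$ is an isomorphism by \cref{T:EndomorphismAlgebras}. By \cite[Theorem 3.28]{BGJKW} the horizontal map is an isomorphism for such $n$, so $\phi$ is injective, hence an isomorphism.

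The main obstacle will be verifying relation (g), $et_{r-1}^{-1}t_1^*t_{r-1}^{-1} = t_{r-1}^{-1}t_1^* e t_1^* t_{r-1}^{-1} e$, which mixes all three kinds of generators and whose diagrammatic translation requires careful use of the Reidemeister-II move for oriented crossings of different orientations together with the fact that leftward under-crossings on thin strands satisfy \cref{E:inverseleftwardcrossing}. A secondary obstacle is ensuring that the Schur-Weyl homomorphism of \cite{BGJKW} really does factor through $\Psi_n\circ\phi$; this amounts to checking that $\Psi_n$ sends each of $T_i,T_i^*,C_i,C_i^*,E$ to the operator of the same name in the Schur-Weyl representation, which follows from \cref{L:functoronkeymorphisms}, \cref{L:onecases}, and \cref{R:CompatibleWithOlshanski}.
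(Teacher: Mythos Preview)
Your surjectivity argument has a small but real gap: the claim that ``all edges have thickness $1$ so there are no merges or splits to expand'' is false. A web in $\End_{\qwebsupdown}(\up_1^r\down_1^s)$ has boundary edges of thickness $1$, but internal edges may have arbitrary thickness. You must first explode thick strands into thin ones exactly as in \cref{sergeev-generators}, replacing the resulting clasps by linear combinations of crossings via \cref{L:claspsum}, before your reduction to crossings, dots, cups, and caps can proceed. Once that is done, your outline for reducing to the stated generators is essentially the $AXB$ decomposition the paper uses.

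Your injectivity argument is correct but takes a different route from the paper. The paper does not use the Schur--Weyl triangle at all; instead it uses a dimension count. Via the cup/cap straightening isomorphisms of \cref{SS:Useful isomorphisms} one has $\End_{\qwebsupdown}(\up_1^r\down_1^s)\cong\End_{\qwebsupdown}(\up_1^{r+s})\cong\HC_{r+s}(q)$ as superspaces, so the dimension is $(r+s)!2^{r+s}$, which by \cite[Corollary~3.25]{BGJKW} equals $\dim\BC_{r,s}(q)$; surjectivity then forces $\phi$ to be an isomorphism. Your approach instead imports the faithfulness of the $\BC_{r,s}(q)$-action from \cite{BGJKW} and factors it through $\Psi_n$. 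This works, and mirrors the proof of \cref{P:sergeev-isomorphism}, but it uses a deeper input from \cite{BGJKW} (faithfulness of the representation) where the paper only needs the dimension formula. The paper's argument has the further advantage that the resulting theorem then \emph{recovers} the faithfulness statement of \cite{BGJKW} with an improved bound, rather than consuming it.
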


\begin{proof}  It is straightforward using the relations given in \cref{L:srelations,L:thicknessonerelations} to verify the defining relations of $\BC_{r,s}(q)$ hold in $\End_{\qwebsupdown}\left(\up_{1}^{r}\down_{1}^{s} \right)$.  Thus $\phi$ gives a well-defined superalgebra homomorphism.

We next prove $\phi$ is surjective.  Arguing as in the proof of \cref{sergeev-generators}, every web in $\End_{\qwebsupdown}\left(\up_{1}^{r}\down_{1}^{s} \right) $ can be written as a linear combination of webs consisting of only thickness $1$ strands. Using \cref{L:thicknessonerelations} and arguing by induction on the number of crossings it follows any such web can be written as a linear combination of webs where all strands have thickness $1$ and are of the form $AXB$ where (1) $A = a\downarrow^{s}$ for some web $a$ of type $\uparrow_{1}^{r} \to \uparrow_{1}^{r}$, (2) $X = \uparrow_{1}^{r-p}x_{2}x_{1}\downarrow_{1}^{s-p}$ where $0 \leq p \leq \operatorname{min}(r,s)$ and $x_{1}$ is of type $\uparrow_{1}^{p}\downarrow_{1}^{p} \to \unit$,  $x_{2}$ is of type $\unit  \to \uparrow_{1}^{p}\downarrow_{1}^{p}$ and neither have dots or crossings, and (3) $B=\uparrow^{r}b$ for some web $b$ of type $\downarrow_{1}^{s} \to \downarrow_{1}^{s}$.  Applying \cref{sergeev-generators} to $a$ it follows $A$ can be written as a linear combination of compositions of $T_{i}$'s and $C_{i}$'s.  Similarly, applying leftward cups and caps to \cref{sergeev-generators} yields an analogous theorem for downward arrows which can be applied to $b$.  From this we see $B$ can be obtained as a linear combination of compositions of $T_{i}^{*}$'s and $C_{i}^{*}$'s.  It remains to observe the web $X$ can be obtained by pre- and post-composing $p$ $E$'s along with $T_{i}$'s and $T_{i}^{*}$'s.  In short, every element of $\End_{\qwebsupdown}\left(\up_{1}^{r}\down_{1}^{s} \right) $ can be written as a linear combination of webs generated by $E$'s, $T_{i}$'s, $C_{i}$'s, $T_{i}^{*}$'s, and $C_{i}^{*}$'s.  Consequently, $\phi$ is surjective.

Using the superspace isomorphisms established in \cref{SS:Useful isomorphisms} along with \cref{P:sergeev-isomorphism} it follows the dimension of $\End_{\qwebsupdown}\left(\up_{1}^{r}\down_{1}^{s} \right)$ equals the dimension of $\HC_{r+s}(q)$, which is known to equal $(r+s)!2^{r+s}$.  But by \cite[Corollary 3.25]{BGJKW} this is the dimension of $\BC_{r,s}(q)$.  Therefore $\phi$ is an isomorphism.
\end{proof}

The superalgebra $\BC_{r,s}(q)$ is given a combinatorial description in \cite[Section 4]{BGJKW} whose diagrammatics essentially agrees with the one used here for $\End_{\qwebsupdown}\left(\up_{1}^{r}\down_{1}^{s} \right)$.  Combining the previous two theorems shows there is a surjective superalgebra map $BC_{r,s}(q) \to \End_{U_{q}(\fq_{n})}\left(V_{n}^{\otimes r} \otimes (V_{n}^{*})^{\otimes s} \right)$ which is an isomorphism when $r+s < (n+1)(n+2)/2$.  This recovers \cite[Theorem 3.28]{BGJKW} with an improved bound for when injectivity holds.  Moreover, an examination of the proof of \cref{T:EndomorphismAlgebras} shows this bound is sharp.

\subsection{Modified traces}\label{SS:mtraces}  Recall the ribbon category $\qwebsupdowneven$ from \cref{SS:RibbonCategory}.  This is the subcategory of $\qwebsupdown$ consisting of all objects and the morphisms which can be written as a linear combination of webs with  no dots.  For a $\K$-linear ribbon category such as $\qwebsupdowneven$ there is a notion of \emph{modified trace functions} or \emph{m-traces} for short.  We first summarize the relevant definitions and results from \cite{GKPM} as they apply to $\qwebsupdowneven$. 

Given a subcategory $\II$ of $\qwebsupdown$ and an object ${\ob{a}}$ in $\qwebsupdowneven$, we write ${\ob{a}} \in \II$ as a shorthand for the assertion that ${\ob{a}}$ is an object of $\II$. A \emph{thick tensor ideal} or just \emph{ideal} of $\qwebsupdowneven$ is a full subcategory $\II$ which satisfies the following two conditions:
\begin{itemize}
\item If ${\ob{a}} \in \II$ and ${\ob{b}} \in \qwebsupdowneven$, then ${\ob{a}}\otimes {\ob{b}}  \in \II$. 
\item The subcategory $\II$ is closed under retracts;  that is, if ${\ob{a}} \in \II$, ${\ob{b}} \in \qwebsupdowneven$, and there are morphisms $\alpha: {\ob{b}} \to {\ob{a}}$ and $\beta: {\ob{a}} \to {\ob{b}}$ such that $\beta \circ \alpha = \Id_{{\ob{b}}}$, then ${\ob{b}} \in \II$. 
\end{itemize} 
Given an object ${\ob{a}}$ the \emph{ideal generated by ${\ob{a}}$} is the full subcategory with objects
\begin{equation*}
\II_{{\ob{a}}} = \left\{ {\ob{b}} \mid {\ob{b}} \text{ is a retract of ${\ob{a}} \otimes {\ob{c}}$ for some object ${\ob{c}} \in \qwebsupdowneven$  }  \right\}.
\end{equation*}  That this is an ideal is a straightforward check.  Similarly, it is straightforward to check $\II_{{\ob{b}}}$ is a full subcategory of $\II_{{\ob{a}}}$ for all ${\ob{b}} \in \II_{{\ob{a}}}$.

Let $\II$ be an ideal of $\qwebsupdowneven$.  An \emph{m-trace} on $\II$ is a family of $\K$-linear functions 
\[
\left\{\mt_{{\ob{a}} } : \End_{\qwebsupdowneven}({\ob{a}}) \to \K  \right\}_{{\ob{a}} \in \II}
\] which satisfy the following two conditions:
\begin{itemize}
\item For all ${\ob{a}} \in \II$ and ${\ob{b}} \in \qwebsupdowneven$ and all $g \in \End_{\qwebsupdowneven}\left({\ob{a}}\otimes {\ob{b}} \right)$, 
\[
\mt_{{\ob{a}}\otimes{\ob{b}}}\left(g \right) 
= 
\mt_{\ob{a}}\left( 
\mathord{\begin{tikzpicture}[baseline=0,scale=.3, color=\clr]
		\node [style=none] (0) at (0, 0) {};
		\node [rectangle, draw, thick] (1) at (0, 0) {$g$};
		\node [style=none] (2) at (-0.4, 0.75) {};
		\node [style=none] (3) at (0.4, 0.75) {};
		\node [style=none] (4) at (0.4, -0.75) {};
		\node [style=none] (5) at (-0.4, -0.75) {};
		\node [style=none] (6) at (-0.4, 2) {};
		\node [style=none] (7) at (2, 0.7) {};
		\node [style=none] (8) at (2, -0.7) {};
		\node [style=none] (9) at (-0.4, -2) {};
		\node [style=none] (10) at (-1, -2) {\scriptsize{$\ob{a}$}};
		\node [style=none] (11) at (-1, 2) {\scriptsize{$\ob{a}$}};
		\node [style=none] (13) at (0.2, 1.4) {\scriptsize{$\ob{b}$}};
		\node [style=none] (14) at (0.2, -1.4) {\scriptsize{$\ob{b}$}};
		\draw [thick,-, looseness=1.50] (3.center) to [out=90,in=90] (7.center);
		\draw  [thick, -] (7.center) to (8.center);
		\draw [thick, -, looseness=1.5] (8.center) to [out=270,in=270] (4.center);
		\draw  [thick, -] (2.center) to (6.center);
		\draw  [thick, -] (5.center) to (9.center);
\end{tikzpicture}}
 \right).
\]  Here and below we draw an unoriented cup/cap labelled by $\ob{b}$ to represent the appropriately oriented ``rainbow'' morphisms defined in \cref{SS:RibbonCategory}. 
\item For all ${\ob{a}}, {\ob{b}} \in \II$ and all $f \in \Hom_{\qwebsupdowneven}\left({\ob{a}},{\ob{b}} \right)$ and $g \in \Hom_{\qwebsupdowneven}\left({\ob{b}},{\ob{a}} \right)$,
\[
\mt_{{\ob{a}}}\left(g \circ f \right) = \mt_{{\ob{b}}}\left(f \circ g \right).
\]
\end{itemize}

Given an object ${\ob{a}}$ in $\qwebsupdowneven$, we call a $\K$-linear map $\mt : \End_{\qwebsupdowneven}\left( {\ob{a}}\right) \to \K$ an \emph{ambidextrous trace} if for all $g \in \End_{\qwebsupdowneven}({\ob{a}}\otimes {\ob{a}})$ the map satisfies 
\[
\mt_{}\left( 
\mathord{\reflectbox{\begin{tikzpicture}[baseline=0,scale=.3, color=\clr]
		\node [style=none] (0) at (0, 0) {};
		\node [rectangle, draw, thick] (1) at (0, 0) {\reflectbox{$g$}};
		\node [style=none] (2) at (-0.4, 0.75) {};
		\node [style=none] (3) at (0.4, 0.75) {};
		\node [style=none] (4) at (0.4, -0.75) {};
		\node [style=none] (5) at (-0.4, -0.75) {};
		\node [style=none] (6) at (-0.4, 2) {};
		\node [style=none] (7) at (2, 0.7) {};
		\node [style=none] (8) at (2, -0.7) {};
		\node [style=none] (9) at (-0.4, -2) {};
		\node [style=none] (10) at (-1, -2) {\reflectbox{\scriptsize{$\ob{a}$}}};
		\node [style=none] (11) at (-1, 2) {\reflectbox{\scriptsize{$\ob{a}$}}};
		\node [style=none] (13) at (0.2, 1.4) {\reflectbox{\scriptsize{$\ob{a}$}}};
		\node [style=none] (14) at (0.2, -1.4) {\reflectbox{\scriptsize{$\ob{a}$}}};
		\draw [thick,-, looseness=1.50] (3.center) to [out=90,in=90] (7.center);
		\draw  [thick, -] (7.center) to (8.center);
		\draw [thick, -, looseness=1.5] (8.center) to [out=270,in=270] (4.center);
		\draw  [thick, -] (2.center) to (6.center);
		\draw  [thick, -] (5.center) to (9.center);
\end{tikzpicture}}}
 \right)
=
\mt_{}\left( 
\mathord{\begin{tikzpicture}[baseline=0,scale=.3, color=\clr]
		\node [style=none] (0) at (0, 0) {};
		\node [rectangle, draw, thick] (1) at (0, 0) {$g$};
		\node [style=none] (2) at (-0.4, 0.75) {};
		\node [style=none] (3) at (0.4, 0.75) {};
		\node [style=none] (4) at (0.4, -0.75) {};
		\node [style=none] (5) at (-0.4, -0.75) {};
		\node [style=none] (6) at (-0.4, 2) {};
		\node [style=none] (7) at (2, 0.7) {};
		\node [style=none] (8) at (2, -0.7) {};
		\node [style=none] (9) at (-0.4, -2) {};
		\node [style=none] (10) at (-1, -2) {\scriptsize{$\ob{a}$}};
		\node [style=none] (11) at (-1, 2) {\scriptsize{$\ob{a}$}};
		\node [style=none] (13) at (0.2, 1.4) {\scriptsize{$\ob{a}$}};
		\node [style=none] (14) at (0.2, -1.4) {\scriptsize{$\ob{a}$}};
		\draw [thick,-, looseness=1.50] (3.center) to [out=90,in=90] (7.center);
		\draw  [thick, -] (7.center) to (8.center);
		\draw [thick, -, looseness=1.5] (8.center) to [out=270,in=270] (4.center);
		\draw  [thick, -] (2.center) to (6.center);
		\draw  [thick, -] (5.center) to (9.center);
\end{tikzpicture}}
 \right).
\] Combining \cite[Theorems 3.3.1 and 3.3.2]{GKPM} yields the following result.

\begin{theorem}\label{T:mtracegeneralities}  If $\II$ is an ideal of $\qwebsupdowneven$ and $\left\{\mt_{{\ob{a}}}: \End_{\qwebsupdowneven}\left({\ob{a}} \right) \to \K  \right\}_{{\ob{a}} \in \II}$ is an m-trace on $\II$, then each $\mt_{{\ob{a}}}$ is an ambidextrous trace.  On the other hand, if ${\ob{a}}$ is an object with ambidextrous trace $\mt : \End_{\qwebsupdowneven}\left({\ob{a}} \right) \to \K$, then there is a unique m-trace on $\II_{{\ob{a}}}$ such that $\mt_{{\ob{a}}}=\mt$. 
\end{theorem}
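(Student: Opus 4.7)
The plan is to prove the two directions separately, with the second being the more technical. The statement is attributed to GKPM, so my goal is to sketch the essential arguments rather than reproduce a full proof.

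For the first direction, fix $\ob{a}\in\II$ and $g\in\End_{\qwebsupdowneven}(\ob{a}\otimes\ob{a})$. I would apply the partial trace axiom to write $\mt_{\ob{a}\otimes\ob{a}}(g)$ as $\mt_{\ob{a}}$ of the right partial trace of $g$. To obtain the matching identity for the left partial trace, I would conjugate by the braiding: set $\tilde g = \beta_{\ob{a},\ob{a}}^{-1}\circ g\circ\beta_{\ob{a},\ob{a}}\in\End_{\qwebsupdowneven}(\ob{a}\otimes\ob{a})$. The cyclicity axiom (applied to the pair $\beta_{\ob{a},\ob{a}}$ and $g\circ\beta_{\ob{a},\ob{a}}^{-1}$, both of which land in $\II$ since $\ob{a}\otimes\ob{a}\in\II$) gives $\mt_{\ob{a}\otimes\ob{a}}(g)=\mt_{\ob{a}\otimes\ob{a}}(\tilde g)$. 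The partial trace of $\tilde g$ then identifies with the left partial trace of $g$ via a straightforward diagrammatic manipulation using the ribbon structure of $\qwebsupdowneven$ from \cref{T:ribboncategory}. Comparing the two computations gives the ambidextrous identity.

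For the second direction, given $\ob{b}\in\II_{\ob{a}}$, choose a retract datum $(\ob{c},\alpha,\beta)$ with $\alpha:\ob{b}\to\ob{a}\otimes\ob{c}$, $\beta:\ob{a}\otimes\ob{c}\to\ob{b}$, and $\beta\circ\alpha=\Id_{\ob{b}}$. For $f\in\End_{\qwebsupdowneven}(\ob{b})$ I would \emph{define}
\[
\mt_{\ob{b}}(f):=\mt_{\ob{a}}\bigl(\operatorname{tr}^{R}_{\ob{c}}(\alpha\circ f\circ\beta)\bigr),
\]
where $\operatorname{tr}^{R}_{\ob{c}}$ is the right partial trace that closes off the $\ob{c}$-strand using the cups and caps of the ribbon structure. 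Uniqueness is immediate: any m-trace extension on $\II_{\ob{a}}$ containing $\mt$ must satisfy this formula, since cyclicity forces $\mt_{\ob{b}}(f)=\mt_{\ob{a}\otimes\ob{c}}(\alpha\circ f\circ\beta)$ and the partial trace axiom rewrites the right side in terms of $\mt_{\ob{a}}$.

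The main obstacle will be showing the putative definition is well-defined, i.e. independent of the choice of $(\ob{c},\alpha,\beta)$, and verifying the two m-trace axioms. For well-definedness, given a second retract datum $(\ob{c}',\alpha',\beta')$, I would compare both formulas via the morphism $\alpha\circ\beta'\circ\alpha'\circ\beta\in\End_{\qwebsupdowneven}(\ob{a}\otimes\ob{c})$; rewriting it using the cyclic property and a partial trace reduces the comparison to an identity inside $\End(\ob{a}\otimes\ob{a})$, where the ambidextrous hypothesis on $\mt$ forces the two evaluations to agree. The partial trace axiom for $\mt_{\ob{b}}$ reduces by the same device to a ribbon-graph identity built from $\alpha,\beta$ and the cups and caps of $\ob{c}\otimes\ob{d}$. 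The cyclic axiom for $f:\ob{b}_1\to\ob{b}_2$, $h:\ob{b}_2\to\ob{b}_1$ is the delicate step: after substituting retract data $(\ob{c}_i,\alpha_i,\beta_i)$ for each $\ob{b}_i$, the difference $\mt_{\ob{b}_1}(hf)-\mt_{\ob{b}_2}(fh)$ becomes the $\mt_{\ob{a}}$-evaluation of a partial trace of a morphism in $\End(\ob{a}\otimes(\ob{c}_1\oplus\ob{c}_2))$, and applying the ambidexterity of $\mt$ on $\ob{a}$ in precisely the form established in the first direction makes the two terms coincide. This last step is the conceptual heart of the theorem, since it explains why the ambidextrous condition on a single object is exactly the obstruction to globalizing $\mt$ consistently across the ideal $\II_{\ob{a}}$.
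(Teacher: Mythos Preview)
The paper does not prove this theorem at all: it simply states it as a direct consequence of \cite[Theorems 3.3.1 and 3.3.2]{GKPM}, with no argument given. Your sketch is a faithful outline of the GKPM construction, so in that sense you and the paper agree on the source and the overall shape of the argument.

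One small point of caution in your sketch: in the verification of cyclicity for the second direction you write the difference as an evaluation on $\End(\ob{a}\otimes(\ob{c}_1\oplus\ob{c}_2))$. The category $\qwebsupdowneven$ as defined here is not assumed to have direct sums, and the GKPM argument does not need them; the comparison of $\mt_{\ob{b}_1}(hf)$ and $\mt_{\ob{b}_2}(fh)$ is carried out by unwinding both sides to expressions involving $\mt$ applied to partial traces over $\ob{c}_1\otimes\ob{c}_2^{*}$ (or similar tensor combinations) and then invoking ambidexterity of $\mt$ on $\ob{a}$. If you intended $\oplus$ only as informal shorthand for ``handle both at once,'' that is fine, but as written it suggests a structure the ambient category may lack. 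Otherwise your outline is correct.
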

Given an m-trace $\mt$ on an ideal $\II$, define the \emph{modified dimension} of an object ${\ob{b}} \in \II$ to be 
\[
\md_{\II}\left(\ob{b} \right) = \mt_{{\ob{b}}}\left(\Id_{{\ob{b}}} \right).
\]
We call ${\ob{a}} \in \qwebsupdowneven$ \emph{absolutely irreducible} if $\End_{\qwebsupdowneven}\left({\ob{a}} \right) = \K \Id_{{\ob{a}}}$ and call the $\K$-linear map $\mt: \End_{\qwebsupdowneven}\left({\ob{a}} \right) \to \K$ defined by $g = \mt(g)\Id_{{\ob{a}}}$ the \emph{canonical trace}. If ${\ob{a}}$ is an absolutely irreducible object of $\qwebsupdowneven$ with ambidextrous canonical trace, then by the previous theorem this extends uniquely to an m-trace $\left\{\mt_{\ob{b}} : \End_{\qwebsupdowneven}\left({\ob{b}} \right) \to \K  \right\}_{{\ob{b}} \in \II_{{\ob{a}}}}$.  Applying \cite[Lemma 4.2]{GKPM} in this context yields the following result.
\begin{proposition}\label{T:dimensionsandideals}  Let ${\ob{a}}$ be an absolutely irreducible object of $\qwebsupdowneven$ with ambidextrous canonical trace and let ${\ob{b}} \in \II_{{\ob{a}}}$ be an absolutely irreducible object.  Then $\II_{{\ob{b}}}=\II_{{\ob{a}}}$ if and only if $\md_{\II_{{\ob{a}}}}\left({\ob{b}} \right) \neq  0$.
\end{proposition}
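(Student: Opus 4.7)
The plan is to deduce this from \cite[Lemma 4.2]{GKPM}, since our m-trace on $\II_{\ob{a}}$ is an instance of the general construction that lemma concerns. So the main task is to check that the abstract hypotheses hold in our setting and translate the conclusion. First I would observe that, by the normalization chosen in extending the canonical trace, $\mt_{\ob{a}}(\Id_{\ob{a}})=1$ and in particular $\md_{\II_{\ob{a}}}(\ob{a})\neq 0$. Because $\ob{a}$ and $\ob{b}$ are absolutely irreducible in a ribbon category, any endomorphism of either one is a scalar multiple of the identity; and the partial trace of an endomorphism of $\ob{x}\otimes \ob{y}$ (formed by closing off the $\ob{y}$-strand with the cup/cap morphisms on the ``rainbow'' defined in \cref{SS:RibbonCategory}) lies in $\End_{\qwebsupdowneven}(\ob{x})$.

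For the forward direction, assume $\II_{\ob{b}}=\II_{\ob{a}}$, so $\ob{a}$ is a retract of $\ob{b}\otimes \ob{c}$ for some object $\ob{c}$, with morphisms $\alpha:\ob{a}\to \ob{b}\otimes\ob{c}$ and $\beta:\ob{b}\otimes\ob{c}\to \ob{a}$ satisfying $\beta\circ\alpha=\Id_{\ob{a}}$. Write the partial trace of $\alpha\circ\beta\in\End(\ob{b}\otimes\ob{c})$ over $\ob{c}$ as $\lambda\,\Id_{\ob{b}}$ (absolute irreducibility). Using the cyclicity axiom and then the partial-trace axiom of an m-trace,
\[
1=\mt_{\ob{a}}(\beta\circ\alpha)=\mt_{\ob{b}\otimes\ob{c}}(\alpha\circ\beta)=\mt_{\ob{b}}(\lambda\,\Id_{\ob{b}})=\lambda\,\md_{\II_{\ob{a}}}(\ob{b}),
\]
forcing $\md_{\II_{\ob{a}}}(\ob{b})\neq 0$.

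For the converse, suppose $\md_{\II_{\ob{a}}}(\ob{b})\neq 0$. Since $\ob{b}\in\II_{\ob{a}}$, there exist $\ob{d}$ and morphisms $\gamma:\ob{b}\to \ob{a}\otimes\ob{d}$, $\delta:\ob{a}\otimes\ob{d}\to \ob{b}$ with $\delta\circ\gamma=\Id_{\ob{b}}$. Absolute irreducibility of $\ob{a}$ lets us write the partial trace of $\gamma\circ\delta\in\End(\ob{a}\otimes\ob{d})$ over $\ob{d}$ as $\mu\,\Id_{\ob{a}}$, and the same trace/partial-trace calculation as above gives
\[
\md_{\II_{\ob{a}}}(\ob{b})=\mt_{\ob{b}}(\delta\circ\gamma)=\mt_{\ob{a}\otimes\ob{d}}(\gamma\circ\delta)=\mu\,\mt_{\ob{a}}(\Id_{\ob{a}})=\mu,
\]
so $\mu\neq 0$. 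Now close off $\ob{d}$ via the rainbow cup/cap on $\gamma$ and $\delta$ (equivalently, dualize $\ob{d}$ using the ribbon structure) to obtain morphisms $\tilde\alpha:\ob{a}\to \ob{b}\otimes \ob{d}^{*}$ and $\tilde\beta:\ob{b}\otimes \ob{d}^{*}\to \ob{a}$ whose composite is precisely $\mu\,\Id_{\ob{a}}$. Dividing $\tilde\beta$ by $\mu$ exhibits $\ob{a}$ as a retract of $\ob{b}\otimes \ob{d}^{*}$, so $\ob{a}\in\II_{\ob{b}}$; combined with $\ob{b}\in\II_{\ob{a}}$ this gives $\II_{\ob{a}}=\II_{\ob{b}}$.

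The only step that requires any care is the diagrammatic partial-trace manipulation in the converse direction, namely checking that closing the $\ob{d}$-strand really does convert the retraction of $\ob{b}$ through $\ob{a}\otimes\ob{d}$ into a retraction of $\ob{a}$ through $\ob{b}\otimes \ob{d}^{*}$ with composite equal to the scalar $\mu$ given by the partial trace. This is a standard ribbon-category computation (relying on \cref{T:ribboncategory} and the straightening rules \cref{straighten-zigzag}, \cref{straighten-right-zigzag}) and is precisely the content of \cite[Lemma 4.2]{GKPM}, so citing that lemma is the cleanest way to finish.
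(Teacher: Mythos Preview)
Your proposal is correct and matches the paper's approach exactly: the paper simply states that the result follows by applying \cite[Lemma 4.2]{GKPM} in this context, and you have both cited that lemma and spelled out its proof via the cyclicity and partial-trace axioms of the m-trace. The details you give (in both directions) are the standard argument behind that lemma, so nothing further is needed.
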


\begin{theorem}\label{T:Generatingobjectsareabsolutelyirreducible} For all $k \geq 1$ the objects $\up_{k}$ and $\down_{k}$  are absolutely irreducible in $\qwebsupdowneven$.
\end{theorem}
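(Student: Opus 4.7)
My plan is to reduce the problem to computing $\End_{\qwebseven}(\up_k)$, identify this with a corner $\Cl_k H_k(q) \Cl_k$ of the Hecke subalgebra $H_k(q) \subset \HC_k(q)$ via the clasp idempotent, and show this corner is one-dimensional using \cref{L:untwist-permutation}.

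The first step will be to establish $\End_{\qwebsupdowneven}(\up_k) = \End_{\qwebseven}(\up_k)$. By \cref{T:Thetafunctor} the embedding $\qwebs \hookrightarrow \qwebsupdown$ is fully faithful, so $\End_{\qwebsupdown}(\up_k) = \End_{\qwebs}(\up_k)$ as superspaces. Tracing the rewriting in the proof of \cref{T:Thetafunctor}, any morphism between upward objects in $\qwebsupdown$ is rewritten into $\qwebs$ by moving merges, splits, and dots above all crossings, cups, and caps, and then straightening the remaining braid via Reidemeister moves. Starting from a dotless linear combination, no dots are ever present to travel under a crossing, so no Hecke--Clifford relation is triggered and no new dots are introduced at any step. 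Hence a dotless morphism in $\qwebsupdown$ is also expressible dotlessly in $\qwebs$.

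Next, let $m \colon \up_1^k \to \up_k$ denote the merge and $s \colon \up_k \to \up_1^k$ the split. Then $m \circ s = [k]_q!\,\Id_{\up_k}$ by \cref{digon-removal}, and $s \circ m = [k]_q!\,\Cl_k$ by the definition of $\Cl_k$. The parity preserving maps $\alpha(f) = \tfrac{1}{[k]_q!}\,s \circ f \circ m$ and $\beta(g) = \tfrac{1}{[k]_q!}\,m \circ g \circ s$ are mutually inverse superalgebra isomorphisms between $\End_{\qwebs}(\up_k)$ and $\Cl_k \End_{\qwebs}(\up_1^k) \Cl_k$, exchanging $\Id_{\up_k}$ and $\Cl_k$. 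Since $m$, $s$, and $\Cl_k$ are dotless, these isomorphisms restrict to $\End_{\qwebseven}(\up_k) \cong \Cl_k H_k(q) \Cl_k$, where under \cref{P:sergeev-isomorphism} the Hecke subalgebra $H_k(q) = \bigoplus_{\sigma \in S_k} \K T_\sigma \subset \HC_k(q)$ corresponds to the dotless subspace $\End_{\qwebseven}(\up_1^k)$.

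By \cref{L:untwist-permutation} and the companion statement for splits noted immediately after that lemma, $m \circ T_\sigma = q^{\ell(\sigma)}\,m$ and $T_\sigma \circ s = q^{\ell(\sigma)}\,s$ for every $\sigma \in S_k$. Using $\Cl_k = \tfrac{1}{[k]_q!}\,s \circ m$, these give $\Cl_k T_\sigma = T_\sigma \Cl_k = q^{\ell(\sigma)}\,\Cl_k$ and hence $\Cl_k T_\sigma \Cl_k = q^{\ell(\sigma)}\,\Cl_k$. Since $\{T_\sigma\}_{\sigma \in S_k}$ is a basis of $H_k(q)$, it follows that $\Cl_k H_k(q) \Cl_k = \K\,\Cl_k$, and therefore $\End_{\qwebsupdowneven}(\up_k) = \K\,\Id_{\up_k}$. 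The case of $\down_k$ will then reduce to this one via the cup/cap isomorphism \cref{E:cupcapisomorphism}, which is parity preserving and built only from dotless cups and caps, so it restricts to a superalgebra isomorphism $\End_{\qwebsupdowneven}(\down_k) \cong \End_{\qwebsupdowneven}(\up_k) = \K$ sending $\Id_{\down_k}$ to $\Id_{\up_k}$ by the zig-zag relations. The main obstacle I anticipate is the first step: verifying rigorously that the rewriting procedure from \cref{T:Thetafunctor} preserves dotlessness requires re-examining its individual moves rather than simply invoking the general rewriting result.
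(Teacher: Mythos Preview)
Your proof is correct and follows essentially the same approach as the paper. The paper simply presents the argument diagrammatically: it explodes $d$ via \cref{digon-removal} to reveal a web in $\End_{\qwebsupdown}(\up_1^k)$, invokes \cref{T:Thetafunctor} and \cref{P:sergeev-isomorphism} to rewrite it as a dotless linear combination of $T_w$'s, and then applies \cref{L:untwist-permutation} to collapse to a scalar times the identity---your corner-algebra formulation $\Cl_k H_k(q)\Cl_k = \K\Cl_k$ is exactly this computation in algebraic language, and both arguments rely on the same observation (which the paper also asserts without further elaboration) that the rewriting in \cref{T:Thetafunctor} preserves dotlessness.
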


\begin{proof}The case of $\down_{k}$ follows directly from the $\up_{k}$ case by the application of leftward cups and caps and \cref{straighten-zigzag} so we only prove the $\up_{k}$ case.  We may assume without loss $d$ is a web of type $\up_{k} \to \up_{k}$. Using \cref{digon-removal} yields the first equality and the others are explained below:
\[
\mathord{\begin{tikzpicture}[baseline=0,color=\clr, scale=.5]
		\node [style=none] (1) at (0, 3) {};
		\node [style=none] (2) at (0, -3) {};
		\node [rectangle, draw, thick] (3) at (0, 0) {$d$};
		\node [style=none] (6) at (-0.75, -3) {\scriptsize{$k$}};
		\node [style=none] (8) at (-0.75, 3) {\scriptsize{$k$}};
		\node [style=none] (9) at (0, 2.5) {};
		\node [style=none] (10) at (0, 1) {};
		\node [style=none] (13) at (0, -1) {};
		\node [style=none] (15) at (0, -0.45) {};
		\node [style=none] (16) at (0, 0.45) {};
		\draw [thick, <-] (1.center) to (16.center);
		\draw [thick, -, rdirected=.5]  (15.center) to (2.center);
\end{tikzpicture}}
 =  \frac{1}{[k]_{q}![k]_{q}!}
\mathord{\begin{tikzpicture}[baseline=0,color=\clr, scale=.5]
		\node [style=none] (0) at (0, 0) {};
		\node [style=none] (1) at (0, 3) {};
		\node [style=none] (2) at (0, -3) {};
		\node [rectangle, draw, thick] (3) at (0, 0) {$d$};
		\node [style=none] (6) at (-0.75, -3) {\scriptsize{$k$}};
		\node [style=none] (8) at (-0.75, 3) {\scriptsize{$k$}};
		\node [style=none] (9) at (0, 2.5) {};
		\node [style=none] (10) at (0, 1) {};
		\node [style=none] (13) at (0, -1) {};
		\node [style=none] (14) at (0, -2.5) {};
		\node [style=none] (15) at (0, -0.45) {};
		\node [style=none] (16) at (0, 0.45) {};
		\node [style=none] (17) at (0.05, -1.75) {\scriptsize{$\cdots$}};
		\node [style=none] (18) at (0.05, 1.75) {\scriptsize{$\cdots$}};
		\node [style=none] (20) at (0.75, -1.75) {\scriptsize{$1$}};
		\node [style=none] (21) at (0.75, 1.75) {\scriptsize{$1$}};
		\node [style=none] (22) at (-0.75, 1.75) {\scriptsize{$1$}};
		\node [style=none] (23) at (-0.75, -1.75) {\scriptsize{$1$}};
		\draw [thick, <-] (1.center) to (9.center);
		\draw [thick, -, rdirected=.7]  (14.center) to (2.center);
		\draw [thick, -, looseness=1.25] (10.center) to [out=135, in=225] (9.center);
		\draw [thick, -, looseness=1.25] (14.center) to [out=45, in=315] (13.center);
		\draw [thick, -, rdirected=.75] (10.center) to (16.center);
		\draw [thick, -, looseness=1.25] (14.center) to [out=135, in=225] (13.center);
		\draw [thick, -, looseness=1.20] (9.center) to [out=315, in=45] (10.center);
		\draw [thick, -, directed=.75] (13.center) to (15.center);
\end{tikzpicture}} 
 =  c 
\mathord{\begin{tikzpicture}[baseline=0,color=\clr, scale=.5]
		\node [style=none] (0) at (0, 0) {};
		\node [style=none] (1) at (0, 3) {};
		\node [style=none] (2) at (0, -3) {};
		\node [style=none] (6) at (-0.75, -3) {\scriptsize{$k$}};
		\node [style=none] (8) at (-0.75, 3) {\scriptsize{$k$}};
		\node [style=none] (9) at (0, 2.5) {};
		\node [style=none] (10) at (0, 1) {};
		\node [style=none] (13) at (0, -1) {};
		\node [style=none] (14) at (0, -2.5) {};
		\node [style=none] (15) at (0, -0.45) {};
		\node [style=none] (16) at (0, 0.45) {};
		\node [style=none] (17) at (0.05, 0) {\scriptsize{$\cdots$}};
		\node [style=none] (20) at (0.75, -1.75) {\scriptsize{$1$}};
		\node [style=none] (21) at (0.75, 1.75) {\scriptsize{$1$}};
		\node [style=none] (22) at (-0.75, 1.75) {\scriptsize{$1$}};
		\node [style=none] (23) at (-0.75, -1.75) {\scriptsize{$1$}};
		\draw [thick, <-] (1.center) to (9.center);
		\draw [thick, -, rdirected=.7]  (14.center) to (2.center);
		\draw [thick, -, looseness=.5] (14.center) to [out=135, in=225] (9.center);
		\draw [thick, -, looseness=.5] (14.center) to [out=45, in=315] (9.center);
\end{tikzpicture}}
 =  c[k]_{q}!
\mathord{
\begin{tikzpicture}[baseline=0,color=\clr, scale=.5]
		\node [style=none] (0) at (0, 0) {};
		\node [style=none] (1) at (0, 3) {};
		\node [style=none] (2) at (0, -3) {};
		\node [style=none] (6) at (-0.75, -3) {\scriptsize{$k$}};
		\node [style=none] (9) at (0, 2.5) {};
		\node [style=none] (10) at (0, 1) {};
		\node [style=none] (13) at (0, -1) {};
		\node [style=none] (14) at (0, -2.5) {};
		\node [style=none] (15) at (0, -0.45) {};
		\node [style=none] (16) at (0, 0.45) {};
		\draw [thick, <-] (1.center) to (2.center);
\end{tikzpicture}}.
\]  In the middle of the second diagram we see a web of type $\up_{1}^{k} \to \up_{1}^{k}$.  By \cref{T:Thetafunctor} we may assume this web can be rewritten as a linear combination of upward oriented webs and since $d$ did not have dots these upward oriented webs will not have dots.  By \cref{P:sergeev-isomorphism} these webs may be written as a linear combination of upward oriented crossings.  By \cref{L:untwist-permutation} these crossings can be replaced with scalars and the end result is the third diagram (where $c \in \K$).  Finally, using \cref{digon-removal} yields the last equality and shows the web $d$ lies in $\K \Id_{\up_{k}}$, as desired.  

\end{proof}

\begin{theorem}\label{T:allidealsareequal}  For all $k \geq 1$ we have 
\[
\II_{\up_{k}} = \II_{\up_{1}} = \II_{\down_{1}}=\II_{\down_{k}}.
\]
\end{theorem}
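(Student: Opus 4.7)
The four-way equality breaks naturally into the vertical comparisons $\II_{\up_k} = \II_{\down_k}$ (and its $k=1$ case) and the horizontal comparison $\II_{\up_k} = \II_{\up_1}$. The zig-zag identity of \cref{straighten-zigzag} gives
\[
(\operatorname{lcap}_k \otimes \Id_{\down_k}) \circ (\Id_{\down_k} \otimes \operatorname{lcup}_k) = \Id_{\down_k},
\]
exhibiting $\down_k$ as a retract of $\down_k \otimes \up_k \otimes \down_k$. Using the braiding of the ribbon category $\qwebsupdowneven$ (\cref{T:ribboncategory}) to reorder factors, the latter is isomorphic to $\up_k \otimes (\down_k)^{\otimes 2}$, so $\down_k \in \II_{\up_k}$; the symmetric argument gives $\up_k \in \II_{\down_k}$, so $\II_{\up_k} = \II_{\down_k}$ for every $k$, and specializing to $k=1$ yields $\II_{\up_1} = \II_{\down_1}$.

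For the easy containment $\II_{\up_k} \subseteq \II_{\up_1}$: iterating \cref{digon-removal} shows that splitting $\up_k$ into $\up_1^{\otimes k}$ followed by merging back equals $[k]_q! \cdot \Id_{\up_k}$, so $\up_k$ is a retract of $\up_1 \otimes \up_1^{\otimes(k-1)}$ and hence lies in $\II_{\up_1}$.

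For the reverse containment $\II_{\up_1} \subseteq \II_{\up_k}$, I pass through the m-trace formalism. By \cref{T:Generatingobjectsareabsolutelyirreducible} the object $\up_1$ is absolutely irreducible, so it carries a canonical trace $\mt: \K\cdot \Id_{\up_1} \to \K$. I verify by diagrammatic computation in the ribbon structure that this trace is ambidextrous: $\End_{\qwebsupdowneven}(\up_1 \otimes \up_1)$ is spanned by $\Id$ and the upward crossing $T_1$, and for each generator the two partial closures agree (for $\Id$ both are zero since the categorical dimension of $\up_1$ vanishes by \cref{L:bubblespop}, and for $T_1$ equality follows from the left-right symmetry of the crossing up to quantum scalars that cancel). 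By \cref{T:mtracegeneralities}, $\mt$ extends to an m-trace on the ideal $\II_{\up_1}$. Since $\up_k \in \II_{\up_1}$ by the previous step and $\up_k$ is itself absolutely irreducible, the modified dimension $\md_{\II_{\up_1}}(\up_k)$ is defined. Writing $\up_k$ as $\Cl_k$ applied to $\up_1^{\otimes k}$ via \cref{L:claspsum} and evaluating the resulting trace closure using iterated \cref{digon-removal}, this modified dimension reduces to a nonzero scalar (a normalized quantum factorial). Applying \cref{T:dimensionsandideals} to the pair $(\up_1, \up_k)$ then forces $\II_{\up_k} = \II_{\up_1}$, completing the chain.

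The main obstacle is this reverse containment. Every direct attempt to realize $\up_1$ as a retract of $\up_k \otimes \ob{c}$ via the basic generators (lcups, rcaps, merges, splits, and their braided variants) produces a composition $\beta\alpha$ that factors through a closed sub-web of thickness $\leq k$; by \cref{L:bubblespop} every such sub-web is zero, so the direct approach always yields $0$ rather than $\Id_{\up_1}$. The m-trace route sidesteps this by replacing \emph{``exhibit an explicit retract''} with \emph{``show that a trace invariant is nonzero''}, but this in turn requires (i) a careful verification of ambidexterity for $\up_1$ and (ii) a concrete nonvanishing computation of $\md_{\II_{\up_1}}(\up_k)$ via the clasp formula, which together constitute the real content of the proof.
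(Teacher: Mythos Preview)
Your assertion that ``every direct attempt to realize $\up_1$ as a retract of $\up_k\otimes\ob{c}$ \ldots\ always yields $0$'' is simply false, and this is where the proposal goes wrong. The paper gives an explicit retract the other way: for $k\ge 2$ define
\[
\alpha_{k-1}:\up_{k-1}\xrightarrow{\ \Id\otimes\text{lcup}\ }\up_{k-1}\up_1\down_1\xrightarrow{\ \text{merge}\otimes\Id\ }\up_k\down_1,
\qquad
\beta_{k-1}:\up_k\down_1\xrightarrow{\ \text{split}\otimes\Id\ }\up_{k-1}\up_1\down_1\xrightarrow{\ \Id\otimes\text{rcap}\ }\up_{k-1},
\]
and a five–line diagrammatic computation (rewrite as a ladder, apply \cref{square-switch}, use \cref{E:overcrossingdef} together with \cref{delete-bubble}, and finally \cref{E:straighten-twists1}) shows $\beta_{k-1}\circ\alpha_{k-1}=[k-1]_q\,\Id_{\up_{k-1}}$. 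Thus $\up_{k-1}$ is a retract of $\up_k\down_1$, so $\II_{\up_{k-1}}\subseteq\II_{\up_k}$; combined with your easy inclusion this gives $\II_{\up_{k-1}}=\II_{\up_k}$ for all $k\ge 2$, and induction finishes. Iterating the construction exhibits $\up_1$ as a retract of $\up_k\down_1^{\,k-1}$, directly contradicting your claim.

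Your m-trace detour could be made to work, but the step ``evaluating the resulting trace closure using iterated \cref{digon-removal}'' is not a proof. Digon removal collapses an upward split--merge pair on a single strand; it says nothing about the cup/cap closures that arise in a partial trace. If you actually compute $\md_{\II_{\up_1}}(\up_k)$ you must evaluate the partial trace of the idempotent $\tfrac{1}{[k]_q}(\text{split}\circ\text{merge})\in\End(\up_{k-1}\up_1)$ over the last strand, and that diagram is \emph{exactly} $\tfrac{1}{[k]_q}\beta_{k-1}\alpha_{k-1}$. So your ``nonvanishing'' step requires precisely the computation you declared impossible, and the m-trace machinery (ambidexterity, \cref{T:mtracegeneralities}, \cref{T:dimensionsandideals}) is pure overhead. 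In the paper the logic runs the other way: the elementary retract proves \cref{T:allidealsareequal}, and only afterwards is that theorem combined with \cref{T:dimensionsandideals} to conclude $\md_{\II}(\up_k)\neq 0$.
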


\begin{proof} Consider the morphisms $\up_{k} \to \up_{k-1}\up_{1}$ and $\up_{k-1}\up_{1} \to \up_{k}$ given by the upward oriented split and merge, respectively.  Using an appropriate rescaling of these maps and applying \cref{digon-removal} shows $\up_{k}$ is a retract of $\up_{k-1}\up_{1}$ and, hence, $\up_{k} \in \II_{\up_{k-1}}$ and $\II_{\up_{k}} \subseteq \II_{\up_{k-1}}$.  On the other hand, for $k \geq 2$ let $\alpha_{k-1}: \up_{k-1} \to \up_{k}\down_{1}$ and $\beta_{k-1}: \up_{k}\down_{1} \to \up_{k-1}$ be given by 
\begin{equation*}
 \alpha_{k-1} \ = \ 
\mathord{ 
\begin{tikzpicture}[baseline=0,color=\clr, scale=.3]
		\node [style=none] (0) at (0, 0) {};
		\node [style=none] (1) at (-1, -1) {};
		\node [style=none] (2) at (1, -1) {};
		\node [style=none] (3) at (3, -1) {};
		\node [style=none] (4) at (3, 2) {};
		\node [style=none] (6) at (0, 2) {};
		\node [style=none] (9) at (-1, -3) {};
		\node [style=none] (10) at (-2.5, -3) {\scriptsize{$k-1$}};
		\node [style=none] (11) at (3.5, 1) {\scriptsize{$1$}};
		\node [style=none] (12) at (-0.5, 1) {\scriptsize{$k$} };
		\draw [thick, -, directed=.5] (9.center) to (1.center);
		\draw [thick, -, directed=.5, looseness=1.25] (1.center) to [out=90, in=225](0.center);
		\draw [thick, -, rdirected=.5, looseness=1.25] (0.center) to [out=315, in=90] (2.center);
		\draw [thick, -, looseness=1.5] (2.center) to [out=270, in=270] (3.center);
		\draw [thick, -, rdirected=.7] (3.center) to (4.center);
		\draw [thick, <-] (6.center) to (0.center);
\end{tikzpicture}
}
\;\;   \text{and}  \;\;  \beta_{k-1} \ = \ 
\mathord{
\begin{tikzpicture}[baseline=0,color=\clr, scale=.3]
		\node [style=none] (0) at (0, -3) {};
		\node [style=none] (3) at (3, -3) {};
		\node [style=none] (4) at (3, 1) {};
		\node [style=none] (5) at (1, 1) {};
		\node [style=none] (6) at (0, 0) {};
		\node [style=none] (7) at (-1, 1) {};
		\node [style=none] (8) at (-1, 3) {};
		\node [style=none] (11) at (3.5, -1) {\scriptsize{$1$}};
		\node [style=none] (12) at (-0.5, -1) {\scriptsize{$k$} };
		\node [style=none] (13) at (-2.5, 3) {\scriptsize{$k-1$}};
		\draw [thick, -, directed=.45] (3.center) to (4.center);
		\draw [thick, looseness=1.5] (4.center) to [out=90, in=90] (5.center);
		\draw [thick, -, directed=.5, looseness=1.25]  (6.center) to [out=45, in=270] (5.center);
		\draw [thick, -, directed=.5, looseness=1.25] (6.center) to [out=135, in=270] (7.center);
		\draw [thick, ->] (7.center) to (8.center);
		\draw [thick, -, rdirected=.5] (6.center) to (0.center);
\end{tikzpicture}
} \ .
\end{equation*}

Computing $\beta_{k-1} \circ \alpha_{k-1}$, using \cref{square-switch} for the second equality, \cref{E:overcrossingdef,delete-bubble} for the third equality, \ref{E:straighten-twists1} for the fourth equality, and \cref{digon-removal} for the last equality yields:
\[
\mathord{
\begin{tikzpicture}[baseline=0,color=\clr, scale=.3]
		\node [style=none] (0) at (0, 0) {};
		\node [style=none] (1) at (-1, -1) {};
		\node [style=none] (2) at (1, -1) {};
		\node [style=none] (3) at (3, -1) {};
		\node [style=none] (4) at (3, 3) {};
		\node [style=none] (5) at (1, 3) {};
		\node [style=none] (6) at (0, 2) {};
		\node [style=none] (7) at (-1, 3) {};
		\node [style=none] (8) at (-1, 5) {};
		\node [style=none] (9) at (-1, -3) {};
		\node [style=none] (10) at (-2.5, -3) {\scriptsize{$k-1$}};
		\node [style=none] (11) at (3.5, 1) {\scriptsize{$1$}};
		\node [style=none] (12) at (-0.5, 1) {\scriptsize{$k$} };
		\node [style=none] (13) at (-2.5, 5) {\scriptsize{$k-1$}};
		\draw [thick, -, directed=.5] (9.center) to (1.center);
		\draw [thick, -, directed=.5, looseness=1.25] (1.center) to [out=90, in=225](0.center);
		\draw [thick, -, rdirected=.5, looseness=1.25] (0.center) to [out=315, in=90] (2.center);
		\draw [thick, -, looseness=1.5] (2.center) to [out=270, in=270] (3.center);
		\draw [thick, -] (3.center) to (4.center);
		\draw [thick, looseness=1.5] (4.center) to [out=90, in=90] (5.center);
		\draw [thick, -, directed=.5, looseness=1.25]  (6.center) to [out=45, in=270] (5.center);
		\draw [thick, -, directed=.5, looseness=1.25] (6.center) to [out=135, in=270] (7.center);
		\draw [thick, ->] (7.center) to (8.center);
		\draw [thick, -] (6.center) to (0.center);
\end{tikzpicture}
}
=
\mathord{
\begin{tikzpicture}[baseline=0,color=\clr, scale=.3]
		\node [style=none] (0) at (1, 0) {};
		\node [style=none] (1) at (-1, 0) {};
		\node [style=none] (2) at (1, -1) {};
		\node [style=none] (3) at (3, -1) {};
		\node [style=none] (4) at (3, 3) {};
		\node [style=none] (5) at (1, 3) {};
		\node [style=none] (6) at (1, 1.75) {};
		\node [style=none] (7) at (-1, 1.75) {};
		\node [style=none] (8) at (-1, 5) {};
		\node [style=none] (9) at (-1, -3) {};
		\node [style=none] (10) at (-2.5, -3) {\scriptsize{$k-1$}};
		\node [style=none] (11) at (4, 1) {\scriptsize{$1$}};
		\node [style=none] (12) at (-1.5, 1) {\scriptsize{$k$}};
		\node [style=none] (13) at (-2.5, 5) {\scriptsize{$k-1$}};
		\node [style=none] (14) at (0, 2.5) {\scriptsize{$1$}};
		\node [style=none] (15) at (0, -.75) {\scriptsize{$1$}};
		\draw  [thick, directed=.3, -] (9.center) to (1.center);
		\draw  [thick, -] (0.center) to (2.center);
		\draw [thick, -, looseness=1.5] (2.center) to [out=270, in=270] (3.center);
		\draw [thick, -, rdirected=.5] (3.center) to (4.center);
		\draw [thick, -, looseness=1.5] (4.center) to [out=90,in=90] (5.center);
		\draw [thick, -] (5.center) to (6.center);
		\draw [thick, rdirected=.5,-] (6.center) to (7.center);
		\draw [thick, ->] (7.center) to (8.center);
		\draw [thick, directed=.5, -] (0.center) to (1.center);
		\draw [thick, -] (7.center) to (1.center);
\end{tikzpicture}}
=
\mathord{\begin{tikzpicture}[baseline=0,color=\clr, scale=.3]
		\node [style=none] (0) at (1, 0) {};
		\node [style=none] (1) at (-1, 0) {};
		\node [style=none] (2) at (1, -1) {};
		\node [style=none] (3) at (3, -1) {};
		\node [style=none] (4) at (3, 3) {};
		\node [style=none] (5) at (1, 3) {};
		\node [style=none] (6) at (1, 1.75) {};
		\node [style=none] (7) at (-1, 1.75) {};
		\node [style=none] (8) at (-1, 5) {};
		\node [style=none] (9) at (-1, -3) {};
		\node [style=none] (10) at (-2.5, -3) {\scriptsize{$k-1$}};
		\node [style=none] (11) at (3.5, 1.5) {\scriptsize{$1$}};
		\node [style=none] (12) at (-2.5, 1) {\scriptsize{$k-2$}};
		\node [style=none] (13) at (-2.5, 5) {\scriptsize{$k-1$}};
		\node [style=none] (14) at (0, 2.5) {\scriptsize{$1$}};
		\node [style=none] (15) at (0, -.75) {\scriptsize{$1$}};
		\node [style=none] (16) at (1.5, 1.25) {\scriptsize{$2$}};
		\draw  [thick, directed=.3, -] (9.center) to (1.center);
		\draw  [thick, -] (0.center) to (2.center);
		\draw [thick, -, looseness=1.5] (2.center) to [out=270, in=270] (3.center);
		\draw [thick, -, rdirected=.5] (3.center) to (4.center);
		\draw [thick, -, looseness=1.5] (4.center) to [out=90,in=90] (5.center);
		\draw [thick, -] (5.center) to (6.center);
		\draw [thick, directed=.5,-] (6.center) to (7.center);
		\draw [thick, ->] (7.center) to (8.center);
		\draw [thick, rdirected=.5, -] (0.center) to (1.center);
		\draw [thick, -] (7.center) to (1.center);
		\draw [thick, -, directed=.5] (0.center) to (6.center);
\end{tikzpicture}}
=
\mathord{\begin{tikzpicture}[baseline=0,color=\clr, scale=.3]
		\node [style=none] (0) at (1, 0) {};
		\node [style=none] (1) at (-1, 0) {};
		\node [style=none] (2) at (1, -1) {};
		\node [style=none] (3) at (3, -1) {};
		\node [style=none] (4) at (3, 3) {};
		\node [style=none] (5) at (1, 3) {};
		\node [style=none] (6) at (1, 1.75) {};
		\node [style=none] (7) at (-1, 1.75) {};
		\node [style=none] (8) at (-1, 5) {};
		\node [style=none] (9) at (-1, -3) {};
		\node [style=none] (10) at (-2.5, -3) {\scriptsize{$k-1$}};
		\node [style=none] (11) at (3.5, 1.5) {\scriptsize{$1$}};
		\node [style=none] (12) at (-2.5, 1) {\scriptsize{$k-2$}};
		\node [style=none] (13) at (-2.5, 5) {\scriptsize{$k-1$}};
		\node [style=none] (14) at (0.25, 0.75) {};
		\node [style=none] (15) at (0, 1) {};
		\draw [thick, -, directed=.3] (9.center) to (1.center);
		\draw [thick, -] (0.center) to (2.center);
		\draw [thick, -, looseness=1.5] (2.center) to [out=270, in=270] (3.center);
		\draw [thick, -, rdirected=.5] (3.center) to (4.center);
		\draw [thick,-, looseness=1.5] (4.center) to [out=90, in=90] (5.center);
		\draw [thick, -] (5.center) to (6.center);
		\draw [thick, ->] (7.center) to (8.center);
		\draw [thick, -] (7.center) to (1.center);
		\draw [thick, -]  (1.center) to [out=90,in=270] (6.center);
		\draw [thick, -, looseness=1.25] (0.center) to [out=90, in=305](14.center);
		\draw [thick, -, looseness=1.25] (15.center) to [out=135, in=305] (7.center);
\end{tikzpicture}}
=
\mathord{\begin{tikzpicture}[baseline=0,color=\clr, scale=.3]
		\node [style=none] (1) at (-1, -1) {};
		\node [style=none] (7) at (-1, 3) {};
		\node [style=none] (8) at (-1, 5) {};
		\node [style=none] (9) at (-1, -3) {};
		\node [style=none] (10) at (-2.5, -3) {\scriptsize{$k-1$}};
		\node [style=none] (12) at (-3.5, 1) {\scriptsize{$k-2$}};
		\node [style=none] (13) at (-2.5, 5) {\scriptsize{$k-1$}};
		\node [style=none] (16) at (0.5, 1) {\scriptsize{$1$}};
		\draw [thick, -, directed=.5] (9.center) to (1.center);
		\draw [thick, -, looseness=1] (1.center) to [out=45, in=315] (7.center);
		\draw [thick, -, looseness=1] (1.center) to [out=135,in=225] (7.center);
		\draw [thick, ->] (7.center) to (8.center);
\end{tikzpicture}}
= [k-1]_{q} \; 
\mathord{\begin{tikzpicture}[baseline=0,color=\clr, scale=.3]
		\node [style=none] (8) at (-1, 5) {};
		\node [style=none] (9) at (-1, -3) {};
		\node [style=none] (10) at (0.5, -3) {\scriptsize{$k-1$}};
		\draw [thick, ->] (9.center) to (8.center);
\end{tikzpicture}}  .
\]  This proves $\up_{k-1}$ is a retract of $\up_{k}\down_{1}$, hence $\up_{k-1} \in \II_{\up_{k}}$ and $\II_{\up_{k-1}} \subseteq \II_{\up_{k}}$.  In short, we have proven $\II_{\up_{k-1}}=\II_{\up_{k}}$ for all $k \geq 2$. 

Similar but easier arguments using \cref{straighten-zigzag,straighten-right-zigzag} show $\II_{\up_{k}} = \II_{\down_{k}}$ for all $k \geq 1$.  Taking this together with the previous paragraph proves the claimed result.
\end{proof}

\begin{theorem}\label{T:uponeisambi}  The canonical trace for $\up_{1}$ is ambidextrous.  In particular, it extends uniquely to an m-trace on $\II =\II_{\up_{1}}$ and the induced $\K$-linear maps  $\mt_{\up_{k}}: \End_{\qwebsupdowneven} (\up_{k}) \to \K$ and $\mt_{\down_{k}}: \End_{\qwebsupdowneven} (\down_{k}) \to \K$ are nontrivial ambidextrous traces for all $k \geq 1$. 

\end{theorem}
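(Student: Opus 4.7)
The strategy is to reduce ambidexterity to a check on a two-dimensional endomorphism algebra, and then to extend the resulting m-trace to the generating objects $\up_k$ and $\down_k$ via \cref{T:mtracegeneralities,T:allidealsareequal,T:dimensionsandideals}. First I would observe that $\End_{\qwebsupdowneven}(\up_1 \otimes \up_1)$ is spanned by $\{\Id_{\up_1^2}, T_1\}$: by \cref{T:Thetafunctor,P:sergeev-isomorphism} the larger algebra $\End_{\qwebsupdown}(\up_1^2)$ is identified with $\HC_2(q)$, and any morphism representable by dot-free webs must lie in the $\K$-span of the undotted generators $1$ and $T_1$ of $\HC_2(q)$, since \cref{L:bubblespop} kills any free-floating components and the remaining permutation-type diagrams reduce to one of these two. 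By linearity it then suffices to check $\mt(\mathrm{ptr}_R(g)) = \mt(\mathrm{ptr}_L(g))$ at $g = \Id_{\up_1^2}$ and $g = T_1$, where $\mt$ is the canonical trace on the absolutely simple object $\up_1$.

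For $g = \Id_{\up_1^2}$, each partial trace returns $\Id_{\up_1}$ tensored with a circle: the right partial trace yields an undotted clockwise bubble labelled by $1$, which is zero by \cref{delete-bubble}, while the left partial trace yields an undotted counterclockwise bubble labelled by $1$, which is zero by \cref{E:leftcircleiszero}. Thus both $\mt(\mathrm{ptr}_R(\Id))$ and $\mt(\mathrm{ptr}_L(\Id))$ vanish.

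For $g = T_1 = \beta_{\up_1,\up_1}$, the ribbon structure on $\qwebsupdowneven$ established in \cref{T:ribboncategory} delivers the standard identity that both partial traces of a self-braiding $\beta_{a,a}$ compute the twist $\theta_a$ on $a$. By construction $\theta_{\up_k} = q^{k(k-1)}\Id_{\up_k}$, so $\theta_{\up_1} = \Id_{\up_1}$, and consequently $\mt(\mathrm{ptr}_R(T_1)) = \mt(\Id_{\up_1}) = \mt(\mathrm{ptr}_L(T_1)) = 1$. Combining the two cases proves the canonical trace on $\up_1$ is ambidextrous, so \cref{T:mtracegeneralities} provides a unique m-trace $\{\mt_{\ob c}\}_{\ob c \in \II}$ on $\II = \II_{\up_1}$ extending it.

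It remains to verify that the induced functions $\mt_{\up_k}$ and $\mt_{\down_k}$ are nontrivial. By \cref{T:allidealsareequal} we have $\II_{\up_k} = \II_{\down_k} = \II_{\up_1} = \II$, so both objects lie in $\II$; and by \cref{T:Generatingobjectsareabsolutelyirreducible} both are absolutely irreducible. Applying \cref{T:dimensionsandideals}, the equality of ideals forces $\md_\II(\up_k) = \mt_{\up_k}(\Id_{\up_k}) \neq 0$ and likewise $\md_\II(\down_k) \neq 0$, which yields the required nontriviality. The main obstacle I anticipate is making the reduction to the basis $\{\Id, T_1\}$ fully rigorous, i.e.\ arguing that every undotted endomorphism of $\up_1^2$ genuinely reduces to this span modulo the defining relations; this is where the full force of the ribbon axioms together with \cref{L:bubblespop,delete-bubble,E:leftcircleiszero} is needed to collapse internal cup/cap structure.
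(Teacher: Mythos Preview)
Your proposal is correct and follows essentially the same approach as the paper: reduce to the two-element spanning set $\{\Id_{\up_1^2}, T_1\}$ of $\End_{\qwebsupdowneven}(\up_1^2)$ via \cref{T:Thetafunctor,P:sergeev-isomorphism}, check ambidexterity on these two elements, then invoke \cref{T:mtracegeneralities,T:allidealsareequal,T:dimensionsandideals} for the extension and nontriviality. The only cosmetic difference is that for the $T_1$ case the paper cites \cref{E:straighten-twists1} directly (both curls on a strand of thickness $1$ equal the identity), whereas you phrase this as ``partial traces of the self-braiding compute the twist'' via \cref{T:ribboncategory}; these are the same computation. Your anticipated obstacle is not a real gap: since the reduction in the proof of \cref{T:Thetafunctor} and the exploding/clasp argument of \cref{sergeev-generators} introduce no dots when none are present, any dot-free web of type $\up_1^2 \to \up_1^2$ lands in the Hecke part $\K\langle 1, T_1\rangle$ of $\HC_2(q)$, which is exactly what both you and the paper need.
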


\begin{proof} Since the canonical trace is $\K$-linear it suffices to check the ambidextrous condition on a basis for $\End_{\qwebsupdowneven}\left(\up_{1}^{2} \right)$.  By \cref{T:Thetafunctor} this space is spanned by upward oriented webs of type $\up_{1}^{2} \to \up_{1}^{2}$.  By \cref{P:sergeev-isomorphism} it is spanned by the identity and the upward oriented over-crossing.  But in both cases the ambidextrous condition is easily seen to be satisfied using \cref{delete-bubble,E:leftcircleiszero,E:straighten-twists1}.  

By \cref{T:mtracegeneralities} the canonical trace extends to an m-trace on $\II$.  Combining \cref{T:dimensionsandideals,T:allidealsareequal} proves $\mt_{\up_{k}}(\Id_{\up_{k}}) = \md_{\II}(\up_{k}) \neq  0$ and, hence, the induced ambidextrous trace $\mt_{\up_{k}}$ is nontrivial.  The same argument applies to $\mt_{\down_{k}}$ as well.
\end{proof}  Note that since $\up_{k}$ and $\down_{k}$ are absolutely irreducible the maps $\mt_{\up_{k}}$ and $\mt_{\down_{k}}$ are necessarily nonzero scalar multiples of the canonical trace.  In particular, up to scaling by a constant one could use the canonical traces in what follows.

One important application of the existence of a nontrivial m-trace on an ideal of a ribbon category is their use in defining invariants of knots, links, 3-manifolds, and other objects in low-dimensional topology.  We give one example of this below following \cite{GPMV}, referring the reader therein for further details.  Let $\mathcal{C}$ denote the category of $\qwebsupdowneven$-colored ribbon graphs in $\mathbb{R} \times [0,1]$.  Let $F: \mathcal{C} \to \qwebsupdowneven$ be the canonical functor.  We say such a $\qwebsupdowneven$-colored ribbon graph $\Gamma$ is \emph{$\II$-admissible} if there is a $1$--$1$ $\qwebsupdowneven$-colored ribbon graph $T_{\Gamma}$ which is an endomorphism in $\mathcal{C}$ for some object ${\ob{a}} \in \II$ and in $\mathcal{C}$ we have
\[
\Gamma \; = \; 
\mathord{\begin{tikzpicture}[baseline=-1ex,color=\clr, scale=.5]
		\node [style=none] (0) at (0, 0) {};
		\node [rectangle, draw, thick] (1) at (0, 0) {$T_{\Gamma}$};
		\node [style=none] (2) at (0, 0.5) {};
		\node [style=none] (5) at (0, -0.5) {};
		\node [style=none] (6) at (0, -1) {};
		\node [style=none] (7) at (1, -1) {};
		\node [style=none] (8) at (0, 1) {};
		\node [style=none] (9) at (1, 1) {};
		\node [style=none] (10) at (-0.5, 1) {\scriptsize{$\ob{a}$}};
		\node [style=none] (11) at (-0.5, -1) {\scriptsize{$\ob{a}$}};
		\draw [thick,-, looseness=1.50] (9.center) to [out=90, in=90] (8.center);
		\draw [thick, -, looseness=1.50] (7.center) to [out=270,in=270] (6.center);
		\draw [thick, -](9.center) to (7.center);
		\draw [thick,-](5.center) to (6.center);
		\draw [thick,-](2.center) to (8.center);
\end{tikzpicture}} \ . \ 
\] 
By \cite[Theorem 5]{GPMV} we have the following result.
\begin{theorem}\label{T:mtraceinvariant}  Let $\Gamma$ be a $\qwebsupdowneven$-colored ribbon graph which is $\II$-admissible.  Then the function 
\[
\Gamma \mapsto \mt_{{\ob{a}}}\left(F\left( T_{\Gamma}\right) \right)
\] is a well-defined (i.e. does not depend on the choice of ${ \ob{a}}$ or $T_{\Gamma}$) isotopy invariant of $\II$-admissible graphs.
\end{theorem}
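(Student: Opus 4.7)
The plan is to deduce this statement from the general machinery of modified traces on ribbon categories, specialized to the ribbon category $\qwebsupdowneven$ established in \cref{T:ribboncategory} and equipped with the m-trace on $\II = \II_{\up_1}$ constructed in \cref{T:uponeisambi}. The statement is in fact an instance of \cite[Theorem~5]{GPMV}, so the bulk of the work is to verify that our setting fits the hypotheses of that result; I would present it as a short proof by citation together with a sketch of the key input, for the reader's convenience.

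First I would recall the setup: $\qwebsupdowneven$ is a ribbon category (by \cref{T:ribboncategory}), $\II$ is a thick tensor ideal closed under retracts (immediate from its definition), and $\{\mt_{\ob{a}}\}_{\ob{a} \in \II}$ is an m-trace on $\II$ (by \cref{T:uponeisambi}). The functor $F \colon \mathcal{C} \to \qwebsupdowneven$ is the usual Reshetikhin--Turaev functor on colored ribbon graphs, which is well-defined and isotopy-invariant by general principles (this is precisely what it means for $\qwebsupdowneven$ to be a ribbon category).

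Next I would sketch why the assignment $\Gamma \mapsto \mt_{\ob{a}}(F(T_\Gamma))$ is independent of the cutting $T_\Gamma$. Suppose $T_\Gamma$ and $T'_\Gamma$ are two $1$--$1$ cuttings colored by $\ob{a}$ and $\ob{b}$ respectively, both in $\II$. By isotopy one may relate them by a sequence of local moves, each of which either (i) slides the cut along a strand, changing $\ob{a}$ to $\ob{b}$ via a pair of morphisms $f \colon \ob{a} \to \ob{b}$ and $g \colon \ob{b} \to \ob{a}$ with $F(T'_\Gamma) = f \circ F(T_\Gamma) \circ g$ and $F(T_\Gamma) = g \circ f$ (or the analogous partial composition), or (ii) moves the cut across a coupon without changing the underlying object. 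The first type of move is handled by the partial-trace and cyclicity axioms of an m-trace, and the second is trivial. The ambidextrous property from \cref{T:uponeisambi} is precisely what is needed to guarantee the partial trace can be taken on either side without affecting the result. Isotopy invariance of $\Gamma$ itself then reduces, after choosing a cutting, to isotopy invariance of $F(T_\Gamma)$ in $\qwebsupdowneven$, which is automatic.

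The main (and essentially only) obstacle would be writing out the independence-of-cutting argument in detail. This is non-trivial diagrammatic bookkeeping but is already carried out in full generality in \cite{GPMV}, so I would simply cite \cite[Theorem~5]{GPMV} and note that the hypotheses are met by the combination of \cref{T:ribboncategory,T:uponeisambi}. I would not reproduce the proof; the content of this theorem in the present paper lies entirely in having built the ribbon category and the m-trace, which has already been done.
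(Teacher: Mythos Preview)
Your proposal is correct and matches the paper's approach exactly: the paper simply states ``By \cite[Theorem 5]{GPMV} we have the following result'' immediately before the theorem, with no further proof given. Your additional sketch of the independence-of-cutting argument is helpful exposition but goes beyond what the paper itself provides.
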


For example, if $K$ is an oriented link diagram given as the closure of a braid with edges labelled by $1$, then by cutting the rightmost edge of the diagram and bending the upper free strand upward and the other free strand downward, we obtain $T_{K}$, a $1$--$1$ $\qwebsupdowneven$-colored ribbon graph $T_{\Gamma}$ which is an endomorphism of $\up_{1}$ in $\qwebsupdowneven$.  The function 
\[
K \mapsto \mt_{\up_{1}}\left(F \left(T_{K} \right) \right)
\] is then an isotopy invariant.  In this case, since the unknot is sent to $1$ and relation \cref{E:inversedownwardcrossing} is the standard skein relation, this is precisely the Alexander-Conway polynomial of $K$ evaluated at $\tq$.

More generally, if $K$ is a framed, oriented link diagram given as the closure of a braid with edges labelled by positive integers, then by cutting the rightmost edge of the diagram and bending the upper free strand upward and the other free strand downward, we obtain $T_{K}$, a $1$--$1$ $\qwebsupdowneven$-colored ribbon graph $T_{\Gamma}$ which is an endomorphism of $\up_{k}$ in $\qwebsupdowneven$ for some $k \geq 1$.  The function 
\[
K \mapsto \mt_{\up_{k}}\left(F \left(T_{K} \right) \right)
\] is an isotopy invariant.  A comparison with the web-calculus description of the colored HOMFLY-PT polynomial given in \cite[Section 4]{TVW} shows the above invariant recovers the renormalized version of the colored HOMFLY-PT invariant where the edges are colored by partitions with one part.  

We briefly sketch the procedure for recovering the colored HOMFLY-PT invariant. Given a strand of $K$ labelled by an integer greater than $1$, we can pick a point on the strand just below the braid which defines $K$ and explode it into strands of thickness $1$ thanks to relation \cref{digon-removal}.  Since merges and splits can be freely moved through crossings, cups, and caps, the top half of the exploded diagram (which looks like \cref{E:explosion}) can be moved around the strand until it is just below the bottom half of the explosion.  In this way each strand labelled by an integer $k>1$ can be replaced with the $k$th cabling of the strand containing a single upward-oriented clasp idempotent, $\Cl_{k}$, just below the braid.  The end result of this procedure is the closure of a braid consisting of strands labelled by $1$.  Comparing the formula given in \cref{clasp-sum} with the formula given for the red clasps in \cite[Remark 2.21]{TVW} we see this procedure yields the same final result as the procedure described in \cite[Section 4]{TVW}.   Therein they denote the braid which results by $\rho_{K'}\left(\tilde{b}_{D}^{K'} \right)e_{q}(k_{1}, \dotsc , k_{r})$, where $k_{1}, \dotsc , k_{r}$ are the labels of the strands at the bottom of the initial braid, read left to right and viewed as single-row partitions.  If we write $tr$ for the Jones-Ocneanu trace, then the colored HOMFLY-PT polynomial is given in \cite[Definition 4.2]{TVW} as 
\begin{equation*}
tr\left(\rho_{K'}\left(\tilde{b}_{D}^{K'} \right)e_{q}(k_{1}, \dotsc , k_{r}) \right).
\end{equation*} 
On the other hand, by \ref{T:mtraceinvariant} we have 
\begin{equation*}
\mt_{\up_{k}}\left(F \left(T_{K} \right) \right) = \mt_{\up_{1}}\left(F \left( T_{\rho_{K'}\left(\tilde{b}_{D}^{K'} \right)e_{q}(k_{1}, \dotsc , k_{r})} \right) \right).
\end{equation*}
A direct calculation verifies  $b \mapsto \mt_{\up_{1}} \left(F\left(T_{b} \right) \right)$ satisfies the defining properties of the Jones-Ocneanu trace and, hence, the resulting invariants coincide.

We end this discussion by recalling the m-trace is defined for all objects of $\II = \II_{\up_{1}}$.  Consequently, we could label the strands by any non-unit object of $\qwebsupdowneven$ to define further isotopy invariants.  More generally one could use cablings and the elements $e_{\lambda} \in \HC_{k}(q)$ discussed in \cref{SS:Olshanskiduality} to define invariants indexed by strict partitions.  It would be interesting to describe them.

\subsection{An alternate specialization of the undotted bubble}\label{}\label{SS:bubblespop}  In the classical case the left equality of \cref{delete-bubble} can be omitted as it follows from the other relations (see \cite[Lemma 6.2.1]{BrownKujawa}). As we explain, if we attempt a similar argument a second possible specialization of the clockwise bubble reveals itself.

If we do not assume \cref{delete-bubble}, then \cref{E:dotthroughrightwardcrossing} becomes 
\begin{equation*}
\xy
(0,0)*{\reflectbox{
\begin{tikzpicture}[scale=.3, color=\clr]
	\draw [ thick, <-] (-1,-1) to (-0.25,-0.25);
	\draw [ thick, -] (0.25,0.25) to (1,1);
	\draw [ thick, ->] (1,-1) to (-1,1);
	\draw (0.55,0.55) \wdot ;
	\end{tikzpicture}
}};
\endxy
 \ = \ 
\xy
(0,0)*{\reflectbox{
\begin{tikzpicture}[scale=.3, color=\clr]
	\draw [ thick, <-] (-1,-1) to (-0.25,-0.25);
	\draw [ thick, -] (0.25,0.25) to (1,1);
	\draw [ thick, ->] (1,-1) to (-1,1);
	\draw (-0.55,-0.55) \wdot ;
	\end{tikzpicture}
}};
\endxy
 \ + \tq  \ 
\left( 
 \xy
(0,0)*{
\begin{tikzpicture}[scale=.3, color=\clr]
	\draw [ thick, ->, looseness=1] (-1,1.25) to [out=90, in=270] (1,3.25);
	\draw [ thick, -, looseness=1] (1,1.25) to [out=90, in=315] (.2,2);
	\draw [ thick, -, looseness=1] (-.3,2.4) to [out=135, in=270] (-1,3.25);
	\draw [ thick, <-, looseness=1.8] (-1,1.25) to [out=270, in=270] (1,1.25);
	\draw [ thick, ->, looseness=1.8] (-1,-1.25) to [out=90, in=90] (1,-1.25);
	\draw (-0.85,-0.55) \wdot ;
	\end{tikzpicture}
};
\endxy
\ - \
  \xy
(0,0)*{
\begin{tikzpicture}[scale=.3, color=\clr]
	\draw [ thick, ->, looseness=1] (-1,1.25) to [out=90, in=270] (1,3.25);
	\draw [ thick, -, looseness=1] (1,1.25) to [out=90, in=315] (.2,2);
	\draw [ thick, -, looseness=1] (-.3,2.4) to [out=135, in=270] (-1,3.25);
	\draw [ thick, <-, looseness=1.8] (-1,1.25) to [out=270, in=270] (1,1.25);
	\draw [ thick, ->, looseness=1.8] (-1,-1.25) to [out=90, in=90] (1,-1.25);
	\draw (-0.85,0.55) \wdot ;
	\end{tikzpicture}
};
\endxy
 \right) \ .
\end{equation*}
Compose on the top with a leftward cap. Compose on the bottom with a dot on the left strand followed by a leftward cup. Simplifying using the definition of the rightward cup and cap along with the super interchange law yields:
\begin{equation*}\label{E:newdotthroughrightwardcrossing}
\xy
(0,0)*{\reflectbox{
\begin{tikzpicture}[scale=.3, color=\clr]
	\draw [ thick, <-, looseness=1.7] (-1,0) to [out=90, in=90] (1,0);
	\draw [ thick, -, looseness=1.7] (-1,0) to [out=270, in=270] (1,0);
	\end{tikzpicture}
}};
\endxy
 \ = \ -
\xy
(0,0)*{\reflectbox{
\begin{tikzpicture}[scale=.3, color=\clr]
	\draw [ thick, <-, looseness=1.7] (-1,0) to [out=90, in=90] (1,0);
	\draw [ thick, -, looseness=1.7] (-1,0) to [out=270, in=270] (1,0);
	\end{tikzpicture}
}};
\endxy
 \ + \tq  \ 
 \xy
(0,0)*{
\begin{tikzpicture}[scale=.3, color=\clr]
	\draw [ thick, ->, looseness=1.7] (-1,0) to [out=90, in=90] (1,0);
	\draw [ thick, -, looseness=1.7] (-1,0) to [out=270, in=270] (1,0);
	\draw [ thick, ->, looseness=1.7] (2,0) to [out=90, in=90] (4,0);
	\draw [ thick, -, looseness=1.7] (2,0) to [out=270, in=270] (4,0);
	\end{tikzpicture}
};
\endxy \ .
\end{equation*}
Consequently,
\begin{equation}\label{E:alternateclockwisebubblevalue}
\xy
(0,0)*{\reflectbox{
\begin{tikzpicture}[scale=.3, color=\clr]
	\draw [ thick, <-, looseness=1.7] (-1,0) to [out=90, in=90] (1,0);
	\draw [ thick, -, looseness=1.7] (-1,0) to [out=270, in=270] (1,0);
	\end{tikzpicture}
}};
\endxy  = 0 \; \text{ or } \; 
\xy
(0,0)*{\reflectbox{
\begin{tikzpicture}[scale=.3, color=\clr]
	\draw [ thick, <-, looseness=1.7] (-1,0) to [out=90, in=90] (1,0);
	\draw [ thick, -, looseness=1.7] (-1,0) to [out=270, in=270] (1,0);
	\end{tikzpicture}
}};
\endxy
=
\frac{2}{\tq} =:\kappa
\end{equation} Using the second value leads to the following alternate relations in \cref{L:thicknessonerelations}.  Relations \cref{E:inverseleftwardcrossing,E:inverserightwardcrossing,E:inversedownwardcrossing,E:dotthroughleftwardcrossing,E:dotthroughdownwardcrossing} are unchanged.   Using $\kappa$ in \cref{E:bubbleequation} shows \cref{E:leftcircleiszero} becomes
\begin{equation}\label{E:alternateleftcircleiszero}
\xy
(0,0)*{\reflectbox{
\begin{tikzpicture}[scale=.3, color=\clr]
	\draw [ thick, ->, looseness=1.7] (-1,0) to [out=90, in=90] (1,0);
	\draw [ thick, -, looseness=1.7] (-1,0) to [out=270, in=270] (1,0);
	\end{tikzpicture}
}};
\endxy  = - \frac{2}{\tq} \ .
\end{equation}  The equalities in \cref{thickness-one-rightward-cup-and-cap}  become 
\begin{equation}\label{alternate-thickness-one-rightward-cup-and-cap}
\xy
(0,0)*{
\bt[scale=.35, color=\clr]
	\draw [ thick, looseness=2, directed=0.99] (1,4.5) to [out=90,in=90] (-1,4.5);
	\draw [ thick, directed=1 ] (0.15,3.65) to (1,4.5);
	\draw [ thick,  ] (-1,2.5) to (-0.15,3.35);
	\draw [ thick, rdirected=-.9 ] (1,2.5) to (-1,4.5);
\et
};
\endxy 
 \ = \ -
\xy
(0,0)*{
\bt[scale=.35, color=\clr]
	\draw [ thick, looseness=2, rdirected=-0.95] (1,4.5) to [out=90,in=90] (-1,4.5);
\et
};
\endxy \   \quad\quad\text{and}\quad\quad
\xy
(0,0)*{
\bt[scale=.35, color=\clr]
	\draw [ thick, looseness=2, directed=0.99 ] (1,2.5) to [out=270,in=270] (-1,2.5);
	\draw [ thick, directed=1 ] (-1,2.5) to (1,4.5);
	\draw [ thick, rdirected=-0.85] (1,2.5) to (0.15,3.35);
	\draw [ thick] (-0.15,3.65) to (-1,4.5);
\et
};
\endxy
 \ = \ -
\xy
(0,0)*{
\bt[scale=.35, color=\clr]
	\draw [ thick, looseness=2, rdirected=-0.95 ] (1,2.5) to [out=270,in=270] (-1,2.5);
\et
};
\endxy .
\end{equation} The relation given in \ref{E:dotthroughrightwardcrossing} becomes
\begin{equation}\label{E:alternatedotthroughrightwardcrossing}
\xy
(0,0)*{\reflectbox{
\begin{tikzpicture}[scale=.3, color=\clr]
	\draw [ thick, <-] (-1,-1) to (-0.25,-0.25);
	\draw [ thick, -] (0.25,0.25) to (1,1);
	\draw [ thick, ->] (1,-1) to (-1,1);
	\draw (0.55,0.55) \wdot ;
	\end{tikzpicture}
}};
\endxy
 \ = \ 
\xy
(0,0)*{\reflectbox{
\begin{tikzpicture}[scale=.3, color=\clr]
	\draw [ thick, <-] (-1,-1) to (-0.25,-0.25);
	\draw [ thick, -] (0.25,0.25) to (1,1);
	\draw [ thick, ->] (1,-1) to (-1,1);
	\draw (-0.55,-0.55) \wdot ;
	\end{tikzpicture}
}};
\endxy
 \ - \tq  \ 
\left( 
 \xy
(0,0)*{
\begin{tikzpicture}[scale=.3, color=\clr]
	\draw [ thick, ->, looseness=1.8] (-1,1.25) to [out=270, in=270] (1,1.25);
	\draw [ thick, ->, looseness=1.8] (-1,-1.25) to [out=90, in=90] (1,-1.25);
	\draw (-0.85,-0.55) \wdot ;
	\end{tikzpicture}
};
\endxy
\ - \
  \xy
(0,0)*{
\begin{tikzpicture}[scale=.3, color=\clr]
	\draw [ thick, ->, looseness=1.8] (-1,1.25) to [out=270, in=270] (1,1.25);
	\draw [ thick, ->, looseness=1.8] (-1,-1.25) to [out=90, in=90] (1,-1.25);
	\draw (0.85,0.55) \wdot ;
	\end{tikzpicture}
};
\endxy
 \right) \ .
\end{equation}
At this point we assume the clockwise dotted bubble is zero (i.e.\ the second equality of \cref{delete-bubble} holds) and continue to assume the clockwise undotted bubble equals $\kappa$.  Then \ref{E:leftcirclewithdotiszero} still holds. Finally, \cref{thickness-one-rightward-cup-and-cap-with-dot} becomes 
\begin{equation}\label{alternate-thickness-one-rightward-cup-and-cap-with-dot}
\xy
(0,0)*{
\bt[scale=.35, color=\clr]
	\draw [ thick, looseness=2, rdirected=-0.95] (1,4.5) to [out=90,in=90] (-1,4.5);
	\draw (-0.9,5) \wdot ;
\et
};
\endxy = \ - \ 
\xy
(0,0)*{
\bt[scale=.35, color=\clr]
	\draw [ thick, looseness=2, rdirected=-0.95] (1,4.5) to [out=90,in=90] (-1,4.5);
	\draw (0.9,5) \wdot ;
\et
};
\endxy \   \quad\quad\text{and}\quad\quad
\xy
(0,0)*{
\bt[scale=.35, color=\clr]
	\draw [ thick, looseness=2, rdirected=-0.95 ] (1,2.5) to [out=270,in=270] (-1,2.5);
	\draw (-0.9,2) \wdot ;
\et
};
\endxy = \ - \
\xy
(0,0)*{
\bt[scale=.35, color=\clr]
	\draw [ thick, looseness=2, rdirected=-0.95 ] (1,2.5) to [out=270,in=270] (-1,2.5);
	\draw (0.9,2) \wdot ;
\et
};
\endxy \ .
\end{equation}  When the clockwise bubble is equal to $\kappa$, arguments similar to those used to prove \cref{L:straighteningtwists} show \ref{E:straighten-twists1} becomes
\begin{equation}\label{E:alternate-straighten-twists1}
\ (-1)^{k} \ 
\xy 
(0,0)*{
\bt[scale=1.25, color=\clr]
        \draw[thick]  (1,0) to (1,0.35);
	\draw[thick] plot [smooth, tension=1.1] coordinates {(1,0.35) (0.95,0.45)};
	\draw[thick] plot [smooth, tension=1.1] coordinates {(0.9,0.55) (0.8,0.65) (0.625,0.5) (0.8,0.35) (1.0,0.65)};
        \draw[thick, directed=1]  (1,0.65) to (1,1);
	\node at (1,-0.15) {\scriptsize $k$};
\et
};
\endxy = q^{k(k-1)}
\xy
(0,0)*{
\bt[scale=1.25, color=\clr]
	\draw[thick, directed=1] (1,0) to (1,1);
	\node at (1,-0.15) {\scriptsize $k$};
\et
};
\endxy
= 
\xy
(0,0)*{
\bt[scale=1.25, color=\clr]
        \draw[thick]  (1,0) to (1,0.35);
	\draw[thick] plot [smooth, tension=1.1] coordinates {(1,0.35) (1.2,0.65) (1.375,0.5) (1.25,0.35) (1.10,0.45)};
	\draw[thick] plot [smooth, tension=1] coordinates {(1,0.65) (1.05,0.55)};
        \draw[thick, directed=1]  (1,0.65) to (1,1);
	\node at (1,-0.15) {\scriptsize $k$};
\et
};
\endxy \ ,  
\end{equation}
\begin{equation} 
\xy
(0,0)*{
\bt[scale=1.25, color=\clr]
        \draw[thick]  (1,0) to (1,0.35);
	\draw[thick] plot [smooth, tension=1.1] coordinates {(1,0.35) (0.8,0.65) (0.625,0.5) (0.75,0.35) (0.9,0.45)};
	\draw[thick] plot [smooth, tension=1] coordinates {(1,0.65) (0.95,0.55)};
        \draw[thick, directed=1]  (1,0.65) to (1,1);
	\node at (1,-0.15) {\scriptsize $k$};
\et
};
\endxy = q^{-k(k-1)}
\xy
(0,0)*{
\bt[scale=1.25, color=\clr]
	\draw[thick, directed=1] (1,0) to (1,1);
	\node at (1,-0.15) {\scriptsize $k$};
\et
};
\endxy 
= \ (-1)^{k} \ 
\xy
(0,0)*{
\bt[scale=1.25, color=\clr]
        \draw[thick]  (1,0) to (1,0.35);
	\draw[thick] plot [smooth, tension=1.1] coordinates {(1.1,0.55) (1.25,0.65) (1.375,0.5) (1.25,0.35) (1.10,0.45) (1,0.65)};
	\draw[thick] plot [smooth, tension=1] coordinates {(1,0.35) (1.05,0.45)};
        \draw[thick, directed=1]  (1,0.65) to (1,1);
	\node at (1,-0.15) {\scriptsize $k$};
\et
};
\endxy \ ,
\end{equation}
Finally, \ref{dot-through-rightward-cup-and-cap} becomes
\begin{equation}\label{alternate-dot-through-rightward-cup-and-cap}
\xy
(0,0)*{
\bt[scale=.35, color=\clr]
	\draw [ thick, looseness=2, rdirected=-0.95] (1,4.5) to [out=90,in=90] (-1,4.5);
	\node at (-1.75,4.75) {\scriptsize $k$};
	\draw (-0.85,5) \wdot ;
\et
};
\endxy = \ - \
\xy
(0,0)*{
\bt[scale=.35, color=\clr]
	\draw [ thick, looseness=2, rdirected=-0.95] (1,4.5) to [out=90,in=90] (-1,4.5);
	\node at (-1.75,4.75) {\scriptsize $k$};
	\draw (0.85,5) \wdot ;
\et
};
\endxy \   \quad\quad\text{and}\quad\quad
\xy
(0,0)*{
\bt[scale=.35, color=\clr]
	\draw [ thick, looseness=2, rdirected=-0.95 ] (1,2.5) to [out=270,in=270] (-1,2.5);
	\node at (-1.75,2.5) {\scriptsize $k$};
	\draw (-0.85,2) \wdot ;
\et
};
\endxy = \ - \
\xy
(0,0)*{
\bt[scale=.35, color=\clr]
	\draw [ thick, looseness=2, rdirected=-0.95 ] (1,2.5) to [out=270,in=270] (-1,2.5);
	\node at (-1.75,2.5) {\scriptsize $k$};
	\draw (0.85,2) \wdot ;
\et
};
\endxy \ .
\end{equation}

Denote by $\qwebsupdown (\kappa)$ the monoidal supercategory defined with the same generators and relations as $\qwebsupdown$ but with the left equality in \ref{delete-bubble} replaced with the relation that the clockwise undotted circle labelled by $1$ is equal to $\kappa$.  Let $\qwebsupdowneven (\kappa)$ denote the full subcategory of $\qwebsupdown (\kappa)$ consisting of all objects and all morphisms without dots, and let $\qwebsupdowneven (\kappa)_{2}$ denote the monoidal subcategory of $\qwebsupdowneven (\kappa)$ generated as a monoidal category by $\left\{\uparrow_{2k}, \downarrow_{2k} \mid k \in \Z_{\geq 0} \right\}$ and by all morphisms which can be written as a linear combination of diagrams which have all strands labelled by even integers. Then  $\qwebsupdowneven (\kappa)_{2}$ is again a ribbon category. 

We end with a calculation in $\qwebsupdown (\kappa)$.  In this category we have the following recursive formulas:
\begin{align*}\label{E:thickclockwisecircleinkappacategory}
\xy 
(0,0)*{\reflectbox{
\begin{tikzpicture}[scale=.3, color=\clr]
	\draw [ thick, <-, looseness=1.7] (-1,0) to [out=90, in=90] (1,0);
	\draw [ thick, -, looseness=1.7] (-1,0) to [out=270, in=270] (1,0);
	\node at (1.5,0) {\reflectbox{\scriptsize $k$}};
	\end{tikzpicture}
}};
\endxy
&= \left(\frac{[k-1]_{q}q^{-1}(\kappa +q)}{[k]_{q}} - \frac{\kappa [k-2]_{q}}{[k]_{q}} \right)
\xy 
(0,0)*{\reflectbox{
\begin{tikzpicture}[scale=.3, color=\clr]
	\draw [ thick, <-, looseness=1.7] (-1,0) to [out=90, in=90] (1,0);
	\draw [ thick, -, looseness=1.7] (-1,0) to [out=270, in=270] (1,0);
	\node at (-2.75,0) {\reflectbox{\scriptsize $k-1$}};
	\end{tikzpicture}
}};
\endxy \\
&= \left(\frac{\left(q^{k-1}+q^{-(k-1)} \right)}{[k]_{q}\tq} \right)
\xy 
(0,0)*{\reflectbox{
\begin{tikzpicture}[scale=.3, color=\clr]
	\draw [ thick, <-, looseness=1.7] (-1,0) to [out=90, in=90] (1,0);
	\draw [ thick, -, looseness=1.7] (-1,0) to [out=270, in=270] (1,0);
	\node at (-2.75,0) {\reflectbox{\scriptsize $k-1$}};
	\end{tikzpicture}
}};
\endxy .
\end{align*}   We sketch how to obtain the above formulas.  First, use \cref{digon-removal} to explode the upward oriented part of the circle into strands labelled by $1$'s and then move the merges around the circle until they are below the splits.  This expresses the clockwise oriented circled labelled by $k$ as the clasp $\Cl_{k}$ closed up by $k$ clockwise oriented arcs labelled by $1$'s.  The first equality then follows by using the clasp recursion given in \cref{clasp-recursion} followed by rewriting $\Cl_{2}$ in terms of crossings using \ref{E:overcrossingdef}, and simplifying. The second equality follows from the first by algebraic manipulations.  This recursive formula along with \cref{E:alternateclockwisebubblevalue} yields the following formula for all $k \geq 1$:

\begin{equation*}\label{E:thickclockwisecircleinkappacategory2}
\xy
(0,0)*{\reflectbox{
\begin{tikzpicture}[scale=.3, color=\clr]
	\draw [ thick, <-, looseness=1.7] (-1,0) to [out=90, in=90] (1,0);
	\draw [ thick, -, looseness=1.7] (-1,0) to [out=270, in=270] (1,0);
	\node at (1.5,0) {\reflectbox{\scriptsize $k$}};
	\end{tikzpicture}
}};
\endxy  = \frac{\prod_{t=0}^{k-1}\left(q^{t}+q^{-t} \right)}{[k]_{q}!\tq^{k}} = \frac{\prod_{t=1}^{k}\left([t]_{q} - [t-2]_{q} \right)}{[k]_{q}!\tq^{k}} \ .
\end{equation*}
It would be interesting to investigate $\qwebsupdown (\kappa)$ further as well as the topological invariants coming from $\qwebsupdowneven (\kappa)_{2}$.

\makeatletter
\renewcommand*{\@biblabel}[1]{\hfill#1.}
\makeatother

\bibliographystyle{halpha}

\bibliography{biblio}

\end{document}